\DeclarePairedDelimiter{\abs}{\lvert}{\rvert}
\theoremstyle{plain} 
\newtheorem{thm}{Theorem}[subsection]
\newtheorem{lemma}[thm]{Lemma}
\newtheorem{prop}[thm]{Proposition}
\newtheorem{corollary}[thm]{Corollary}
\newtheorem{claim}[thm]{Claim}
\theoremstyle{definition}
\newtheorem{notation}[thm]{Notation}
\newtheorem{setup}[thm]{Set-up}
\newtheorem{definition}[thm]{Definition}
\newtheorem{construction}[thm]{Construction}
\newtheorem{remark}[thm]{Remark}
\newtheorem{assumption}[thm]{Assumption}
\theoremstyle{plain} 
\newcommand{\thistheoremname}{}
\newtheorem{genericthm}[thm]{\thistheoremname}
\newtheorem*{genericthm*}{\thistheoremname}
\newenvironment{namedthm*}[1]
  {\renewcommand{\thistheoremname}{#1}%
   \begin{genericthm*}}
  {\end{genericthm*}}
\newtheorem{Lthm}{Theorem}
\newcommand{\C}{\mathbb{C}}
\newcommand{\Z}{\mathbb{Z}}
\newcommand{\cA}{\mathcal{A}}
\newcommand{\cC}{\mathcal{C}}
\newcommand{\cF}{\mathcal{F}}
\newcommand{\cH}{\mathcal{H}}
\newcommand{\cL}{\mathcal{L}}
\newcommand{\cO}{\mathcal{O}}
\newcommand{\cP}{\mathcal{P}}
\newcommand{\cV}{\mathcal{V}}
\newcommand{\cX}{\mathcal{X}}
\newcommand{\cY}{\mathcal{Y}}
\newcommand{\bA}{\mathbf{A}}
\newcommand{\bG}{\mathbf{G}}
\newcommand{\bP}{\mathbf{P}}
\newcommand{\fp}{\mathfrak{p}}
\newcommand{\sD}{\mathscr{D}}
\newcommand{\sX}{\mathscr{X}}
\newcommand{\sY}{\mathscr{Y}}
\newcommand{\Gr}{\mathrm{Gr}}
\newcommand{\IC}{\mathrm{IC}}
\newcommand{\pre}{\mathrm{pre}}
\newcommand{\Wpre}{W^{\mathrm{pre}}}
\newcommand{\prim}{\mathrm{prim}}
\newcommand{\MTC}{\mathrm{MTC}}
\newcommand{\tot}{\mathrm{tot}}
\newcommand{\Gm}{\mathbf{G}_m}
\newcommand{\ex}{\mathrm{ex}}
\newcommand{\Patch}{\mathrm{Patch}}
\newcommand{\Cone}{\mathrm{Cone}}
\newcommand{\Iden}{\mathrm{Iden}}
\renewcommand{\d}{\partial}
\newcommand{\dbar}{\bar{\partial}}
\renewcommand{\t}{\theta}
\newcommand{\Hdual}{H^{\ast}}
\DeclareMathOperator{\Ker}{Ker}
\DeclareMathOperator{\im}{\mathrm{Im}}
\begin{document}

\title[Cohomology of semisimple local systems]{Cohomology of semisimple local systems and the Decomposition theorem}

\author{Chuanhao Wei and Ruijie Yang}

\address{Chuanhao Wei: Institute for Theoretical Sciences, School of Science, Westlake University and Institute of Natural Sciences, Westlake Institute for Advanced Study, 18 Shilongshan Road, Hangzhou 310024, Zhejiang Province, China.}
\email{weichuanhao@westlake.edu.cn}
\address{Ruijie Yang: Max-Planck-Institut f\"ur Mathematik, Vivatsgasse 7, 53111 Bonn, Germany.}
\email{ruijie.yang@hu-berlin.de, ryang@mpim-bonn.mpg.de}




\begin{abstract}
    In this paper, we study the cohomology of semisimple local systems in the spirit of classical Hodge theory.
    On the one hand, we construct a generalized Weil operator from the complex conjugate of the cohomology of a semisimple local system to the cohomology of its dual local system, which is functorial with respect to smooth restrictions. This operator allows us to study the Poincar\'e pairing, usually not positive definite, in terms of a positive definite Hermitian pairing. On the other hand, we prove a global invariant cycle theorem for semisimple local systems.
    
    As an application, we give a new proof of Sabbah's Decomposition Theorem for the direct images of semisimple local systems under proper algebraic maps, by adapting the method of de Cataldo-Migliorini, without using the category of polarizable twistor $\sD$-modules. This answers a question of Sabbah.
\end{abstract}

    \date{\today}

\subjclass[2020]{Primary: 14C30,  Secondary: 32S60.}
\keywords{Semisimple local systems, decomposition theorem, Hodge structure, twistor structure, polarization}
\maketitle






\vspace{1cm}


\section*{Introduction}

The classical Hodge theory provides two fundamental tools for studying the cohomology of complex algebraic varieties: polarizations and weights. The polarizations allow one to study the Poincar\'e pairing using a positive definitive pairing; the theory of weights puts strong restrictions on morphisms between cohomology groups of algebraic varieties. On the other hand, the study of nontrivial local systems leads to deeper understanding of topology of algebraic varieties. For example it has applications to K\"ahler groups \cite{Simpson92}, higher dimensional uniformization \cite{Simpson88} and Shafarevich conjecture \cite{EKP}. Therefore it is natural to seek generalization of results on polarizations and weights to nontrivial local systems. The purpose of this paper is to study the cohomology of semisimple local systems in this spirit and provide some new tools. In particular, we construct a (functorial) generalized Weil operator, which induces polarizations on the pure twistor structures associated to semisimple local systems; we also prove a global invariant cycle theorem. As an application, we give a new proof of Sabbah's Decomposition Theorem \cite{Sabbah}, which might offer some new viewpoint on polarizations.

Our guiding principle is Simpson's ``meta-theorem" \cite{Simpson97}, which leads to the solution of Kashiwara's conjecture \cite{Mochizuki3}. But even for the cohomology of semisimple local systems, which is the simplest case, we discover something new: there are some hidden structures related to the polarization, e.g. a pre-Weil operator, which seems invisible in Hodge theory. 

Turning to details, let $X$ be a complex smooth projective variety and let $\cV$ be a local system on $X$. Denote the dual local system by $\cV^{\ast}$. Let $\eta$ be an ample line bundle on $X$ and fix an integer $k\leq \dim X$. The
\emph{twisted Poincar\'e pairing} is defined by
\begin{align*}
S:H^k(X,\cV)&\otimes H^k(X,\cV^{\ast}) \to  \C,\\
[\alpha] &\otimes [\beta] \mapsto (-1)^{k(k-1)/2}\cdot \int_X c_1(\eta)^{\dim X-k}\wedge \alpha \wedge \beta,
\end{align*}
where $\alpha$ is a $k$-form on $X$ with coefficient in $\cV$, same for $\beta$. Among arbitrary local systems, semisimple local systems are special as they are more manageable by analysis. From now on, we assume $\cV$ is semisimple. Corlette \cite{Corlette} showed that the $\cC^{\infty}$-bundle $H=\cV\otimes_{\C} \cC^{\infty}_X$ admits a unique harmonic metric up to a linear transformation. Unlike the constant local system $\C$, the cohomology groups of $\cV$ do not necessarily underlie pure Hodge structures. However, using the harmonic metric, Simpson \cite{Simpson97} proved that there is a natural semistable holomorphic bundle $E$ of slope $k$ over $\bP^1$ whose fiber at $1$ satisfies
\[ E|_{z=1} \cong H^k(X,\cV).\]
This can be viewed as a generalized pure Hodge structure of weight $k$ on $H^k(X,\cV)$. Indeed, Simpson defined a \emph{pure twistor structure} of weight $k$ to be a semistable holomorphic bundle of slope $k$ over $\bP^1$. Our first result is the construction of a pre-Weil operator
\[ \phi: \overline{E|_{z=-1}} \xrightarrow{\sim} H^k(X,\cV^{\ast}),\]
which is an isomorphism and is functorial with respect to restriction to smooth subvarieties. Precomposing an Identification map $E|_{z=1}\to E|_{z=-1}$ associated to the pure twistor structure $E$, this gives an isomorphism
\[ C:\overline{H^k(X,\cV)}  \xrightarrow{\sim} H^k(X,\cV^{\ast}).\]
In particular, it recovers the Weil operator on $H^k(X,\C)$. In the appendix, we reinterpret $C$ using Hodge star operators. Using Simpson's Lefschetz decomposition \cite[Lemma 2.6]{Simpson92} on $H^k(X,\cV)$, we can use $C$ to prove the following generalization of Hodge-Riemann bilinear relation.
\begin{Lthm}[Theorem \ref{cor:polarized by twisted Poincare pairing}]\label{thm: generalized Weil operator}
The bilinear pairing 
\[ S(\bullet,C(\bullet)):H^k(X,\cV)\otimes \overline{H^k(X,\cV)} \to \C\]
is Hermitian, and positive definite on each primitive space $\eta^{m}H^{k-2m}(X,\cV)_{\prim}$, up to a non-zero constant, where $H^{k}(X,\cV)_{\prim}$ denotes the primitive space with respect to $\eta$.  In particular, the bilinear pairing 
\[S( \bullet,\phi(\bullet)):E|_{z=1}\otimes \overline{E|_{z=-1}}\to \C\]
polarizes the pure twistor structure on $H^k(X,\cV)_{\prim}$, up to a non-zero constant.
\end{Lthm}
This theorem means that the pre-Weil operator $\phi$ should be an intrinsic structure associated to the polarization of the pure twistor structure on $H^k(X,\cV)_{\prim}$.  As in Hodge theory, the existence of polarizations enables one to keep track of non-degeneracy of topological pairings under restrictions. To illustrate the idea, let us consider a smooth  subvariety $T\subseteq X$ and the restriction maps
\[ R: H^k(X,\cV)_{\prim} \to H^k(T,\cV|_T), \quad \widetilde{R}: H^k(X,\cV^{\ast})_{\prim} \to H^k(T,\cV^{\ast}|_T).\]
Then the twisted Poincar\'e pairing $S$ restricts to a non-degenerate pairing:
\[ S: \Ker R\otimes \Ker \widetilde{R} \to \C.\]
Even though this is a purely topological statement, it seems difficult to find a topological proof.

Turning to weights, let $X$ be a complex quasi-projective variety and let $\cV$ be a local system on $X$ coming from the restriction of a semisimple local system on a smooth projective compactification $\overline{X}\supseteq X$. In \cite{Simpson97}, Simpson showed that $H^k(X,\cV)$ underlies a natural mixed twistor structure, which is functorial in $X$ and generalizes Deligne's mixed Hodge structures. This means that there is a holomorphic vector bundle $E$ over $\bP^1$ with an increasing filtration $W_{\bullet}E$ by strict subbundles such that $E|_{z=1}\cong H^k(X,\cV)$ and $W_{\ell}E/W_{\ell-1}E$ is a pure twistor structure of weight $\ell$. In \cite{Simpson97}, after stating this result, Simpson remarked that the same yoga of weights also holds, which is stated as follows.
\begin{namedthm*}{Simpson's theory of weights}[Theorem \ref{thm: yoga of weights}]
Let $W_{\bullet}$ denote the weight filtration on $H^k(X,\cV)$ induced by the natural mixed twistor structure. We have
\begin{itemize}
    \item if $X$ is proper, then $\Gr^W_iH^k(X,\cV)=0$ for $i> k$,
    \item if $X$ is smooth, then $\Gr^W_iH^k(X,\cV)=0$ for $i< k$,
    \item if $X$ is smooth and proper, then $\Gr^W_iH^k(X,\cV)=0$ for $i\neq  k$.
\end{itemize}
\end{namedthm*}
Using this, we prove a version of global invariant cycle theorem for semisimple local systems, generalizing Deligne's classical result \cite{Deligne74}.
\begin{Lthm}\label{thm: global invariant cycle}Consider the following chain of inclusion maps:
\[ Z \xrightarrow{\alpha} U \xrightarrow{j} X,\]
where $X$ is a smooth projective variety, $U$ is a Zariski open subset of $X$, and $Z$ is a proper subvariety of $X$ contained in $U$. Let $\cV$ be a semisimple local system on $X$. Then, for any integer $k$, the following two restriction maps have the same image:
\begin{align*}
(j\circ \alpha)^{\ast}:H^k(X,\cV) &\to H^k(Z,(j\circ \alpha)^{\ast}\cV),\\
\alpha^{\ast}:H^k(U,j^{\ast}\cV) &\to H^k(Z,(j\circ \alpha)^{\ast}\cV).
\end{align*}

\end{Lthm}

Using these new results on the cohomology of semisimple local systems, we give a relatively short and geometric proof of Sabbah's Decomposition Theorem for semisimple local systems under proper algebraic maps, by adapting the method of de Cataldo and Migliorini in \cite{DM05}. Sabbah's proof relies on the complicated category of polarizable twistor $\sD$-modules, we only need work in the category of constructible complexes. This answers a question of Sabbah \cite[Page 8]{Sabbah}. We would like to point out that the adaption of the de Cataldo-Migliorini method is nontrivial, which will be explained at the end of introduction. The main point is the compatibility of the pre-Weil operator $\phi$ with the perverse cohomology functor and smooth restrictions: on the one hand it allows us to keep track of non-degeneracy of various Poincar\'e pairings after restrictions to hyperplanes (as illustrated after Theorem \ref{thm: generalized Weil operator}); on the other hand it helps us translate the positivity coming from the polarization to the non-degeneracy of certain adjunction morphisms (see Proposition \ref{prop:splitting}), which is needed for the splitting criterion.

Now let us state the result. Let $f:X\to Y$ be a morphism between projective varieties, where $X$ is smooth. Let $\cV$ be a semisimple local system on $X$ and $\eta$ be an $f$-ample line bundle on $X$. Denote by $K\colonequals \cV[\dim X]$ the associated perverse sheaf. Let ${}^{\fp}\cH$ denote the perverse cohomology functor and $f_{\ast}$ denote the total direct image functor.
\begin{Lthm}\label{thm:main}We have
\begin{enumerate}
    \item[(i)] (Relative Hard Lefschetz Theorem) the following cup product map is an isomorphism:
    \[\eta^{\ell}\colon {}^{\fp}\cH^{-\ell}(f_{\ast}K) \xrightarrow{\sim}  {}^{\fp}\cH^{\ell}(f_{\ast}K). \] \item [(ii)](Decomposition Theorem) There exists an isomorphism in $D^b_c(Y)$:
    \[ f_{\ast}K \cong \bigoplus_{\ell} {}^{\fp}\cH^{\ell}(f_{\ast}K)[-\ell], \]
    where $D^b_c(Y)$ denotes the derived category of constructible sheaves on $Y$.
    \item[(iii)] (Semisimplicity Theorem) For each $\ell$, ${}^{\fp}\cH^{\ell}(f_{\ast}K)$ is a semisimple perverse sheaf. More precisely, given any stratification for $f$ so that $Y=\amalg_{d=\dim S_d}S_{d}$, there is a canonical isomorphism in $\mathrm{Perv}(Y)$:
    \[ {}^{\fp}\cH^{\ell}(f_{\ast}K)\cong \bigoplus_{d=0}^{\dim Y} \mathrm{IC}_{\overline{S_d}}(L_{\ell,d}),\]
    where the local systems $L_{\ell,d}\colonequals \cH^{-d}({}^{\fp}\cH^{\ell}(f_{\ast}K))|_{S_d}$ on $S_d$ are semisimple. Here $\mathrm{Perv}(Y)$ denotes the category of perverse sheaves on $Y$ and $\mathrm{IC}_{\overline{S_d}}(L_{\ell,d})$ denotes the intersection complex associated to the local system $L_{\ell,d}$.
\end{enumerate}
\end{Lthm}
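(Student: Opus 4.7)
The plan is to adapt the geometric strategy of de Cataldo and Migliorini \cite{DM05}, replacing their Hodge-theoretic ingredients with the twistor-theoretic analogues established earlier in this paper. Parts (i)--(iii) are proved by a simultaneous induction on $\dim X$. Observe first that (ii) is a formal consequence of (i) via Deligne's splitting criterion: the relative hard Lefschetz isomorphism produces a primitive Lefschetz decomposition of $f_\ast K$ which splits $f_\ast K$ into the shifted direct sum of its perverse cohomology sheaves. Hence the substantive content lies in (i) and (iii).

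For the inductive step I would use a general Lefschetz pencil associated to $\eta$: after blowing up the base locus, this yields a morphism $g\colon \widetilde{X} \to \bP^1$ whose general fibre $\widetilde{X}_t$ is a smooth ample hyperplane section of $X$. The restriction $\cV|_{\widetilde{X}_t}$ remains semisimple, so by the inductive hypothesis the theorem holds for the induced map $\widetilde{X}_t \to Y$. The strategy is then to bootstrap these statements up to $f\colon X \to Y$ by relating the perverse cohomology sheaves of $f_\ast K$ to those of the hyperplane section via the weak Lefschetz and vanishing-cycle formalism of \cite{DM05}.

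The two principal analytic inputs are Theorem \ref{thm: generalized Hodge Riemann for single filtration} and Theorem \ref{global inv}. For (i), the question reduces, after passing to the cohomology of a small neighbourhood of the generic point of a stratum, to a cup-product statement on the cohomology of a smooth projective fibre with coefficients in a semisimple local system. The generalized Hodge-Riemann relations, combined with Simpson's Lefschetz decomposition, produce a Hermitian pairing which is positive definite on each primitive subspace; nondegeneracy of this pairing is equivalent to the hard Lefschetz isomorphism on perverse cohomology. For (iii), the local system $L_{\ell,d}$ on a stratum $S_d$ is, by proper base change, a subquotient of the cohomology of a smooth fibre with coefficients in $\cV$. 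Theorem \ref{global inv} identifies its monodromy-invariant subspace as the image of the restriction from the entire space $X$. Combined with the polarization provided by Theorem \ref{thm: generalized Hodge Riemann for single filtration}, this splits any sub-local system of $L_{\ell,d}$ off as an orthogonal direct summand, yielding the required semisimplicity.

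The main obstacle is the precise compatibility of the twistor-theoretic polarizations with the operations used in the Lefschetz pencil induction: restriction to a hyperplane section, cup product with the Lefschetz class $\eta$, and formation of perverse cohomology sheaves. In Hodge theory this compatibility is packaged inside the category of pure and mixed Hodge modules, and Sabbah's original approach encodes it inside polarizable twistor $D$-modules. Without recourse to either framework, I would argue directly, combining the semisimplicity of the restriction of $\cV$ to a generic hyperplane section with the weight yoga for mixed twistor structures stated above to track weight filtrations through the relevant spectral sequences. Once these compatibilities are in place, Theorem \ref{thm: generalized Hodge Riemann for single filtration} drives the relative hard Lefschetz (i) and Theorem \ref{global inv} drives the semisimplicity (iii), closing the induction and yielding (ii) via Deligne's criterion.
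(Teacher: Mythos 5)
Your proposal captures the right spirit---adapt de Cataldo--Migliorini, replacing Hodge theory by Simpson's twistor theory, with Theorems~\ref{thm: generalized Hodge Riemann for single filtration} and~\ref{global inv} as the two analytic inputs---but several of the specifics would not survive contact with the argument.

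First, the induction variable. You propose a simultaneous induction on $\dim X$, but the paper (following \cite{DM05}) runs a double induction on the pair $(r(f),\dim f(X))$, where $r(f)$ is the defect of semismallness. This is not a cosmetic difference: to prove the Relative Hard Lefschetz isomorphism for $\ell\geq 1$ the paper passes to the \emph{universal} hyperplane $\cX\subseteq X\times \bP^\vee$ and the map $g\colon\cX\to\cY=Y\times\bP^\vee$ (Proposition~\ref{prop:relative weak Lefschetz}). This satisfies $r(g)<r(f)$ but has $\dim\cX=\dim X+\dim\bP^\vee-1>\dim X$, so an induction on $\dim X$ cannot accommodate it. A single generic hyperplane section of $X$ avoids the dimension increase, but one then loses the fully-faithful functor $p_Y^\ast[\dim\bP^\vee]$ on which the perverse-sheaf-level comparison is based.

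Second, the Lefschetz pencil and the ``vanishing-cycle formalism'' do not appear in the paper. The universal hyperplane is a global object over the full dual projective space, not a pencil over $\bP^1$, and the paper deliberately avoids vanishing/nearby cycles (it is noted in the introduction that the method cannot be used to reprove the semisimplicity of nearby cycles). Invoking vanishing cycles would be a departure toward the BBD-style argument, not the de Cataldo--Migliorini one you set out to adapt.

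Third, and most substantively, you omit the intermediate Theorems~\ref{thm:Hard Lefschetz Perverse Complexes} and~\ref{thm:Hodge-Riemann}: the \emph{double} Lefschetz decomposition of $H^{-\ell-j}_{-\ell}(X,K)$ with respect to both $\eta$ and $L=f^{\ast}A$, and the Hodge--Riemann relations for the modified pairing $S^{\eta L}_{\ell j}$. These are not corollaries of Theorem~\ref{thm: generalized Hodge Riemann for single filtration}; they require a separate induction involving \emph{two} types of hyperplane cuts---in $\abs{\eta}$ on $X$ (which decreases $r(f)$) and in $\abs{A}$ on $Y$ (which preserves $r(f)$ but decreases $\dim f(X)$)---together with a delicate $\lim_{\epsilon\to 0}(L+\epsilon\eta)$ limiting argument to handle the piece $P^0_0$. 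The cut in $Y$ is entirely absent from your sketch, yet it is indispensable: the Hard Lefschetz for $A^j$ on $H^\bullet(Y,{}^{\fp}\cH^0(f_\ast K))$ cannot be reached by cutting $X$ alone.

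Fourth, the roles of Theorem~\ref{global inv} and of Simpson's semisimplicity for smooth projective maps (Corollary~\ref{thm:semisimplicity smooth projective map}) are swapped in your sketch. The global invariant cycle theorem is what makes the de Cataldo--Migliorini splitting criterion (Lemma~\ref{lem:DM splitting}, and the intersection-form nondegeneracy in Proposition~\ref{prop:splitting}) verifiable: it gives the surjectivity of restriction to the stalk over a zero-dimensional stratum, hence the injectivity of the cycle map by duality, hence the perverse sheaf ${}^{\fp}\cH^0(f_\ast K)$ splits into intersection complexes. The semisimplicity of the local systems $L_{\ell,d}$ is a \emph{separate} statement, proved in Proposition~\ref{prop:semisimplicity} by realizing $L_{\ell,d}$ as a quotient of a direct image under the smooth projective map $\pi\circ\Phi$ associated to the universal $d$-fold complete intersection family, and then invoking Simpson's theorem---not a polarization-orthogonality argument. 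Finally, you make no mention of the need to carry $\cV$ and its dual $\cV^\ast$ through the entire induction in parallel; this is essential because the twisted Poincar\'e pairing is between $H^\ast(X,\cV)$ and $H^\ast(X,\cV^\ast)$, and it is the motivation for the ``two companion vector spaces'' set-up in \S\ref{sec:weight filtrations}. Without that bookkeeping the duality step (e.g.\ promoting a monomorphism to an isomorphism via Corollary~\ref{corollary:duality of cup product map}) cannot even be stated.
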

Sabbah's original theorem holds for a proper map from $U\to Y$, where $U$ is a Zariski open subset of $X$ and $Y$ is a complex manifold. We will discuss in Theorem \ref{thm:Sabbah's theorem} how to obtain the Decomposition Theorem for a proper map $U\to Y$, where $Y$ is an algebraic variety. It is not clear to us whether or not our method can recover \cite[Main Theorem 2]{Sabbah}, which is about semisimplicity of nearby and vanishing cycles.

Theorem \ref{thm:main} will be proved by induction together with two more theorems. We use $H^b(X,K)$ to denote the hypercohomology of $K$. There is a perverse Leray filtration $H^b_{\leq \ell}(X,K)\subseteq H^b(X,K)$ induced by $f$ and we denote associated grade spaces by $H^b_{\ell}(X,K)\colonequals H^b_{\leq \ell}(X,K)/H^b_{\leq \ell-1}(X,K)$. Let $A$ be an ample line bundle on $Y$ and set $L\colonequals f^{\ast}A$. Assume the decomposition theorem holds, it follows from \cite[Lemma 4.4.2. and Remark 4.4.3]{DM05} that the cup products with $\eta$ and $L$ induce maps $\eta:H^b_{\ell}(X,K) \to H^{b+2}_{\ell+2}(X,K), \quad L:H^b_{\ell}(X,K) \to H^{b+2}_{\ell}(X,K)$. The next theorem generalizes \cite[Theorem 2.1.5]{DM05}.
\begin{Lthm}\label{thm:Hard Lefschetz Perverse Complexes}
Let $\ell,j \in \Z$. Then the following cup product maps are isomorphisms:
\begin{align*}
\eta^{\ell} \colon  H^{j-\ell}_{-\ell}(X,K) \xrightarrow{\sim} H^{j+\ell}_{\ell}(X,K), \quad L^{j} \colon H^{\ell-j}_{\ell}(X,K) \xrightarrow{\sim} H^{\ell+j}_{\ell}(X,K).
\end{align*}
\end{Lthm}
As a corollary, there is a double Lefschetz decomposition
\begin{equation*}
H^{-\ell-j}_{-\ell}(X,K)=\bigoplus_{i,m\in \mathbb{Z}} \eta^{-\ell+i}L^{-j+m}P^{j-2m}_{\ell-2i},\end{equation*}
where $P^{-j}_{-\ell}\colonequals 
\Ker \eta^{\ell+1} \cap \Ker L^{j+1} \subseteq H^{-\ell-j}_{-\ell}(X,K)$ for $\ell,j\geq 0$ and $P^{-j}_{-\ell}=0$ if $\ell<0$ or $j<0$.

Set $K^{\ast}=\cV^{\ast}[\dim X]$ and we denote by $\widetilde{P}^{-j}_{-\ell}\subseteq H^{-\ell-j}_{-\ell}(X,K^{\ast})$ the corresponding primitive pieces. For $\ell,j\geq 0$, there is a similar bilinear pairing as in \cite[Theorem 2.1.8]{DM05}.
\begin{align}
&S^{\eta L}_{\ell j}: H^{-\ell-j}_{-\ell}(X,K)\otimes_\C H^{-\ell-j}_{-\ell}(X,K^{\ast}) \to \C, \label{eqn:first time twisted Poincare pairing}\\
&[\alpha \otimes e]\otimes [\beta \otimes \lambda ]\mapsto C(n-\ell-j)\cdot \int_X  \lambda(e) \cdot c_1(\eta)^{\ell} \wedge c_1(L)^{j} \wedge \alpha \wedge \beta \nonumber.
\end{align}
Here $C(k)$ denotes the constant $(-1)^{k(k-1)/2}$.  In the next theorem, we prove that the pairing $S^{\eta L}_{\ell j}$ and the pre-Weil operator $\phi$ can be used to polarize the pure twistor structures on primitive pieces.
\begin{Lthm}
\label{thm:Hodge-Riemann} The pairing $S^{\eta L}_{\ell j}$ is well-defined and non-degenerate. The double Lefschetz decompositions of $H^{-\ell-j}_{-\ell}(X,K)$ and $H^{-\ell-j}_{-\ell}(X,K^{\ast})$ are orthogonal with respect to $S^{\eta L}_{\ell j}$. Furthermore, each direct summand $\eta^{-\ell+i}L^{-j+m}P^{j-2m}_{\ell-2i}$ underlies a natural pure twistor structure $F$ induced by the pure twistor structre on $H^{n-\ell-j}(X,\cV)$. The pre-Weil operator $\phi$ induces an isomorphism
\[ \phi:\overline{F|_{z=-1}} \xrightarrow{\sim} \eta^{-\ell+i}L^{-j+m}\widetilde{P}^{j-2m}_{\ell-2i},\]
so that $F$ is polarized by the bilinear pairing $S^{\eta L}_{\ell j}(\bullet,\phi(\bullet))$, up to multiplication by certain power of $\sqrt{-1}$ (see Remark \ref{remark: signs for twisted Poincare pairing}).
\end{Lthm}

To conclude the introduction, let us give a very brief historical discussion of the Decomposition Theorems (see \cite{Sabbah} for a more detailed account).
The Decomposition Theorem of Beilinson-Bernstein-Deligne-Gabber \cite{BBD}, says that Theorem \ref{thm:main} holds for the constant local system $\mathbb{C}$ and any proper map between algebraic varieties. Inspired by the Theorem of BBDG and M. Saito's Decomposition Theorem for Hodge modules \cite{Saito88}, Kashiwara \cite{Kashiwara96} conjectured that the Decomposition Theorem should hold for arbitrary semisimple holonomic $\sD$-modules. 

Theorem \ref{thm:main}, originally due to Sabbah \cite{Sabbah}, establishes Kashiwara's conjecture for semisimple local systems. Sabbah's proof drew on his deep theory of polarizable twistor $\sD$-modules. The main idea is that semisimple local systems on smooth projective varieties underlie polarizable twistor $\sD$-modules, where Sabbah proved a Decomposition Theorem in this category by extending Saito's method in \cite{Saito88}. This allows him to translate the results back to get the Decomposition Theorem for semisimple local systems.  Building on Sabbah's work, T. Mochizuki established the full Kashiwara's conjecture using his seminal work of harmonic bundles on quasi-projective varieties \cite{Mochizuki1,Mochizuki2,Mochizuki3}. Meanwhile, Kashiwara's conjecture for semisimple perverse sheaves was also proved by Drinfeld \cite{Drinfeld}, Gaitsgory \cite{Gaitsgory}, B\"{o}ckle-Khare \cite{BK} using arithmetic methods.

There are other approaches to Decomposition Theorems. Recently, Bhatt-Lurie \cite{BL} construct a $p$-adic Riemann-Hilbert functor from certain constructible sheaves to filtered $\sD$-modules, and then deduce the Decomposition Theorem for filtered $\sD$-modules of geometric origin from BBDG. Budur-Wang \cite{BW} used absolute sets to deduce the Decomposition Theorem for rank one perverse sheaves from the Theorem of BBDG. El Zein-L\^{e}-Ye \cite{ELY} gave another proof for the case of intermediate extensions of polarizable variations of Hodge structures using a local purity theorem.

Finally, let us highlight a couple of differences from the ideas of Sabbah and those of de Cataldo-Migliorini. In addition, we also want to mention how our results differ. 

\begin{remark}\label{remark: extra difficulity}
\begin{enumerate}

\item [(I)] Comparing with Sabbah's proof, instead of putting polarizable twistor $\sD$-modules structures on semisimple local systems, we simply put polarizable twistor structures on their cohomology groups with an extra pre-Weil operator $\phi$. It turns out that to run the de Cataldo-Migliorini method, one only needs to check the compatibility of the pre-Weil operator $\phi$ with perverse cohomology functors and adjunction morphisms. This allows us to run inductive arguments in the category of constructible complexes, which is technically simpler. Furthermore, this topological method reduces the Decomposition Theorem for arbitrary maps to the constant map (i.e. Simpson's Hard Lefschetz theorem and Theorem \ref{thm: generalized Weil operator}), with the help of Simpson's results for smooth projective maps and Theorem \ref{thm: global invariant cycle}. It is also easier than Zucker's theorem for tame harmonic bundles \cite[\S 5]{Sabbah}.

\item [(II)] 
Comparing with de Cataldo and Migliorini's method, the main difficulty we encounter is that there is no off-the-shelf Hodge theory. The idea is to use Simpson's theory of mixed twistor structures as a replacement. However, Simpson's result is not enough, especially there is no obvious relation between his polarization and the twisted Poincar\'e pairing, which is one of the main discovery of this paper. On the other hand, when the local system $\cV$ is $\C$, our argument gives a Hodge-decomposition-free proof of de Cataldo-Migliorini's results. In other words, we realize that it is \emph{polarization}, not Hodge decomposition, that is essential for the Decomposition Theorem. For example, the proof of Theorem \ref{thm:Hard Lefschetz Perverse Complexes} is different from that of \cite[Proposition 5.2.3]{DM05}. The proof of Theorem \ref{thm:Hodge-Riemann} is also different, where we need a new Corollary \ref{non-degeneracy of pairing on Lambda0}. 
\item [(III)] One key property used in the proof of de Cataldo-Migliorini is that the constant local system $\C$ is self-dual, while we need to run the inductive arguments for the local system $\cV$ and its dual $\cV^{\ast}$ together. This is the main reason for introducing notions of weight filtrations for two companion vector spaces in \S \ref{sec:weight filtrations}. Moreover, quite some efforts are needed to check that the pre-Weil operator $\phi$ is functorial and compatible with various geometric operations (see Lemma \ref{lemma:Identification map is functorial} and Corollary \ref{cor:restriction of perverse filtration}) and show non-degeneracy is preserved under restrictions. This enables us to prove non-degeneracy of certain adjunction morphisms using the positivity of polarizations, which is in turn needed to split the perverse cohomologies (see Lemma \ref{prop:splitting} and Proposition \ref{prop:splitting of the 0-th perverse cohomology}).

\item [(IV)] To apply the splitting criterion for the proof of Theorem \ref{thm:main}(iii) in the case of $\ell=0$, one needs to relate adjunction morphisms with Poincare pairings. This is probably  well-known to experts, we give a detailed discussion in \S \ref{splitting criterion} for the lack of appropriate references.
\end{enumerate}
\end{remark}

\subsubsection*{Structure of the paper}

This paper consists of two parts. In the first part \S \ref{sec:cohomology of semisimple local systems}, we study the cohomology of semisimple local systems. In the second part \S \ref{sec: proof of main results}, we give a new proof of Sabbah's Decomposition Theorem.

In \S \ref{sec:Cohomology of smooth projective varieties}, we introduce an equivalent definition of polarization on pure twistor structures.  In \S \ref{sec: cohomology of arbitrary}, we discuss Simpson's theory of weights and prove Theorem \ref{thm: global invariant cycle}. In \S \ref{sec: pre-Weil operator}, we construct the generalized Weil operator and prove Theorem \ref{thm: generalized Weil operator}. In \S \ref{sec:perverse filtrations}, we show that perverse filtrations underlie natural pure twistor structures. In \S \ref{sec:weight filtrations}, we set up various definitions concerning weight filtrations for nilpotent operators and polarizations on the associated graded spaces on two vector spaces related by a non-degenerate pairing.

In \S \ref{sec: proof of main results}, building on results in previous sections, we give a new proof of Sabbah's Decomposition Theorem.  We collect and prove some topological results about constructible complexes in \S \ref{cup product with a line bundle} - \S \ref{splitting criterion}. Then we give the proof in the rest of subsections.

In Appendix \ref{sec:Hodge star operators}, we sketch the construction of Hodge star operators for differential forms with coefficients in harmonic bundles.

\subsubsection*{Acknowledgement} This paper owes a tremendous debt to the work of Simpson on his non-abelian Hodge theory, and the work of de Cataldo and Migliorini on the topology of algebraic maps.

We would like to thank many people whose comments have led to improvements in this paper. We thank Christian Schnell for several long discussion on polarizations. We thank Takuro Mochizuki, Claude Sabbah and Carlos Simpson for several communications and comments on an earlier version of the paper. We thank Mark de Cataldo for the discussion on Theorem \ref{thm:Hodge-Riemann} and valuable suggestions on the organization of the paper. A special thank goes to Shizhang Li, who taught us about the relation between Verdier duality and Poincar\'e duality. Last, we thank Nathan Chen, Tong Chen, Bruno Klingler, Rob Lazarsfeld and Junliang Shen for reading a previous version of the manuscript and their suggestions for improving the exposition.

\section{Cohomology of semisimple local systems}\label{sec:cohomology of semisimple local systems}

In this section, we study the cohomology of semisimple local systems using polarizations and weights, and prove Theorem \ref{thm: generalized Weil operator} and Theorem \ref{thm: global invariant cycle}. The main set-up of this section is
\begin{setup}\label{setup: cohomology of smooth projective}
Let $X$ be a smooth projective variety of dimension $n$ and $\cV$ be a semisimple local system on $X$ with a flat connection $\nabla$. Let $\eta$ be an ample line bundle on $X$. Denote $\cV^{\ast}$ to be the dual local system and $H\colonequals \cV\otimes_\C \cC^{\infty}_X$ to be the associated $\cC^{\infty}$ flat bundle. Let $\cA^{k}_X$ denote the sheaf of $\cC^{\infty}$ $k$-forms on $X$, similarly for $\cA^{p,q}_X$, and let $(\cA^{\bullet}_X(H),\nabla)$ denote the de Rham complex associated to $H$ and $\nabla$
\[ (\cA^{\bullet}_X(H),\nabla)\colonequals [H \xrightarrow{\nabla} \cA^{1}_X\otimes_{\cC^{\infty}_X}H \to \cdots \to \cA^{2\dim X}_X\otimes_{\cC^{\infty}_X}H],\]
with cohomology degree $0$ at the term $H$.  
\end{setup}

\begin{notation}\label{notation: omega}
Denote $\Omega_0$ and $\Omega_{\infty}$ to be the standard $\bA^1$-neighborhoods for $0$ and $\infty$ in $\bP^1$ and set $\Gm\colonequals \Omega_0\cap \Omega_{\infty}$.
\end{notation}

\subsection{Classical results}  
\begin{thm}[Hodge-Simpson]\label{thm:Hodge-Simpson} We have the following results.

\begin{itemize} 

\item (Hard Lefschetz Theorem) For each $j\geq 1$, the cup product map with $\eta$ is an isomorphism
\[ \eta^j \colon H^{n-j}(X,\cV) \xrightarrow{\sim} H^{n+j}(X,\cV).\]

\item (Lefschetz decomposition) For each $k\geq 1$, $H^{k}(X,\cV)$ underlies a natural pure twistor structure of weight $k$. Moreover, for $k\leq n$, there is a direct sum decomposition
\begin{align*}
H^{k}(X,\cV) \cong \bigoplus_{m\geq 0} \eta^mH^{k-2m}(X,\cV)_{\prim},
\end{align*}
where $H^k(X,\cV)_{\prim}\colonequals \Ker \eta^{\dim X-k+1}\subseteq H^{k}(X,\cV)$ and each primitive space $\eta^mH^{k-2m}(X,\cV)_{\prim}$ underlies a natural pure sub-twistor structure.
\end{itemize}
\end{thm}

\begin{proof}
The first statement is \cite[Lemma 2.6]{Simpson92}. The second statement comes from \cite[Theorem 4.1]{Simpson97} (see also \cite[Theorem 2.2.4]{Sabbah}). 
\end{proof}

\begin{thm}[Semisimplicity Theorem for smooth projective maps]\label{thm:Simpson Semisimplicity} Let $f: X \to Y$ be a projective morphism between smooth projective varieties. Let $\cV$ be a semisimple local system on $X$. Let $Y_0\subseteq Y$ be the open subset over which $f$ is smooth and let $X_0=f^{-1}(Y_0)$. Then $R^qf_{\ast}(\cV|_{X_0})$ is a semisimple local system on $Y_0$ for every $q \geq 0$.
\end{thm}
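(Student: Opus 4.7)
The plan is to produce a harmonic metric on the local system $R^qf_{\ast}(\cV|_{X_0})$ over $Y_0$ using Simpson's nonabelian Hodge theory, and then to deduce semisimplicity from the Corlette--Simpson correspondence for smooth quasi-projective varieties.

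By Corlette's theorem, $\cV$ admits a harmonic metric $h$ on $X$. For each $y\in Y_0$, the fiber $X_y$ is smooth projective, and the Hodge--Simpson Theorem~\ref{thm:Hard Lefschetz for local systems} applied to $(X_y,\cV|_{X_y})$ equips $H^q(X_y,\cV|_{X_y})$ with a polarized pure twistor structure of weight $q$. The main task is to show that these fiberwise structures assemble into a variation of pure polarized twistor structure on $Y_0$, equivalently a tame harmonic bundle whose underlying local system is $R^qf_{\ast}(\cV|_{X_0})$. Concretely, the fiberwise Hodge star operator attached to $(\cV|_{X_y},h|_{X_y})$ (Appendix~\ref{sec:Hodge star operators}) identifies $H^q(X_y,\cV|_{X_y})$ with the space $\cH^q_y$ of $\cV|_{X_y}$-valued harmonic $q$-forms on $X_y$. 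These spaces form a $\cC^\infty$ subbundle of $f_{\ast}\cA^q(X_0,\cV)$, on which one puts a Hermitian metric by fiberwise $L^2$ pairing against $h$ and a fixed K\"ahler form on $X$, together with the flat Gauss--Manin connection. The harmonic bundle equations on $Y_0$ then follow from the fiberwise K\"ahler identities for harmonic bundles, varying smoothly in $y$. Because $f$ extends to a projective morphism $X\to Y$ with $X$ smooth projective and $\cV$ algebraic, the resulting harmonic bundle is tame and pure imaginary at infinity.

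The quasi-projective Corlette--Simpson correspondence (Jost--Zuo, Mochizuki, Sabbah) then asserts that any local system on a smooth quasi-projective variety admitting a tame pure imaginary harmonic metric is semisimple. Applied to $Y_0$, this yields the semisimplicity of $R^qf_{\ast}(\cV|_{X_0})$.

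The main obstacle is the gluing step: the fiberwise polarized pure twistor structures of weight $q$ on $H^q(X_y,\cV|_{X_y})$ must be shown to depend smoothly on $y$ and to satisfy a Griffiths-type horizontality condition (equivalently, the harmonic bundle equations) globally on $Y_0$. This is not automatic from the fiberwise construction; establishing it is the technical heart of Simpson's theorem on higher direct images in nonabelian Hodge theory, and requires a careful analysis of the Gauss--Manin connection on fiberwise harmonic forms together with the variation of the fiberwise Laplacians.
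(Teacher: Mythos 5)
The paper does not prove this theorem at all: it is imported verbatim by citation to Simpson's work, namely \cite[Corollary 4.5]{Simpson93}. So there is no argument in the paper for you to match against, and your proposal is an attempt to reconstruct the proof of an external result that the authors treat as a black box.

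That said, your reconstruction has a genuine and substantial gap, and you are candid about it: the entire mathematical content of the theorem is the assertion that the fiberwise polarized pure twistor (equivalently, harmonic) structures on $H^q(X_y,\cV|_{X_y})$ fit together into a harmonic bundle on $Y_0$. You name this as ``the main obstacle'' and ``the technical heart'' but do not carry it out, so what remains is a statement of strategy rather than a proof. Concretely, two things are missing. First, you must show that the $\cC^\infty$ bundle $\bigcup_y \mathrm{Harm}^q(X_y,\cV|_{X_y})$ with the $L^2$ metric, the Gauss--Manin connection, and the operator obtained by projecting the fiberwise Higgs field, actually satisfies the harmonic bundle (pluriharmonicity) equations over $Y_0$; this requires controlling how the fiberwise Laplacian and Green's operator vary in $y$, and it is not a formal consequence of the K\"ahler identities on each fiber. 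Second, your reduction runs through the quasi-projective Corlette--Simpson correspondence (tame, pure imaginary harmonic bundles $\leftrightarrow$ semisimple local systems, due to Jost--Zuo, Sabbah, Mochizuki), and the tameness and pure-imaginary behaviour at $Y\setminus Y_0$ of the harmonic bundle you constructed is asserted without argument; ``$f$ extends to a projective morphism with $X$ smooth projective'' does not by itself imply tameness of the $L^2$ metric near the discriminant. Note also that this route is anachronistic relative to the cited source: Simpson's 1993 argument predates the quasi-projective correspondence and proceeds differently. If you want to complete a proof along your lines, the gluing step and the boundary behaviour are exactly the points that must be established, not merely flagged.
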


\begin{proof}
This is proved by Simpson in \cite[Corollary 4.5]{Simpson93}.
\end{proof}

\begin{corollary}\label{thm:semisimplicity smooth projective map}
Let $T\subseteq Y_0$ be a smooth subvariety and let $X_T=f^{-1}(Y_T)\subseteq X_0$. Then $R^qf_{\ast}(\cV|_{X_T})$ is a semisimple local system on $T$ for every $q \geq 0$.
\end{corollary}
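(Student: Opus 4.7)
The plan is to reduce via smooth base change to a statement about restriction of a semisimple local system to a smooth subvariety, and then appeal to the theory of harmonic bundles. Consider the Cartesian square
\[ \begin{tikzcd} X_T \ar[r] \ar[d, "{f|_{X_T}}"'] & X_0 \ar[d, "{f|_{X_0}}"] \\ T \ar[r, "\iota", hook] & Y_0 \end{tikzcd} \]
whose right-hand vertical morphism $f|_{X_0}$ is smooth and projective. Smooth base change applied to this proper morphism yields the canonical isomorphism
\[ R^q(f|_{X_T})_*(\cV|_{X_T}) \cong \iota^{-1} R^q(f|_{X_0})_*(\cV|_{X_0}). \]
By Theorem~\ref{thm:Simpson Semisimplicity}, the right-hand side is the restriction to the smooth subvariety $T \subseteq Y_0$ of a semisimple local system on $Y_0$.

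It then remains to show that this restriction is itself semisimple on $T$. My approach is to invoke the Corlette--Simpson--Mochizuki correspondence: a semisimple local system on a smooth quasi-projective variety corresponds to a tame pure imaginary harmonic bundle, and the restriction of such a harmonic bundle to a smooth subvariety remains tame, pure imaginary, and harmonic, so the restricted local system is again semisimple. Equivalently, one can note that $R^q (f|_{X_0})_*(\cV|_{X_0})$ underlies a variation of polarized pure twistor structures on $Y_0$ (a family version of Theorem~\ref{thm:Hard Lefschetz for local systems}), and the restriction of such a variation to a smooth subvariety again produces a variation of polarized pure twistor structures, which is in particular semisimple.

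The main obstacle is that this step uses non-abelian Hodge theoretic input (tame harmonic bundles, or equivalently variations of polarized pure twistor structures) that lies somewhat outside the self-contained framework developed earlier in the paper. An alternative route staying closer to Theorem~\ref{thm:Simpson Semisimplicity} is to construct a smooth projective compactification of $f|_{X_T}$: choose a smooth projective resolution $\widetilde{T} \to \overline{T}$ of the closure of $T$ in $Y$ which is an isomorphism over $T$, and then a smooth projective resolution $\widetilde{X}$ of $X \times_Y \widetilde{T}$ which is an isomorphism over $X_T$. This produces a projective morphism $\widetilde{f} \colon \widetilde{X} \to \widetilde{T}$ between smooth projective varieties, with $T$ Zariski dense in $\widetilde{T}$ and $X_T$ Zariski dense in $\widetilde{X}$. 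Applying Theorem~\ref{thm:Simpson Semisimplicity} to $\widetilde{f}$ together with Remark~\ref{remark:semisimplicity via Zariski dense open subset} would finish the argument, provided one knows that the pullback of $\cV$ along $\widetilde{X} \to X$ is semisimple on $\widetilde{X}$; this relocates the hard step to showing semisimplicity of $\cV$ restricted to the locally closed subvariety $X_T \subseteq X$, which is essentially the same obstacle encountered in the first approach.
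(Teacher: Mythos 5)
Your alternative route in the final paragraph is essentially the paper's proof, but you stop short of closing it because you overlook a result the paper has already established. The step you flag as problematic---semisimplicity of the pullback of $\cV$ along $\widetilde{X} \to X$---is precisely Corollary~\ref{cor:pull back preserve semisimplicity}, proved earlier in the paper: for a morphism $Z \to X$ from a smooth quasi-projective variety to a smooth projective variety carrying a semisimple local system, the pullback is again semisimple. The proof there observes that the pullback of a harmonic metric along a morphism of compact K\"ahler manifolds is harmonic, so the Corlette--Simpson characterization (Theorem~\ref{thm:Corlette-Simpson}) gives semisimplicity of the pullback to the projective compactification, and Remark~\ref{remark:semisimplicity via Zariski dense open subset} finishes. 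You also conflate this with semisimplicity of $\cV|_{X_T}$ for the locally closed $X_T \subseteq X$; that is a different (and also easy) instance of the same corollary, and neither version is the open problem you suggest. With Corollary~\ref{cor:pull back preserve semisimplicity} in hand, your diagram $\widetilde{X}\to\widetilde{T}$ together with Theorem~\ref{thm:Simpson Semisimplicity} and Remark~\ref{remark:semisimplicity via Zariski dense open subset} is exactly the paper's argument.

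Your first approach, via proper base change $R^q(f|_{X_T})_*(\cV|_{X_T}) \cong \iota^{-1} R^q(f|_{X_0})_*(\cV|_{X_0})$ followed by restriction of a semisimple local system from $Y_0$ to $T$, is correct but genuinely requires more than what the paper develops: since $R^q(f|_{X_0})_*(\cV|_{X_0})$ lives on the smooth quasi-projective variety $Y_0$ and is not given a priori as the restriction of a semisimple local system from a smooth projective compactification, Corollary~\ref{cor:pull back preserve semisimplicity} does not apply, and you must invoke Mochizuki's theory of tame harmonic bundles on quasi-projective varieties to know that restriction preserves semisimplicity. One of the explicit goals of this paper is to avoid that machinery, which is why the compactification route is the one taken.
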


\begin{proof}
Eliminating the indeterminacies of maps, we can find the following commutative diagram:
\[ \begin{CD}
X_T @>>> X_{\overline{T}} @>h>> X\\
@VVfV @VVf_{\overline{T}}V @VVfV\\
T @>>> \overline{T} @>>> Y,
\end{CD}\]
where $\overline{T}$ is a smooth projective variety containing $T$ as a Zariski open subset and $X_{\overline{T}}$ is smooth projective. Using the metric charaterization of semisimple local systems on smooth projective varieties (cf. Corollary \ref{cor:pull back preserve semisimplicity}), we know the local system $h^{\ast}\cV$ is semisimple. Then by Theorem \ref{thm:Simpson Semisimplicity} and Remark \ref{remark:semisimplicity via Zariski dense open subset}, it follows that
\[ R^qf_{\ast}(\cV|_{X_T})=R^qf_{\ast}((h^{\ast}\cV)|_{X_T})\]
is semisimple on $T$.
\end{proof}

\begin{remark}\label{remark:semisimplicity via Zariski dense open subset}
Let $Y$ be a smooth variety and let $\cV$ be a local system on $Y$. Let $Y'\subseteq Y$ be a Zariski dense open subset, then $\cV$ is semisimple if and only if $\cV|_{Y'}$ is semisimple. This follows from the surjectivity of the natural map $\pi_{1}(Y') \twoheadrightarrow \pi_{1}(Y)$. (For example, see \cite[X, Th\'eor\`eme 2.3]{God71}, and notice that $Y\setminus Y'$ has real codimension at least $2$.)
\end{remark}

\subsection{Cohomology of smooth projective varieties and pure twistor structures}\label{sec:Cohomology of smooth projective varieties}

In this subsection, we review Simpson's notion of pure twistor structures following \cite{Simpson97}. For our purposes, we introduce an equivalent definition of polarization. This allows us to relate the polarization with the twisted Poincar\'e pairing, via the pre-Weil operator $\phi$ constructed in \S \ref{sec: pre-Weil operator}.

\begin{definition}\label{definition:pure twistor structures} 
A \emph{twistor structure} is a holomorphic vector bundle $E$ on $\bP^1$.  Morphisms of twistor structures are morphisms between holomorphic vector bundle over $\bP^1$. We say a complex vector space $V$ \emph{underlies} a twistor structure $E$ if $V\cong E|_{z=1}$ (the fiber over the point $1\in \bP^1$).  We also say that $E$ \emph{is a twistor structure on} $V$. A twistor structure $E$ is \emph{pure} of weight $w$ if $E$ is a direct sum of copies of $\cO_{\bP^1}(w)$.  \end{definition}

\begin{remark}\label{remark:Category of twistor structures}
It is clear that the category of pure twistor structures with a fixed weight is equivalent to the category of complex vector spaces.
\end{remark}

For the study of polarizations, we define a functorial Identification map associated to any pure twistor structure.

\begin{definition}\label{definition:Identification map} Let $E$ be a pure twistor structure. The \emph{Identification map} $\mathrm{Iden}: E|_{z=1} \to E|_{z=-1}$ is defined as follows. If the weight of $E$ is $0$, define  
\[ \Iden: E|_{z=1} \xrightarrow{(\mathrm{ev}_{z=1})^{-1}}  H^0(\bP^1,E)\xrightarrow{\mathrm{ev}_{z=-1}} E|_{z=-1},\]
where $\mathrm{ev}_{z=z_0}$ is the isomorphic evaluation map for global sections.

If the weight of $E$ is $w\neq 0$, we do a Tate twist to weight zero: choose $\mu \in H^0(\bP^1,\cO_{\bP^1}(1))$ to be the unique section up to scaling so that $\mu|_{\Omega_0}$ is nowhere zero. The evaluation of $\mu^{\otimes w}$ at $z=z_0\in \Omega_0$ identifies $\cO_{\bP^1}(w)|_{z=z_0}$ with $\C$.    We use $\mathrm{ev}^{w}_{z=z_0}$ to denote the composition of following maps:
\[ \mathrm{ev}^{w}_{z=z_0}: H^0(\bP^1,E(-w)) \xrightarrow{\mathrm{ev}^{w}_{z=z_0}} E(-w)|_{z=z_0} \xrightarrow{\mu^{\otimes w}|_{z=z_0}} E|_{z=z_0}. \]
Then we define
\[ \mathrm{Iden}:E|_{z=1}\xrightarrow{(\mathrm{ev}^{w}_{z=1})^{-1} } H^0(\bP^1,E(-w)) \xrightarrow{\mathrm{ev}^{w}_{z=-1}} E|_{z=-1}.\]
It is direct to see that it does not depend on the choice of $\mu$.

\end{definition}
\begin{remark}
In fact, one can actually define Identification map between any two fibers of $E$. Here $1$ and $-1$ are special because they are related to Weil operators in Hodge theory (see Corollary \ref{cor:generalized Weil operator}).
\end{remark}
\begin{remark}
It is direct to see that the Identification map is functorial with respect to pure sub-twistor structures.
\end{remark}

Now let us turn to polarizations. Let $E$ be a pure twistor structure of weight $w$. Let $\sigma$ denote the antipodal involution of $\bP^1$, where $\sigma(z) = -1/\overline{z}$. As in \cite[\S 2]{Simpson97}, $\sigma$ induces another pure twistor structure $\sigma^{\ast}(E)$ with an induced isomorphism:
\[\sigma: \overline{H^0(\bP^1,E)} \xrightarrow{\sim} H^0(\bP^1,\sigma^{\ast}E).\]

\begin{definition}[Simpson's Polarization]\label{definition:Simpson's polarization}
Consider a morphism of pure twistor structures
\[ P:E \otimes_{\cO_{\bP^1}} \sigma^{\ast}E \to \cO_{\bP^1}(2w),\]
which is equivalent to $P(-2w):E(-w) \otimes_{\cO_{\bP^1}} \sigma^{\ast}(E(-w)) \to \cO_{\bP^1}$. We say $P$ is a \emph{polarization} on $E$ if the induced morphism on global sections 
\begin{align*}
&H^0(\bP^1,E(-w))\otimes_\C \overline{H^0(\bP^1,E(-w))}
\xrightarrow{\mathrm{Id}\otimes \sigma} H^0(\bP^1,E(-w))\otimes_{\C} H^0(\bP^1,\sigma^{\ast}(E(-w))) \to \C
\end{align*}
is a positive definite Hermitian pairing. 
\end{definition}
In fact, any pure twistor structure is polarizable. The polarization on $H^k(X,\cV)$ we use in this paper is the canonical one that comes from twisted Poincar\'e pairings. For this purpose, we would like to have the following lemma.

\begin{lemma}\label{lemma:characterization of polarization via bilinear pairings}
The pure twistor structure $E$ is polarized by a morphism $P$ if and only if the bilinear pairing $S\colonequals P|_{z=1}$ as
\[ S:E|_{z=1} \otimes_{\C} \overline{E|_{z=-1}} \to \C,\]
induces a positive definite Hermitian pairing
\[ S(\bullet,\overline{\mathrm{Iden}}(\bullet)): E|_{z=1}\otimes_{\C} \overline{E|_{z=1}} \to \C.\]
Here $\mathrm{Iden}$ comes from Definition \ref{definition:Identification map}.

\end{lemma}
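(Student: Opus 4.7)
The plan is to translate between Simpson's global-section formulation of polarization and the fiber-level condition by means of the canonical trivializations of Definition \ref{definition:canonical trivialization}. Set $V \colonequals H^0(\bP^1, E(-w))$. The trivialization $\psi_0$ over $\Omega_0 \supset \{1,-1\}$ provides two canonical isomorphisms $\psi_0|_{z=\pm 1} \colon V \xrightarrow{\sim} E|_{z=\pm 1}$. By Remark \ref{remark:calculate Identification map via other trivialization}, the Identification map is $\Iden = \psi_0|_{z=-1} \circ (\psi_0|_{z=1})^{-1}$, so under these trivializations $\Iden$ becomes the identity on $V$ and $\overline{\Iden}$ becomes the identity on $\overline{V}$. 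The whole argument reduces to showing that, under these identifications, the bilinear pairing $S(\bullet,\overline{\Iden}(\bullet))$ on $E|_{z=1}\otimes \overline{E|_{z=1}}$ coincides with the Hermitian form $H$ on $V\otimes \overline{V}$ induced by $P$.

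For the forward direction, I would start from a polarization $P$. The twist $P(-2w) \colon E(-w) \otimes_{\cO_{\bP^1}} \sigma^{\ast}E(-w) \to \cO_{\bP^1}$ is a morphism of trivial bundles of weight $0$, hence completely determined by the induced map on global sections, which after applying the $\sigma$-isomorphism from Remark \ref{remark:sigma on global sections} is a pairing $H \colon V \otimes \overline{V} \to \C$; by Definition \ref{definition:Simpson's polarization}, $H$ is positive definite Hermitian. The next step is to identify $S = P|_{z=1}$ with $H$ through the trivializations: since $P(-2w)$ is a map of trivial bundles, its fiber at $z=1$ is again $H$, and twisting by $\cO_{\bP^1}(2w)$ introduces on the left the factors $\mu^{\otimes w}|_{z=1}$ (entering $\psi_0|_{z=1}$) and $\sigma^{\ast}\mu^{\otimes w}|_{z=1} = \overline{\mu^{\otimes w}|_{z=-1}}$ (entering the trivialization of $\sigma^{\ast}E$ at $z=1$, which also yields the identification $(\sigma^{\ast}E)|_{z=1} \cong \overline{E|_{z=-1}}$), while introducing on the right the factor $\mu^{\otimes 2w}|_{z=1}$; these cancel, leaving the equality $S(\psi_0|_{z=1}(v),\, \overline{\psi_0|_{z=-1}(w)}) = H(v, \overline{w})$. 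Combined with the identification of $\overline{\Iden}$ as the identity on $\overline{V}$, this yields $S(\bullet, \overline{\Iden}(\bullet)) = H$ after pullback, so one pairing is positive definite Hermitian if and only if the other is.

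The reverse direction runs the same construction backwards. Given $S$ with the stated positivity, define $H \colon V \otimes \overline{V} \to \C$ by pullback along $\psi_0|_{z=1} \otimes \overline{\psi_0|_{z=-1}}$; by hypothesis and the identification of $\Iden$, $H$ is positive definite Hermitian. Because $E(-w)$ and $\sigma^{\ast}E(-w)$ are trivial bundles, $H$ extends uniquely to a morphism $E(-w) \otimes \sigma^{\ast}E(-w) \to \cO_{\bP^1}$, and twisting by $\cO_{\bP^1}(2w)$ produces a morphism $P \colon E \otimes \sigma^{\ast}E \to \cO_{\bP^1}(2w)$; by construction its fiber at $z=1$ is $S$ and its induced pairing on global sections is $H$, hence $P$ is a polarization in the sense of Definition \ref{definition:Simpson's polarization}.

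The main delicate point will be the bookkeeping of conjugations: one must verify carefully that the canonical identification $(\sigma^{\ast}E)|_{z=1} \cong \overline{E|_{z=-1}}$ is compatible, up to the scalar trivializing $\cO_{\bP^1}(2w)|_{z=1}$, with the complex conjugate of the trivialization $\psi_0|_{z=-1}$, and that $\sigma^{\ast}\mu^{\otimes w}$ restricted to $z=1$ matches $\overline{\mu^{\otimes w}|_{z=-1}}$. Once these compatibilities are in place, the equivalence is a formal consequence of the fact that morphisms between trivial bundles on $\bP^1$ are determined by, and recovered from, their action on any single fiber or on global sections.
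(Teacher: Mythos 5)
Your proposal is correct and takes essentially the same approach as the paper: both translate between Simpson's global-section formulation of polarization and the fiber-level condition via the canonical trivialization/evaluation maps, under which the Identification map $\Iden$ becomes the identity on $V = H^0(\bP^1, E(-w))$, and both use the observation that $\overline{\mathrm{ev}_{z=-1}}$ factors through the $\sigma$-isomorphism on global sections. The one organizational difference is that you handle arbitrary weight $w$ directly, whereas the paper first treats the weight-$0$ case cleanly (where the conjugation bookkeeping $\sigma^{\ast}\mu^{\otimes w}|_{z=1} = \overline{\mu^{\otimes w}|_{z=-1}}$ that you flag as delicate does not arise, since the bundles involved are trivial) and then reduces general weight to weight $0$ via the twist by $\mu^{\otimes w}$ through a commutative diagram of fibers — a slightly cleaner route that sidesteps precisely the scalar-cancellation verification you describe but leave unfinished.
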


\begin{proof}
The proof is straightforward via the definition of Identification map. For details, see \cite[Lemma 2.3.20]{Yang21}.
\end{proof}

As a result, we can use bilinear pairings to polarize pure twistor structures.
\begin{definition}[Polarization by a bilinear pairing]\label{definition:pure twistor structures polarized by bilinear pairings}
Given a bilinear pairing $S: E|_{z=1}\otimes_{\C} \overline{E|_{z=-1}} \to \C$, we say $E$ \emph{is polarized by} $S$, if the pairing
\[ S(\bullet,\overline{\mathrm{Iden}}(\bullet)): E|_{z=1}\otimes_{\C} \overline{E|_{z=1}} \to E|_{z=1}\otimes_{\C} \overline{E|_{z=-1}}  \to \C\]
is positive definite and Hermitian, where $\mathrm{Iden}$ is the Identification map in Definition \ref{definition:Identification map}.
\end{definition}
The following two statements are direct to check. 
\begin{lemma}\label{lemma: polarization implies non-degeneracy}
If $S$ is a bilinear pairing that polarizes $E$, then $S$ is non-degenerate.
\end{lemma}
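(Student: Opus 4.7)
The plan is to reduce the statement to the fact that positive definite Hermitian forms are automatically non-degenerate, together with the observation that the Identification map is an isomorphism.

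First I would unpack Definition \ref{definition:Identification map}: the Identification map $\mathrm{Iden}: E|_{z=1} \to E|_{z=-1}$ is a composition of evaluation maps at $z=1$ and $z=-1$ on the space $V = H^0(\bP^1, E(-w))$, together with multiplication by $\mu^{\otimes w}$. Each of these is a $\C$-linear isomorphism for a pure twistor structure (by Grothendieck's theorem, or equivalently by the trivializations in Definition \ref{definition:canonical trivialization}). Hence $\mathrm{Iden}$ is a $\C$-linear isomorphism, and consequently the conjugate $\overline{\mathrm{Iden}}: \overline{E|_{z=1}} \to \overline{E|_{z=-1}}$ is also an isomorphism of $\C$-vector spaces.

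Next, I would use the polarization hypothesis. By Definition \ref{definition:pure twistor structures polarized by bilinear pairings}, the pairing
\[ H(\bullet,\bullet) \colonequals S(\bullet, \overline{\mathrm{Iden}}(\bullet)): E|_{z=1} \otimes_{\C} \overline{E|_{z=1}} \to \C \]
is positive definite and Hermitian. In particular, $H$ is non-degenerate in both arguments: if $v \in E|_{z=1}$ satisfies $H(v,\overline{u}) = 0$ for all $\overline{u} \in \overline{E|_{z=1}}$, then taking $\overline{u} = \overline{v}$ gives $H(v,\overline{v}) = 0$, so $v = 0$ by positive definiteness; the same argument in the second slot shows non-degeneracy on the right.

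Finally, I would transfer this non-degeneracy back to $S$. Suppose $v \in E|_{z=1}$ satisfies $S(v, \overline{w}) = 0$ for every $\overline{w} \in \overline{E|_{z=-1}}$. Since $\overline{\mathrm{Iden}}$ is surjective, every such $\overline{w}$ is of the form $\overline{\mathrm{Iden}}(\overline{u})$ for some $\overline{u} \in \overline{E|_{z=1}}$, whence $H(v,\overline{u}) = 0$ for all $\overline{u}$, forcing $v = 0$. Conversely, if $\overline{w} \in \overline{E|_{z=-1}}$ satisfies $S(v,\overline{w}) = 0$ for all $v$, write $\overline{w} = \overline{\mathrm{Iden}}(\overline{u})$ with $\overline{u} \in \overline{E|_{z=1}}$; then $H(v,\overline{u}) = 0$ for all $v$, so $\overline{u} = 0$ and hence $\overline{w} = 0$. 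This establishes non-degeneracy of $S$. There is no substantial obstacle here — the proof is essentially an unpacking of definitions, and the only nontrivial ingredient is the already-observed fact that the Identification map is an isomorphism.
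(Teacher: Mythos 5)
Your proof is correct and follows the same route as the paper, whose entire proof is the observation that $\mathrm{Iden}$ is an isomorphism; you have simply written out the transfer of non-degeneracy from the positive definite Hermitian form $S(\bullet,\overline{\mathrm{Iden}}(\bullet))$ back to $S$ explicitly.
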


\begin{lemma}\label{lemma:polarization of subtwistors}
Let $G\subseteq E$ be a pure sub-twistor structure. If $E$ is polarized by a bilinear pairing $ S_E:E|_{z=1}\otimes_\C \overline{E|_{z=-1}}\to \C$, then its restriction
\[ S_G:G|_{z=1}\otimes_\C \overline{G|_{z=-1}}\to \C\]
polarizes the pure twistor structure $G$.
\end{lemma}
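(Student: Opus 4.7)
The plan is to invoke the characterization of polarization via a bilinear pairing given in Definition \ref{definition:pure twistor structures polarized by bilinear pairings}: it suffices to show that the induced pairing
\[ S_G(\bullet, \overline{\mathrm{Iden}_G}(\bullet)) : G|_{z=1} \otimes_{\C} \overline{G|_{z=1}} \to \C \]
is positive definite and Hermitian, where $\mathrm{Iden}_G : G|_{z=1} \to G|_{z=-1}$ denotes the Identification map of $G$ in the sense of Definition \ref{definition:Identification map}.

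The core input I will use is Remark \ref{remark: restriction of Identification map}, which asserts that the Identification map $\mathrm{Iden}_E$ of $E$ restricts to the Identification map $\mathrm{Iden}_G$ of $G$ along the inclusions $G|_{z=1} \hookrightarrow E|_{z=1}$ and $G|_{z=-1} \hookrightarrow E|_{z=-1}$. Since $S_G$ is by definition the restriction of $S_E$ to $G|_{z=1} \otimes_{\C} \overline{G|_{z=-1}}$, the verification reduces to a diagram chase: for any $v, v' \in G|_{z=1} \subseteq E|_{z=1}$,
\[ S_G(v, \overline{\mathrm{Iden}_G}(\overline{v'})) \;=\; S_E(v, \overline{\mathrm{Iden}_E}(\overline{v'})). \]

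Since $S_E$ polarizes $E$, the right-hand side is a positive definite Hermitian form on $E|_{z=1}$, and its restriction to any complex subspace --- in particular to $G|_{z=1}$ --- is again positive definite and Hermitian, which will finish the argument. There is essentially no obstacle to this plan: all the substance sits in Remark \ref{remark: restriction of Identification map} (already available from the excerpt, where it follows by comparing the canonical trivializations in Definition \ref{definition:canonical trivialization} for $G$ and $E$), together with the elementary fact that positive definiteness is inherited by subspaces.
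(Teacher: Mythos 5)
Your proposal is correct and follows essentially the same route as the paper: the paper's proof also cites Remark \ref{remark: restriction of Identification map} together with the fact that restrictions of positive definite pairings remain positive definite. Nothing is missing; your write-up simply makes the diagram chase explicit.
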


\subsubsection{Cohomology of smooth projective varieties with semisimple coefficients}\label{sec:Hodge-Simpson} We use Set-up \ref{setup: cohomology of smooth projective}.  The following result is proved by Corlette \cite[Theorem 3.3]{Corlette} and Simpson \cite[Theorem 1]{Simpson92}. We follow \cite[Page 13, paragraph 2]{Simpson92} with slightly different notations, where Simpson's $D,\d,\dbar,\theta,\overline{\theta},d',d''$ are our $\nabla,\d',\d'',\theta',\theta'',D',D''$ respectively.

\begin{thm}\label{thm:Corlette-Simpson}
The bundle $H$ admits a harmonic metric $h$ inducing the decomposition
\[ \nabla=\d'+\theta'+\d''+\theta'',\]
with
\begin{align*}
&\d': H \to \cA^{1,0}_X \otimes_{\cC^{\infty}_X} H, \quad \theta': H \to \cA^{1,0}_X \otimes_{\cC^{\infty}_X} H,\\
&\d'': H \to \cA^{0,1}_X \otimes_{\cC^{\infty}_X} H,\quad \theta'': H \to \cA^{0,1}_X \otimes_{\cC^{\infty}_X} H,
\end{align*}
so that $\d'$ is a $(1,0)$-connection, $\d''$ is a $(0,1)$-connection and $\d'+\d''$ is a metric connection of $h$. The operator $\theta'$ is $\cC^{\infty}_X$-linear and $\theta''$ is the adjoint of $\theta'$ with respect to $h$. Moreover, if we set $D'\colonequals \d'+\theta''$ and $D''\colonequals \d''+\theta'$, then
\[ (D')^2= (D'')^2=0.\] 
Conversely, if a local system admits a harmonic metric, then it is semisimple.
\end{thm}

\begin{corollary}\label{cor:pull back preserve semisimplicity}
Let $f:Z \to X$ be a morphism from a smooth quasi-projective variety $Z$. Then $f^{\ast}\cV$ is semisimple.
\end{corollary}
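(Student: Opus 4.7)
The plan is to reduce to a morphism between smooth projective varieties, where I can pull back the harmonic metric provided by Theorem \ref{thm:Corlette-Simpson}, and then descend back to $Z$ using the surjection of fundamental groups coming from a Zariski open inclusion.

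First, by Hironaka's theorem, I choose a smooth projective compactification $\overline{Z} \supseteq Z$ with $Z$ a Zariski open subset. The morphism $f\colon Z \to X$ extends to a rational map $\overline{Z} \dashrightarrow X$, which by elimination of indeterminacies is resolved by a smooth projective variety $Y$ together with a birational morphism $\pi\colon Y \to \overline{Z}$ that is an isomorphism over $Z$, and a morphism $g\colon Y \to X$ that agrees with $f$ under the identification $U \colonequals \pi^{-1}(Z) \cong Z$. Then $U$ is Zariski dense open in $Y$, so it suffices to show that $g^{\ast}\cV$ is semisimple on $Y$ and that its restriction to $U$ is semisimple.

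For the first point, I apply Theorem \ref{thm:Corlette-Simpson} to $X$: the semisimple local system $\cV$ corresponds to a harmonic metric $h$ on $H = \cV \otimes \cC^{\infty}_X$ together with the decomposition $\nabla = \d' + \theta' + \d'' + \theta''$ satisfying $(D')^2 = (D'')^2 = 0$. I would pull back all of this data under the holomorphic map $g$: the pullback metric $g^{\ast}h$ on $g^{\ast}H$, together with the pullback operators $g^{\ast}\d'$, $g^{\ast}\d''$, $g^{\ast}\theta'$, $g^{\ast}\theta''$, gives precisely the structure of a harmonic bundle on $Y$. Hence by Theorem \ref{thm:Corlette-Simpson} applied to the smooth projective $Y$, the local system $g^{\ast}\cV$ is semisimple.

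For the final descent, the Zariski dense open inclusion $U \subseteq Y$ induces a surjection $\pi_1(U) \twoheadrightarrow \pi_1(Y)$ (as used in Remark \ref{remark:semisimplicity via Zariski dense open subset}), so any semisimple representation of $\pi_1(Y)$ restricts to a semisimple representation of $\pi_1(U)$; therefore $(g^{\ast}\cV)|_U \cong f^{\ast}\cV$ is semisimple on $Z$. The one point that requires verification is the naturality claim used in the second step: since the splitting $\nabla = \d' + \theta' + \d'' + \theta''$, the $\cC^{\infty}$-linearity of $\theta'$, the $h$-adjointness of $\theta''$ and $\theta'$, and the nilpotency $(D')^2 = (D'')^2 = 0$ are all characterized by pointwise differential-geometric identities, they are preserved under holomorphic pullback of bundle data, so $(g^{\ast}H, g^{\ast}\nabla, g^{\ast}h)$ inherits the structure of a harmonic bundle on $Y$. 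This naturality is a standard feature of non-abelian Hodge theory.
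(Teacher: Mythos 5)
Your proof is correct and takes essentially the same route as the paper: the paper also compactifies the map $f$ to a morphism $\widetilde{f}\colon \widetilde{Z}\to X$ from a smooth projective variety containing $Z$ as a Zariski dense open subset (your Hironaka-plus-elimination-of-indeterminacy step just makes this explicit), notes that pullback preserves the harmonic structure so Theorem \ref{thm:Corlette-Simpson} gives semisimplicity of $\widetilde{f}^{\ast}\cV$, and then restricts along the dense open inclusion via Remark \ref{remark:semisimplicity via Zariski dense open subset}.
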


\begin{proof}
Use Theorem \ref{thm:Corlette-Simpson}, Remark \ref{remark:semisimplicity via Zariski dense open subset} and the fact that pullback preserves harmonic metrics (see \cite[Page 18]{Simpson88}). 
\end{proof}

The following result is proved by Simpson, see \cite[Page 25, line 8 - line 13]{Simpson97}.
\begin{thm}[Simpson]\label{thm:harmonic representatives}
There is an isomorphism
\[ H^k(X,\cV) \cong \mathrm{Harm}^k(X,H)\colonequals\{ \alpha \in \cC^{\infty}(\cA^k_X\otimes_{\cC^{\infty}_X} H) \colon \Delta_{\nabla}(\alpha)=0\},\]
where $\Delta_{\nabla}$ is the Laplacian of $\nabla$. Moreover, for any $(a,b)\neq (0,0)$, there is an isomorphism
\begin{align*}
H^k(X,\cA^{\bullet}_X(H); aD'+bD'') &\cong \{ \alpha \in \cC^{\infty}(\cA^k_X\otimes_{\cC^{\infty}_X} H) \colon \Delta_{aD'+bD''}(\alpha)=0\}=\mathrm{Harm}^k(X,H).
\end{align*}
\end{thm}

\begin{proof}
For $(a,b)=(1,1)$,  the isomorphism between de Rham cohomology groups and the spaces of harmonic forms with coefficients in $H$ is already proved in \cite[\S 2]{Simpson88}. The proof for other $(a,b)$ is similar, as pointed out by Simpson in \cite[Page 25]{Simpson97}, so that we have
\[ H^k(X,\cA^{\bullet}_X(H); aD'+bD'') \cong \{ \alpha \in \cC^{\infty}(\cA^k_X\otimes_{\cC^{\infty}_X} H) \colon \Delta_{aD'+bD''}(\alpha)=0\}, \quad \forall (a,b)\neq (0,0).\]
It remains to identify different spaces of harmonic forms. For reader's convenience,  we give a brief review of K\"ahler identities following \cite[Page 51]{Sabbah}, where Sabbah's $D'_E,D_E'',\t'_E,\t''_E,D_{\infty},D_0$ are our $\d',\d'',\t',\t'',D',D''$ respectively. In our notation, one has
\[ \Delta_{D'}=\frac{1}{2}\Delta_{\nabla}, \quad\Delta_{D''+z_0D'}=\frac{1+|z_0|^2}{2}\Delta_{\nabla}, \quad \forall z_0 \in \C,\]
where the second equality comes from \cite[(2.2.5)]{Sabbah}. They imply that
\[\Delta_{aD'+bD''}=\frac{|a|^2+|b|^2}{2}\Delta_{\nabla}, \quad \forall (a,b)\in \C^2.\]
Therefore for any $(a,b)\neq (0,0)$, one has 
\[\{ \alpha \in \cC^{\infty}(\cA^k_X\otimes_{\cC^{\infty}_X} H) \colon \Delta_{aD'+bD''}(\alpha)=0\}=\{\alpha : \Delta_{\nabla}(\alpha)=0\}=\mathrm{Harm}^k(X,H).\]
\end{proof}

\begin{construction}\label{differential geometric construction}
Using Theorem \ref{thm:harmonic representatives}, Simpson \cite[\S 4]{Simpson97} defined a natural pure twistor structure $E^k$ of weight $k$ on $H^k(X,\cV)$ such that  
\begin{align*}
&E^k|_{z=[a,b]}\cong H^k(X,\cA^{\bullet}_X(H); aD'+bD'')\cong \mathrm{Harm}^k(X,H), \quad \forall [a,b]\in \bP^1.
\end{align*}
If we use the coordinate $z_0=[z_0,1]\in \bP^1\setminus\{\infty\}$, then
\begin{equation}\label{eqn: fiber of twistor structures}
E^k|_{z=z_0}\cong H^k(X,\cA^{\bullet}_X(H); z_0\d'+\theta'+\d''+z_0\theta'').
\end{equation}
In particular, $E^k|_{z=1}\cong H^k(X,\cV)$.
\end{construction}

\begin{remark}\label{remark:Identification map for Simpsons twistor structure}
The Identification map for the pure twistor structure $E^k$ in Construction \ref{differential geometric construction} can be calulated in the following way. Let $\psi\in H^k(X,\cV)$ and let $\sum_{p+q=k} \alpha^{p,q}\otimes m_{p,q}$ be a harmonic representative of $\psi$, where $\alpha^{p,q}$ are $(p,q)$-forms and $m_{p,q}$ are sections of $H$. Because choosing the harmonic representative gives the trivialization of $E^k$ over $\bP^1$, then
\begin{align*}
\mathrm{Iden}: E^k|_{z=1}\cong H^k(X,\cV) &\to \mathrm{Harm}^k(X,H) &\to H^k(X,\cA^{\bullet}_X(H);-D'+D'')\cong E^k|_{z=-1}, \\
\psi=\left[\sum_{p+q=k} \alpha^{p,q}\otimes m_{p,q}\right]_{H^k(X,\cV)} &\mapsto \sum_{p+q=k} \alpha^{p,q}\otimes m_{p,q} &\mapsto \left[\sum_{p+q=k} \alpha^{p,q}\otimes m_{p,q}\right]_{H^k(X,\cA^{\bullet}_X(H);-D'+D'')}
\end{align*}

\end{remark}

\subsection{Cohomology of algebraic varieties and mixed twistor structures}\label{sec: cohomology of arbitrary}

In this subsection, we review Simpson's theory of weights following \cite[\S 5]{Simpson97} and prove the global invariant cycle Theorem \ref{thm: global invariant cycle}. 

\begin{definition}\label{definition:mixed twistor structure}  A \emph{mixed twistor structure} is a twistor structure $E$ which is filtered by an increasing sequence of strict subbundles $W_iE$ such that $\Gr^W_i(E)=W_iE/W_{i-1}{E}$ is a pure twistor structure of weight $i$ for all $i \in \mathbb{Z}$.
A morphism of mixed twistor structures is a morphism of filtered bundles on $\bP^1$ preserving the filtration. 
\end{definition}

\begin{thm}[Simpson]\label{thm:existence of mixed twistor structures}
Let $U$ be a quasi-projective algebraic variety. Let $\cV$ be a local system on $U$ coming from the restriction of a semisimple local system on a smooth projective compactification $\overline{U}\supseteq U$. Then $H^k(U,\cV)$ underlies a natural mixed twistor structure, which is functorial in $U$.
\end{thm}

In the remark under \cite[Theorem 5.2]{Simpson97}, Simpson said that the same theory of weights hold as in \cite[Theor\`eme 8.2.4]{Deligne74}. In particular, we have 
\begin{thm}[Simpson's theory of weights]\label{thm: yoga of weights}
With the assumptions in Theorem \ref{thm:existence of mixed twistor structures}. Let $W_{\bullet}$ denote the weight filtration on $H^k(U,\cV)$ induced by the natural mixed twistor structure. Then we have
\begin{itemize}
    \item if $U$ is proper, then $\Gr^W_iH^k(U,\cV)=0$ for $i> k$,
    \item if $U$ is smooth, then $\Gr^W_iH^k(U,\cV)=0$ for $i< k$,
    \item if $U$ is smooth and proper, then $\Gr^W_iH^k(U,\cV)=0$ for $i\neq  k$.
\end{itemize}
\end{thm} 

In \cite{Simpson97}, Simpson only proved the last statement of Theorem \ref{thm: yoga of weights}. For some proof details, see \cite[Corollary 2.3.87 and Corollary 2.3.97]{Yang21}.

\subsubsection{Mixed Twistor Complexes}\label{sec:mixed twistor complexes}
To construct the mixed twistor structures, Simpson used the notion of mixed twistor complexes, patching construction and an analytic construction for the pure twistor structures.
\begin{definition}\label{definition:mixed twistor complex}
A \textit{mixed twistor complex} is a filtered complex $(M^{\bullet},W^{\pre}_{\bullet})$ of sheaves of $\cO_{\bP^1}$-modules on $\bP^1$ such that $\cH^i(\Gr^{W^\pre}_{\ell}(M^{\bullet}))$ is a pure twistor structure of weight $\ell+i$. $W^{\pre}$ is called the pre-weight filtration.
\end{definition}
\begin{lemma}\label{corollary:description of the lowest weight}
Let $(M^{\bullet},W^{\pre}_{\bullet})$ be a mixed twistor complex. Suppose $\ell$ is the smallest integer so that $\Wpre_{\ell}M^{\bullet}\neq 0$, then
\[ W^{\pre}_{\ell}\cH^{k}(M^{\bullet}) = \mathrm{Im} \left\{\cH^{k}(W^{\pre}_{\ell}M^{\bullet}) \to \cH^{k}(M^{\bullet})\right\}, \]
where the map is induced by $W^{\pre}_{\ell}M^{\bullet} \hookrightarrow M^{\bullet}$.
\end{lemma}  

\begin{proof}
It follows from \cite[Lemma 5.3]{Simpson97} and \cite[Lemma 8.24]{Voisin}. For details, see \cite[Corollary 2.3.74]{Yang21}.
\end{proof}

Simpson introduced a patching construction to glue several filtered complexes over subsets of $\bP^1$ to a filtered complex over $\bP^1$ \cite[Page 30-32]{Simpson97}. For clarity, we briefly recall his construction. Let $M^{\bullet}, N^{\bullet}$ and $P^{\bullet}$ be filtered complexes of sheaves of $\cO$-modules, respectively over $\Omega_0, \Omega_{\infty}$ and $\Gm$.
Assume there are filtered quasi-isomorphisms $M^{\bullet}|_{\Gm} \xleftarrow{f} P^{\bullet} \xrightarrow{g} N^{\bullet}|_{\Gm}$, then we denote by
\[ \Patch(M\leftarrow P \rightarrow N)\colonequals \Cone(P^{\bullet}_{\ex}\xrightarrow{(f,-g)}M^{\bullet}_{\ex}\oplus N^{\bullet}_{\ex}),\]
where $M^{\bullet}_{\ex},N^{\bullet}_{\ex},P^{\bullet}_{\ex}$ are filtered complexes over $\bP^1$ induced by a fixed functorial choice of right derived functor of some extension functor for the inclusion $\Omega_0\hookrightarrow\bP^1$ (as well as for $\Omega_{\infty}$ and $\Gm$).
If there are filtered quasi-isomorphisms $M^{\bullet}|_{\Gm} \xrightarrow{f} P^{\bullet} \xleftarrow{g} N^{\bullet}|_{\Gm}$, then we define
\[ \Patch(M\rightarrow P \leftarrow N)\colonequals \Cone(M^{\bullet}_{\ex}\oplus N^{\bullet}_{\ex} \xrightarrow{(f,-g)}P^{\bullet}_{\ex})[-1].\]

There is also a version for gluing 5 complexes. Suppose $P^{\bullet},Q^{\bullet},R^{\bullet}$ are filtered complexes of $\cO$-modules over $\Gm$ and $M^{\bullet},N^{\bullet}$ are filtered complexes of $\cO$-modules over $\Omega_0$ and $\Omega_{\infty}$ respectively.  Assume we have filtered quasi-isomorphisms
\[ M^{\bullet}|_{\Gm} \leftarrow P^{\bullet} \xrightarrow{f} Q^{\bullet} \xleftarrow{g} R^{\bullet} \rightarrow N^{\bullet}|_{\Gm},\]
then we define
\[ \Patch(M,P,Q,R,N)\colonequals \Patch(M^{\bullet}\leftarrow \Cone(P^{\bullet}\oplus R^{\bullet}\xrightarrow{(f,-g)} Q^{\bullet})[-1]\rightarrow N^{\bullet}).\]

For our purposes, we need the following compatibility statement.
\begin{lemma}\label{lemma:3 equals 5}
Let $M^{\bullet}, N^{\bullet}$ and $P^{\bullet}$ be filtered complexes of sheaves of $\cO$-modules, respectively over $\Omega_0, \Omega_{\infty}$ and $\Gm$. Given a filtered quasi-isomorphisms of complexes 
\[ M^{\bullet}|_{\Gm} \xrightarrow{f} P^{\bullet} \xleftarrow{g} N^{\bullet}|_{\Gm},\]
then there is a natural filtered quasi-isomorphism 
\[ \Patch(M\rightarrow P \leftarrow N) \cong \Patch (M,M|_{\Gm},P,N|_{\Gm},N),\]
where the latter is induced by
\[ M^{\bullet}|_{\Gm} \xleftarrow{\mathrm{Id}} M^{\bullet}|_{\Gm} \xrightarrow{f} P^{\bullet} \xleftarrow{g} N^{\bullet}|_{\Gm} \xrightarrow{\mathrm{Id}} N^{\bullet}|_{\Gm}.\]
\end{lemma}

\begin{proof}
It is straightforward by definition of cones. For details, see \cite[Lemma 2.3.80]{Yang21}.
\end{proof}

It turns out that the Construction \ref{differential geometric construction} can be recovered using the patching construction. Now we work with Set-up \ref{setup: cohomology of smooth projective} and follow the notations in \cite[Page 23-24]{Simpson97}. Simpson constructed a triple $(\cF,\cL,\cF')$ associated to $\cV$ and showed that the bundle $R^kp_{2,\ast}(\xi \Omega^{\bullet}_X(\cF))$ over $\Omega_0$ and the bundle $R^kp_{2,\ast}(\xi \Omega^{\bullet}_{\overline{X}}(\cF'))$ over $\Omega_{\infty}$ can be identified over $\Gm$ with $Rp_{2,\ast}(\cL)$, and therefore glued to a holomorphic bundle $\oplus \cO_{\bP^1}(k)$, which is canonically isomorphic to the twistor structure in Construction \ref{differential geometric construction} (see \cite[Page 25-26]{Simpson97}). It can be summarized as follows.

\begin{lemma}\label{lemma:pure twistor structure via glueing}
Consider the complex of sheaves of $\cO_{\bP^1}$-modules with trivial filtrations:
\[ \Patch\left(Rp_{2,\ast}(\xi\Omega^{\bullet}_X(\cF)) \rightarrow Rp_{2,\ast}(\cL) \leftarrow Rp_{2,\ast}(\xi\Omega^{\bullet}_{\overline{X}}(\cF'))\right),\]
where the morphisms are induced by Dolbeault resolutions. Then the $k$-th cohomology of this complex is the natural pure twistor structure $E^k$ on $H^k(X,\cV)$.
\end{lemma}

\subsubsection{Mixed twistor structures on open varieties}\label{Mixed twistor structures on open varieties}
We work with Set-up \ref{setup: cohomology of smooth projective}. Let $j:U\hookrightarrow X$ be an open subset with a simple normal crossing boundary divisor $D=X\setminus U$. In \cite[Page 34]{Simpson97}, the functorial mixed twistor structure on $H^k(U,\cV|_U)\colonequals H^k(U,j^{\ast}\cV)$ is constructed as follows. Let $p$ denotes the second projection map to the $\bP^1$-direction for subsets of $X^{\mathrm{top}}\times \bP^1$. Using the triple $(\cF,\cL,\cF')$ associated to $\cV$ on $X$, he defined 5 filtered complexes:
\begin{align*}
M^{\bullet}&\colonequals (\xi\Omega^{\bullet}_X(\log D)\otimes_{\cO_{X\times \Omega_0}}\cF,\Wpre_{\bullet}),\quad  N^{\bullet}\colonequals (\xi\Omega^{\bullet}_{\overline{X}}(\log D)\otimes_{\cO_{\overline{X}\times \Omega_{\infty}}}\cF',\Wpre_{\bullet}),\\
P^{\bullet}&\colonequals (\xi\Omega^{\bullet}_X(\log D)\otimes_{\cO_{X\times \Gm}}\cF,\tau), \quad Q^{\bullet}\colonequals (\xi j_{\ast}\cA^{\bullet}_U\otimes_{p^{-1}\cO_{\Gm}}\cL,\tau),\\
R^{\bullet}&\colonequals (\xi\Omega^{\bullet}_{\overline{X}}(\log D)\otimes_{\cO_{\overline{X}\times \Gm}}\cF',\tau),
\end{align*}
where $\tau$ is the filtration induced by the truncation functor and $\Wpre$ is the filtration induced by the weight filtration on $\Omega^{\bullet}_X(\log D)$. Then the isomorphisms $\Omega_X^{\bullet}(\log D)\cong j_{\ast}\Omega^{\bullet}_U \cong j_{\ast}\cA^{\bullet}_U$ induces filtered isomorphisms
\[ M^{\bullet}|_{X\times \Gm} \leftarrow P^{\bullet}\to Q^{\bullet} \leftarrow R^{\bullet} \rightarrow N^{\bullet}|_{\overline{X}\times \Gm}.\]
This gives filtered quasi-isomorphisms on $\bG_m$:
\[ Rp_{\ast}M^{\bullet}|_{\bG_m} \leftarrow Rp_{\ast}P^{\bullet} \rightarrow Rp_{\ast}Q^{\bullet} \leftarrow Rp_{\ast}R^{\bullet} \to Rp_{\ast}N^{\bullet}|_{\bG_m}.\]
Define the patching complex
\[ \MTC(\cV|_U)\colonequals \mathrm{Patch}(Rp_{\ast}M^{\bullet},Rp_{\ast}P^{\bullet},Rp_{\ast}Q^{\bullet},Rp_{\ast}R^{\bullet},Rp_{\ast}N^{\bullet}).\]

\begin{thm}{\cite[Page 34]{Simpson97}}\label{thm: mixed twistor complex}
The complex $\MTC(\cV|_U)$ is a mixed twistor complex.
\end{thm}
As a corollary, for any integer $k$, $H^k(U,\cV|_U)$ underlies a natural mixed twistor structure, which is functorial with respect to $U\hookrightarrow X$.

\begin{remark}
To construct the mixed twistor structure on $H^k(U,\cV|_U)$, it is actually enough to use three complexes $P^{\bullet},Q^{\bullet},R^{\bullet}$, $P^{\bullet}$ and $R^{\bullet}$ naturally extend to $\Omega_0$ and $\Omega_{\infty}$ respectively) and consider $\Patch(Rp_{\ast}P^{\bullet} \to Rp_{\ast}Q^{\bullet} \leftarrow Rp_{\ast}R^{\bullet})$. The advantage of using the extra two complexes is that the explicit weight filtration on $\Omega^{\bullet}_X(\log D)$ is useful for the proof of Lemma \ref{lemma: lowest weight}.
\end{remark}
\begin{corollary}\label{cor:yoga of weights for open}
With Set-up \ref{setup: cohomology of smooth projective}. Let $j:U\hookrightarrow X$ be an arbitrary Zariski open subset. Then $H^k(U,\cV|_U)$ underlies a natural and functorial mixed twistor structure $(E^k_U,W_{\bullet}E^k_U)$. Moreover, for $[z_0,1]\in \bP^1\setminus\{0,\infty\}$, there is an isomorphism
\begin{equation}\label{eqn: fiber of twistor structure on open}
E^k_U|_{z=z_0}\cong H^k(U,\cA^{\bullet}_U(H|_U);(z_0\d'+\d''+\t'+z_0\t'')|_U),
\end{equation}
where $\d',\d'',\t',\t''$ are the operators associated to $\cV$ and $H$ in Theorem \ref{thm:Corlette-Simpson}.
\end{corollary}

\begin{proof}
First we assume $X\setminus U$ is a simple normal crossing divisor and use notation above. The first statement is obtained in Theorem \ref{thm: mixed twistor complex}. Lemma \ref{lemma:pure twistor structure via glueing} and \eqref{eqn: fiber of twistor structures} imply that
\[ H^k(X,\cL|_{X\times \{z_0\}})\cong H^k(X,\cA^{\bullet}_X(H);z_0\d'+\d''+\t'+z_0\t'').\]
Therefore, we have
\[ E^k_U|_{z=z_0}\cong (R^kp_{\ast}Q^{\bullet})_{z_0}=H^k(X,(\xi j_{\ast}\cA^{\bullet}_U\otimes \cL)|_{X\times \{z_0\}})\cong H^k(U,\cA^{\bullet}_U(H|_U);(z_0\d'+\d''+\t'+z_0\t'')|_U),\]
where $p:X^{\mathrm{top}}\otimes \Gm\to \Gm$ is the second projection. This proves \eqref{eqn: fiber of twistor structure on open} for $X\setminus U$ normal crossing.

For arbitrary $U\subseteq X$, consider the following diagram
\[ \begin{CD}
U @>\tilde{j}>> \tilde{X} \\
@V\mathrm{id} VV  @VV\pi V \\
U @>>j> X,
\end{CD}
\]
where $\pi: \tilde{X}\to X$ is a log resolution of $(X,X\setminus U)$, so that $\tilde{X}$ is smooth projective, $\pi$ is an isomorphism over $U$, and $\tilde{X}\setminus \pi^{-1}(U)$ is a simple normal crossing divisor. In particular, we can view $U$ as an open subset of $\tilde{X}$. Let $\tilde{\cV}=\pi^{\ast}\cV$ be the pullback local system. By Corollary \ref{cor:pull back preserve semisimplicity}, $\tilde{\cV}$ is semisimple and the corresponding bundle $\tilde{H}=\tilde{\cV}\otimes \cC^{\infty}_{\tilde{X}}$ admits the harmonic metric $\tilde{h}=\pi^{\ast}h$. Let $\tilde{\d}',\tilde{\d}'',\tilde{\t}',\tilde{\t}''$ be the operators associated to $(\tilde{H},\tilde{h})$. Since $\pi$ is an isomorphism over $U$, we have $\tilde{\cV}|_U=\cV|_U$ and $(\tilde{H},\tilde{h})|_U=(H,h)|_U$, and $(\tilde{\d}',\tilde{\d}'',\tilde{\t}',\tilde{\t}'')|_U=(\d',\d'',\t',\t'')|_U$.

Now we can apply the previous case to conclude that $H^k(U,\cV|_U)= H^k(U,\tilde{\cV}|_U)$ admits a natural mixed twistor structure $(\tilde{E}^k_U,W_{\bullet}\tilde{E}^k_U)$. Moreover, 
\begin{align*}
\tilde{E}^k_U|_{z=z_0}&\cong H^k(U,\cA^{\bullet}_U(\tilde{H}|_U);(z_0\tilde{\d}'+\tilde{\d}''+\tilde{\t}'+z_0\tilde{\t}'')|_U),\\
&=H^k(U,\cA^{\bullet}_U(H|_U);(z_0\d'+\d''+\t'+z_0\t'')|_U).
\end{align*}
This proves \eqref{eqn: fiber of twistor structure on open} for arbitrary $U\subseteq X$ and finishes the proof of the corollary.
\end{proof}

\begin{lemma}\label{lemma: lowest weight}
With the notations in Corollar \ref{cor:yoga of weights for open}, then
\[ W_{k}H^k(U,\cV|_{U})=j^{\ast}H^k(X,\cV).\]
\end{lemma}

\begin{proof}
First, consider the following commutative diagram
\[ \begin{CD}
U @>\tilde{j}>> \tilde{X} \\
@V\mathrm{id} VV  @VV\pi V \\
U @>>j> X,
\end{CD}
\]
where $\pi$ is a log resolution so that $\tilde{X}\setminus U$ is a normal crossing divisor. It is easy to see that $\tilde{j}^{\ast}H^k(\tilde{X},\pi^{\ast}\cV)=j^{\ast}H^k(X,\cV)$ by looking at the Leray spectral sequence for the map $\pi:\tilde{X} \to X$ and $\pi^{\ast}\cV$. Since semisimplicity is preserved under pull-backs (see Corollary \ref{cor:pull back preserve semisimplicity}), we can assume $X\setminus U$ is normal crossing.  Let $E^k$ be the natural twistor structure on $H^k(X,\cV)$. By Lemma \ref{lemma:pure twistor structure via glueing} and Lemma \ref{lemma:3 equals 5},
\begin{align*}
E^k&\cong \cH^k\Patch\left(Rp_{\ast}(\xi\Omega^{\bullet}_X(\cF)) \rightarrow Rp_{\ast}(\cL) \leftarrow Rp_{\ast}(\xi\Omega^{\bullet}_{\overline{X}}(\cF'))\right)\\
&\cong \cH^k\Patch\left(Rp_{\ast}(\xi\Omega^{\bullet}_X(\cF)), Rp_{\ast}( \xi\Omega^{\bullet}_X(\cF))|_{\Gm},Rp_{\ast}(\cL), Rp_{\ast}(\xi\Omega^{\bullet}_{\overline{X}}(\cF'))|_{\Gm}, Rp_{\ast}(\xi\Omega^{\bullet}_{\overline{X}}(\cF'))\right)
\end{align*}
By Theorem \ref{thm: mixed twistor complex}, the map $j^{\ast}:H^k(X,\cV) \to H^k(U,\cV|_U)$ can be lifted to a morphism of mixed twistor structures, which is induced by the following morphisms of complexes:
\[ \xi\Omega^{\bullet}_X\otimes \cF \to  \xi\Omega^{\bullet}_X(\log D)\otimes \cF, \quad \cL \to \xi j_{\ast}\Omega^{\bullet}_U\otimes \cL, \quad   \xi\Omega^{\bullet}_{\overline{X}}\otimes \cF' \to  \xi\Omega^{\bullet}_{\overline{X}}(\log D)\otimes \cF'.\]
By Lemma \ref{corollary:description of the lowest weight}, $W_kH^k(U,\cV|_U)$ underlies the gluing of the images of the inclusion maps over $\Omega_0,\Gm,\Omega_{\infty}$:
\begin{itemize}

\item $R^kp_{\ast}(W^{\pre}_0\xi\Omega^{\bullet}_{X}(\log D)\otimes \cF) \to R^kp_{\ast}(\xi \Omega^{\bullet}_{X}(\log D)\otimes \cF)$,

\item $R^kp_{\ast}(\tau_0\xi\Omega^{\bullet}_{X}(\log D)\otimes \cF) \to R^kp_{\ast}(\xi \Omega^{\bullet}_{X}(\log D)\otimes \cF)$,

\item $R^kp_{\ast}({\tau}_0\xi j_{\ast}\Omega^{\bullet}_U\otimes \cL) \to R^kp_{\ast}(\xi j_{\ast}\Omega^{\bullet}_U\otimes \cL)$.
\end{itemize}
By the residue map in \cite[Page 36]{Simpson97}, we have 
\[ W^{\pre}_0\xi\Omega^{\bullet}_{X}(\log D)\otimes \cF\cong \xi \Omega^{\bullet}_X\otimes \cF.\]
Topological calculations from \cite[3.1.8.1]{Deligne71} show that
\[ {\tau}_0\xi\Omega^{\bullet}_{X}(\log D)\otimes \cF=\cH^0(\xi\Omega^{\bullet}_{X}(\log D)\otimes \cF) \cong \xi\Omega^{\bullet}_X\otimes \cF,\]
and by definition of the truncation functor
\[ {\tau}_0\xi j_{\ast}\Omega^{\bullet}_U\otimes \cL=\cH^0(\xi j_{\ast}\Omega^{\bullet}_U\otimes \cL) \cong \cL. \]
In particular, on each corresponding subset of $\bP^1$, the lowest weight filtration coincides with the image of restriction from $X$. Since the patching construction preserves the quasi-isomorphism, the lemma follows. 
\end{proof}

Now we can prove the global invariant cycle theorem.
\begin{proof}[Proof of Theorem \ref{thm: global invariant cycle}]
Theorem \ref{thm: yoga of weights} implies that
\begin{itemize}
    \item if $i>k$, then $\Gr^W_i H^k(Z,(j\circ \alpha)^{\ast}\cV)=0$ [$Z$ is proper],
    \item if $i<k$, then $\Gr^W_i H^k(U,j^{\ast}\cV) =0$ [$U$ is smooth].
\end{itemize}
By the strictness of morphism between mixed twistor structures and Lemma \ref{lemma: lowest weight}, we have
\begin{align*}
    \textrm{Im } \alpha^{\ast}&=\textrm{Im } \alpha^{\ast}\cap W_{k}H^k(Z,(j\circ \alpha)^{\ast}\cV)\\
    &=\alpha^{\ast}(W_{k}H^k(U,j^{\ast}\cV))=\alpha^{\ast}j^{\ast}H^k(X,\cV)=\textrm{Im } (j\circ \alpha)^{\ast}.
\end{align*}
\end{proof}

\subsection{Twisted Poincar\'e pairings and polarizations}\label{sec: pre-Weil operator}

In this subsection, we construct the pre-Weil operator $\phi$ using harmonic metrics and Sabbah's rescaling map. We show that the twisted Poincar\'e pairing can be used to polarize the pure twistor structures on the cohomology groups semisimple local systems. 

Let $U$ be a smooth quasi-projective variety and $\cV$ be a semisimple local system on $U$ with a flat connection $\nabla$, which comes from the restriction of a semisimple local system on a smooth projective compactification of $U$.
By Theorem \ref{thm:Corlette-Simpson}, there is a harmonic bundle $(H,h)$ associated to $\cV$ with the decomposition $\nabla=\partial'+\theta'+\partial''+\theta''$, where $h$ is unique up to a linear transformation. We view the harmonic metric $h$ as a $\cC^{\infty}_U$-linear morphism
\[ h: H\otimes_{\cC^{\infty}_U}\overline{H} \to \cC^{\infty}_U.\]
Let $\cV^{\ast}$ be the dual local system of $\cV$ and $H^{\ast}$ be the dual $\cC^{\infty}$-bundle of $H$. In \cite[Page 14]{Simpson92}, Simpson showed that $H^{\ast}$ is equipped with the following harmonic structure so that it is the harmonic bundle associated to $\cV^{\ast}$. Following Simpson, we will abuse notation and still use $D'',\nabla$ etc to represent the corresponding operator on $H^{\ast}$. Note that Simpson's dual construction in \cite{Simpson92} actually works without the compactness assumption. This is crucial for us, because in the proof of Decomposition theorem one needs to deal with non-compact varieties.
\begin{construction}[Dual harmonic bundle]\label{construction:dual harmonic bundle}
The dual metric $h^{\ast}:H^{\ast} \otimes \overline{H^{\ast}} \to \cC^{\infty}_U$ is defined as follows: for two sections $\lambda,\mu \in \cC^{\infty}(H^{\ast})$, set
\[ h^{\ast}(\lambda,\overline{\mu}) \colonequals \lambda(e),\]
where $e$ is the unique section in $\cC^{\infty}(H)$ satisfying $\mu(\bullet)=h(\bullet,\overline{e})$. The dual Higgs operator $D''=\d''+\t'$ is defined by
\[ (\t'\lambda)(e)+\lambda(\t'e) = 0, \quad (\d''\lambda)(e) + \lambda(\d''e)=\overline{\d} (\lambda(e)),\]
where $\lambda \in \cC^{\infty}(H^{\ast}), e \in \cC^{\infty}(H)$. The dual flat connection $\nabla$ on $\cV^{\ast}$ is defined by 
\[ \nabla \colonequals \d'+\theta'+\d''+\theta'',\]
where $\d'+\d''$ is a metric connection for $h^{\ast}$ and $\theta''$ is the adjoint of $\theta'$ with respect to $h^{\ast}$.
\end{construction}

\begin{remark}
We want  the reader to be aware that the dual connection $\nabla$ on $\cV^{\ast}$ is \textit{not} the dual of $\nabla$ on $\cV$ with respect to the harmonic metric $h$.
\end{remark}

\begin{lemma}\label{lemma: fiber at z=-1}
With the notations above, the harmonic metric $h$ induces a 
$\C$-linear isomorphism between $\cC^{\infty}_U$-bundles:
\[ h: \overline{H} \to H^{\ast}, \quad \overline{e}\mapsto h(\bullet,\overline{e}), \quad \forall e \in \cC^{\infty}(H),\]
so that it induces a quasi-isomorphism
\[ h:\overline{(\cA^{\bullet}_U(H),\d'-\theta'+\d''-\theta'')} \xrightarrow{\sim} \cV^{\ast},\]
and an isomorphism
\[ h: \overline{H^k(U,\cA^{\bullet}_U(H);\d'-\theta'+\d''-\theta'')} \xrightarrow{\sim} H^k(U,\cV^{\ast}).\]
\end{lemma}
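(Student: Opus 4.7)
The plan is to verify the statement in two stages: first, produce the bundle-level isomorphism $h:\overline{H}\xrightarrow{\sim} H^{\ast}$ and extend it to the twisted de Rham complexes; second, check that under this extension the complex-conjugate of $\d'-\theta'+\d''-\theta''$ is identified with the flat connection $\nabla=\d'+\theta'+\d''+\theta''$ on $H^{\ast}$ from Construction \ref{construction:dual harmonic bundle}. The cohomological statement then follows from the de Rham resolution $H^k(X,\cV^{\ast})\cong H^k(X,\cA^{\bullet}_X(H^{\ast}),\nabla)$.

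For the first step, I would check that $\bar e\mapsto h(\,\cdot\,,\bar e)$ is well-defined and $\C$-linear on $\overline{H}$ (the anti-linearity of $h$ in the second slot is compensated by the conjugate $\C$-structure on $\overline{H}$), and bijective by non-degeneracy of the harmonic metric. Combining with the canonical $\C$-linear identification $\overline{\cA^k_X}\cong \cA^k_X$ of form bundles (which sends $\overline{\cA^{p,q}_X}$ to $\cA^{q,p}_X$), I obtain a $\C$-linear isomorphism of graded $\cC^{\infty}$-bundles
\[
\overline{\cA^{\bullet}_X(H)}\xrightarrow{\sim}\cA^{\bullet}_X(H^{\ast}),\qquad \overline{\alpha\otimes e}\mapsto \bar{\alpha}\otimes h(\,\cdot\,,\bar e).
\]

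For the second step, I would compute how $h$ intertwines the component operators. Using that $\d'+\d''$ is the Chern connection for $h$, a direct calculation starting from $\bar\d\, h(e,\bar f)=h(\d''e,\bar f)+h(e,\overline{\d' f})$ combined with the defining relation $(\d''\lambda)(e)+\lambda(\d''e)=\bar\d(\lambda(e))$ in Construction \ref{construction:dual harmonic bundle} yields
\[
\d''\,h(\,\cdot\,,\bar f)=h(\,\cdot\,,\overline{\d' f}),
\]
and the symmetric computation for the dual Chern connection gives $\d'\,h(\,\cdot\,,\bar f)=h(\,\cdot\,,\overline{\d'' f})$. For the Higgs part, the relation $(\theta'\lambda)(e)+\lambda(\theta'e)=0$ together with the $h$-adjoint property $h(\theta'e,\bar f)=h(e,\overline{\theta''f})$ gives $\theta'\,h(\,\cdot\,,\bar f)=-h(\,\cdot\,,\overline{\theta''f})$ and similarly $\theta''\,h(\,\cdot\,,\bar f)=-h(\,\cdot\,,\overline{\theta'f})$. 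Summing these four identities, the conjugate of $\d'-\theta'+\d''-\theta''$ on $\overline{H}$ corresponds under $h$ to $\d''+\theta''+\d'+\theta'=\nabla$ on $H^{\ast}$, and the identity extends to the full twisted de Rham complex by the Leibniz rule.

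The main obstacle, as is usual with these Hodge-theoretic comparisons, is the bookkeeping of types and signs: making sure that complex conjugation of a $(1,0)$-type operator (such as $\d'$) becomes a $(0,1)$-type operator, that the conjugate-linearity of $h$ in the second argument is properly tracked against the conjugate $\C$-structure on $\overline{H}$, and that the minus signs in the Higgs-adjoint identities line up with the minus signs in the operator $\d'-\theta'+\d''-\theta''$ to produce exactly $\nabla$ on $H^{\ast}$. Once these verifications are in place, passing to cohomology and applying the de Rham theorem for the flat bundle $(H^{\ast},\nabla)$ yields the stated isomorphism.
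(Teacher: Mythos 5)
Your proposal is correct and follows essentially the same approach as the paper's proof: both establish the four intertwining identities $\d''\circ h=h\circ\overline{\d'}$, $\d'\circ h=h\circ\overline{\d''}$, $\theta'\circ h=-h\circ\overline{\theta''}$, $\theta''\circ h=-h\circ\overline{\theta'}$ from the definitions in Construction \ref{construction:dual harmonic bundle}, and then observe that summing them identifies the conjugate of $\d'-\theta'+\d''-\theta''$ with $\nabla$ on $H^{\ast}$. (Your explicit attention to the $\C$-linear identification $\overline{\cA^{p,q}_X}\cong\cA^{q,p}_X$ is a small point of care the paper leaves implicit, but the computation is the same.)
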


\begin{proof}
There is a commutative diagram
\begin{equation*}
\begin{CD}
\overline{H} @>h >> H^{\ast} \\
@VVh^{\ast}(\nabla) V @VV\nabla V \\
\overline{H}\otimes \cA^1_U @>h\otimes \mathrm{Id}>> H^{\ast}\otimes \cA^1_U.
\end{CD}
\end{equation*}
We claim that
\begin{equation*} 
h^{\ast}(\nabla)= \overline{\d''}-\overline{\t''}+\overline{\d'}-\overline{\t'},
\end{equation*}
and each operator in the equation is the conjugated operator on $\overline{H}$ with respect to $\nabla$ on $H^{\ast}$. Granting this for now, since $H^{\ast}$ is the harmonic bundle associated to $\cV^{\ast}$ and the de Rham complex $\cA^{\bullet}_U(H^{\ast})$ is determined by the first order differential operator $\nabla$, we see that $h$ induces a quasi-isomorphism $\overline{\cV^{\ast}}\cong (\cA^{\bullet}_X(H),\d'-\theta'+\d''-\theta'')$.

To prove the claim, using the isomorphism $h:\overline{H}\to H^{\ast}$, we define a map so that any operator $A: H^{\ast} \to H^{\ast}\otimes \cA^1_U$ is sent to the unique operator $B: \overline{H} \to \overline{H}\otimes \cA^1_U$ satisfying
\[ A\circ h= (h\otimes \mathrm{Id}) \circ B.\]
Let us see how each operator in $\nabla=\d'+\t'+\d''+\t''$ changes under the map defined above. We fix $e_1,e_2\in\cC^{\infty}(H)$ and $\lambda=h(\bullet,\overline{e_2})\in \cC^{\infty}(H^{\ast})$.

\begin{enumerate}

\item [(A)] $\t'\mapsto -\overline{\t''}$. We have
\[ (\t'\lambda)(e_1)=-h(\t' e_1,\overline{e_2})=h(e_1,\overline{-\t''e_2}).\]
The last equality uses that $\theta'$ is the adjoint of $\t''$. Hence
\[ \t'h(\bullet,\overline{e_2})=h(\bullet,\overline{-\t''}\overline{e_2}),\]
which means that the image of $\t'$ is $-\overline{\t''}$.

\item [(B)] $\d'' \mapsto \overline{\d'}$. We have
\[ (\d''\lambda)(e_1)=\dbar h(e_1,\overline{e_2})-h(\d'' e_1,\overline{e_2})=h(e_1,\overline{\d' e_2}).\]
The last equality uses that $\d'+\d''$ is the metric connection with respect to $h$. Hence
\[\d''h(\bullet,\overline{e_2})=h(\bullet,\overline{\d'}\overline{e_2}),\] 
which means that $\d''$ is mapped to $\overline{\d'}$.

\item [(C)] $\t'' \mapsto -\overline{\t'}$ and $\d' \mapsto \overline{\d''}$ can be verified similarly using their definition via the dual harmonic metric $h^{\ast}$.
\end{enumerate}
\end{proof}

To define the pre-Weil operator $\phi$, we recall Sabbah's rescaling map in \cite[equation (2.2.6)]{Sabbah}.
\begin{definition}\label{definition:rescaling map} Define a map of complexes
\begin{align*}
\iota: \C[z]\otimes_{\C} \cA^{p,q}_U \otimes_{\cC^{\infty}_U}H &\to z^{-p}\C[z]\otimes_{\C}\cA^{p,q}_U \otimes_{\cC^{\infty}_U}H\\
\alpha^{p,q}\otimes m &\mapsto z^{-p}\alpha^{p,q}\otimes m.
\end{align*}
Under $\iota$, the differential $z\d'+\theta'+\d''+z\theta''$ is changed into $\d'+z^{-1}\theta'+\d''+z\theta''$. For $z_0\in \Gm$, the evaluation at $z=z_0$ gives
\begin{align*}
\iota_{z_0}: \cA^{p,q}_U \otimes_{\cC^{\infty}_U}H \to \cA^{p,q}_U \otimes_{\cC^{\infty}_U}H, \quad \alpha^{p,q}\otimes m \mapsto z^{-p}_0\alpha^{p,q}\otimes m.
\end{align*}
Abusing notations, we denote the induced map on the cohomology by
\[ \iota_{z_0}: H^k(U,\cA^{\bullet}_U(H);z_0\d'+\theta'+\d''+z_0\theta'') \xrightarrow{\sim} H^k(U,\cA^{\bullet}_U(H);\d'+z^{-1}_0\theta'+\d''+z_0\theta''). \]
\end{definition}
Let $U$ be a smooth quasi-projective variety and $\cV$ be a local system on $U$ coming from the restriction of a semisimple local system on a smooth projective compactification of $U$. By Corollary \ref{cor:yoga of weights for open}, there exists a natural twistor structure $E^k$ on $H^k(U,\cV)$ so that
\[ E^k|_{z=-1}\cong H^k(U,\cA^{\bullet}_U(H); -\d'+\t'+\d''-\t'').\]
\begin{definition}\label{definition: pre-Weil operator}
With the notation above. Let us fix the choice of a harmonic metric on $H$. The \emph{pre-Weil operator} $\phi$ associated to $(U,\cV)$, at the level of de Rham complexes, is defined by
\begin{align*}
\phi:\overline{(\cA^{\bullet}_U(H); -\d'+\t'+\d''-\t'')}
\xrightarrow{\overline{\iota_{-1}}} &\overline{(\cA^{\bullet}_U(H);\d'-\theta'+\d''-\theta'')}\xrightarrow{h} \cV^{\ast}.
\end{align*}
The pre-Weil operator at the level of cohomology is the induced map
\begin{align*}
\phi:\overline{E^k|_{z=-1}}
\xrightarrow{\overline{\iota_{-1}}} &\overline{H^k(U,\cA^{\bullet}_U(H);\d'-\theta'+\d''-\theta'')}\xrightarrow{h} H^k(U,\cV^{\ast}),
\end{align*}
where $h$ is the map in Lemma \ref{lemma: fiber at z=-1} and $\iota_{-1}$ is Sabbah's rescaling map.
\end{definition}

\begin{remark}
In the rest of paper, we also refer pre-Weil operators to these isomorphisms induced by $\phi$ through perverse cohomology functors and smooth restrictions.
\end{remark}
\begin{prop}\label{cor:generalized Weil operator} Let $X$ be a smooth projective variety and $\cV$ be a semisimple local system. Then the map
\[ C:\overline{H^k(X,\cV)} \cong \overline{E|_{z=1}} \xrightarrow{\overline{\mathrm{Iden}}} \overline{E|_{z=-1}} \xrightarrow{\phi} H^k(X,\cV^{\ast}),\]
recovers the Weil operator for the pure Hodge structure on $H^k(X,\C)$, sending $\overline{\alpha^{p,q}}$ to $(-1)^p\alpha^{p,q}$ for $\alpha^{p,q} \in H^{p,q}(X,\C)$, where $\mathrm{Iden}$ is the Identification map in Definition \ref{definition:Identification map}.
\end{prop}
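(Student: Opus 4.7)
The plan is to specialize the composition $C = \phi \circ \overline{\mathrm{Iden}}$ to $\cV = \C$ and compute its action on a harmonic Hodge component directly. With $\cV = \C$ the harmonic bundle is the trivial line bundle $H = \cC^{\infty}_X$ with metric $h(1,\overline{1}) = 1$, so $\theta' = \theta'' = 0$, $D' = \partial'$, $D'' = \partial''$, and $\mathrm{Harm}^k(X,H)$ is the classical space of harmonic $k$-forms with its Hodge decomposition $\bigoplus_{p+q=k} \cH^{p,q}(X)$. All three cohomology groups entering Definition \ref{definition: canonical map}, namely those carrying the twisted differentials $\pm \partial' \pm \partial''$, are then simultaneously trivialized by $\mathrm{Harm}^k(X)$, so the entire verification reduces to tracking sign factors on harmonic representatives.

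Starting from $\overline{\alpha^{p,q}} \in \overline{H^k(X,\C)}$ with $\alpha^{p,q}$ harmonic of type $(p,q)$, I would first apply $\overline{\mathrm{Iden}}$: by Remark \ref{remark:Identification map for Simpsons twistor structure}, the Identification map for Simpson's Differential geometric construction acts as the identity on harmonic representatives, so $\overline{\mathrm{Iden}}(\overline{\alpha^{p,q}}) = \overline{\alpha^{p,q}}$, now regarded in $\overline{H^k(X,\cA^{\bullet}_X; -\partial'+\partial'')}$. Next I would apply the conjugate rescaling $\overline{\iota_{-1}}$; by Definition \ref{definition:rescaling map}, $\iota_{-1}(\alpha^{p,q}) = (-1)^{-p}\alpha^{p,q} = (-1)^p\alpha^{p,q}$, and because $(-1)^p \in \R$ the conjugate map outputs $(-1)^p \overline{\alpha^{p,q}}$ in $\overline{H^k(X,\cA^{\bullet}_X; \partial'-\partial'')}$. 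Finally I would apply $h$ from Lemma \ref{lemma: fiber at z=-1}: in the trivial-bundle case the bundle isomorphism $h\colon \overline{H} \to H^{\ast}$ is the tautological identification $\cC^{\infty}_X \cong \cC^{\infty}_X$, and on cohomology it turns the abstract conjugate of a class into the actual complex-conjugate form, giving a class of $(-1)^p \alpha^{p,q}$ in $H^k(X,\C^{\ast}) = H^k(X,\C)$. This is precisely the image prescribed by the classical Weil operator.

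The hard part is not the algebra but the bookkeeping: one must keep three a priori unrelated ``conjugations'' straight---the abstract conjugation $\overline{V}$ of a $\C$-vector space (used in the definition of $\phi$), the antipodal rescaling $\iota_{-1}$ at $z=-1$, and the conjugate-linear isomorphism $\overline{H} \cong H^{\ast}$ provided by the harmonic metric---and match them against the genuine complex conjugation of differential forms entering the Hodge decomposition. Simpson's picture is well suited for this because all of the twistor fibers involved are simultaneously represented by the single vector space $\mathrm{Harm}^k(X,H)$: on this uniform model, $\overline{\iota_{-1}}$ supplies the Weil sign $(-1)^p$ while $h$ merely relabels an abstract conjugate as an honest complex-conjugate form, so their composition recovers the classical Weil operator.
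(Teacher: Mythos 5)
Your proof follows the paper's own argument exactly: specialize to $\cV=\C$ so the harmonic metric is constant, invoke Remark~\ref{remark:Identification map for Simpsons twistor structure} to see $\overline{\mathrm{Iden}}$ fixes harmonic representatives, read off the sign $(-1)^p$ from $\overline{\iota_{-1}}$, and note that $h$ becomes the canonical identification $\overline{\C}\cong\C^{\ast}$. The only wrinkle is your description of the last step: for the trivial bundle $h$ is not literally ``tautological'' but is the conjugate-linear identification $\overline{\C}\cong\C^{\ast}$ of Lemma~\ref{lemma: fiber at z=-1}, which on harmonic representatives produces the conjugate form $\overline{\alpha^{p,q}}$ rather than $\alpha^{p,q}$ itself (compare the computation inside the proof of Theorem~\ref{cor:polarized by twisted Poincare pairing}); this matches the paper's intended reading of ``$\overline{\alpha^{p,q}}\mapsto(-1)^p\alpha^{p,q}$'' once the implicit identification $\overline{H^k(X,\C)}\cong H^k(X,\C)$ by conjugation of forms is unwound, and so does not affect the correctness of the argument.
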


\begin{proof}
When $\cV=\C$, the associated harmonic metric is the constant Hermitian metric. Therefore the map $h$ in Lemma \ref{lemma: fiber at z=-1} is an identity under the identification $\overline{\C}\cong \C^{\ast}$. Then the corollary follows from Remark \ref{remark:Identification map for Simpsons twistor structure} and the construction of the rescaling map in Definition \ref{definition:rescaling map}.
\end{proof}

\begin{lemma}\label{lemma:Identification map is functorial} With the notation in Definition \ref{definition: pre-Weil operator}. The pre-Weil operator $\phi$ is functorial with respect to smooth restrictions and is compatible with cup products:

\begin{itemize}

\item if $N$ is any line bundle on $U$, there is a commutative diagram:
\[ \begin{CD}
  \overline{E^k|_{z=-1}}  @>\overline{F|_{z=-1}}>>  \overline{E^{k+2\ell}|_{z=-1}} \\
  @VV\phi V @VV\phi V \\
  H^k(U,\cV^{\ast}) @>\wedge c_1(N)^{\ell}>>H^{k+2\ell}(U,\cV^{\ast})
\end{CD}\]
where $F:E^k \to E^{k+2\ell}$ is the morphism of twistor structures induced by the cup product with $c_1(N)^{\ell}$, 

\item for any smooth subvariety $T\subseteq U$, denote $E^k_T$ to be the associated twistor structure, then there is a commutative diagram:
\[ \begin{CD}
  \overline{E^k|_{z=-1}}  @>\overline{G|_{z=-1}}>>  \overline{E^k_T|_{z=-1}} \\
  @VV\phi V @VV\phi_TV \\
  H^k(U,\cV^{\ast}) @>\widetilde{R}>>H^k(T,\cV^{\ast}|_T)
\end{CD}\]
where $G:E^k\to E^k_T$ and $\widetilde{R}$ are restriction maps.
\end{itemize}

\end{lemma}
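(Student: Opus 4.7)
The plan is to verify each of the two commutative diagrams by decomposing $\phi=h\circ\overline{\iota_{-1}}$ (and analogously $\phi_T$) and checking separately that the rescaling map $\iota_{-1}$ and the harmonic-metric map $h$ each commute with the cup product in part (1) and with the restriction in part (2). Both are fiber-level assertions, and I will work with harmonic-form representatives throughout, using the explicit fiber descriptions $E^k|_{z=-1}\cong H^k(X,\cA^\bullet_X(H); -\d'+\t'+\d''-\t'')$ coming from the Differential geometric construction \ref{differential geometric construction}.

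For the rescaling factor, recall from Definition \ref{definition:rescaling map} that $\iota_{-1}(\alpha^{p,q}\otimes m)=(-1)^p\alpha^{p,q}\otimes m$ depends only on the $(p,q)$-bidegree. Restriction to a smooth subvariety preserves bidegree, so $\iota_{-1}$ is functorial under restriction on the nose, handling part (2) for this factor. For part (1), let $\omega$ be a harmonic representative of $c_1(N)$, a closed real $(1,1)$-form whose $\ell$-th power $\omega^\ell$ is of pure type $(\ell,\ell)$. I will show that the fiber morphism $F|_{z=-1}$ equals $(-1)^\ell\omega^\ell\wedge$: in the Differential geometric construction, $F$ is realized by wedging $\omega^\ell$ against the section $\lambda^\ell\mu^\ell\in H^0(\bP^1,\cO_{\bP^1}(2\ell))$, and evaluating at $z=-1$ (where $\lambda=-\mu$) contributes the factor $(-1)^\ell$. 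A short computation then shows the two signs cancel,
\[
\iota_{-1}\bigl((-1)^\ell\,\omega^\ell\wedge(\alpha^{p,q}\otimes m)\bigr)=(-1)^\ell(-1)^{p+\ell}\,\omega^\ell\wedge(\alpha^{p,q}\otimes m)=\omega^\ell\wedge\iota_{-1}(\alpha^{p,q}\otimes m),
\]
and taking conjugates (using that $\omega$ is real) gives the analogous statement for $\overline{\iota_{-1}}$.

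For the harmonic-metric factor, the map $h\colon\overline{H}\to H^{\ast}$ from Lemma \ref{lemma: fiber at z=-1} is a $\cC^\infty$-isomorphism of bundles on $X$. Wedging with the scalar form $\omega^\ell$ acts only on the form factor and commutes trivially with $h$, handling part (1) for this factor. For part (2), by Corollary \ref{cor:pull back preserve semisimplicity} the harmonic bundle $H|_T$ carries the restriction of $h$, so the map defining $\phi_T$ is literally $h|_T$ and is functorial under pullback of sections of $\overline{H}$. Stringing the two compatibilities together produces both diagrams.

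The main obstacle is pinning down the formula $F|_{z=-1}=(-1)^\ell\omega^\ell\wedge$ in part (1), since a priori any section of $\cO_{\bP^1}(2\ell)$ produces a morphism of twistor complexes (the differential $\lambda D'+\mu D''$ annihilates the closed form $\omega$, so no $z$-dependence is forced by $\d$-compatibility alone). I will fix the section $\lambda^\ell\mu^\ell$ either by comparison with the Rees bundle description of the Analytic construction \ref{analytic construction} (where wedging with a $(1,1)$-form raises the Hodge level, producing a $z^{-1}$ factor that specializes to $-1$ at $z=-1$) or, equivalently, by the uniqueness of the $\sigma$-invariant section of $\cO_{\bP^1}(2)$ normalized to give the ordinary cup product at $z=1$. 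The resulting sign $(-1)^\ell$ is precisely what cancels the $(-1)^\ell$ coming from the rescaling, which is the underlying reason the canonical map is twistor-theoretic.
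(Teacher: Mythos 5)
Your argument is correct, and for the second bullet it is the same proof as the paper's: the paper also factors $\phi=h\circ\overline{\iota_{-1}}$ and records, at the level of bundles with operators, that restriction to $T$ commutes first with $\overline{\iota_{-1}}$ (because pullback along the complex submanifold $T$ preserves $(p,q)$-bidegree) and then with the metric maps $h_X$, $h_T$, taking cohomology at the end; your use of Corollary \ref{cor:pull back preserve semisimplicity} to say $\phi_T$ is built from $h|_T$ matches this. For the first bullet the paper simply says the commutativity is clear, so your contribution is to make that explicit, and your sign bookkeeping is right: $F|_{z=-1}=(-1)^{\ell}\omega^{\ell}\wedge$ in the fiber description $H^k(X,\cA^{\bullet}_X(H);-D'+D'')$ is exactly equivalent to $\iota_{-1}\circ F|_{z=-1}=\omega^{\ell}\wedge\circ\,\iota_{-1}$, which cancels against the $(-1)^{p}$ versus $(-1)^{p+\ell}$ discrepancy. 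One caution: of your two proposed ways to pin down the section of $\cO_{\bP^1}(2\ell)$, only the first is conclusive. The ``unique $\sigma$-invariant section normalized at $z=1$'' argument does not work as stated: prescribing the value at $z=1$ together with $\sigma$-reality does not single out $(\lambda\mu)^{\ell}$ inside $H^0(\bP^1,\cO_{\bP^1}(2\ell))$, and the paper fixes no real structure on $E^k$ with respect to which ``$\sigma$-invariance of $F$'' would even be meaningful. The clean justification is the one you list first: over $\Gm$ the twistor structure is, via the rescaling $\iota$, the family $R^kp_{2,\ast}(\cL)$ of cohomologies of the local systems $L_z$, and the morphism induced by $N^{\ell}$ is by construction the plain cup product with the topological class $c_1(N)^{\ell}$ in that description (this is what forces the section $(\lambda\mu)^{\ell}$, equivalently the factor $z^{\ell}$ in the $\mu$-trivialization, specializing to $(-1)^{\ell}$ at $z=-1$); with that observation the first square is immediate, since in the rescaled picture at $z=-1$ the map is just $\omega^{\ell}\wedge$, which visibly commutes with conjugation (as $\omega$ is real) and with $h$.
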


\begin{proof}
The first statement follows from the compatibility of $\phi$ with the cup product with $c_1(N)^{\ell}$ on the level of de Rham complex $\cA_U^{\bullet}(H)$. 

For the second statement, denote by $(H,h)$ the harmonic bundle associated to the semisimple local system $\cV$ on $U$. Consider the restriction $\cV_T=\cV|_T$ and $(H_T,h_T)=(H,h)|_T$. Because harmonic metrics are preserved under pullback (see \cite[Page 18]{Simpson88}), we know that $h_T$ is a harmonic metric on $H_T$. This induces the complex $(\cA^\bullet_T(H_T); \lambda\d'_T+\t'_T+\d''_T+\lambda\t''_T),$ with the differential operators $\d'_T,$ etc., which are associated with the harmonic metric $h_T$. It is clear that they are equal to the restriction of $\d'$, etc. to $T$. This implies that the pre-Weil operator
\[ \overline{H^k(U,\cA^{\bullet}_U(H);-\d'+\theta'+\d''-\theta'')}\xrightarrow{h\circ \overline{\iota_{-1}}} H^k(U,\cV^{\ast}) \]
is compatible with restriction to $T$.

It remains to prove that the isomorphism $E^k|_{z=-1}\cong H^k(U,\cA^{\bullet}_U(H); -\d'+\t'+\d''-\t'')$ from \eqref{eqn: fiber of twistor structure on open} in Corollary \ref{cor:yoga of weights for open} is also compatible with $T\hookrightarrow U$. To do this, we need to recall the construction of this isomorphism. Let $X$ be the smooth projective compactification of $U$ so that $\cV$ is the restriction of a semisimple local system $\cV_X$ on $X$. As in the proof of Corollary \ref{cor:yoga of weights for open}, using sufficiently many blow-ups of $X$ which do not change $T$ and $U$, we can assume that we have the following diagram
\[ \begin{tikzcd}
    T \arrow[r,hook] \arrow[d,hook,"j_T"] &U \arrow[d,hook,"j_U"] \\
    \overline{T} \arrow[r,hook,"i"] &X
\end{tikzcd}
\]
where $X\setminus U$ and $\overline{T}\setminus T$ are simple normal crossing divisors in $X$ and $\overline{T}$, respectively and $\overline{T}\subseteq X$ is a smooth projective variety containing $T$.

Let $\cL_X$ be the holomorphic family of local systems on $X^{\mathrm{top}}\times \Gm$ constructed by Simpson \cite[Page 23-24]{Simpson97}, as a part of the triple $(\cF_X,\cL_X,\cF_X')$ associated to the local system $\cV_X$. Denote by $H_X$ the associated harmonic bundle on $X$ and $E^k_X$ the pure twistor structure on $H^k(X,\cV_X)$. We denote by $()_{\overline{T}}$ for the corresponding objects of $\overline{T}$. Then $\cL_{\overline{T}}=(i\times\mathrm{id})^{\ast}\cL_X$ and $H_{\overline{T}}=i^{\ast}H_X$. Lemma \ref{lemma:pure twistor structure via glueing} and \eqref{eqn: fiber of twistor structures} imply that
\begin{equation}\label{eqn: identification of EkX at -1}
E^k_X|_{z=-1}=H^k(X,\cL_X|_{X\times \{-1\}})\cong H^k(X,\cA^{\bullet}_X(H_X);-\d'_X+\d''_X+\t'_X-\t''_X).
\end{equation}
The same holds for $\overline{T}$. The natural map $\cL_X\to (i\times \mathrm{id})_{\ast}\cL_{\overline{T}}$ and $\cA^{\bullet}_X(H_X) \to i_{\ast}i^{\ast}\cA^{\bullet}_X(H_X)\cong_{\mathrm{qiso}} i_{\ast}\cA^{\bullet}_{\overline{T}}(H_{\overline{T}})$ induce the restriction map $E^k_X\to E^k_{\overline{T}}$ and
\[ H^k(X,\cA^{\bullet}_X(H_X);-\d'_X+\d''_X+\t'_X-\t''_X)\to H^k(\overline{T},\cA^{\bullet}_{\overline{T}}(H_{\overline{T}});-\d'_{\overline{T}}+\d''_{\overline{T}}+\t'_{\overline{T}}-\t''_{\overline{T}})\]
so that they are compatible with the isomorphism \eqref{eqn: identification of EkX at -1} and the one for $\overline{T}$.

Now note that the twistor structures $E^k_T$ and $E^k$ are induced by $\cL_{\overline{T}}$ and $\cL_{X}$ so that
\[ E^k|_{z=z_0}\cong H^k(X,(\xi j_{U,\ast}\cA^{\bullet}_U)\otimes \cL_X)|_{X\times \{z_0\}}),\quad E^k_T|_{z=z_0}\cong H^k(\overline{T},(\xi j_{T,\ast}\cA^{\bullet}_T)\otimes \cL_{\overline{T}}|_{\overline{T}\times\{z_0\}}).\]
This gives the desired compatiblity with $T\hookrightarrow U$, because all of them are induced by $\cL_X$ and all harmonic bundles $H_{-}$ (for $-=U,T,\overline{T}$) are restrictions of $H_X$. This finishes the proof of the Lemma.

\end{proof}

In the rest of this subsection, we work with Set-up \ref{setup: cohomology of smooth projective} and assume $k\leq \dim X$.
\begin{definition}\label{definition:twisted Poincare pairing}
We define the \emph{twisted Poincar\'e pairing}  $S$ to be the bilinear pairing
\begin{align*}
S: H^k(X,\cV) &\otimes_{\C} H^k(X,\cV^{\ast}) \to \C \\
 [\alpha \otimes e] &\otimes [\beta \otimes \lambda] \mapsto (-1)^{k(k-1)/2} \int_X  \lambda(e)\cdot c_1(\eta)^{\dim X-k}\wedge \alpha \wedge \beta.
\end{align*}
Here $\alpha,\beta$ are $k$-forms on $X$ and $e,\lambda$ are global sections of $H$ and $H^{\ast}$ respectively.
 \end{definition}
Let $E^k_{\prim}$ be the natural pure sub-twistor structure on $H^k(X,\cV)_{\prim}$ induced by the one on $H^k(X,\cV)$. We show that one can use $S$ and $\phi$ to polarize $E^k_{\prim}$.
\begin{thm}\label{cor:polarized by twisted Poincare pairing}
The pre-Weil operator $\phi$ restricts to
\[ \phi: \overline{E^k_{\prim}|_{z=-1}} \xrightarrow{\sim} H^k(X,\cV^{\ast})_{\prim}\colonequals \Ker \eta^{\dim X-k+1}\subseteq H^k(X,\cV^{\ast}).\]
Moreover, $E^k_{\prim}$ is polarized by the bilinear form
\[ i^{-k}\cdot S(\bullet,\phi(\bullet)):E^k_{\prim}|_{z=1}\otimes \overline{E^k_{\prim}|_{z=-1}} \xrightarrow{\mathrm{Id}\otimes \phi} H^k(X,\cV)_{\prim}\otimes H^k(X,\cV^{\ast})_{\prim} \xrightarrow{S} \C. \]
in the sense of Definition \ref{definition:pure twistor structures polarized by bilinear pairings}. Similarly, $i^{-(k-2m)}(-1)^{m(2m-2k+1)}\cdot S(\bullet,\phi(\bullet))$ polarizes the natural pure twistor structure $F$ on the primitive space $\eta^mH^{k-2m}(X,\cV)_{\prim}$, with the induced isomorphism $\phi : \overline{F|_{z=-1}} \xrightarrow{\sim} \eta^mH^{k-2m}(X,\cV^{\ast})_{\prim}$. 
\end{thm}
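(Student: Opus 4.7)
The plan is to reduce the polarization statement to the pointwise Hodge--Riemann bilinear relations on the harmonic bundle $H$, by representing cohomology classes through harmonic forms as in Simpson's Differential geometric construction.

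First I would verify that $\phi$ restricts to an isomorphism between the primitive subspaces. By Lemma \ref{lemma:Identification map is functorial}(i) applied to $N = \eta$, the map $\phi$ intertwines the cup product with $\eta$, and therefore carries $\Ker(\eta^{n-k+1})$ on $\overline{E^k|_{z=-1}}$ into $\Ker(\eta^{n-k+1}) = H^k(X, \cV^{\ast})_{\prim}$; since $\phi$ is an isomorphism by construction, this restriction is also an isomorphism. By Remark \ref{remark: restriction of Identification map} the Identification map of $E^k$ restricts to that of $E^k_{\prim}$, so the source is correctly identified with $\overline{E^k_{\prim}|_{z=-1}}$.

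For the polarization, fix $0 \neq v \in H^k(X, \cV)_{\prim}$ and represent it by its unique harmonic form $\alpha$ (Theorem \ref{thm:harmonic representatives}), decomposed by bidegree as $\alpha = \sum_{p+q=k} \alpha^{p,q}$ with $\alpha^{p,q} \in \cA^{p,q}_X \otimes H$; each $\alpha^{p,q}$ is again primitive because $\eta$ is of type $(1,1)$. By Remark \ref{remark:Identification map for Simpsons twistor structure}, $\overline{\mathrm{Iden}}(\overline{v})$ is represented by $\overline{\alpha}$ viewed as a class in $\overline{H^k(X, \cA^{\bullet}_X(H); -\d' + \theta' + \d'' - \theta'')}$. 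Unwinding $\phi = h \circ \overline{\iota_{-1}}$ via Definition \ref{definition:rescaling map} (so that $\iota_{-1}$ multiplies each bidegree $(p,q)$ component by $(-1)^{-p} = (-1)^p$) and Lemma \ref{lemma: fiber at z=-1} (the metric contraction $h \colon \overline{H} \to H^{\ast}$), I find that $\phi(\overline{\mathrm{Iden}}(\overline{v}))$ is represented by $\sum_{p+q=k} (-1)^p h(\overline{\alpha^{p,q}})$. Substituting into the twisted Poincar\'e pairing and noting that the cross terms with $(p,q) \neq (p',q')$ vanish by a bidegree count (the integrand must have total bidegree $(n,n)$), one obtains
\[
S(v, \phi(\overline{\mathrm{Iden}}(\overline{v}))) = (-1)^{k(k-1)/2} \sum_{p+q=k} (-1)^p \int_X \eta^{n-k} \wedge \alpha^{p,q} \wedge h(\overline{\alpha^{p,q}}),
\]
where $\alpha^{p,q} \wedge h(\overline{\alpha^{p,q}})$ is the scalar $(k,k)$-form built by wedging forms and simultaneously contracting $H$-coefficients with $H^{\ast}$-coefficients via the natural pairing.

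Multiplying by $i^{-k}$ and using $i^{-k}(-1)^p = i^{p-q}$ (valid since $p - q = 2p - k$), each summand becomes $i^{p-q}(-1)^{k(k-1)/2} \int_X \eta^{n-k} \wedge \alpha^{p,q} \wedge h(\overline{\alpha^{p,q}})$, the classical Hodge--Riemann integrand for primitive $(p,q)$-forms, which is positive by the pointwise Hodge--Riemann bilinear relations applied fiberwise together with the positive definiteness of the harmonic metric $h$. This establishes the polarization on $E^k_{\prim}$. For $\eta^m H^{k-2m}(X, \cV)_{\prim}$, applying Lemma \ref{lemma:Identification map is functorial}(i) to commute $\eta^m$ past $\phi \circ \overline{\mathrm{Iden}}$ reduces to the primitive case of weight $k - 2m$ already proved; a direct sign comparison using $k(k-1) - (k-2m)(k-2m-1) = 2m(2k - 2m - 1)$ shows that the stated constant $i^{-(k-2m)}(-1)^{m(2m-2k+1)}$ is precisely what converts the primitive polarization of weight $k - 2m$ into the one of weight $k$. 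The main obstacle is the careful sign bookkeeping, ensuring that the composition of the ``identity'' Identification map on harmonic representatives (Remark \ref{remark:Identification map for Simpsons twistor structure}), the rescaling factor $(-1)^p$ from $\overline{\iota_{-1}}$, and the $i^{-k}$ normalization collapses $(-1)^{k(k-1)/2}(-1)^p \cdot i^{-k}$ exactly into the Hodge--Riemann factor $i^{p-q}(-1)^{k(k-1)/2}$.
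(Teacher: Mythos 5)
Your proposal is correct and follows essentially the same route as the paper: identify the primitive restriction of $\phi$ via Lemma \ref{lemma:Identification map is functorial}, compute $\phi\circ\overline{\mathrm{Iden}}$ on harmonic representatives as $\sum_{p+q=k}(-1)^p\,\overline{\alpha^{p,q}}\otimes m_{p,q}^{\vee}$ using Remark \ref{remark:Identification map for Simpsons twistor structure} and the rescaling map, and conclude positivity of $i^{-k}S(\bullet,\phi\circ\overline{\mathrm{Iden}}(\bullet))$ from the classical pointwise Hodge--Riemann computation for primitive $(p,q)$-forms together with the positivity of $h$. The only difference is that you spell out the $\eta^mH^{k-2m}(X,\cV)_{\prim}$ case (which the paper leaves to the reader), and your sign bookkeeping $k(k-1)-(k-2m)(k-2m-1)=2m(2k-2m-1)$ matching the stated constant is correct.
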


\begin{proof}
We will only deal with the case of $H^k(X,\cV)_{\prim}$ and leave other cases to the reader. The statement on restriction of $\phi$ follows from Lemma \ref{lemma:Identification map is functorial}. For any element in $H^k(X,\cV)$, consider its harmonic representative $\sum_{p+q=k} \alpha^{p,q}\otimes m_{p,q}$, where $\alpha^{p,q}$ are $(p,q)$-forms and $m_{p,q}$ are sections of the associated harmonic bundle $H$. By Remark \ref{remark:Identification map for Simpsons twistor structure}, we see that
\[ \phi(\overline{\mathrm{Iden}}\overline{\left[\sum_{p+q=k} \alpha^{p,q}\otimes m_{p,q}\right]})=\phi(\overline{\left[\sum_{p+q=k} \alpha^{p,q}\otimes m_{p,q}\right]})=\sum_{p+q=k}(-1)^p\left[\overline{\alpha^{p,q}}\otimes m_{p,q}^{\vee}\right]\]
where $m_{p,q}^{\vee}\in \cC^{\infty}(H^{\ast})$ is a section satisfying $m_{p,q}^{\vee}(\bullet)=h(\bullet,\overline{m_{p,q}})$. Therefore,
\begin{align*}
&i^{-k}\cdot S\left(\left[\sum \alpha^{p,q}\otimes m_{p,q}\right],\phi\circ \overline{\mathrm{Iden}}\overline{\left[\sum \alpha^{p,q}\otimes m_{p,q}\right]}\right)\\
&=\sum_{p+q=k} i^{-k}\cdot S\left([\alpha^{p,q}\otimes m_{p,q}],[(-1)^p\overline{\alpha^{p,q}}\otimes m_{p,q}^{\vee}]\right)\\
&=\sum_{p+q=k} (-1)^p\cdot i^{-k}(-1)^{k(k-1)/2}\int_X  h(m_{p,q},\overline{m_{p,q}})\cdot c_1(\eta)^{n-k}\wedge \alpha^{p,q}\wedge \overline{\alpha^{p,q}}\\&=\sum_{p+q=k} i^{p-q}(-1)^{k(k-1)/2}\int_X  h(m_{p,q},\overline{m_{p,q}})\cdot c_1(\eta)^{n-k}\wedge \alpha^{p,q}\wedge \overline{\alpha^{p,q}}>0.
\end{align*}
The positivity follows from the classical calculation of the Hodge star operators for primitive forms \cite[Theorem 6.29]{Voisin}.
\end{proof}

\begin{corollary}\label{cor: demonstration of polarization of subtwistor}
Let $T\subseteq X$ be a smooth   subvariety. Consider the restrictions maps
\[ R:H^k(X,\cV)_{\prim} \to H^k(T,\cV|_T), \quad \widetilde{R}:H^k(X,\cV^{\ast})_{\prim} \to H^k(T,\cV^{\ast}|_T).\]
Then the twisted Poincar\'e pairing restricts to a non-degenerate pairing
\[ S: \Ker R\otimes \Ker \widetilde{R} \to \C.\]
\end{corollary}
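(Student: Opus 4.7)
The plan is to deduce the non-degeneracy of $S|_{\Ker R \otimes \Ker \widetilde R}$ from the positivity statement in Theorem~\ref{cor:polarized by twisted Poincare pairing} by identifying $\Ker \widetilde R$ with the image of $\overline{\Ker R}$ under the generalized Weil operator $C = \phi \circ \overline{\Iden}$. Granting this identification, non-degeneracy follows immediately: for any $0 \neq v \in \Ker R$, Theorem~\ref{cor:polarized by twisted Poincare pairing} yields $i^{-k} S\bigl(v, C(\overline{v})\bigr) > 0$ while $C(\overline{v}) \in \Ker \widetilde R$, so $v$ is not in the left radical of $S|_{\Ker R \otimes \Ker \widetilde R}$; the Hermitian symmetry of $S(\bullet, C(\bullet))$ eliminates the right radical symmetrically.

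The crux is therefore to establish $C(\overline{\Ker R}) = \Ker \widetilde R$. Let $F \colon E^k \to E^k_T$ be the morphism of twistor structures induced by the restriction to $T$, and let $F_{\prim}$ denote its restriction to the primitive direct summand $E^k_{\prim} \subseteq E^k$ coming from the Lefschetz decomposition in Theorem~\ref{thm:Hodge-Simpson}. The second part of Lemma~\ref{lemma:Identification map is functorial}, restricted to the primitive direct summand on the $X$-side, yields a commutative diagram
\begin{equation*}
\begin{CD}
\overline{E^k_{\prim}|_{z=-1}} @>\overline{F_{\prim}|_{z=-1}}>> \overline{E^k_T|_{z=-1}} \\
@V \phi VV @VV \phi_T V \\
H^k(X, \cV^{\ast})_{\prim} @>\widetilde R>> H^k(T, \cV^{\ast}|_T).
\end{CD}
\end{equation*}
Both vertical arrows are isomorphisms: $\phi$ by Theorem~\ref{cor:polarized by twisted Poincare pairing}, and $\phi_T$ because its two defining factors $\overline{\iota_{-1}}$ and $h$ are isomorphisms (Lemma~\ref{lemma: fiber at z=-1}). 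Consequently $\phi$ maps $\overline{\Ker(F_{\prim}|_{z=-1})}$ isomorphically onto $\Ker \widetilde R$, and it remains to verify that the Identification map $\Iden \colon E^k_{\prim}|_{z=1} \to E^k_{\prim}|_{z=-1}$ carries $\Ker R$ bijectively onto $\Ker(F_{\prim}|_{z=-1})$.

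For this last equality, I would invoke Simpson's theory of weights: since $T$ is smooth, $\Gr^W_i H^k(T, \cV|_T) = 0$ for $i < k$, so the morphism $F_{\prim}$, whose source is pure of weight $k$, factors through the lowest-weight piece $W_k E^k_T$, itself pure of weight $k$. Being a morphism $F'_{\prim} \colon E^k_{\prim} \to W_k E^k_T$ between pure twistor structures of the same weight, $F'_{\prim}$ commutes with the Identification maps on its source and target by the equivalence of categories in Remark~\ref{remark:Category of twistor structures}. Since $W_k E^k_T \hookrightarrow E^k_T$ is fiberwise injective, $\Ker(F_{\prim}|_{z=\pm 1}) = \Ker(F'_{\prim}|_{z=\pm 1})$, and naturality of $\Iden$ applied to $F'_{\prim}$ then gives $\Iden(\Ker R) = \Ker(F_{\prim}|_{z=-1})$, as required.

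The main obstacle is this last step: it relies on the weight-vanishing $\Gr^W_i H^k(T,\cV|_T) = 0$ for $i < k$ from \S\ref{sec: cohomology of arbitrary}. Such a forward reference is admissible because, as the excerpt explicitly notes, the proof of the weight theorem is independent of the present section. A conceptually alternative route would be to replace $T$ by a smooth projective compactification $\overline T$ admitting a morphism to $X$ that extends the inclusion (as in the proof of Corollary~\ref{thm:semisimplicity smooth projective map}) and to factor $R$ as $H^k(X, \cV) \to H^k(\overline T, \cV|_{\overline T}) \to H^k(T, \cV|_T)$, but tracking how $\Ker R$ relates to the two factor kernels appears less transparent than appealing directly to the weight vanishing.
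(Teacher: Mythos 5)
Your proof is correct and follows essentially the same route as the paper: identify $\Ker R$ with a pure sub-twistor structure of the polarized $E^k_{\prim}$, transport it to $\Ker\widetilde R$ via the compatibility of canonical maps (Lemma~\ref{lemma:Identification map is functorial}), and read off non-degeneracy from the polarization (Lemmas~\ref{lemma:polarization of subtwistors} and~\ref{lemma: polarization implies non-degeneracy}); your version merely makes explicit the weight-theoretic step the paper leaves implicit inside Remark~\ref{remark: restriction of Identification map}. One small caution: the assertion that $F_{\prim}$ factors through $W_kE^k_T$ does not follow from the weight vanishing alone, since $\Hom_{\cO_{\bP^1}}(\cO_{\bP^1}(k),\cO_{\bP^1}(k+1))\neq 0$ means a bundle morphism from a pure weight-$k$ source into a bundle with weights $\geq k$ need not land in the lowest weight piece; what makes the factorization hold is that the restriction map underlies a morphism of \emph{mixed} twistor structures (functoriality in Theorem~\ref{thm:existence of mixed twistor structures}), which by definition preserves the weight filtration — the same implicit input the paper relies on when it invokes Lemma~\ref{lemma:polarization of subtwistors}, since one needs $\Ker R$ to be the $z=1$-fiber of a strict subbundle of $E^k_{\prim}$.
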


\begin{proof}
It follows from Theorem \ref{cor:polarized by twisted Poincare pairing}, the compatibility Lemma \ref{lemma:Identification map is functorial} and Lemma \ref{lemma:polarization of subtwistors}.
\end{proof}
We conclude this subsection by reinterpreting the pre-Weil operator $\phi$ via the Hodge star operator.
\begin{lemma}\label{lemma:identification is Hodge star operator}
 Assume that $\eta$ is an ample line bundle associated to a K\"ahler metric $g$ on $X$. Consider 
\[ (\eta^{n-k})^{-1}\circ\overline{ \ast} : \overline{H^k(X,\cV)_{\prim}} \to H^{2n-k}(X,{\cV}^{\ast})_{\prim} \to H^k(X,{\cV}^{\ast}_{\prim}).\]
where $\ast$ is the Hodge star operator in Definition \ref{definition:C-linear Hodge star operator}.
Then this map can be identified up to a scaling constant with the composition map
\[ \phi \circ \overline{\mathrm{Iden}}: \overline{H^k(X,\cV)_{\prim}}= \overline{E^k_{\prim}|_{z=1}} \to  \overline{E^k_{\prim}|_{z=-1}}\to H^k(X,\cV^{\ast})_{\prim}. \]
\end{lemma}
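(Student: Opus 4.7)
The plan is to verify the identity by computing both sides on an explicit harmonic representative and matching constants type by type. Fix a class in $H^k(X,\cV)_{\prim}$ with harmonic representative $A = \sum_{p+q=k}\alpha^{p,q}\otimes m_{p,q}$, where each $\alpha^{p,q}$ is a primitive $(p,q)$-form and $m_{p,q}\in\cC^\infty(H)$. The proof of Theorem \ref{cor:polarized by twisted Poincare pairing} already records the formula
\[
\phi\circ\overline{\mathrm{Iden}}\bigl(\overline{[A]}\bigr) \;=\; \sum_{p+q=k}(-1)^{p}\bigl[\,\overline{\alpha^{p,q}}\otimes m_{p,q}^{\vee}\,\bigr], \qquad m_{p,q}^{\vee}(\bullet)=h(\bullet,\overline{m_{p,q}}),
\]
so that side is entirely explicit.

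Next I would unwind the definition of the $\C$-linear Hodge star operator $\overline{\ast}$ from Definition \ref{definition:C-linear Hodge star operator} in the Appendix. By construction it intertwines the form part with the classical Hodge star and handles the harmonic bundle coefficients through the map $\overline{m}\mapsto h(\bullet,\overline{m})$, giving a $\C$-linear operator $\overline{H^k(X,\cV)}\to H^{2n-k}(X,\cV^{\ast})$ on harmonic representatives. The one input I need is the classical Weil formula for primitive forms: for a primitive $(p,q)$-form $\alpha^{p,q}$ with $p+q=k\leq n$,
\[
\ast\,\alpha^{p,q} \;=\; \frac{(-1)^{k(k+1)/2}\, i^{p-q}}{(n-k)!}\,\eta^{n-k}\wedge\alpha^{p,q}.
\]
Applying this termwise, one obtains
\[
\overline{\ast}\bigl(\overline{[A]}\bigr) \;=\; \sum_{p+q=k}\frac{(-1)^{k(k+1)/2}\, i^{q-p}}{(n-k)!}\,\bigl[\,\eta^{n-k}\wedge\overline{\alpha^{p,q}}\otimes m_{p,q}^{\vee}\,\bigr],
\]
since conjugating the scalar flips $i^{p-q}$ to $i^{q-p}$ while the harmonic metric contribution becomes $m_{p,q}^{\vee}$.

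Dividing by $\eta^{n-k}$, which is legal on primitive classes by Hard Lefschetz (Theorem \ref{thm:Hard Lefschetz for local systems}), yields
\[
(\eta^{n-k})^{-1}\circ\overline{\ast}\bigl(\overline{[A]}\bigr) \;=\; \frac{(-1)^{k(k+1)/2}}{(n-k)!}\,\sum_{p+q=k} i^{q-p}\bigl[\,\overline{\alpha^{p,q}}\otimes m_{p,q}^{\vee}\,\bigr].
\]
Comparison with the formula for $\phi\circ\overline{\mathrm{Iden}}$ then reduces to the scalar identity
\[
(-1)^{p} \;=\; i^{-k}\cdot i^{q-p}, \qquad p+q=k,
\]
which holds because $i^{-k}\cdot i^{q-p}=i^{-k+q-p}=i^{-2p}=(-1)^{p}$. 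Consequently
\[
\phi\circ\overline{\mathrm{Iden}} \;=\; \frac{(-1)^{k(k-1)/2}(n-k)!}{i^{k}} \cdot\bigl((\eta^{n-k})^{-1}\circ\overline{\ast}\bigr),
\]
a single constant independent of the Hodge type $(p,q)$, as claimed.

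The main obstacle is not the final arithmetic but keeping the conventions straight: one must verify that the Hodge star for harmonic bundles of Definition \ref{definition:C-linear Hodge star operator} really acts on the coefficient $m_{p,q}$ by the same duality $m\mapsto m^{\vee}=h(\bullet,\overline{m})$ that appears on the $\phi$-side via Lemma \ref{lemma: fiber at z=-1}, and that the interaction of $\overline{\ast}$ with the rescaling $\iota_{-1}$ contributes only the $i^{p-q}\leftrightarrow(-1)^{p}$ discrepancy absorbed by the identity above. Once those conventions are pinned down via the Appendix, the remainder is a direct termwise computation on harmonic representatives.
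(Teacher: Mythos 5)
Your proof is correct and follows essentially the same route as the paper's: compute both sides on a primitive harmonic representative, using Lemma~\ref{lemma:Hodge star operator for primitive forms} for the Hodge star and the explicit formula $\phi\circ\overline{\mathrm{Iden}}(\overline{[\sum\alpha^{p,q}\otimes m_{p,q}]})=\sum(-1)^p\overline{\alpha^{p,q}}\otimes m_{p,q}^{\vee}$ recorded in the proof of Theorem~\ref{cor:polarized by twisted Poincare pairing}, then absorb the type-dependence via $(-1)^p=i^{-k}\,i^{q-p}$ for $p+q=k$. If anything, you are a bit more careful than the paper's terse proof about tracking the conjugation $\overline{\ast}$ versus $\ast$ on both the form factor and the coefficient $m_{p,q}\mapsto m_{p,q}^\vee$, though this does not change the substance of the argument.
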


\begin{proof}
Let $\sum \alpha^{p,q}\otimes m_{p,q}$ be a primitive harmonic form, where $\alpha^{p,q}$ is a form and $m_{p,q}$ is a global section of $H$. By Lemma \ref{lemma:Hodge star operator for primitive forms} we have 
\[ (\eta^{n-k})^{-1}\circ \ast( \alpha^{p,q}\otimes m_{p,q}) = \frac{(-1)^{k(k+1)/2}i^{p-q}}{(n-k)!}\alpha^{p,q}\otimes \overline{m_{p,q}^{\vee}}=C \cdot(-1)^p\alpha^{p,q}\otimes \overline{m_{p,q}^{\vee}}.\]
Here $C=\frac{(-1)^{k(k+1)/2}i^{-k}}{(n-k)!}$ and $m_{p,q}^{\vee}$ is the section of $H^{\ast}$ so that $m_{p,q}^{\vee}(\bullet) =h(\bullet,\overline{m_{p,q}})$. On the other hand, the proof of Theorem \ref{cor:polarized by twisted Poincare pairing} implies that
\[ \phi\left(\overline{\mathrm{Iden}}\overline{\left[\sum \alpha^{p,q}\otimes m_{p,q}\right]}\right) = \sum(-1)^p \overline{\alpha^{p,q}}\otimes m_{p,q}^{\vee}.\]

\end{proof}

\subsection{Compatibility with perverse filtrations}
\label{sec:perverse filtrations}
In this subsection, we show that the pre-Weil operator $\phi$ is compatible with perverse cohomology functors, using the geometric description found by de Cataldo and Migliorini \cite{DM10}. Let $f:V\to W$ be an algebraic morphism between quasi-projective varieties. Let $K$ be a bounded complex with constructible cohomology sheaves on $V$. Denote ${}^{\fp}\tau_{\leq \ell}$ to be the perverse truncation functor.

\begin{notation}\label{perverse filtration}
We denote the perverse Leray filtration on $H^{b}(V,K)$ by
\[ H^{b}_{\leq \ell}(V,K)\colonequals \mathrm{Im} \{ H^{b}(W,{}^{\fp}\tau_{\leq \ell}f_\ast K) \to H^{b}(W,f_{\ast}K)\} \subseteq H^{b}(V,K).\]
We also set
\[ H^{b}_{\ell}(V,K)\colonequals H^{b}_{\leq \ell}(V,K)/H^{b}_{\leq \ell-1}(V,K).\]
\end{notation}

\subsubsection{Perverse filtrations via flag filtrations}\label{sec: perverse filtration via flag filtrations}
Let us recall the main results of \cite{DM10}. Suppose that $\dim W=k$ and let $W\subseteq \bP^{N}$ be a fixed \emph{affine} embedding. 
\begin{definition}
A linear $k$-flag $\mathfrak{F}$ on $\bP^{N}$ is defined to be
\[ \mathfrak{F} \colonequals \{ \bP^N=\Lambda_0\supseteq \Lambda_{-1} \supseteq \cdots \supseteq \Lambda_{-k}\},\]
where $\Lambda_{-\ell}$ is a codimension $\ell$ linear subspace. A linear $k$-flag $\mathfrak{F}$ is  \emph{general}, if it belongs to a suitable Zariski dense open subset of the corresponding flag variety parametrizing all such $k$-flags. 
\end{definition}
Let $\mathfrak{F}^1=\{ \Lambda^1_{\ast}\},\mathfrak{F}^2=\{ \Lambda^2_{\ast}\}$ be two, possibly identical, linear $k$-flags on $\bP^{N}$. They induce two pre-image flags  $V_{\ast},Z_{\ast}$ on $V$:
\begin{align*}
&V=V_0 \supseteq V_{-1} \supseteq \cdots \supseteq V_{-k}\supseteq V_{-k-1}=\varnothing, \quad \textrm{with } V_{\ell} \colonequals f^{-1}(\Lambda^1_\ell\cap W). \\
&V=Z_0 \supseteq Z_{-1} \supseteq \cdots \supseteq Z_{-k}\supseteq Z_{-k-1}=\varnothing, \quad \textrm{with }  Z_{\ell} \colonequals f^{-1}(\Lambda^2_\ell\cap W).
\end{align*}

\begin{notation}
Let $i \colon T\hookrightarrow V$ be a locally closed embedding. We use
\[ (-)_T \colonequals i_{!}i^{\ast}(-)\]
to denote the complex compactly supported on $T$. We use $H^{\ast}_T(-)$ to denote the local cohomology group with support in $T$.
\end{notation}

\begin{thm}[Theorem 4.2.1 \cite{DM10}]\label{thm:geometric characterization of perverse filtration} Given a pair of general flags $\mathfrak{F}^1,\mathfrak{F}^2$, there is an isomorphism
\[ H^{b}_{\leq \ell}(V,K) \cong \mathrm{Im}\left\{ \bigoplus_{j+i=b+\ell} H^{b}_{Z_{-j}}(V,K_{V\setminus V_{i}}) \to H^{b}(V,K) \right\}.\]
\end{thm}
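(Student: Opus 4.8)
The plan is to cite the main theorem of de Cataldo--Migliorini on the geometric description of the perverse filtration directly, since this is precisely \cite[Theorem 4.2.1]{DM10}, and then explain briefly why the hypotheses are met and how the statement matches the reference. First I would note that the perverse Leray filtration $H^b_{\leq \ell}(X,K)$ as defined in Notation \ref{perverse filtration} agrees with the filtration studied in \cite{DM10}: both are defined as the images in $H^b(X,K)$ of the hypercohomology of the perverse truncations ${}^{\fp}\tau_{\leq \ell}f_{\ast}K$. Since $Y$ is quasi-projective and we have fixed an affine embedding $Y\subseteq \bP^N$, the key input is that a general pair of linear $k$-flags on $\bP^N$ induces, via $f$, flags $X_{\bullet}$ and $Z_{\bullet}$ on $X$ that are ``$K$-non-characteristic'' in the sense required by \cite{DM10}; this is guaranteed by the standard transversality/genericity argument (the Bertini-type statements in \cite{DM10}), which uses only that $\mathfrak{F}^1,\mathfrak{F}^2$ range over Zariski-dense open subsets of the relevant flag varieties.

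The second step is to match the indexing. In \cite[Theorem 4.2.1]{DM10}, the perverse filtration is expressed as the image of $\bigoplus_{j+i=b+\ell} H^b_{Z_{-j}}(X,K_{X\setminus X_i})$ under the natural map to $H^b(X,K)$, where the local cohomology $H^b_{Z_{-j}}$ and the ``compactly supported away from $X_i$'' complex $K_{X\setminus X_i} = (i_{X\setminus X_i})_{!}(i_{X\setminus X_i})^{\ast}K$ are as in our Notation above. The maps $H^b_{Z_{-j}}(X,K_{X\setminus X_i}) \to H^b(X,K)$ are the evident ones obtained by forgetting supports and composing with the adjunction $K_{X\setminus X_i}\to K$; I would spell out that these are compatible as $(i,j)$ vary so that the direct sum maps to a single well-defined subspace. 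At this point the statement is verbatim the cited theorem.

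The only genuine subtlety — and the step I would flag as the main point requiring care rather than obstacle — is ensuring the genericity hypothesis is available: one must know that general pairs of flags exist for the given $f$ and $K$, i.e.\ that the relevant dense open subset of the flag variety is nonempty. This is exactly the content of the existence statements preceding \cite[Theorem 4.2.1]{DM10} and follows from generic transversality applied to the (finitely many) strata of a stratification adapted to $K$ and $f$. Since in our applications $K$ will be a shifted semisimple local system or a perverse sheaf arising as a direct image, such stratifications always exist, and no new argument is needed. I would therefore present the proof as: recall the equality of the two definitions of $H^b_{\leq \ell}$, invoke the existence of general flags, and quote \cite[Theorem 4.2.1]{DM10}.
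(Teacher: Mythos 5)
Your proposal is correct and matches the paper's approach: the paper also treats this as a cited result from \cite{DM10}, with the only work being the translation from de Cataldo--Migliorini's decreasing perverse filtration to the increasing convention used here (which the paper flags in the sentence immediately following the theorem). Your additional remarks on genericity, on matching $K_{X\setminus X_i}$ and $H^b_{Z_{-j}}$ with the notation of \cite{DM10}, and on compatibility of the maps are all accurate and consistent with what the reference provides; the paper simply does not spell them out.
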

In \cite{DM10}, the authors use the decreasing filtrations. We transform their results to increasing filtrations, which is consistent with the notations in this paper.

\subsubsection{Perverse filtrations are twistor-theoretic}

From now on, we work with Set-up \ref{setup: cohomology of smooth projective} and a map $f:X\to Y$. Set $K=\cV[\dim X], K^{\ast}=\cV^{\ast}[\dim X]$.

\begin{lemma}\label{lemma:perverse is twistor}
For any integers $\ell$ and $b$, the subspace $H^{b}_{\leq \ell}(X,K) \subseteq H^{b}(X,K)$
underlies a pure sub-twistor structure of the natural pure twistor structure on $H^{b}(X,K)$ in Theorem \ref{thm:Hodge-Simpson}. Therefore the quotient space
\[ H^{b}_{\ell}(X,K)\colonequals H^{b}_{\leq \ell}(X,K)/H^{b}_{\leq \ell-1}(X,K)\]
inherits a pure twistor structure $F$ of weight $(b+\dim X)$. Moreover, the pre-Weil operator $\phi$ in Definition \ref{definition: pre-Weil operator} induces an isomorphism $\phi: 
\overline{F|_{z=-1}} \xrightarrow{\sim} H^{b}_{\ell}(X,K^{\ast})$. In other words, the pre-Weil operator $\phi$ is compatible with perverse filtrations.
\end{lemma}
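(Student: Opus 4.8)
The plan is to realize the perverse filtration geometrically, via Theorem~\ref{thm:geometric characterization of perverse filtration}, and then observe that every building block in that description is twistor-theoretic. Concretely, after choosing an affine embedding $Y\subseteq\bP^N$ and a pair of general linear $k$-flags $\mathfrak F^1,\mathfrak F^2$ (which exist for general flags), we get flags $X_\ast$ and $Z_\ast$ of closed algebraic subvarieties of $X$, and
\[
H^b_{\leq\ell}(X,K)\cong\mathrm{Im}\Big\{\bigoplus_{i+j=b+\ell}H^b_{Z_{-j}}\big(X,K_{X\setminus X_i}\big)\to H^b(X,K)\Big\}.
\]
So the first step is to show each local cohomology group $H^b_{Z_{-j}}(X,K_{X\setminus X_i})$, together with the maps among them, underlies a natural twistor structure compatible with the one on $H^b(X,K)$ from Theorem~\ref{thm:Hodge-Simpson}. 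Writing $K=\cV[\dim X]$, these groups are built from ordinary cohomology groups of $\cV$ restricted to smooth locally closed pieces (the open parts $X\setminus X_i$ and the closed parts $Z_{-j}$, and their complements/intersections): after passing to a simultaneous log resolution that makes all the $X_i$, $Z_{-j}$ simple normal crossing, one expresses $H^b_{Z_{-j}}(X,K_{X\setminus X_i})$ through the mixed twistor structures on cohomology of open and closed subvarieties constructed in \S\ref{sec: cohomology of arbitrary} (Corollary~\ref{cor:yoga of weights for open}, Corollary~\ref{cor:yoga of weights for projective}, Proposition~\ref{prop:mixed twistor structure for normal crossing varieties}), with the long exact sequences of the pairs being morphisms of mixed twistor structures. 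The second step is then purely formal: the image of a morphism of pure/mixed twistor structures is again a twistor structure, because the category of mixed twistor structures is abelian (Remark~\ref{remark:strictness}) and morphisms are strict; in particular, since the target $H^b(X,K)$ is pure of weight $b+\dim X$, the subobject $H^b_{\leq\ell}(X,K)$ is a pure sub-twistor structure of that weight, and the quotient $H^b_\ell(X,K)$ is pure of weight $b+\dim X$ as claimed. Strictness also guarantees that the sub-twistor structure does not depend on the auxiliary flags, matching the intrinsic perverse filtration.

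For the last assertion about the canonical map, the idea is that every geometric operation above has a ``dual'' version for $\cV^\ast$, and the canonical maps $\phi$ (Definition~\ref{definition: canonical map}, Definition~\ref{definition:canonical maps for open}) intertwine them. More precisely, by Lemma~\ref{lemma:Identification map is functorial} the canonical map $\phi$ is compatible with cup products by line bundles and with restriction to smooth locally closed subvarieties; since the local cohomology groups $H^b_{Z_{-j}}(X,K_{X\setminus X_i})$ and their comparison maps are assembled from exactly such restriction maps (and their Gysin/coboundary partners, which are adjoint under the twisted Poincaré pairing), the canonical map for $\cV$ on $H^b(X,K)$ carries the subspace $H^b_{\leq\ell}(X,K^\ast)$ — built from the same flags applied to $K^\ast$ — isomorphically onto the corresponding subspace, hence descends to $\phi\colon\overline{F|_{z=-1}}\xrightarrow{\sim}H^b_\ell(X,K^\ast)$ on the graded pieces. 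Here one uses that under the identification $\overline{E^b|_{z=-1}}\cong H^b(X,\cV^\ast)$ the twisted Poincaré pairing is the one pairing $H^b_{\leq\ell}(X,\cV)$ with $H^b_{\leq k-\ell}(X,\cV^\ast)$ compatibly (the duality between perverse truncation $^{\fp}\tau_{\leq\ell}$ and $^{\fp}\tau_{\geq-\ell}$ under Verdier duality), so the filtration is preserved by $\phi$.

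The main obstacle I expect is the first step: verifying carefully that the local cohomology groups $H^b_{Z_{-j}}(X,K_{X\setminus X_i})$ carry \emph{natural} mixed twistor structures for which the maps in Theorem~\ref{thm:geometric characterization of perverse filtration} are morphisms, and that these are compatible with the pure twistor structure on $H^b(X,K)$. This is the analogue, in the twistor setting, of the statement in Hodge theory that local cohomology with supports and compactly supported cohomology of constructible complexes built from VHS carry functorial mixed Hodge structures — here one does not have Saito's machinery available, so one must reduce everything, via resolution of singularities and the distinguished triangles relating $(-)_{X\setminus X_i}$, $(-)_{X}$, $(-)_{X_i}$ and similarly for supports, to the mixed twistor structures on cohomology of smooth open and (possibly singular) closed subvarieties established in \S\ref{sec: cohomology of arbitrary}, checking functoriality at each stage. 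Once that bookkeeping is done, the purity of the graded pieces and the statement about $\phi$ follow formally from strictness and from Lemma~\ref{lemma:Identification map is functorial}.
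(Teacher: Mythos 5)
Your proposal is correct and follows essentially the same route as the paper: geometric characterization of the perverse filtration via general flags (Theorem~\ref{thm:geometric characterization of perverse filtration}), mixed twistor structures on the local cohomology groups appearing in that description, strictness in the abelian category of mixed twistor structures, and functoriality of the canonical map $\phi$ (Lemma~\ref{lemma:Identification map is functorial}). Two small remarks: the paper obtains the mixed twistor structure on $H^b_{Z_{-j}}(X,K_{X\setminus X_i})$ directly, as the hypercohomology of a mapping cone of de Rham complexes with harmonic bundle coefficients (so the only log resolution needed is the one already hidden inside Corollary~\ref{cor:yoga of weights for open} for the smooth open piece $X\setminus Z_{-j}$), which avoids the simultaneous log-resolution bookkeeping you flag as the main obstacle; and the closing appeal to Verdier duality of perverse truncations is not what is needed --- that $\phi$ respects the perverse filtration follows simply from running the same flag construction on $K^\ast$ and the compatibility of canonical maps with restrictions and cup products in Lemma~\ref{lemma:Identification map is functorial}, not from any orthogonality of the filtrations under the pairing.
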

\begin{proof}
We use Theorem \ref{thm:geometric characterization of perverse filtration} to study $H^b_{\leq \ell}(X,K)$. We use the notation from \S \ref{sec: perverse filtration via flag filtrations} with $V=X$. For each $j$, the local cohomology group
$H^{b}_{Z_{-j}}(X,K)$ fits into the long exact sequence associated to the closed and open embeddings $Z_{-j} \xrightarrow{A} X \xleftarrow{B} U\colonequals X\setminus Z_{-j}$:
\begin{equation}\label{eqn:exact sequence of local cohomology}
H^{b}_{Z_{-j}}(X,K)=H^b(X,A_{!}A^{!}K) \to H^{b}(X,K) \to H^{b}(U,B^{\ast}K). 
\end{equation}

\textbf{Claim}: $H^{b}_{Z_{-j}}(X,K)$ underlies a mixed twistor structure $E_1$ with an isomorphism $\phi_{\mathrm{loc}}$ so that the exact sequence \eqref{eqn:exact sequence of local cohomology} underlies an exact sequence of mixed twistor structures with the following commutative diagram:
\[ \begin{CD}
\overline{E_1|_{z=-1}} @>>> \overline{E_2|_{z=-1}} @>>>\overline{E_3|_{z=-1}} \\
@VV\phi_{\mathrm{loc}}V @VV\phi V @VV\phi_{U}V \\
H^{b}_{Z_{-j}}(X,K^{\ast}) @>>> H^{b}(X,K^{\ast}) @>>> H^{b}(U,B^{\ast}K^{\ast})
\end{CD}\]
where $E_2$ and $E_3$ are natural twistor structures on $H^{b}(X,K)$ and $H^{b}(X,B_{\ast}B^{\ast}K)$, $\phi$ and $\phi_{X\setminus Z_{-j}}$ are the corresponding isomorphisms. 

\begin{proof}[Proof of claim]
Since $U=X\setminus Z_{-j}$ is smooth, the square involving $E_2$ and $E_3$ is commutative by Lemma \ref{lemma:Identification map is functorial}. Recall that $H$ is the harmonic bundle associated to $\cV$. Define a complex
\begin{equation*}
 \cA^{k}_X(H)_{Z_{-j}}\colonequals \mathrm{Cone} \{\cA^{\bullet}_X(H) \to \cA^{\bullet}_U(H|_U) \}[-1].\end{equation*}
We have 
\[  H^b_{Z_{-j}}(X,K) \cong H^b(X,\cA^{\bullet}_X(H)_{Z_{-j}}).\]
Adapting the proof of Lemma \ref{lemma: lowest weight}, one can then put a natural mixed twistor structure on $H^b_{Z_{-j}}(X,K)$. Adapting the proof of Lemma \ref{lemma: fiber at z=-1} and applying the rescaling map $\iota_{-1}$ in Definition \ref{lemma:pure twistor structure via glueing}, we obtain the isomorphism $\phi_{\mathrm{loc}}$ with the desired commutativity. One can also use \cite[Proposition III.2.1]{ElZein} for the construction of mixed twistor structures on local cohomology.
\end{proof}

Observe that $K_{X\setminus X_i}$ fits into the distinguished triangle in $D^b_{c}(X)$ associated to the closed and open embeddings $X_i \xrightarrow{\alpha} X \xleftarrow{\beta} X\setminus X_i$:
\[ K_{X\setminus X_i}=\beta_{!}\beta^{\ast}K= \beta_{!}\beta^{!}K \to K \to \alpha_{\ast}\alpha^{\ast}K \xrightarrow{[1]}. \]
By a similar argument as above, we can show that $H^{b}_{Z_{-j}}(X,K_{X\setminus X_i})$ underlies a mixed twistor structure $E_0$ so that there is an isomorphism $\phi_0:\overline{E_0|_{z=-1}}\xrightarrow{\sim} H^{b}_{Z_{-j}}(X,K^{\ast}_{X\setminus X_i})$. Since $H^b(X,K)$ underlies a pure twistor structure, using Theorem \ref{thm:geometric characterization of perverse filtration} and \cite[Lemma 1.3]{Simpson97} (which says that the category of mixed twistor structures is abelian and any morphism is strict with respect to the weight filtrations), we conclude that the space $H^b_{\leq\ell}(X,K)$ admits a pure sub-twistor structure $E_4$ so that the pre-Weil operator $\phi$ restricts to
\[ \phi: \overline{E_4|_{z=-1}} \xrightarrow{\sim} \mathrm{Im}\left\{ H^{b}_{Z_{-j}}(X,K^{\ast}_{X\setminus X_i}) \to H^{b}(X,K^{\ast}) \right\}.\]
The statement for $H^b_{\ell}(X,K)$ follows immediately.
\end{proof}

We show that the pre-Weil operator $\phi$ on perverse filtrations is compatible with smooth restrictions.
\begin{corollary}\label{cor:restriction of perverse filtration}
With the notation above and let $T$ be a smooth subvariety of $X$. Set $\cV_T=\cV|_T$, $K_T=\cV_T[\dim T]$, and $K^{\ast}_T=\cV^{\ast}_T[\dim T]$. Let $H^b_{\ell}(T,K_T)$ denote the space associated to $f|_T:T\to Y$ as in Notation \ref{perverse filtration}. Then we have
\begin{itemize}

\item $H^b_{\ell}(T,K_T)$ underlies a natural mixed twistor structure $F_T$, with an isomorphism
\[ \phi_T: \overline{F_T|_{z=-1}} \xrightarrow{\sim} H^b_{\ell}(T,K^{\ast}_T).\]

\item The restriction map
\[R: H^b_{\ell}(X,K) \to H^b_{\ell}(T,K_T)\]
underlies a morphism of twistor structures $G:F\to F_T$ and there is a commutative diagram:
\[ \begin{CD}
  \overline{F|_{z=-1}}  @>\overline{G|_{z=-1}}>>  \overline{F_T|_{z=-1}} \\
  @VV\phi V @VV\phi_TV \\
  H^{b}_{\ell}(X,K^{\ast}) @>\widetilde{R}>>H^{b}_{\ell}(T,K^{\ast}_T) 
\end{CD}\]
\end{itemize}
\end{corollary}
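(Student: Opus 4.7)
The plan is to mimic the proof of Lemma \ref{lemma:perverse is twistor} with $X$ replaced by $T$, tracking functoriality under the locally closed inclusion $\iota:T\hookrightarrow X$ at every step. First I would invoke Theorem \ref{thm:geometric characterization of perverse filtration} for $f|_T:T\to Y$ after choosing flags $\mathfrak{F}^1,\mathfrak{F}^2$ that are simultaneously general for both $f$ and $f|_T$; then the restricted flag filtrations $X_i\cap T$ and $Z_{-j}\cap T$ compute $H^b_{\leq\ell}(T,K_T)$, and the inclusion $\iota$ induces commutative restriction diagrams from the corresponding local cohomology groups on $X$ to those on $T$.

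Next I would construct the mixed twistor structure on each local cohomology group appearing on the $T$-side by repeating the Claim inside the proof of Lemma \ref{lemma:perverse is twistor}: form the cone complex in \eqref{subcomplex vanishing on subsets} with $X$ replaced by $T$ and the harmonic bundle $H$ replaced by $H|_T$, noting that semisimplicity is preserved under restriction by Corollary \ref{cor:pull back preserve semisimplicity}. Since $T$ is only locally closed and smooth, the pure twistor structure of Theorem \ref{thm:Hodge-Simpson} is unavailable, so I would instead use Corollary \ref{cor:yoga of weights for open} to produce mixed twistor structures on the hypercohomologies appearing in the distinguished triangle, together with canonical maps $\phi_T$ built from the rescaling operator and the harmonic metric as in Definition \ref{definition:canonical maps for open}. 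These assemble into a mixed twistor structure on $H^b_{Z_{-j}\cap T}(T,(K_T)_{T\setminus(X_i\cap T)})$ and a canonical identification with its $K^{\ast}_T$-counterpart. Taking images inside $H^b(T,K_T)$ and quotienting gives the mixed twistor structure $F_T$ on $H^b_\ell(T,K_T)$ in (1), together with the canonical isomorphism $\phi_T$.

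The morphism $G:F\to F_T$ in (2) is then obtained because at every stage the restriction map $\iota^{\ast}$ is realized by pullback of (logarithmic) de Rham complexes with coefficients in the harmonic bundle, which is manifestly a map of filtered complexes on $\bP^1$; passing to images and quotients preserves this, so $R$ underlies a morphism of the constructed twistor structures. The compatibility with canonical maps in (3) reduces, via the flag characterization and the abelianness of the category of mixed twistor structures (Remark \ref{remark:strictness}), to the compatibility already verified on each local cohomology piece, which is precisely Lemma \ref{lemma:Identification map is functorial} applied to the ambient smooth variety and to its open complements of subvarieties. The main obstacle is the bookkeeping needed to check that the Rees and patching constructions behind Corollary \ref{cor:yoga of weights for open} are functorial for the cone complex \eqref{subcomplex vanishing on subsets} on $T$, so that restriction from $X$ to $T$ commutes with the formation of the twistor structure on local cohomology; once this naturality is established, everything else is a diagram chase.
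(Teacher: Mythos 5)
Your proposal follows the same route as the paper's (very terse) proof: both obtain the mixed twistor structure on $H^b_\ell(T,K_T)$ by running the proof of Lemma \ref{lemma:perverse is twistor} over $T$, using the flag characterization of Theorem \ref{thm:geometric characterization of perverse filtration}, the mixed twistor structures from Corollary \ref{cor:yoga of weights for open}, the canonical map of Definition \ref{definition:canonical maps for open}, and then invoke Lemma \ref{lemma:Identification map is functorial} for the compatibility of canonical maps. The paper compresses this into two sentences while you spell out the functoriality checks; the content and the key lemmas cited are the same.
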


\begin{proof}
Apply Theorem \ref{thm: mixed twistor complex}, Lemma \ref{lemma:Identification map is functorial} and Lemma \ref{lemma:perverse is twistor}.
\end{proof}

\subsection{Weight filtrations}\label{sec:weight filtrations}

In this subsection, we discuss weight filtrations in the category of  polarized pure twistor structures following  \cite[\S 4.5]{DM05}. It is important for us to set everything up so that they are compatible with the pre-Weil operator $\phi$. We only give proofs when necessary. The equation \eqref{kernel intersect with its orthogonal complement} in Corollary \ref{cor:nondegeneracy on the kernel} is a new input and will play an important role in the proof of the Decomposition Theorem (via Theorem \ref{thm:Hodge-Riemann}, or more precisely Corollary \ref{non-degeneracy of pairing on Lambda0}). 

\subsubsection{Bilinear pairings}
Let $H$ and $\widetilde{H}$ be two finite dimensional vector spaces and let $S: H \otimes \widetilde{H} \to \C$ be a bilinear pairing. As in the case $H=\widetilde{H}$, one can also talk about notions of orthogonal complement, decompositions  orthogonal with respect to $S$ and non-degenerate pairings etc. For reader's benefit, we record the following statement, which is used to prove Corollary \ref{non-degeneracy of pairing on Lambda0} and is an important piece of the proof of Hodge-Riemann bilinear relation Theorem \ref{thm:Hodge-Riemann}.

\begin{lemma}\label{non-degeneracy of subspaces after quotient}
Let $S: H \otimes \widetilde{H} \to \C$ be a bilinear pairing. Assume there are direct sum decompositions orthogonal with respect to $S$: $H=H_1\oplus H_2, \widetilde{H}=\widetilde{H}_{1}\oplus \widetilde{H}_{2}$. Then the induced direct sum decomposition
\begin{align*}
&H/\widetilde{H}^{\perp}=H_1/(H_1\cap \widetilde{H}^{\perp})\oplus H_2/(H_2\cap \widetilde{H}^{\perp})
\end{align*}
is orthogonal with respect to the descent pairing $\hat{S}: H/\widetilde{H}^{\perp} \otimes \widetilde{H}/H^{\perp} \to \C$. Moreover, the pairing
\begin{align*}
&\hat{S}: H_1/(H_1\cap \widetilde{H}^{\perp}) \otimes  \widetilde{H}_{1}/(\widetilde{H}_{1}\cap H^{\perp}) \to \C
\end{align*}
is non-degenerate. We have the same statement if one switches $H$ with $\widetilde{H}$. We also have
\begin{equation}\label{the crucial equality}
\left(\widetilde{H}_{2}/(\widetilde{H}_{2}\cap H^{\perp})\right)^{\perp}=H_1/(H_1\cap \widetilde{H}^{\perp}),
\end{equation}
where the first orthogonal complement is taken with respect to $\hat{S}$.
\end{lemma}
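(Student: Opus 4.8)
\textbf{Proof plan for Lemma \ref{non-degeneracy of subspaces after quotient}.}
The plan is to reduce everything to Remark \ref{remark: nondegeneracy after quoting radicals} and Remark \ref{orthogonal complement and direct sum} by first passing to the non-degenerate quotient pairing and then identifying the images of the summands correctly. The one point that needs care is that the hypothesis only asserts orthogonality of the decompositions, not non-degeneracy of $S$, so I must be a little careful about which radicals I quotient by and whether the images of $H_1, H_2$ in $H/\widetilde H^{\perp}$ still form a direct sum.

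\emph{Step 1 (orthogonality descends).} By Remark \ref{remark: nondegeneracy after quoting radicals}, $\hat S$ is a well-defined non-degenerate pairing on $H/\widetilde H^{\perp}\otimes \widetilde H/H^{\perp}$. For the induced decompositions: first I would check that $H = H_1 \oplus H_2$ with $H_i$ orthogonal to $\widetilde H_j$ for $i\neq j$ forces $\widetilde H^{\perp} = (H_1\cap \widetilde H^{\perp}) \oplus (H_2\cap \widetilde H^{\perp})$, so the quotient does split as claimed (an element $v = v_1 + v_2$ of $\widetilde H^{\perp}$ pairs to zero against all of $\widetilde H = \widetilde H_1\oplus \widetilde H_2$; pairing against $\widetilde H_2$ kills $v_1$ by orthogonality, hence $v_1 \in \widetilde H^{\perp}$ too, and similarly $v_2$). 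The same argument gives the splitting of $H^{\perp}$. Orthogonality of the induced decompositions under $\hat S$ is then immediate since $\hat S([v_i],[\widetilde v_j])=S(v_i,\widetilde v_j)=0$ for $i\neq j$.

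\emph{Step 2 (restricted non-degeneracy).} Now $\hat S$ is non-degenerate on $H/\widetilde H^{\perp}\otimes \widetilde H/H^{\perp}$, and by Step 1 this space carries orthogonal direct sum decompositions into the images of the $H_i$ and $\widetilde H_i$. I would verify that the image of $H_i$ in $H/\widetilde H^{\perp}$ is exactly $H_i/(H_i\cap\widetilde H^{\perp})$ (clear) and likewise for $\widetilde H_i$, so that Remark \ref{orthogonal complement and direct sum} applies verbatim to $\hat S$ with $(H_1/(H_1\cap\widetilde H^\perp), H_2/(H_2\cap\widetilde H^\perp))$ and $(\widetilde H_1/(\widetilde H_1\cap H^\perp), \widetilde H_2/(\widetilde H_2\cap H^\perp))$. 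That remark then yields simultaneously the non-degeneracy of the two restricted pairings and the equality $(\widetilde H_2/(\widetilde H_2\cap H^{\perp}))^{\perp}=H_1/(H_1\cap \widetilde H^{\perp})$, which is exactly \eqref{the crucial equality}.

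\emph{Main obstacle.} The only genuinely non-formal point is Step 1 — establishing that $\widetilde H^{\perp}$ (and $H^{\perp}$) respects the given direct sum decomposition, so that the quotients split as asserted and the images are honest direct summands; once that is in place, everything else is a direct citation of Remark \ref{remark: nondegeneracy after quoting radicals} and Remark \ref{orthogonal complement and direct sum} applied to $\hat S$. I would keep the writeup to essentially these two observations plus the two citations.
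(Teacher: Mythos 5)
Your proof is correct, and the underlying linear-algebra observation is the same as the paper's: once one sees that the orthogonality of the decomposition forces the radicals to interact cleanly with the direct summands, the rest is citation-chasing. The packaging differs slightly. The paper bypasses your Step 1 ("easy to see") and instead proves directly that $H_1\cap \widetilde H_1^{\perp}=H_1\cap \widetilde H^{\perp}$, then applies Remark \ref{remark: nondegeneracy after quoting radicals} with the subspaces $H_1\subseteq H$, $\widetilde H_1\subseteq\widetilde H$ to get non-degeneracy of the restricted pairing, and finally invokes Remark \ref{orthogonal complement and direct sum} only for \eqref{the crucial equality}. You instead prove the splitting of the radicals explicitly (your Step 1), pass to $\hat S$ once and for all, and then extract both the non-degeneracy and \eqref{the crucial equality} from a single application of Remark \ref{orthogonal complement and direct sum}. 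The two routes rest on the same fact — your $\widetilde H^{\perp}=(H_1\cap\widetilde H^{\perp})\oplus(H_2\cap\widetilde H^{\perp})$ and the paper's $H_1\cap\widetilde H_1^{\perp}=H_1\cap\widetilde H^{\perp}$ are easy consequences of each other given the orthogonal decomposition — so the content is equivalent; your version is marginally more uniform in that it derives all three conclusions from one citation, while the paper's is marginally shorter on the splitting step.

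One small caution on exposition: in Step 1 your sentence "pairing against $\widetilde H_2$ kills $v_1$ by orthogonality, hence $v_1\in\widetilde H^{\perp}$ too" compresses two separate facts — $S(v_1,\widetilde H_2)=0$ by orthogonality, and $S(v_1,\widetilde H_1)=S(v,\widetilde H_1)-S(v_2,\widetilde H_1)=0$ by $v\in\widetilde H^\perp$ together with orthogonality. When you write it up, spell out both halves so the reader does not think the orthogonality alone is doing all the work.
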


\subsubsection{One weight filtration}

Let $(H,N)$ be a finite dimensional vector space $H$ equipped with a nilpotent endomorphism $N$. By \cite[Lemma 6.4]{Schmid}, there is a unique increasing filtration $W$ so that we have $NW_i\subseteq W_{i-2}$, hard Lefschetz isomorphism $N^i: \Gr^W_{i}H \xrightarrow{\sim} \Gr^W_{-i}H$, and a Lefschetz decomposition 
\begin{equation}\label{single Lefschetz decomposition}
\Gr^W_i H= \bigoplus_{\ell \in \Z} N^{-i+\ell}P^{i-2\ell}, \quad i\in \Z,
\end{equation}
where $P^{-i}\colonequals \Ker N^{i+1}\subseteq \Gr^W_iH$ for $i\geq 0$ and $P^{-i}\colonequals 0$ for $i<0$.  We use $W^N$ to denote the unique filtration and call it the \emph{weight filtration} of $N$. For ease of notation, we set $\Gr^N_iH \colonequals \Gr^{W^N}_iH$.

\begin{definition}[Infinitesimal automorphisms]\label{definition:infinitesimal automorphisms}
Consider two pairs $(H,N)$ and $(\widetilde{H},\widetilde{N})$ so that $\dim H=\dim \widetilde{H}$. We say that $(N,\widetilde{N})$ are \emph{infinitesimal automorphisms} of $(H,\widetilde{H},S)$ if there is a non-degenerate bilinear pairing 
\begin{equation}\label{eqn: pairing for infinitesimal auto}
S: H \otimes \widetilde{H} \to \C,
\end{equation}
which is either symmetric or skew-symmetric and satifies
\[ S(Na,\widetilde{b}) + S(a,\widetilde{N}\widetilde{b})=0, \quad \forall a\in H, \widetilde{b}\in \widetilde{H}.\]
\end{definition}

\begin{lemma}\label{lemma:self duality of weight filtration}
One has
\[ (W^N_i)^{\perp} = W^{\widetilde{N}}_{-i-1}, \quad \forall i \in \mathbb{Z}.\]
\end{lemma}

\begin{proof}
It is similar to the original case. For details, see \cite[Lemma 2.3.107]{Yang21}.
\end{proof}

\begin{corollary}\label{cor:descend to one filtration}
The pairing \eqref{eqn: pairing for infinitesimal auto} descends to non-degenerate pairings for $\ell\geq 0$:
\[ S_{\ell}: \Gr^N_{\ell}H \otimes \Gr^{\widetilde{N}}_{\ell}\widetilde{H} \to \C,\]
where $S_{\ell}([a],[\widetilde{b}])\colonequals S(a,\widetilde{N}^{\ell} \widetilde{b})$. Moreover, the Lefschetz decompositions \eqref{single Lefschetz decomposition} for $\Gr^N_{\ell}H$ and $\Gr^{\widetilde{N}}_{\ell}\widetilde{H}$ are orthogonal with respect to $S_{\ell}$.
\end{corollary}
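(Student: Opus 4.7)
The plan is to verify three things in turn: well-definedness of $S_\ell$, non-degeneracy, and orthogonality of the Lefschetz decompositions. All three pieces are driven by the self-duality $(W^N_i)^\perp=W^{\widetilde N}_{-i-1}$ from Lemma~\ref{lemma:self duality of weight filtration} together with the infinitesimal automorphism identity $S(Na,\widetilde b)+S(a,\widetilde N\widetilde b)=0$.

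For well-definedness, I would pick lifts $a\in W^N_\ell$ and $\widetilde b\in W^{\widetilde N}_\ell$. Since $\widetilde N$ drops weight by $2$, $\widetilde N^\ell\widetilde b\in W^{\widetilde N}_{-\ell}$. Modifying $a$ by an element of $W^N_{\ell-1}$ pairs trivially with $W^{\widetilde N}_{-\ell}=(W^N_{\ell-1})^\perp$, while modifying $\widetilde b$ by an element of $W^{\widetilde N}_{\ell-1}$ produces an element of $W^{\widetilde N}_{-\ell-1}=(W^N_\ell)^\perp$, which again pairs trivially with $a$. Hence $S_\ell$ depends only on the classes.

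For non-degeneracy, I would factor
\[
S_\ell=\overline S_\ell\circ(\mathrm{Id}\otimes\widetilde N^\ell),
\]
where $\overline S_\ell\colon \Gr^N_\ell H\otimes\Gr^{\widetilde N}_{-\ell}\widetilde H\to\C$ is the pairing induced on the graded pieces by $S$. The self-duality of Lemma~\ref{lemma:self duality of weight filtration} together with the non-degeneracy of $S$ forces $\overline S_\ell$ to be a perfect pairing (same mechanism as in Remark~\ref{remark: nondegeneracy after quoting radicals}). Since Lemma~\ref{lemma:weight filtration of nilpotent map} makes $\widetilde N^\ell\colon\Gr^{\widetilde N}_\ell\widetilde H\xrightarrow{\sim}\Gr^{\widetilde N}_{-\ell}\widetilde H$ an isomorphism, $S_\ell$ is non-degenerate.

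For orthogonality, pick primitive representatives $a\in P^{\ell-2m}\subseteq\Gr^N_{2m-\ell}H$ and $\widetilde b\in\widetilde P^{\ell-2m'}\subseteq\Gr^{\widetilde N}_{2m'-\ell}\widetilde H$ with $m\neq m'$. Using the infinitesimal automorphism identity iteratively,
\[
S_\ell\bigl(N^{-\ell+m}a,\widetilde N^{-\ell+m'}\widetilde b\bigr)
=S\bigl(N^{-\ell+m}a,\widetilde N^{m'}\widetilde b\bigr)
=(-1)^{m'}S\bigl(N^{-\ell+m+m'}a,\widetilde b\bigr).
\]
If $m'>m$, primitivity gives $N^{2m-\ell+1}a=0$ in $\Gr^N$, so $N^{-\ell+m+m'}a$ lies in $W^N_{\ell-2m'-1}$, which is orthogonal to $W^{\widetilde N}_{2m'-\ell}\ni\widetilde b$ by Lemma~\ref{lemma:self duality of weight filtration}. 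If $m>m'$, the analogous rewriting $S(\tilde a,\widetilde N^{-\ell+m+m'}\widetilde b')$ followed by $\widetilde N$-primitivity of $\widetilde b$ yields the vanishing symmetrically. This handles both cases.

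The main bookkeeping obstacle is tracking indices consistently in the Lefschetz decomposition \eqref{single Lefschetz decomposition}, because $P^{-j}$ sits in $\Gr^W_j$, while the summand $N^{-\ell+m}P^{\ell-2m}$ of $\Gr^W_\ell$ is built from the primitive space living in weight $2m-\ell$; once this is pinned down, the vanishing is a short computation using only primitivity and the perpendicularity relation.
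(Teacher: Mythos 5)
Your proof is correct, and since the paper states this corollary without any proof (it is the standard descent argument implicit in Lemma \ref{lemma:self duality of weight filtration}), your argument is exactly the intended one: well-definedness and orthogonality of the Lefschetz summands come from $(W^N_i)^{\perp}=W^{\widetilde N}_{-i-1}$ combined with the identity $S(Na,\widetilde b)+S(a,\widetilde N\widetilde b)=0$, and non-degeneracy comes from factoring $S_\ell$ through the graded pairing and the isomorphism $\widetilde N^{\ell}\colon \Gr^{\widetilde N}_{\ell}\widetilde H\xrightarrow{\sim}\Gr^{\widetilde N}_{-\ell}\widetilde H$ of Lemma \ref{lemma:weight filtration of nilpotent map}. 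The one step you leave implicit is that the left kernel of your $\overline{S}_\ell$ vanishes, i.e.\ $(W^{\widetilde N}_{-\ell})^{\perp}\cap W^N_{\ell}=W^N_{\ell-1}$; this is not literally the statement of Lemma \ref{lemma:self duality of weight filtration} (which computes orthogonal complements of the $W^N$-filtration inside $\widetilde H$, not of the $W^{\widetilde N}$-filtration inside $H$), but it follows either by symmetry of the hypotheses in Definition \ref{definition:infinitesimal automorphisms} or by the dimension count $\dim\Gr^N_{\ell}H=\dim\Gr^{\widetilde N}_{-\ell}\widetilde H$ forced by the lemma and the non-degeneracy of $S$, so this is a harmless gloss rather than a gap.
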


\begin{corollary}\label{cor:nondegeneracy on the kernel}
We have
\begin{equation}\label{kernel intersect with its orthogonal complement}
 \Ker N\cap (\Ker \widetilde{N})^{\perp}=\Ker N\cap W^{N}_{-1},
\end{equation}
and the pairing \eqref{eqn: pairing for infinitesimal auto} descends to a non-degenerate pairing
\[ S: \Ker N/(\Ker N\cap W^N_{-1}) \otimes \Ker \widetilde{N}/(\Ker \widetilde{N}\cap W^{\widetilde{N}}_{-1})\to \C.\]
\end{corollary}

\begin{proof}
It suffices to show \eqref{kernel intersect with its orthogonal complement}. On the one hand, the isomorphism $\widetilde{N}: \Gr^{\widetilde{N}}_{1}H \xrightarrow{\sim} \Gr^{\widetilde{N}}_{-1}H$ gives $\Ker \widetilde{N}\subseteq W^{\widetilde{N}}_0$. Combining with Lemma \ref{lemma:self duality of weight filtration}, we see that
\[W^{N}_{-1}=(W^{\widetilde{N}}_0)^{\perp}\subseteq (\Ker \widetilde{N})^{\perp}. \]
On the other hand, because $S$ is non-degenerate, we have $ (\Ker \widetilde{N})^{\perp}=\im N$. Therefore
\[ \Ker N\cap (\Ker \widetilde{N})^{\perp} =\Ker N\cap  \im N\subseteq \Ker N\cap  W^N_{-1}.\]
The last inclusion comes from the convolution formula for $W^N_{-1}$ (see \cite[equation (23)]{DM05}).
\end{proof}

\subsubsection{Two weight filtrations}

Let $H$ and $\widetilde{H}$ be two vector spaces with a non-degenerate bilinear pairing $S: H \otimes \widetilde{H} \to \C$. Let $N,M$ be two commuting nilpotent operators on $H$ and $\widetilde{N},\widetilde{M}$ be two commuting nilpotent operators on $\widetilde{H}$ such that $(N,\widetilde{N})$ and $(M,\widetilde{M})$ are infinitesimal automorphisms of $(H,\widetilde{H},S)$. Assume the shifted weight filtration $W^{M}[j]$ (with $W^{M}[j]_i=W^{M}_{j+i}$) induces the monodromy weight filtration of $M$ on $\mathrm{gr}^N_jH$ for every $j\in \Z$ (see \cite[\S 4.5]{DM05}). Make the same assumption for $\widetilde{M}$ and $\widetilde{N}$. It means that
\begin{align*}
M^\ell: \Gr^{M}_{j+\ell}\Gr^{N}_{j}H\xrightarrow{\sim} \Gr^{M}_{j-\ell}\Gr^{N}_{j}H, \textrm{ whenever }\ell\geq 0.
\end{align*}
For $\ell,j\geq 0$, define 
\[ P^{-j}_{-\ell}\colonequals \Ker M^{\ell+1} \cap \Ker N^{j+1}\subseteq \Gr^{M}_{j+\ell}\Gr^{N}_{j}H.\]
Otherwise, set $P^{-j}_{-\ell}=0$. Then we have the double Lefschetz decomposition:
\begin{align}\label{eqn:double Lefschetz decomposition}
\Gr^{M}_{j+\ell}\Gr^{N}_{j}H \cong \bigoplus_{i, m \in \Z} M^{-\ell+i}N^{-j+m}P^{j-2m}_{\ell-2i}, \quad \ell,j\in \Z.
\end{align}
We also have similar statements for $\widetilde{M},\widetilde{N}$ and define $\widetilde{P}^{ -j}_{-\ell}$ to be the corresponding spaces for $\widetilde{M}$ and $\widetilde{N}$. 

\begin{lemma}\label{lemma:polarization on double Lefschetz decomposition}
The non-degenerate pairing $S_{\ell}$ in Corollary \ref{cor:descend to one filtration} descends to a non-degenerate pairing for $\ell,j\geq0$:
\begin{equation}\label{S ell j for general set up}
S_{\ell j}: \Gr^M_{j+\ell}\Gr^N_{j}H\otimes \Gr^{\widetilde{M}}_{j+\ell}\Gr^{\widetilde{N}}_{j}\widetilde{H} \to \C,
\end{equation}
where $S_{\ell j}([a],[\widetilde{b}])=S(a,\widetilde{M}^{ \ell}\widetilde{N}^{ j}\widetilde{b})$ for any $\ell,j\geq 0$. Moreover, the double Lefschetz decompositions (\ref{eqn:double Lefschetz decomposition}) for $\Gr^M_{j+\ell}\Gr^N_{j}H$ and $\Gr^{\widetilde{M}}_{j+\ell}\Gr^{\widetilde{N}}_{j}\widetilde{H}$ are orthogonal with respect to $S_{\ell j}$.
\end{lemma}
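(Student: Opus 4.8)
The plan is to bootstrap Lemma \ref{lemma:polarization on double Lefschetz decomposition} from the one-filtration case (Corollary \ref{cor:descend to one filtration}) applied to the pair $(M,\widetilde M)$, together with the relative weight filtration hypothesis (iii). First I would recall that by Corollary \ref{cor:descend to one filtration}, since $(M,\widetilde M)$ are infinitesimal automorphisms of $(H,\widetilde H,S)$, the pairing $S$ descends to a non-degenerate pairing $S_\ell\colon \Gr^M_\ell H\otimes \Gr^{\widetilde M}_\ell\widetilde H\to\C$ for $\ell\ge 0$, given by $S_\ell([a],[\widetilde b])=S(a,\widetilde M^\ell\widetilde b)$, and that the Lefschetz decomposition of $\Gr^M_\ell$ is orthogonal with respect to $S_\ell$. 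The key observation is that hypothesis (iii) says $W^M[j]$ is the relative weight filtration of $M$ with respect to $W^N$; in particular the operator $N$ (which commutes with $M$) induces a well-defined nilpotent operator on each $\Gr^M_\ell H$ whose weight filtration is, after the shift, induced by $W^N$. So on the space $\Gr^M_\ell H$ we have a nilpotent operator $\bar N$ and the graded pieces of its weight filtration are exactly $\Gr^M_{j+\ell}\Gr^N_j H$ (the index bookkeeping coming from the shift $W^M[j]_i=W^M_{j+i}$). The same holds on $\widetilde H$.

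Next I would verify that $(\bar N,\widetilde{\bar N})$ are infinitesimal automorphisms of $(\Gr^M_\ell H, \Gr^{\widetilde M}_\ell\widetilde H, S_\ell)$: non-degeneracy of $S_\ell$ is Corollary \ref{cor:descend to one filtration}; symmetry/skew-symmetry is inherited (one checks $S_\ell([a],[\widetilde b]) = \pm S_\ell([\widetilde b],[a])$ using $S(Ma,\widetilde b)+S(a,\widetilde M\widetilde b)=0$ and the symmetry type of $S$, noting the sign contributed by the factor $\widetilde M^\ell$ is consistent on both arguments); and the compatibility $S_\ell(\bar N a,\widetilde b)+S_\ell(a,\widetilde{\bar N}\widetilde b)=0$ follows from the hypothesis $S(Na,\widetilde b)+S(a,\widetilde N\widetilde b)=0$ together with the fact that $N$ commutes with $M$ and $\widetilde N$ commutes with $\widetilde M$, so that one can slide $N$ past $\widetilde M^\ell$ at the cost of nothing. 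Once this is in place, I would apply Corollary \ref{cor:descend to one filtration} a second time, now to the pair $(\bar N,\widetilde{\bar N})$ on $(\Gr^M_\ell H,\Gr^{\widetilde M}_\ell\widetilde H,S_\ell)$, to obtain non-degenerate pairings $S_{\ell j}\colon \Gr^N_j\Gr^M_\ell H\otimes \Gr^{\widetilde N}_j\Gr^{\widetilde M}_\ell\widetilde H\to\C$ defined by $S_{\ell j}([a],[\widetilde b])=S_\ell(a,\widetilde{\bar N}^j\widetilde b)=S(a,\widetilde M^\ell\widetilde N^j\widetilde b)$, together with orthogonality of the single Lefschetz decompositions of $\Gr^N_j\Gr^M_\ell H$ with respect to $\bar N$.

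The remaining point is to identify $\Gr^N_j\Gr^M_\ell H$ with $\Gr^M_{j+\ell}\Gr^N_j H$ (a standard consequence of the relative weight filtration being the one induced by $W^N$ on each $\Gr^M$), to check that under this identification $S_{\ell j}$ is exactly the pairing written in \eqref{S ell j for general set up}, and to promote the orthogonality of the \emph{single} Lefschetz decomposition (for $\bar N$) to orthogonality of the \emph{double} Lefschetz decomposition \eqref{eqn:double Lefschetz decomposition}. The last upgrade is purely formal: the double decomposition is obtained by first splitting off $M$-primitive pieces (orthogonal for $S_\ell$ by the first application of Corollary \ref{cor:descend to one filtration}, and this orthogonality is inherited by the induced $S_{\ell j}$ since $S_{\ell j}$ is built from $S_\ell$) and then splitting each summand into $N$-primitive pieces (orthogonal for $S_{\ell j}$ by the second application). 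I expect the main obstacle to be the bookkeeping in the second paragraph: carefully confirming that $\bar N$ really does admit $W^N$-induced weight filtration on $\Gr^M_\ell H$ with the correct index shift — i.e. extracting the precise statement we need from hypothesis (iii) and from \cite[\S 4.5]{DM05} — and tracking the signs in the sesquilinear/symmetry compatibility for $S_\ell$ and then $S_{\ell j}$, so that both $(\bar N,\widetilde{\bar N})$ and the doubly-graded pairings genuinely satisfy the infinitesimal-automorphism axioms of Definition \ref{definition:infinitesimal automorphisms}. Everything else reduces to two invocations of Corollary \ref{cor:descend to one filtration} and the formal inheritance of orthogonal direct sum decompositions under descent.
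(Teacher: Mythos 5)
Your basic strategy — iterate Corollary \ref{cor:descend to one filtration} twice, checking the infinitesimal-automorphism condition on the first-layer graded pieces — is the right one, but you iterate in the wrong order, and this is forced by a misreading of hypothesis~(iii). That hypothesis controls the operator $\bar M$ induced by $M$ on each $\Gr^N_j H$: it says that $W^M[j]$, restricted to $\Gr^N_j H$, is the monodromy weight filtration of $\bar M$ centered at zero. It says nothing about the operator $\bar N$ induced by $N$ on $\Gr^M_\ell H$, and in particular it does \emph{not} assert, as you claim, that the monodromy weight filtration of $\bar N$ agrees (after a shift) with the filtration induced by $W^N$. In the intended application this is badly false. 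Take $M=\eta$ and $N=L$ as in \S\ref{sec:twisted Poincare pairing}: there $W^\eta_i=\bigoplus_{b\geq -i}H^b(X,K)$ is the filtration by cohomological degree, and $L$ raises cohomological degree by two, so $L(W^\eta_i)\subseteq W^\eta_{i-2}$ and $\bar L$ is \emph{identically zero} on $\Gr^\eta_\ell H=H^{-\ell}(X,K)$. Its monodromy weight filtration is therefore trivial, whereas the filtration induced on $H^{-\ell}(X,K)$ by $W^L=W^{\tot}$ is the perverse Leray filtration, which is not trivial. Your second invocation of Corollary \ref{cor:descend to one filtration}, applied to $(\bar N,\widetilde{\bar N})$, would hence produce only $j=0$ pieces and nothing else. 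The hypotheses (i)--(iii) are genuinely asymmetric in $M$ and $N$, and you cannot treat them interchangeably.

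The repair is to reverse the order. First apply Corollary \ref{cor:descend to one filtration} to $(N,\widetilde N)$, obtaining a non-degenerate pairing $S^N_j\colon\Gr^N_j H\otimes\Gr^{\widetilde N}_j\widetilde H\to\C$ given by $S^N_j([a],[\widetilde b])=S(a,\widetilde N^j\widetilde b)$ for $j\geq 0$. Your commutation computation, with the roles of $M$ and $N$ interchanged, shows that $(\bar M,\widetilde{\bar M})$ are infinitesimal automorphisms of $(\Gr^N_j H,\Gr^{\widetilde N}_j\widetilde H,S^N_j)$. Hypothesis~(iii) now applies literally: the monodromy weight filtration of $\bar M$ on $\Gr^N_j H$, recentered at zero, is $W^M[j]$, so $\Gr^{W(\bar M)}_\ell(\Gr^N_j H)=\Gr^M_{j+\ell}\Gr^N_j H$. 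A second application of Corollary \ref{cor:descend to one filtration} to $(\bar M,\widetilde{\bar M})$ yields $S_{\ell j}([a],[\widetilde b])=S^N_j(a,\widetilde{\bar M}^\ell\widetilde b)=S(a,\widetilde M^\ell\widetilde N^j\widetilde b)$. Note that the exponent on $\widetilde M$ is $\ell$ rather than $j+\ell$; this is precisely why the pairing must be built through $\Gr^N_j$ and the relative-filtration re-centering, not by ``descending'' $S_{j+\ell}$ on $\Gr^M_{j+\ell}H$. The orthogonality of the $\bar M$-Lefschetz decomposition supplied by the second application, combined with the orthogonality of the $N$-Lefschetz decomposition from the first, yields the orthogonality of the double Lefschetz decomposition \eqref{eqn:double Lefschetz decomposition}.
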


The next corollary will be used in Lemma \ref{lem: intersection of various kernels}, as a part of Theorem \ref{thm:Hodge-Riemann}.
\begin{corollary}\label{cor: nondegeneracy on kernels of double filtrations}
For each $\ell,j\geq 0$, consider the vector spaces
\[ \Ker N^{j+1} \subseteq \Gr^M_{j+\ell}\Gr^N_jH,\quad 
\Ker \widetilde{N}^{j+1} \subseteq \Gr^{\widetilde{M}}_{j+\ell}\Gr^{\widetilde{N}}_j\widetilde{H}.\]
Then the pairing \eqref{S ell j for general set up} restricts to a non-degenerate pairing:
\[ S_{\ell j}: \Ker N^{j+1} \otimes \Ker \widetilde{N}^{j+1} \to \C.\]
\end{corollary}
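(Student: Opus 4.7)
The plan is to combine the orthogonality and non-degeneracy of the double Lefschetz decomposition (Lemma \ref{lemma:polarization on double Lefschetz decomposition}) with an explicit description of $\Ker N^{j+1}$ as a subsum of the decomposition \eqref{eqn:double Lefschetz decomposition}.

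First, I would describe the kernel. Any $v\in \Gr^M_{j+\ell}\Gr^N_jH$ has a unique expansion $v = \sum_{m\geq j,\, i\geq \ell} M^{-\ell+i}N^{-j+m}p_{m,i}$, with $p_{m,i} \in P^{j-2m}_{\ell-2i} = P^{-(2m-j)}_{-(2i-\ell)}$ and with the summation restricted to those $(m,i)$ for which all exponents are nonnegative. Since $M$ and $N$ commute, one has
\[ N^{j+1}v = \sum_{m\geq j,\, i\geq \ell} M^{-\ell+i} N^{m+1} p_{m,i}. \]
Because $p_{m,i}\in \Ker N^{2m-j+1}$ by definition of $P^{-(2m-j)}_{-(2i-\ell)}$, the summand corresponding to $m=j$ is automatically annihilated. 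For $m\geq j+1$ we have $m+1\leq 2m-j$, so $N^{m+1}p_{m,i}$ is in general nonzero; moreover the elements $M^{-\ell+i}N^{m+1}p_{m,i}$ indexed by such $(m,i)$ land in distinct companion summands of the double Lefschetz decomposition of the target bi-graded piece and are therefore linearly independent. Hence $N^{j+1}v = 0$ forces $p_{m,i}=0$ for all $m\geq j+1$, yielding
\[ \Ker N^{j+1} = \bigoplus_{i\geq \ell} M^{-\ell+i} P^{-j}_{\ell-2i}, \]
and analogously for $\Ker \widetilde N^{j+1}$.

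Second, by Lemma \ref{lemma:polarization on double Lefschetz decomposition} the decompositions \eqref{eqn:double Lefschetz decomposition} of $\Gr^M_{j+\ell}\Gr^N_jH$ and $\Gr^{\widetilde M}_{j+\ell}\Gr^{\widetilde N}_j\widetilde H$ are orthogonal under the non-degenerate pairing $S_{\ell j}$. Restricting to the subdecompositions identified above, the pairing $S_{\ell j}: \Ker N^{j+1}\otimes \Ker \widetilde N^{j+1}\to \C$ splits as an orthogonal direct sum, over $i\geq \ell$, of the restrictions
\[ S_{\ell j}: M^{-\ell+i}P^{-j}_{\ell-2i}\otimes \widetilde M^{-\ell+i}\widetilde P^{-j}_{\ell-2i} \to \C. \]
Each of these is the restriction of a non-degenerate pairing to a pair of companion summands in its orthogonal direct sum decomposition, hence is itself non-degenerate by Remark \ref{orthogonal complement and direct sum}. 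Summing over $i$ gives the desired non-degeneracy.

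The main obstacle I anticipate is justifying the linear independence step: for $m\geq j+1$ one must verify that the various $M^{-\ell+i} N^{m+1} p_{m,i}$ are linearly independent in the target bi-graded piece of $H$. This is an instance of the standard fact that in a bigraded vector space with two commuting Lefschetz operators, applying $N^s M^t$ to a primitive vector at bi-weight $(-(2m-j),-(2i-\ell))$ produces a distinguished summand of the Lefschetz decomposition at the bi-weight shifted by $(-2s,-2t)$, and distinct primitive source bi-weights contribute linearly independent summands. Unwinding this bookkeeping is the most technical point, but can be obtained by iterating the single-filtration analysis used in Lemma \ref{lemma:weight filtration of nilpotent map} and Corollary \ref{cor:nondegeneracy on the kernel}, first in $N$ and then in the relative direction $M$.
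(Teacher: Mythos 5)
Your proposal is essentially the paper's own argument, just with the bookkeeping made explicit. The paper's proof is a two-sentence gesture: it states the decomposition $\Ker N^{j+1} = \bigoplus_{i} M^{-\ell+i}P^{-j}_{\ell-2i}$ as a consequence of the double Lefschetz decomposition and the commutativity of $M$ and $N$, then appeals to the orthogonality in Lemma \ref{lemma:polarization on double Lefschetz decomposition} to conclude non-degeneracy. You reconstruct exactly this, but you actually verify the decomposition: the term with $m=j$ vanishes automatically because $p_{j,i}\in\Ker N^{j+1}$, while the terms with $m\geq j+1$ satisfy $m+1\le 2m-j$, land in distinct summands of the target's own double Lefschetz decomposition, and are therefore independently detected, forcing $p_{m,i}=0$. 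Your final appeal to Remark \ref{orthogonal complement and direct sum} to pass from orthogonality to summand-wise non-degeneracy is the right citation and is slightly more careful than the paper, which leaves that step implicit. The ``main obstacle'' you flag is real but is indeed the standard $sl_2$-string bookkeeping; the paper elides it entirely, so you are not missing anything they did.
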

\begin{proof}
We can use (\ref{eqn:double Lefschetz decomposition}) and the commutability between $M$ and $N$ to obtain decompositions
\[ \Ker N^{j+1} = \bigoplus_{i\in \Z} M^{-\ell+i}P^{-j}_{\ell-2i}, \quad \Ker \widetilde{N}^{j+1} = \bigoplus_{i\in \Z} \widetilde{M}^{-\ell+i}\widetilde{P}^{-j}_{\ell-2i},\]
which are orthogonal with respect to $S_{\ell j}$ by Lemma \ref{lemma:polarization on double Lefschetz decomposition}. Then the non-degeneracy follows.
\end{proof}

\subsubsection{Filtrations on $H^{\ast}(X,\cV)$}\label{sec:twisted Poincare pairing}

Now we apply the previous discussion to the cohomology of local systems. We work with Set-up \ref{setup: cohomology of smooth projective}. Let $f: X\to Y$ be a map between smooth projective varieties, let $A$ be an ample line bundle on $Y$, and set $L\colonequals f^{\ast}A$. Denote $K\colonequals \cV[\dim X]$. Consider the two vector spaces
\[ H=H^{\ast}(X,\cV)\colonequals \bigoplus_{j\in \mathbb{Z}} H^j(X,\cV), \quad \widetilde{H}=H^{\ast}(X,\cV^{\ast})\colonequals \bigoplus_{j\in \mathbb{Z}} H^j(X,\cV^{\ast}).\]
Since $X$ is compact, there is a non-degenerate Poincar\'e pairing
\begin{align}
&S: H \otimes \widetilde{H} \to \C,  \label{Poincare pairing}\\
&S(\sum e_k\otimes \alpha_{k},\sum \lambda_{k}\otimes \beta_{k})\colonequals \sum_{k}  C(k)\int_X \lambda_{2n-k}(e_k)\cdot \alpha_k\wedge \beta_{2n-k}, \nonumber
\end{align}
where $e_k$ and $\lambda_k$ are global sections of the harmonic bundle $H$ and dual harmonic bundle $H^{\ast}$ respectively, and $\alpha_k$, $\beta_k$ are $k$-forms on $X$, $C(k)=(-1)^{k(k-1)/2}$.

Let $\eta$ and $L$ denote the commuting nilpotent operators on $H$, induced by the cup product maps. To keep track the filtrations on $\widetilde{H}$, we use $\widetilde{\eta}$ and $\widetilde{L}$ to denote the corresponding operators on $\widetilde{H}$.  It is direct to check that $(\eta,\widetilde{\eta})$ and $(L,\widetilde{L})$ are infinitesimal automorphisms for $(H,\widetilde{H},S)$:
\begin{equation}\label{eqn: Poincare pairing for double Lefschetz decomposition}
S(\eta a, b)+S(a,\widetilde{\eta}\widetilde{b})=0, \quad S(L a, b)+S(a,\widetilde{L}\widetilde{b})=0.
\end{equation}

Theorem \ref{thm:Hodge-Simpson} implies that the weight filtrations of $W^{\eta}$ on $H$ is
\begin{equation}\label{eqn:weight filtration via eta}
W^{\eta}_i = \bigoplus_{\ell \geq n-i} H^\ell(X,\cV)=\bigoplus_{b\geq -i}H^{b}(X,K). 
\end{equation}
Consider the total filtration on $H$ (see Notation \ref{perverse filtration}):
\begin{equation}\label{eqn:total weight filtration}
W^{\tot}_j\colonequals \bigoplus_{b\in \Z}H^{b}_{\leq b+j}(X,K).
\end{equation}
Let $W^{\widetilde{\eta}}$ and $W^{\widetilde{\tot}}$ denote the corresponding filtrations on $\widetilde{H}$. Then we have for all $\ell,j\in \Z$:
\[ H^{-\ell-j}_{-\ell}(X,K)=\Gr^{\eta}_{j+\ell}\Gr^{\tot}_{j}H, \quad
H^{-\ell-j}_{-\ell}(X,K^{\ast})=\Gr^{\widetilde{\eta}}_{j+\ell}\Gr^{\widetilde{\tot}}_{j}\widetilde{H}. \]
As in \cite{DM05}, we will show $W^{L}=W^{\tot}$ in Corollary \ref{cor:identification of weight filtrations}. 

\begin{remark}\label{remark: signs for twisted Poincare pairing}
By Theorem \ref{cor:polarized by twisted Poincare pairing}, we see that $i^{-(n-\ell-j)}\cdot S_{\ell j}(\bullet,\phi(\bullet))$
polarizes the pure twistor structure on $P^{-j}_{-\ell}=\Ker \eta^{\ell+1}\cap \Ker L^{j+1}$. Therefore, we can get the appropriate signs for other primitive pieces in the double Lefschetz decomposition, as in \cite[Remark 4.5.2]{DM05}. 
\end{remark}

\section{An application to the Decomposition Theorem}\label{sec: proof of main results}

In this section, building on the results developed in the previous sections, we give a new proof of Sabbah's Decomposition Theorem \ref{thm:main}. Here is the main set-up of this section.
\begin{setup}\label{main setup} -
\begin{itemize}
    \item Let $f: X\to Y$ be a morphism between projective varieties, where $X$ is smooth of dimension $n$ and $\dim f(X)=m$.  Let $\eta$ be an ample line bundle on $X$, let $A$ be an ample line bundle on $Y$, and set $L\colonequals f^{\ast}A$.
    \item Let $\cV$ be a semisimple local system on $X$ and denote $\cV^{\ast}$ to be the dual local system. Set $K\colonequals \cV[\dim X], K^{\ast}\colonequals \cV^{\ast}[\dim X]$ to be the associated perverse sheaves on $X$.
    \end{itemize} 
\end{setup}

\subsection{Outline of the proof}\label{sec: outline of the proof}
Since the general strategy is quite close to \cite[\S 2.6]{DM05}, we may skip some details and leave them to the reader. But we will elaborate on extra difficulties and give more details. We prove Theorem \ref{thm:main}, Theorem \ref{thm:Hard Lefschetz Perverse Complexes} and Theorem \ref{thm:Hodge-Riemann}, by double induction on the defect of semismallness $r=r(f)$ and $m=\dim f(X)$. The basic reason for double induction is that in the proof of Theorem \ref{thm:Hard Lefschetz Perverse Complexes} and Theorem \ref{thm:Hodge-Riemann}, there are two different ways of cutting hyperplanes. Cutting on $X$ gives $r'<r$ and cutting on $Y$ gives $r'\leq r$ and $m'<m$. 

Step 1 is similar to the original strategy and Step 2-4 consist of additional difficulties, this is where the pre-Weil operator $\phi$ comes in to overcome the difficulty.
\begin{enumerate}

\item [\textbf{Step 1}] By Deligne's Lefschetz splitting criterion and the construction of universal hyperplanes, it suffices to prove Theorem \ref{thm:main}(i) and Theorem \ref{thm:main}(iii) and the key step is to prove Theorem \ref{thm:main}(iii) for $\ell=0$. To achieve this, we first prove Theorem \ref{thm:Hard Lefschetz Perverse Complexes} and Theorem \ref{thm:Hodge-Riemann} by induction and we elaborate this in Step 2 and 3.

\item [\textbf{Step 2}] Theorem \ref{thm:main}(i) and the inductive Theorem \ref{thm:Hodge-Riemann} imply Theorem \ref{thm:Hard Lefschetz Perverse Complexes}. This needs the new input about the compatibility of the pre-Weil operator $\phi$ and perverse filtration (Lemma \ref{lemma:perverse is twistor}, Lemma \ref{lemma:Identification map is functorial} and Corollary \ref{cor:restriction of perverse filtration}).

\item [\textbf{Step 3}] To prove Theorem \ref{thm:Hodge-Riemann}, we need the setup of weight filtrations on two companion vector spaces from \S \ref{sec:weight filtrations}. This is the most difficult part and our argument is not the same as the original proof. Then by induction, one can reduce it to the case of the constant map (Theorem \ref{thm: generalized Weil operator}).

\item [\textbf{Step 4}] We can complete the proof the Semisimplicity Theorem \ref{thm:main}(iii) using the splitting criterion of de Cataldo-Migliorini. To apply the criterion, we need Lemma \ref{cor: refined intersection is Poincare pairing} to relate the adjunction morphism with the twisted Poincar\'e pairing and then apply the positivity coming from the polarization induced by the twisted Poincar\'e pairing and the pre-Weil operator $\phi$. The semisimplicity of local systems over strata are deduced from Simpson's theorem in the case of smooth projective maps (see Corollary \ref{thm:semisimplicity smooth projective map}).
\end{enumerate}

\subsection{The cup product with a line bundle}\label{cup product with a line bundle}
We need a duality result on cup products. Let $X$ be a projective variety and let $\eta$ be a line bundle on $X$. Let $K$ be a constructible complex of $\C$-vector spaces on $X$. Let $D_X$ denote the Verdier dual functor and set $K^{\ast}\colonequals D_X(K)$. The first Chern class of $\eta$ corresponds to an element in $H^2(X,\C)\cong \mathrm{Hom}_{D^b_c(X)}(\C_X,\C_X[2])$. 
\begin{definition}
The cup product map $\eta: K \to K[2]$ is defined by \[K\cong K\overset{\mathbb{L}}{\otimes}\C_X \xrightarrow{\mathrm{Id}\otimes \eta} K\overset{\mathbb{L}}{\otimes} \C_X[2] \cong K[2].\] 
\end{definition}

\begin{lemma}\label{lemma: dual of cup product}
The dual $D_X(\eta)$ of the morphism $\eta:K \to K[2]$ is isomorphic to the morphism $\eta: (K^{\ast}\to K^{\ast}[2])[-2]$.
\end{lemma}

\begin{proof}
It follows from \cite[Remark 4.4.1]{DM05} and the compatibility of functors involved in the description of $K\to K[2]$ with the Verdier dual functor. For details, see \cite[Lemma 2.5.2]{Yang21}.
\end{proof}

\begin{corollary}\label{corollary:duality of cup product map}
Suppose we are in the Set-up \ref{main setup}, then the dual $D_Y(\eta^{\ell})$ of the morphism $\eta^{\ell}:{}^{\fp}\cH^{-\ell}f_{\ast}K \to {}^{\fp}\cH^{\ell}f_{\ast}K $ is isomorphic to the morphism $ \eta^{\ell}:{}^{\fp}\cH^{-\ell}f_{\ast}K^{\ast} \to  {}^{\fp}\cH^{\ell}f_{\ast}K^{\ast}$.
\end{corollary}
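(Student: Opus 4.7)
The plan is to deduce this corollary essentially formally from the preceding lemma by pushing forward along $f$ and then taking perverse cohomology, using that Verdier duality intertwines ${}^{\fp}\cH^{i}$ with ${}^{\fp}\cH^{-i}$.

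First, I would start with the preceding lemma, which identifies
\[ D_X\bigl(K \xrightarrow{\eta} K[2]\bigr) \cong \bigl(K^{\ast} \xrightarrow{\eta} K^{\ast}[2]\bigr)[-2]. \]
Iterating this (or applying the same argument with $\eta^{\ell}$ in place of $\eta$, using $\eta^{\ell}$ as a class in $H^{2\ell}(X,\C)$), one obtains the analogous statement
\[ D_X\bigl(K \xrightarrow{\eta^{\ell}} K[2\ell]\bigr) \cong \bigl(K^{\ast} \xrightarrow{\eta^{\ell}} K^{\ast}[2\ell]\bigr)[-2\ell]. \]

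Next, I would push forward along $f$. Since $f$ is projective, hence proper, one has $f_{\ast} = f_{!}$, so Grothendieck-Verdier duality gives a natural isomorphism $D_{Y}\circ f_{\ast}\cong f_{\ast}\circ D_{X}$. Applying $f_{\ast}$ (and using $D_{X}(K)=K^{\ast}$) to the identification above produces
\[ D_{Y}\bigl(f_{\ast}K \xrightarrow{\eta^{\ell}} f_{\ast}K[2\ell]\bigr) \cong \bigl(f_{\ast}K^{\ast} \xrightarrow{\eta^{\ell}} f_{\ast}K^{\ast}[2\ell]\bigr)[-2\ell]. \]

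Finally, I would apply the perverse cohomology functor ${}^{\fp}\cH^{-\ell}$ to the source side and invoke the standard compatibility $D_{Y}\circ {}^{\fp}\cH^{i} \cong {}^{\fp}\cH^{-i}\circ D_{Y}$ (together with the shift ${}^{\fp}\cH^{i}(C[j]) = {}^{\fp}\cH^{i+j}(C)$). The map ${}^{\fp}\cH^{-\ell}(\eta^{\ell})$ on $f_{\ast}K$ goes from ${}^{\fp}\cH^{-\ell}(f_{\ast}K)$ to ${}^{\fp}\cH^{-\ell}(f_{\ast}K[2\ell])={}^{\fp}\cH^{\ell}(f_{\ast}K)$, and its Verdier dual lands in the corresponding groups for $K^{\ast}$, yielding precisely the cup-product map $\eta^{\ell}$ on ${}^{\fp}\cH^{\bullet}(f_{\ast}K^{\ast})$, as claimed.

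The argument is essentially formal once the preceding lemma is in hand; the only delicate point is bookkeeping the shift by $[-2\ell]$ against the perverse cohomological degrees so that the Verdier-dual map is identified with the cup product by $\eta^{\ell}$ (rather than, say, its transpose composed with a sign). I expect this bookkeeping — checking that the natural transformations $Df_{\ast}\cong f_{\ast}D$ and $D\,{}^{\fp}\cH^{i}\cong {}^{\fp}\cH^{-i}D$ combine to send the map $\eta^{\ell}$ on the left exactly to the map $\eta^{\ell}$ on the right, with no extra automorphism — to be the only substantive step, and it should follow from the functoriality of the Chern class interpretation of $\eta^{\ell}\in H^{2\ell}(X,\C)$ together with the identification of $\eta^{\ell}$ with the composition $K\to i_{\ast}i^{\ast}K^{\otimes \ell}$-type maps used in the lemma.
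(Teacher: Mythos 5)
Your proof is correct and takes exactly the route the paper implicitly intends: the corollary is stated without proof precisely because it follows formally from the preceding lemma by applying the natural isomorphisms $D_Y f_{\ast}\cong f_{\ast}D_X$ (available since $f$ is proper) and $D_Y\circ{}^{\fp}\cH^{i}\cong{}^{\fp}\cH^{-i}\circ D_Y$, together with the shift bookkeeping you describe. One tiny remark: rather than re-running the lemma's argument with $\eta^{\ell}$, you could also just iterate the $\ell=1$ case, since $\eta^{\ell}$ as a map $K\to K[2\ell]$ is the composition $\eta[2\ell-2]\circ\cdots\circ\eta[2]\circ\eta$ and Verdier duality is contravariant and compatible with composition — but both versions are fine and equally short.
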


\subsection{Weak-Lefschetz-type results}\label{sec: weak lefschetz type}
To run the inductive proof of Theorem \ref{thm:main} via cutting with hyperplanes, let us recall results from \cite[\S 4.7, \S 5.2, \S 5.3]{DM05} enriched by pure twistor structures and the pre-Weil operator $\phi$.

\begin{prop}\label{prop:weak Lefschetz cohomology}
There exists $m_0$ so that for any $m\geq m_0$, the following statements hold: if $i: X^{1} \to X$ is a general hyperplane section of $\abs{\eta^{\otimes m}}$ and $f^{1}:X^1 \to Y$ is the restricted map, set $K^{1}\colonequals i^{\ast}K[-1]$, then
\begin{itemize}

\item $r(f^1)\leq \max \{r(f)-1,0 \}$.
\item The restriction map ${}^{\fp}\cH^{-\ell}(f_{\ast}K) \to {}^{\fp}\cH^{-\ell+1}(f^{1}_{\ast}K^{
1})$ is iso for $\ell\geq 2$ and mono for $\ell=1$.

\item Assume Theorem \ref{thm:main}(ii) holds for $f$, then there is an injective morphism
\[ i^{\ast}: P^{-j}_{-\ell}(X,K) \to P^{-j}_{-\ell+1}(X^1,K^1), \quad  \ell\geq 1, j\geq 0,
\]
which underlies a morphism of pure twistor structures and are compatible with the pre-Weil operator $\phi$.
\end{itemize}
\end{prop}

\begin{proof}
For the second statement, we need to apply \cite[Lemma 3.5.4(b)]{DM05} to get $i^{\ast}K[-1]=i^{!}K[1]$. The statement on twistor structures follows from Corollary \ref{cor:restriction of perverse filtration} (functoriality with restriction) and Lemma \ref{lemma:Identification map is functorial} (compatibility with cup products).
\end{proof}

Since $f:X\to Y$ is an algebraic map between algebraic varieties, the algebraic version of Thom isotopy lemmas  (for example see \cite[Theorem 3.2.3]{DM05}) imply that there exist finite algebraic Whitney stratifications $\sX$ of $X$ and $\sY$ of $Y$ such that 1) given any connected component $S$ of a $\sY$ stratum $S_{\ell}$ on $Y$, then  $f^{-1}(S)$ is an union of connected components of strata of $\sX$, each of which is mapping submersively to $S$, 2) $\forall y\in S$, there exists an euclidean open neighborhood $U$ of $y$ in $S$ and a stratum-preserving homeomorphism $h:U\times f^{-1}(y)\cong f^{-1}(U)$ such that $f\circ h$ is the projection to $U$. For the next statement, let us fix a choice of such stratifications on $X$ and $Y$.

Applying Bertini Theorem to linear systems $|A|$ and $f^{\ast}|A|$ simultaneously, we can choose a general section $Y_1\in |A|$ such that 1) $Y_1$ is transverse to all positive-dimensional strata of $Y$ and avoids the $0$-dimensional strata, 2) $X_1=f^{-1}(Y_1)$ is smooth.
  
\begin{prop}\label{prop:weak Lefschetz hypercohomology}
With the notation above. Let $f_1:X_1 \to Y_1$ denote the restriction of $f$, let $i:X_1 \to X$ denote the inclusion map and set $K_1\colonequals i^{\ast}K[-1]$. Then \begin{itemize}
\item $r(f_1)\leq r(f)$,
\item the restriction map $i^{\ast}: H^{-j}(Y,{}^{\fp}\cH^0(f_{\ast}K)) \to H^{-j+1}(Y_1,{}^{\fp}\cH^0(f_{1\ast}K_1))$ is an isomorphism for $j\geq 2$ and an injection for $j=1$,

\item the Gysin pushforward $i_{\ast}: H^{j-1}(Y_1,{}^{\fp}\cH^0(f_{1\ast}K_1)) \to H^{-j}(Y,{}^{\fp}\cH^0(f_{\ast}K))$ is an isomorphism for $j\geq 2$ and a surjection for $j=1$,

\item assuming Theorem \ref{thm:main}(ii) holds for $f$, there is an injective morphism 
\[ i^{\ast}(P^{-j}_0(X,K)) \subseteq P^{-j+1}_0(X_1,K_1), \quad j\geq 1,\]
which is an equality for $j\geq 2$.
\end{itemize}
Moreover, all morphisms underlie morphisms of pure twistor structures and are compatible with the pre-Weil operator $\phi$.
\end{prop}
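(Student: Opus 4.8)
\textbf{Proof proposal for Proposition \ref{prop:weak Lefschetz hypercohomology}.}

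The plan is to mimic the argument for Proposition \ref{prop:weak Lefschetz cohomology} and Corollary \ref{cor:Hard Lefschetz for primitive parts}, but now cutting with a general section of $A$ upstairs on $Y$ rather than with $\abs{\eta^{\otimes m}}$ on $X$. First I would record the transversality set-up: since $Y_1 \in \abs{A}$ is general and transversal to all strata of a fixed stratification of $f$, the preimage $X_1 = f^{-1}(Y_1)$ is transversal to the induced stratification of $X$, so $i: X_1 \to X$ is a normally nonsingular codimension-one inclusion. This gives the two crucial identifications $i^{\ast}K[-1] = i^{!}K[1] = K_1$ (via \cite[Lemma 3.5.4(b)]{DM05}) and, since $f_1$ is the restriction of $f$ over a general divisor disjoint from no stratum, $r(f_1) = r(f)$ — the fiber dimensions and the strata dimensions all drop by the same amount on the relevant locus, or more precisely one checks directly from the definition of $r(f)$ using $Y_k \cap Y_1$ versus $Y_k$. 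That is the content of the first bullet.

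Next I would prove the statements about ${}^{\fp}\cH^0(f_{\ast}K)$. The key input is \cite[Lemma 4.7.6]{DM05} applied to the perverse sheaf $P \colonequals {}^{\fp}\cH^0(f_{\ast}K)$ on $Y$ together with the ample class $A$: this is the standard ``weak Lefschetz for perverse sheaves'' statement saying that for a general hyperplane section $Y_1$ of $\abs{A}$ transversal to the strata, the restriction map $H^{b}(Y, P) \to H^{b+1}(Y_1, i^{\ast}P[-1])$ is an isomorphism for $b \leq -2$ and injective for $b = -1$, with the dual statement for the Gysin map; moreover $i^{\ast}({}^{\fp}\cH^0(f_{\ast}K))[-1] \cong {}^{\fp}\cH^0(f_{1\ast}K_1)$ by proper base change and commutation of nearby-/vanishing-type truncations with the transversal restriction (exactly as in the proof of \cite[Proposition 4.7.7]{DM05}). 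Taking hypercohomology and using the identification $H^{-j}(Y,{}^{\fp}\cH^0(f_{\ast}K)) = H^{-j}_0(X,K)$ — more precisely $H^{b}_{\ell}(X,K) \cong H^{b-\ell}(Y,{}^{\fp}\cH^{\ell}(f_{\ast}K))$ from Notation \ref{perverse filtration} — yields the second and third bullets. For the fourth bullet, granting Theorem \ref{thm:main}(ii) for $f$ so that $f_{\ast}K$ splits, the cup product with $\eta$ acts on $H^{-j}_0(X,K)$ compatibly with $i^{\ast}$, so $i^{\ast}$ carries $P^{-j}_0(X,K) = \Ker \eta^{1} \cap \Ker L^{j+1}$ into $P^{-j+1}_0(X_1,K_1)$; injectivity for all $j \geq 1$ follows from the injectivity of $i^{\ast}$ on $H^{-j}_0$, and the equality for $j \geq 2$ follows from the isomorphism in that range combined with a diagram chase identifying the primitive pieces (as in \cite[Lemma 5.3.1]{DM05}).

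Finally, for the ``moreover'' clause, I would invoke Corollary \ref{cor:restriction of perverse filtration}: the restriction map $H^{b}_{\ell}(X,K) \to H^{b+1}_{\ell}(X_1,K_1)$ underlies a morphism of the natural pure twistor structures, compatibly with the canonical maps $\phi$, and the cup products with $\eta$ and $L$ are twistor-theoretic by Lemma \ref{lemma:perverse is twistor} and Lemma \ref{lemma:Identification map is functorial}; hence all the maps constructed above, being built from these, underlie morphisms of pure twistor structures. The main obstacle I anticipate is not any single step but the bookkeeping needed to verify that the transversal restriction commutes with perverse truncation and with the Gysin/restriction identifications in exactly the form required — i.e. carefully transcribing \cite[Lemma 4.7.6]{DM05} and \cite[Proposition 4.7.7]{DM05} to the present twisted setting — together with confirming $r(f_1) = r(f)$, which is false without the transversality hypothesis and so must be used with care.
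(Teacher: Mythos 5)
Your proposal is correct and follows essentially the same route the paper takes: the paper's own ``proof'' of Proposition \ref{prop:weak Lefschetz hypercohomology} is the sentence immediately preceding it, pointing to \cite[Lemma 4.7.6, Proposition 4.7.7, Lemma 5.3.1]{DM05} applied to $P = {}^{\fp}\cH^0(f_{\ast}K)$ with the transversality of $Y_1 \in \abs{A}$ and the twistor-compatibility via Corollary \ref{cor:restriction of perverse filtration}, which is exactly what you spell out. One small caution: the identification $H^{-j}(Y,{}^{\fp}\cH^0(f_{\ast}K)) \cong H^{-j}_0(X,K)$ uses the Decomposition Theorem for $f$ (see \eqref{perverse filtration on X and cohomology of perverse complex}), not merely Notation \ref{perverse filtration}, so it should only be invoked for the fourth bullet (where Theorem~\ref{thm:main}(ii) is assumed) and the twistor-theoretic clause, not for the second and third bullets, which are already phrased directly in terms of hypercohomology of the perverse sheaf on $Y$ and need only \cite[Lemma 4.7.6]{DM05}.
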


\begin{proof}
    The proposition can be proved using \cite[Lemma 4.7.6 and Proposition 4.7.7]{DM05} as in the proof of \cite[Lemma 5.3.1]{DM05}. The compatibility with $\phi$ follows from Corollary \ref{cor:restriction of perverse filtration}  and Lemma \ref{lemma:Identification map is functorial}.
\end{proof}

\subsection{Polarization and splitting criterions}\label{splitting criterion}
In this section, we relate adjunction morphisms with Poincar\'e-type pairings. Let $T$ be a closed subset of $Y$. We use $T \xrightarrow{\alpha} Y\xleftarrow{\beta} Y\setminus T$ to denote the closed and open embedding. For any perverse sheaf $P$ on $Y$,
we have two distinguished triangles
\[ \alpha_{!}\alpha^{!}P \to P \to \beta_{\ast}\beta^{\ast}P\xrightarrow{[1]}, \quad 
\beta_{!}\beta^{!}P \to P \to \alpha_{\ast}\alpha^{\ast}P\xrightarrow{[1]}.\]
They induce a map of complexes $\alpha_{!}\alpha^{!}P \to P \to \alpha_{\ast}\alpha^{\ast}P$, whose cohomology gives a map
\begin{equation}\label{eqn:the splitting map}
H^k(Y,\alpha_{!}\alpha^{!}P) \to H^k(Y,\alpha_{\ast}\alpha^{\ast}P), \quad \forall k\in \mathbb{Z}.
\end{equation}
This map induces various maps in the splitting criterions of de Cataldo-Migliorini \cite[Lemma 4.1.3]{DM05} and of MacPherson-Vilonen \cite{MV} (see \cite[Remark 5.7.5]{DM09}). We interpret the map (\ref{eqn:the splitting map}) and other related maps via various Poincar\'e-type pairings. This should be known to experts, but we would like to include the discussion here because we cannot find sufficient references in the literature.

First, recall the Poincar\'e pairing from \eqref{Poincare pairing}:
\begin{align}\label{eqn: Poincare pairing for k=n}
S:H^k(X,\cV) &\otimes H^{2n-k}(X,\cV^{\ast}) \to \C,\\ \nonumber
[A]&\otimes [B] \mapsto C(k)\int_X A\wedge B.
\end{align}
where $A,B$ are forms with coefficients in $\cV$ and $\cV^{\ast}$. For $k=n$, the pairing induces
\[ S:H^0(X,K) \to H^0(X,K^{\ast})^{\vee}, \quad A\mapsto (B\mapsto S(A,B)),\]
where $K=\cV[n]$. Let $D_Z$ denote the Verdier dual functor on a space $Z$ and let $p_X: X\to \mathrm{pt}$ denote the constant map. Since $X$ is proper, there is a canonical equivalence of functors $(p_X)_{\ast} \cong D_{\mathrm{pt}} \circ (p_X)_{\ast} \circ D_X$. Applying to $K$, we get the following map
\begin{equation}\label{eqn: Verdier duality map} 
V: H^0(X,K) \to H^0(X,K^{\ast})^{\vee}.
\end{equation}
\begin{lemma}\label{lemma: identification between Poincare and Verdier in the case of local system}
The map $V$ coincides with the map $S$, up to a sign $C(n)=(-1)^{n(n-1)/2}$.
\end{lemma}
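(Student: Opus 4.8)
\textbf{Proof plan for Lemma \ref{lemma: identification between Poincare and Verdier in the case of local system}.}

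The plan is to unwind both maps to the level of global sections and compare them against the cup-product-and-integrate operation, using the compatibility of the Verdier duality isomorphism with the trace map on cohomology. First I would recall that for the constant sheaf $\C_X$, the canonical isomorphism $(p_X)_\ast \cong D_{\mathrm{pt}}\circ (p_X)_\ast \circ D_X$ applied to $\C_X[n]$ (or rather to $\omega_X = \C_X[2n]$, unwinding the normalization) recovers precisely the classical Poincar\'e duality pairing $H^k(X,\C)\otimes H^{2n-k}(X,\C)\to \C$ given by $A\otimes B \mapsto \int_X A\wedge B$; this is the content of the standard identification of Verdier duality with Poincar\'e--Lefschetz duality, and I would cite the relevant normalization (the fundamental class / trace $\int_X\colon H^{2n}(X,\omega_X)\to \C$). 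The sign $C(k) = (-1)^{k(k-1)/2}$ is a bookkeeping artifact of the shift $K = \cV[n]$; I would track it once at the level of the shift functor and the Koszul sign rule for the cup product, so that it appears symmetrically on both sides.

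Next I would treat the local system $\cV$. The point is that $K = \cV[n]$ and $K^\ast = D_X(K) = \cV^\ast[n]$, where $\cV^\ast = \mathcal{H}om(\cV,\C_X)$ is the sheaf-theoretic dual, and the canonical evaluation $\cV\otimes_{\C}\cV^\ast \to \C_X$ is exactly the pairing of a local system with its dual. The Verdier duality map $V$ is obtained by combining this evaluation with the constant-coefficient Poincar\'e duality: concretely, $V$ factors as
\[
H^0(X,K) \otimes H^0(X,K^\ast) = H^n(X,\cV)\otimes H^n(X,\cV^\ast) \xrightarrow{\cup} H^{2n}(X,\cV\otimes\cV^\ast) \xrightarrow{\mathrm{ev}} H^{2n}(X,\C) \xrightarrow{\int_X} \C,
\]
up to the shift sign. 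This is just the statement that Verdier duality for $\cV[n]$ is the composite of Verdier duality for $\C_X[n]$ with the internal evaluation pairing — a formal consequence of the projection formula and the compatibility of $D_X$ with $\otimes$ and internal Hom. On the other hand, under the de Rham / Dolbeault identification $H^n(X,\cV)\cong\mathrm{Harm}^n(X,H)$ and the analogous one for $\cV^\ast$ (with $H^\ast$ the dual harmonic bundle of Construction \ref{construction:dual harmonic bundle}), the cup product followed by evaluation is represented on forms $A = \sum \alpha\otimes e$, $B = \sum \beta\otimes\lambda$ precisely by $\sum \lambda(e)\cdot \alpha\wedge\beta$, whose integral against the orientation class is the definition of $S$ in \eqref{Poincare pairing} (restricted to $k=n$). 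Matching the two descriptions term by term, with the sign $C(n)$ accounted for by the shift, gives $V = S$.

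The main obstacle I anticipate is not conceptual but bookkeeping: pinning down that the normalization of the Verdier duality isomorphism $(p_X)_\ast\cong D_{\mathrm{pt}}\circ(p_X)_\ast\circ D_X$ used in the paper is the one whose underlying pairing on $H^\ast(X,\C)$ is the honest Poincar\'e pairing $A\otimes B\mapsto\int_X A\wedge B$ (rather than its negative, or a version twisted by a sign depending on degree), and that the shift conventions $K=\cV[n]$, $K^\ast = D_X(K)$ produce exactly the factor $C(n) = (-1)^{n(n-1)/2}$ and no extra sign. I would handle this by reducing to the constant-coefficient case — where the identification of Verdier duality with Poincar\'e duality is classical and the sign conventions are documented (e.g. via the compatibility of the dualizing complex $\omega_X\cong\C_X[2n]$ for smooth $X$ with the fundamental class) — and then noting that passing from $\C_X$ to $\cV$ introduces only the evaluation pairing $\cV\otimes\cV^\ast\to\C_X$, which is manifestly the same pairing $\lambda(e)$ appearing in \eqref{Poincare pairing}. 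Since both $V$ and $S$ are $\C$-bilinear and we have matched them on a spanning set of classes (all of $H^0(X,K)\otimes H^0(X,K^\ast)$ via harmonic representatives), the equality follows.
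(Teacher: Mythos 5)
Your proposal is correct and takes essentially the same approach as the paper: both unwind the Verdier duality isomorphism $(p_X)_\ast \cong D_{\mathrm{pt}}\circ (p_X)_\ast\circ D_X$ at the level of de Rham resolutions and match it against the integrate-the-wedge-product pairing $A\otimes B \mapsto C(k)\int_X A\wedge B$. The paper makes this concrete by choosing a top-degree form to identify $\cA^k_X$ with $(\cA^{2n-k}_X)^\ast$ and dualizing the de Rham resolution of $\cV^\ast$ to produce one of $\cV$, whereas you phrase the same unwinding as a reduction to the constant-coefficient case followed by the evaluation pairing $\cV\otimes\cV^\ast\to\C_X$ via the projection formula; the content and the sign bookkeeping you flag (the factor $C(k)$ coming from the shift $K=\cV[n]$) are the same in both.
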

\begin{proof}
When $\cV$ is the constant local system $\C_X$, this is mentioned in \cite[\S 3.1]{KS}. To simplify the notation, we denote by $p$ the constant map $p_X:X\to \mathrm{pt}$. Let us denote by $\omega^{\bullet}_X=p^{!}\C_{\mathrm{pt}}$ the dualizing complex on $X$ so that $D_X(-)=R\mathcal{H}om(-,\omega^{\bullet}_X)$.

First let us recall that $p_{\ast} \circ D_X\xrightarrow{\sim} D_{\mathrm{pt}}\circ p_{\ast}$ is the existence of an equivalence of functors
\[ p_{\ast}R\mathcal{H}om(-,p^{!}\C_{\mathrm{pt}})\xrightarrow{\sim}R\mathcal{H}om(p_{\ast}(-),\C_{\mathrm{pt}}),\]
which induces the isomorphism $p_{\ast}\cong D_{\mathrm{pt}} \circ p_{\ast} \circ D_X$ and thus the map $V$. By the adjunction map $p_{\ast}p^{!}\to \mathrm{Id}$, the equivalence above is defined by the composition of morphisms
\begin{equation}\label{eqn: map inducing the Verdier duality}
p_{\ast}R\mathcal{H}om(-,\omega^{\bullet}_X)\xrightarrow{p_{\ast}} R\mathcal{H}om(p_{\ast}(-),p_{\ast}\omega^{\bullet}_X)\to R\mathcal{H}om(p_{\ast}(-),\C_{\mathrm{pt}}),
\end{equation}
where first arrow is induced by $p_{\ast}$. 

Now, let us take a closer look at the first arrow $p_{\ast}$ in \eqref{eqn: map inducing the Verdier duality}. Let $K$ be any constructible complex of $\C$-vector spaces on $X$. The tensor-hom adjunction gives a natural isomorphism
\[ \mathrm{Hom}\left(p_{\ast}R\mathcal{H}om(K,\omega^{\bullet}_X),R\mathcal{H}om(p_{\ast}K,p_{\ast}\omega^{\bullet}_X)\right)\xrightarrow{\sim} \mathrm{Hom}(p_{\ast}R\mathcal{H}om(K,\omega^{\bullet}_X)\overset{L}{\otimes} p_{\ast}K,p_{\ast}\omega^{\bullet}_X). \]
By adjunction, the first arrow $p_{\ast}$ in \eqref{eqn: map inducing the Verdier duality} is transformed to the composition of morphisms
\begin{align}\label{eqn: apply tensor hom adjunction to p*}
    p_{\ast}R\mathcal{H}om(K,\omega^{\bullet}_X)\overset{L}{\otimes} p_{\ast}K \xrightarrow{p_{\ast}\otimes \mathrm{Id}} R\mathcal{H}om(p_{\ast}K,p_{\ast}\omega^{\bullet}_X)\overset{L}{\otimes} p_{\ast}K \to p_{\ast}\omega^{\bullet}_X,
\end{align}
and the second arrow is induced by the natural morphism $R\mathcal{H}om(A,B)\overset{L}{\otimes} A \to B$ for complexes $A,B$. Using the projection formula, one can show that the morphism \eqref{eqn: apply tensor hom adjunction to p*} can also be factorized as 
\begin{equation*}
p_{\ast}R\mathcal{H}om(K,\omega^{\bullet}_X)\overset{L}{\otimes} p_{\ast}K \rightarrow p_{\ast}(R\mathcal{H}om(K,\omega^{\bullet}_X)\overset{L}{\otimes} K) \to p_{\ast}\omega^{\bullet}_X,
\end{equation*}
where the first map is induced by the natural transformation $p_{\ast}(-)\overset{L}{\otimes} p_{\ast}(-)\to p_{\ast}(-\overset{L}{\otimes} -)$. 

Note that the second arrow in \eqref{eqn: map inducing the Verdier duality} is induced by $p_{\ast}\omega_X^{\bullet}\to \C_\mathrm{pt}$. In our case, it takes a special form and relates to integration on $X$. Since $X$ is an orientable manifold, there exists an isomorphism
\[ \C_X[2n] \xrightarrow{\sim} \omega^{\bullet}_X=p^{!}\C_{\mathrm{pt}}.\]
By the adjunction map $p_{\ast}p^{!}\to \mathrm{Id}$, this induces a map
\[ p_{\ast}\C_X[2n] \to p_{\ast}\omega^{\bullet}_X\to \C_{\mathrm{pt}}.\]
Since the first term above sits in degrees $[-2n,0]$, there is a factorization
\begin{equation}\label{eqn: factorization of the map from p* to C}
p_{\ast}\C_X[2n] \to R^{2n}p_{\ast}\C_X \xrightarrow{\mathrm{Tr}}\C_{\mathrm{pt}},
\end{equation}
where the second map is induced by the isomorphism $H^{2n}(X,\C)\cong \C$ coming from the orientation of $X$. In particular, we can understand $p_{\ast}\omega^{\bullet}_X\to \C_{\mathrm{pt}}$ using \eqref{eqn: factorization of the map from p* to C}.

Summarizing the discussion above, if we plug $K=\cV[n]$ into \eqref{eqn: map inducing the Verdier duality}, then it induces the following map
\begin{equation}\label{eqn: Verdier dual now is Poincare pairing}
p_{\ast}R\mathcal{H}om(K,\omega^{\bullet}_X)\overset{L}{\otimes} p_{\ast}(K) \rightarrow p_{\ast}(R\mathcal{H}om(K,\omega^{\bullet}_X)\overset{L}{\otimes} K) \to p_{\ast}\omega^{\bullet}_X \to p_{\ast}\C_{X}[2n] \to \C_{\mathrm{pt}}.
\end{equation}
The last two maps come from \eqref{eqn: factorization of the map from p* to C} and $\omega^{\bullet}_X\xrightarrow{\sim} \C_X[2n]$. Moreover, by taking the $0$-th cohomology, the fact that the map $V$ being isomorphism is equivalent to the statement that the induced pairing\begin{equation}\label{eqn: pairing induced from Verdier duality}
    H^0(X,K^{\ast}) \otimes H^0(X,K)\rightarrow H^0(X,K^{\ast}\overset{L}{\otimes} K) \to H^0(X,\omega_X^{\bullet})\xrightarrow{\sim} H^{2n}(X,\C) \xrightarrow{\int} \C
\end{equation}
is a perfect pairing, so that the map $V$ is induced by \eqref{eqn: pairing induced from Verdier duality}. Here because $X$ is an orientable manifold, we can choose the last isomorphism to be induced by the integration $\omega\mapsto \int_X\omega$, for any top degree form $\omega$. 

Note that we have $K=\cV[n]$, $K^{\ast}\cong \cV^{\ast}[n]$, and thus $K\overset{L}{\otimes} K^{\ast}\cong \cV\otimes \cV^{\ast}[2n]$. Then the map \eqref{eqn: pairing induced from Verdier duality} coincides with the pairing
\begin{equation}\label{eqn: Verdier duality pairing integration}
H^n(X,\cV)\otimes H^n(X,\cV^{\ast})\xrightarrow{\cup} H^{2n}(X,\cV\otimes \cV^{\ast})\to H^{2n}(X,\C)\xrightarrow{\int}\C,
\end{equation}
where the first map is the cup product map, the second map is induced by the natural map $\cV\otimes R\mathcal{H}om(\cV,\C_X)\to \C_X$. Since we can represent each cohomology class in $H^n(X,\cV)$ by forms with coefficients in the vector bundle $H=\cV\otimes \cC^{\infty}_X$ (same for $\cV^{\ast}$), it is then clear that the pairing $\eqref{eqn: Verdier duality pairing integration}$ is equal to the Poincar\'e pairing \eqref{eqn: Poincare pairing for k=n}, up to a sign $C(n)=(-1)^{n(n-1)/2}$. 

Therefore we conclude that the map $V$ coincides with the map $S$, up to a sign $(-1)^{n(n-1)/2}$.
\end{proof}

Now, we extend Lemma \ref{lemma: identification between Poincare and Verdier in the case of local system} to perverse filtrations on $H^0(X,K)$. Let $\alpha: \{ y\} \hookrightarrow  Y$ be a point in $Y$. The adjunction map $\alpha_{!}\alpha^{!}f_{\ast}K \to f_{\ast}K$ induces a cycle map
\[ \mathrm{cl}:H^0(Y,\alpha_{!}\alpha^{!}f_{\ast}K) \to H^0(Y,f_{\ast}K)=H^0(X,K).\]
We denote the corresponding cycle map for $K^{\ast}$ by $ \widetilde{\mathrm{cl}}$ (instead of $\mathrm{cl}^{\ast}$, which may get confused with the pullback of $\mathrm{cl}$). They induce the following pairing
\begin{align}\label{eqn: restricted twisted Poincare pairing for cycle maps}
S_{y}: H^0(Y,\alpha_{!}\alpha^{!}f_{\ast}K)\otimes H^0(Y,\alpha_{!}\alpha^{!}f_{\ast}K^{\ast}) \to \C,\quad A\otimes B \mapsto S(\mathrm{cl}(A), \widetilde{\mathrm{cl}}(B)). 
\end{align}
Let $S_{y}$ also denote the induced map between vector spaces. Using the canonical isomorphisms 
\begin{equation}\label{eqn: canonical isomorphisms between verdier dual functor and others}
D_Y \circ \alpha_{!}\cong \alpha_{\ast}\circ D_Y, \quad D_Y \circ \alpha^{!} \cong \alpha^{\ast}\circ D_Y, \quad D_Y\circ f_{\ast}\cong f_{\ast}\circ D_X,
\end{equation}
we have the following isomorphism
\begin{equation}\label{eqn: maps induced by Verdier duality}
\mathrm{V.D.}: H^0(Y,\alpha_{!}\alpha^{!}f_{\ast}K^{\ast})^{\vee} \xrightarrow{V^{-1}} H^0(Y, D_Y(\alpha_{!}\alpha^{!}f_{\ast}K^{\ast})) \xrightarrow{\sim} H^0(Y,\alpha_{\ast}\alpha^{\ast}f_{\ast}K).
\end{equation}

\begin{lemma}\label{lemma: splitting map for direct image of K}
The map 
\[ H^0(Y,\alpha_{!}\alpha^{!}f_{\ast}K) \xrightarrow{S_{y}} H^0(Y,\alpha_{!}\alpha^{!}f_{\ast}K^{\ast})^{\vee} \xrightarrow{\mathrm{V.D.}}H^0(Y,\alpha_{\ast}\alpha^{\ast}f_{\ast}K) \]
coincides with the map from (\ref{eqn:the splitting map})
\[ H^0(Y,\alpha_{!}\alpha^{!}f_{\ast}K) \to H^0(Y,\alpha_{\ast}\alpha^{\ast}f_{\ast}K),\]
up to a sign.
\end{lemma}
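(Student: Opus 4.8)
The statement is a compatibility between the abstract splitting map \eqref{eqn:the splitting map} (for $P = f_\ast K$, $k=0$, and the point $\alpha\colon\{y\}\hookrightarrow Y$) and the concrete map built from the Poincar\'e pairing $S$ followed by the Verdier-duality isomorphism $\mathrm{V.D.}$ of \eqref{eqn: maps induced by Verdier duality}. The plan is to reduce everything to the unstructured (global cohomology) version already established in Lemma \ref{lemma: identification between Poincare and Verdier in the case of local system}, using the functoriality of the Verdier duality isomorphism with respect to the adjunction maps $\alpha_!\alpha^! \to \mathrm{id} \to \alpha_\ast\alpha^\ast$. Concretely, I would first unwind the definition of the map \eqref{eqn:the splitting map} in degree $0$ with $P = f_\ast K$: it is the map on $H^0(Y,-)$ induced by the composite of complexes $\alpha_!\alpha^! f_\ast K \to f_\ast K \to \alpha_\ast\alpha^\ast f_\ast K$, i.e. the cycle map $\mathrm{cl}$ followed by the canonical corestriction $H^0(X,K) \to H^0(Y,\alpha_\ast\alpha^\ast f_\ast K)$.

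\textbf{Key steps.} First I would set up the commuting diagram expressing that Verdier duality is a duality of \emph{functors}: applying $D_{\mathrm{pt}}\circ (p_Y)_\ast$ to the adjunction triangle $\alpha_!\alpha^! f_\ast K^\ast \to f_\ast K^\ast \to \alpha_\ast\alpha^\ast f_\ast K^\ast$ and using the canonical isomorphisms \eqref{eqn: canonical isomorphisms between verdier dual functor and others} together with $D_Y\circ\alpha_\ast\cong\alpha_!\circ D_Y$, $D_Y\circ\alpha^\ast\cong\alpha^!\circ D_Y$ identifies its $H^0$ with the adjunction sequence for $K$, \emph{reversing arrows}: so $\mathrm{V.D.}$ fits into a commutative square whose other side is the dual of the corestriction $H^0(X,K^\ast) \to H^0(Y,\alpha_\ast\alpha^\ast f_\ast K^\ast)$, which is exactly the transpose of the cycle map $\widetilde{\mathrm{cl}}$ in \eqref{dual cycle map}. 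Second, I would invoke Lemma \ref{lemma: identification between Poincare and Verdier in the case of local system}, which says that on all of $H^0(X,K)$ the global Poincar\'e pairing $S$ equals the global Verdier duality map $V$ of \eqref{eqn: Verdier duality map}. Third, I would chase the element $A \in H^0(Y,\alpha_!\alpha^! f_\ast K)$ through both composites: on one side, $A \mapsto S(A,-) = C(n)\int_X \mathrm{cl}(A)\wedge \widetilde{\mathrm{cl}}(-)$, which by Lemma \ref{lemma: identification between Poincare and Verdier in the case of local system} equals $V(\mathrm{cl}(A))\circ \widetilde{\mathrm{cl}}$, i.e. the functional $\mathrm{cl}(A)$ viewed in $H^0(X,K^\ast)^\vee$ and then precomposed with $\widetilde{\mathrm{cl}}$; applying $\mathrm{V.D.}$ and using the functoriality square from Step 1, this is precisely the image of $\mathrm{cl}(A)$ under the corestriction $H^0(X,K) \to H^0(Y,\alpha_\ast\alpha^\ast f_\ast K)$, which by Step's unwinding is the value of the map \eqref{eqn:the splitting map} on $A$.

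\textbf{Main obstacle.} The genuinely delicate point is Step 1: verifying that the Verdier-duality isomorphism $\mathrm{V.D.}$ of \eqref{eqn: maps induced by Verdier duality} is \emph{compatible} with the adjunction morphisms, i.e. that the square relating $\mathrm{V.D.}$ to the transpose of $\widetilde{\mathrm{cl}}$ actually commutes. This is a statement about the interaction of the six-functor natural transformations (the unit/counit of the $(\alpha_!,\alpha^!)$ and $(\alpha^\ast,\alpha_\ast)$ adjunctions) with the Verdier-dual natural isomorphisms $D\alpha_! \cong \alpha_\ast D$ etc., and making it rigorous requires keeping careful track of which canonical isomorphism is used where — the kind of coherence check that is routine in principle but error-prone in practice. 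I expect this to be handled either by citing the standard compatibilities of Verdier duality (e.g. as in the appropriate sections of Kashiwara--Schapira or the foundational references used elsewhere in the paper) or by reducing, via the explicit de Rham / dualizing-form resolution already used in the proof of Lemma \ref{lemma: identification between Poincare and Verdier in the case of local system}, to an identity of honest complexes of sheaves where the adjunction maps are literal restriction and corestriction of differential forms and the duality is the honest $\int_X A\wedge B$ pairing; in that model the commutativity is immediate. The remaining steps are then formal bookkeeping with sign constants $C(n)$, which cancel consistently on both sides.
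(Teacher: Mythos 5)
Your proposal matches the paper's argument: the paper likewise factors the complex-level splitting map $\alpha_!\alpha^! f_\ast K \to \alpha_\ast\alpha^\ast f_\ast K$ through the double Verdier dual $D_Y D_Y f_\ast K \cong D_Y f_\ast K^\ast$, takes $H^0$ to produce exactly the composite $(\widetilde{\mathrm{cl}})^{\vee}\circ V \circ \mathrm{cl}$, and then invokes Lemma \ref{lemma: identification between Poincare and Verdier in the case of local system} to identify $V$ with $S$. The compatibility of $\mathrm{V.D.}$ with the adjunction unit/counit that you single out as the main obstacle is precisely what the paper uses implicitly when it writes that derived-category factorization; it is the standard six-functor coherence ($D_Y$ of the $(\alpha_!,\alpha^!)$-counit is the $(\alpha^\ast,\alpha_\ast)$-unit under the canonical isomorphisms), so there is no gap.
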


\begin{proof}
The map $ \alpha_{!}\alpha^{!}f_{\ast}K \to \alpha_{\ast}\alpha^{\ast}f_{\ast}K$ can be decomposed as
\begin{align*}
\alpha_{!}\alpha^{!}f_{\ast}K \to f_{\ast}K \xrightarrow{\sim} D_Y\circ D_Y(f_{\ast}K) \xrightarrow{\sim}
D_Y(f_{\ast}K^{\ast}) \to D_Y(\alpha_{!}\alpha^{!}f_{\ast}K^{\ast})\xrightarrow{\sim} \alpha_{\ast}\alpha^{\ast}f_{\ast}K.
\end{align*}
The corresponding map on hypercohomology can be decomposed as
\begin{align*}
&H^0(\alpha_{!}\alpha^{!}f_{\ast}K) \xrightarrow{\mathrm{cl}} H^0(Y,f_{\ast}K)=H^0(X,K) \xrightarrow{V} H^0(X,K^{\ast})^{\vee}\\
=&H^0(Y,f_{\ast}K^{\ast})^{\vee}\xrightarrow{(\widetilde{\mathrm{cl}})^{\vee}}H^0(Y,\alpha_{!}\alpha^{!}f_{\ast}K^{\ast})^{\vee} \xrightarrow{\mathrm{V.D.}} H^0(Y,\alpha_{\ast}\alpha^{\ast}f_{\ast}K).
\end{align*}
Using Lemma \ref{lemma: identification between Poincare and Verdier in the case of local system}, for any $A\in H^0(Y,\alpha_{!}\alpha^{!}f_{\ast}K)$ and $B\in H^0(Y,\alpha_{!}\alpha^{!}f_{\ast}K^{\ast})$, we have
\[ \left((\widetilde{\mathrm{cl}})^{\vee}\circ V \circ \mathrm{cl}\right)(A): B \mapsto C(n)\cdot S(\mathrm{cl}(A), \widetilde{\mathrm{cl}}(B)),\]
which is equal to the map $S_{y}$ induced by (\ref{eqn: restricted twisted Poincare pairing for cycle maps}), up to a sign.
\end{proof}

Lastly, let us explain the adaption of Lemma \ref{lemma: splitting map for direct image of K} to ${}^{\fp}\cH^0f_{\ast}K$. Assume Theorem \ref{thm:main}(ii)  holds for $f_{\ast}K$, and we have the induced cycle map  
\[ \mathrm{cl}: H^0(Y,\alpha_{!}\alpha^{!}{}^{\mathfrak{p}}\cH^{0}f_{\ast}K) \to H^0(Y,{}^{\mathfrak{p}}\cH^{0}f_{\ast}K).\]
Similarly we use $\widetilde{\mathrm{cl}}$ to denote the cycle map of $K^{\ast}$. The pairing \eqref{eqn: restricted twisted Poincare pairing for cycle maps} induces a pairing 
\begin{align*}
S^{\eta L}_{00}:H^0(Y,\alpha_{!}\alpha^{!}{}^{\mathfrak{p}}\cH^{0}f_{\ast}K) \otimes H^0(Y,\alpha_{!}\alpha^{!}{}^{\mathfrak{p}}\cH^{0}f_{\ast}K^{\ast}) \to \C,\quad A\otimes B \mapsto S(\mathrm{cl}(A) ,\widetilde{\mathrm{cl}}(B)).
\end{align*}
Using the canonical isomorphisms from \eqref{eqn: canonical isomorphisms between verdier dual functor and others} and
\[ D_Y\circ {}^{\fp}\tau_{\leq 0} \cong {}^{\fp}\tau_{\geq 0} \circ D_Y, \quad D_Y\circ {}^{\fp}\tau_{\geq 0} \cong {}^{\fp}\tau_{\leq 0} \circ D_Y, \quad D_Y\circ {}^{\fp}\cH^k \cong {}^{\fp}\cH^{-k}\circ D_Y, \]
we have the following map as in \eqref{eqn: maps induced by Verdier duality}:
\begin{equation}\label{eqn: Verdier duality for 0th perverse cohomology}
     \mathrm{V.D.}: H^0(Y,\alpha_{!}\alpha^{!}{}^{\mathfrak{p}}\cH^{0}f_{\ast}K^{\ast})^{\vee} \to  H^0(Y,\alpha_{\ast}\alpha^{\ast}{}^{\mathfrak{p}}\cH^{0}f_{\ast}K).
\end{equation}
\begin{lemma}\label{cor: refined intersection is Poincare pairing}
Assume Theorem \ref{thm:main}(ii) holds for $f_{\ast}K$, then the map 
\[ H^0(Y,\alpha_{!}\alpha^{!}{}^{\mathfrak{p}}\cH^{0}f_{\ast}K) \xrightarrow{S^{\eta L}_{00}}H^0(Y,\alpha_{!}\alpha^{!}{}^{\mathfrak{p}}\cH^{0}f_{\ast}K^{\ast})^{\vee} \xrightarrow{\mathrm{V.D.}} H^0(Y,\alpha_{\ast}\alpha^{\ast}{}^{\mathfrak{p}}\cH^{0}f_{\ast}K)   \]
coincides with the map from (\ref{eqn:the splitting map})
\[ H^0(Y,\alpha_{!}\alpha^{!}{}^{\mathfrak{p}}\cH^{0}f_{\ast}K) \to H^0(Y,\alpha_{\ast}\alpha^{\ast}{}^{\mathfrak{p}}\cH^{0}f_{\ast}K),\]
where $k=0$ and $P={}^{\mathfrak{p}}\cH^{0}f_{\ast}K$, up to a sign.
\end{lemma}

\begin{proof}
Note that the map (\ref{eqn:the splitting map}) $ \alpha_{!}\alpha^{!}{}^{\fp}\tau_{\leq 0}f_{\ast}K \to \alpha_{\ast}\alpha^{\ast}{}^{\fp}\tau_{\leq 0}f_{\ast}K$  can be decomposed as
\begin{align*}
\alpha_{!}\alpha^{!}{}^{\fp}\tau_{\leq 0}f_{\ast}K &\to {}^{\fp}\tau_{\leq 0}f_{\ast}K \xrightarrow{\sim} D_Y\circ D_Y({}^{\fp}\tau_{\leq 0}f_{\ast}K)\xrightarrow{\sim}\\
D_Y({}^{\fp}\tau_{\geq 0}f_{\ast}K^{\ast}) &\to D_Y(\alpha_{!}\alpha^{!}{}^{\fp}\tau_{\geq 0}f_{\ast}K^{\ast}) \xrightarrow{\sim} \alpha_{\ast}\alpha^{\ast}{}^{\fp}\tau_{\leq 0}f_{\ast}K.
\end{align*}
In addition, the map from (\ref{eqn:the splitting map}) for $f_{\ast}(K)$ and ${}^{\mathfrak{p}}\cH^{0}f_{\ast}K$ are compatible in the sense that one can represent an element in $H^0(Y,\alpha_{!}\alpha^{!}{}^{\mathfrak{p}}\cH^{0}f_{\ast}K)$ using its lifting to $H^0(Y,\alpha_{!}\alpha^{!}f_{\ast}K)$. Since the bilinear pairing $S^{\eta L}_{00}$ is defined in the same way, one proceeds similarly as in the proof of Lemma \ref{lemma: splitting map for direct image of K}. Again, it is important to notice that these two maps coincide up to a sign.
\end{proof}

\subsection{Set up of the proof}\label{sec: Set up of the inductive proof}
Let us fix two finite algebraic Whitney stratifications $\mathfrak{X}$ on $X$ and $\mathfrak{Y}$ on $Y$ adapted to the map $f$, so that all perverse sheaves we work with are constructible with respect to these stratifications. For precise definitions, the reader can consult \cite[\S 6.1]{DM05}.

For the base case, we start from the case $m=0$ and arbitrary $r$. Then all the statements follow from Hodge-Simpson Theorem \ref{thm:Hodge-Simpson} and Theorem \ref{thm: generalized Weil operator}. 

\begin{assumption}\label{inductive assumption}
Let $r=r(f)\geq 0$ and $m>0$. Assume that the results of Theorem \ref{thm:main}, Theorem \ref{thm:Hard Lefschetz Perverse Complexes} and Theorem \ref{thm:Hodge-Riemann} hold for every projective map $g:X\to Y$ and $\cV$, where either $r(g)<r$, or $\dim g(X)<m$ and $r(g)\leq r$.
\end{assumption}
We will prove that if we are in Assumption \ref{inductive assumption}, then all three Theorems hold for $f:X \to Y$ and $\cV$ with $r(f)=r$ and $\dim f(X)=m$.

\subsection{Proof of Theorem \ref{thm:main}, except the Semisimplicity Theorem for $\ell=0$}\label{sec:Theorem A(a)}
There are two cases. If $r(f)=0$, then $f$ is semismall. Since the proof of \cite[Prop 8.2.30]{HTT} also works for any local system, we conclude that $f_{\ast}K={}^{\fp}\cH^0(f_{\ast}K)$ is a perverse sheaf. In particular, Theorem \ref{thm:main} except for Theorem \ref{thm:main}(iii) $\ell=0$ automatically hold. 

If $r(f)>0$, choose an integer $k$ large enough so that $\eta^{\otimes k}$ is very ample, and fix the projective embedding $X\subseteq \bP$ induced by the linear system $|\eta^{\otimes k}|$. Consider the 
following commutative diagram from \cite[\S 4.7]{DM05}:
\[ \begin{CD}
X @<p_X<< \cX \\
@VVfV  @VVgV \\
Y @<p_Y<< \cY
\end{CD}\]
Here $\cY=Y\times \bP^{\vee}$, $\cX=\{ (x,h) \colon h(x)=0 \} \subseteq X\times \bP^{\vee}$,
the map $g$ is defined by $g(x,h)=(f(x),h)$ and the two horizontal maps $p_X$ and $p_Y$ are projections to the first factors. Let $d=\dim \bP$ and $M=p_X^{\ast}\cV[\dim \cX]$. By Corollary \ref{cor:pull back preserve semisimplicity}, $p_X^{\ast}\cV$ is a semisimple local system on $\cX$. Since we know that $r(g)<r(f)$ by\cite[Lemma 4.7.4]{DM05}, we can apply the inductive assumption on $g$ to proceed as follows. For Theorem \ref{thm:main}(i), since the functor $p_Y^{\ast}[d]$ is fully-faithful, it suffices to show that
\[ p_Y^{\ast}(\eta^{\ell})[d] \colon p_Y^{\ast}{}^{\fp}\cH^{-\ell}(f_{\ast}K)[d] \to p_Y^{\ast}{}^{\fp}\cH^{\ell}(f_{\ast}K)[d]\]
is an isomorphism. There are two cases.

\textbf{Case I}: $\ell \geq 2$. It follows by induction on $g$ and \cite[Proposition 4.7.8(i)]{DM05}. Note that we actually need Theorem \ref{thm:main}(i) to hold for $f$-ample line bundles, but this can be deduced from the ample line bundle case as in \cite[Remark 5.1.2]{DM05}.

\textbf{Case II}: $\ell =1$. We need some additional care. The cup product can be factored as
\[ p_Y^{\ast}(\eta)[d] \colon p_Y^{\ast}{}^{\fp}\cH^{-1}(f_{\ast}K)[d] \to {}^{\fp}\cH^{0}(g_{\ast}M) \to p_Y^{\ast}{}^{\fp}\cH^{1}(f_{\ast}K)[d].\]
Since ${}^{\fp}\cH^0(g_{\ast}M)$ is semisimple, the proof of \cite[Lemma 5.1.1]{DM05} implies that
\begin{equation}\tag{Cup}\label{eqn: cup product}
    \eta \colon {}^{\fp}\cH^{-1}(f_{\ast}K) \to {}^{\fp}\cH^{1}(f_{\ast}K)
\end{equation} 
is a monomorphism. Since $K^{\ast}=\cV^{\ast}[\dim X]$ and $\cV^{\ast}$ is semisimple, we also know that
\begin{equation}\tag{Cup*}\label{eqn: cup product for dual}
    \eta \colon {}^{\fp}\cH^{-1}(f_{\ast}K^{\ast}) \to {}^{\fp}\cH^{1}(f_{\ast}K^{\ast})
\end{equation}
is a monomorphism. By Corollary \ref{corollary:duality of cup product map},  the Verdier dual of $(\text{\ref{eqn: cup product for dual}})$ can be identified with the morphism (\ref{eqn: cup product}). Hence the morphism  (\ref{eqn: cup product}) is also an epimorphism. This finishes the inductive proof of Theorem \ref{thm:main}(i).

By Deligne's Lefschetz splitting criterion \cite[Theorem 1.5]{Deligne68}, Theorem \ref{thm:main}(i) implies Theorem \ref{thm:main}(ii). Regarding Theorem \ref{thm:main}(iii) $\ell \neq 0$, the semisimplicity of ${}^{\fp}\cH^{\ell}(f_{\ast}K)$ follows from \cite[Proposition 4.7.8]{DM05} and the inductive semisimplicity of ${}^{\fp}\cH^{\ell+1}(g_{\ast}M)$ and ${}^{\fp}\cH^{\ell-1}(g_{\ast}M)$.

\subsection{Proof of Theorem \ref{thm:Hard Lefschetz Perverse Complexes} and Theorem \ref{thm:Hodge-Riemann}}\label{sec:auxiliary results}

In order to prove Theorem \ref{thm:main}(iii) in the case of $\ell=0$, we first need to prove Theorem \ref{thm:Hard Lefschetz Perverse Complexes} and Theorem \ref{thm:Hodge-Riemann}. 

\begin{prop}\label{prop: proof of Theorem D}
With the assumption \ref{inductive assumption}, then Theorem \ref{thm:Hard Lefschetz Perverse Complexes}
holds for $f$.
\end{prop}

\begin{proof}
At this point, Theorem \ref{thm:main}(ii) (the Decomposition Theorem) holds for $f_{\ast}K$, hence the complex $f_{\ast}K$ is $p$-split in the sense of \cite[Definition 4.3.1]{DM05} and let us fix a choice of the splitting. We obtain an isomorphism
\begin{equation*}\label{perverse filtration on X and cohomology of perverse complex}
\nu: H^{b}_{\ell}(X,K) \xrightarrow{\sim} H^{b-\ell}(Y,{}^{\fp}\cH^{\ell}(f_\ast K)).
\end{equation*}
By \cite[Lemma 4.4.2 and Remark 4.4.3]{DM05}, the cup product maps with the first Chern classes of $\eta$ and $A$ are compatible with the isomorphism $\nu$, respectively. Therefore Theorem \ref{thm:Hard Lefschetz Perverse Complexes} is equivalent to the following two statements:
\begin{align*}
\eta^{\ell} \colon H^{j}(Y, {}^{\fp}\cH^{-\ell}(f_{\ast}K)) &\xrightarrow{\sim} H^{j}(Y,{}^{\fp}\cH^{\ell}(f_{\ast}K)), \textrm{ whenever $\ell\geq 0,j\in \Z$},\\
A^j \colon H^{-j}(Y, {}^{\fp}\cH^{\ell}(f_{\ast}K)) &\xrightarrow{\sim} H^{j}(Y,{}^{\fp}\cH^{\ell}(f_{\ast}K)), \textrm{ whenever $j\geq 0,\ell\in \Z$}.
\end{align*}

The statement for $\eta$ follows from Theorem \ref{thm:main}(i) for $f$. For $A$, the plan is to cut $X$ by hyperplane sections in $\abs{\eta}$ or $\abs{L}$ and use the corresponding weak Lefschetz theorem. 

\textbf{Case I}: $\ell \neq 0$. By Theorem \ref{thm:main}(i), we can assume $\ell<0$.
Choose a general hyperplane section $X^1\in \abs{\eta}$ and set $f^1:X^1\to Y$. For proving that $A^j$ is an isomorphism, using Proposition \ref{prop:weak Lefschetz cohomology} we can replace $\eta$ by $\eta^{\otimes m}$ for some integer $m$, so that $r(f^{1})\leq r(f)-1$ or $r(f^{1})=r(f)=0$ and $\dim f^1(X^1)< \dim f(X)$. The injectivity of $A^j$ follows from inductive assumption on the map $f^1$ and Proposition \ref{prop:weak Lefschetz cohomology}. The surjectivity follows from a dual argument as in the Case II of the proof of Theorem \ref{thm:main}(i).

\textbf{Case II}:  $\ell=0$ and $j \geq 2$. Using Bertini Theorem, we can choose $Y_1$ to be a sufficiently general hyperplane section in $\abs{A}$ so that $f^{-1}(Y_1)$ is nonsingular and $Y_1$ is transversal to all strata of $Y$. Set $f_1:X_1\colonequals f^{-1}(Y_1)\to Y_1$. Proposition \ref{prop:weak Lefschetz hypercohomology} implies that $r(f_1)\leq r(f)$ and  $\dim f_1(X_1)<\dim f(X)$. The bijectivity of $A^j$ follows from the inductive assumption on $(A|_{Y_1})^{j-1}$ and Proposition \ref{prop:weak Lefschetz hypercohomology}.

\textbf{Case III}: $\ell=0$ and $j=1$. This step requires a different argument since there is no Hodge decomposition to use, compared with \cite[Proposition 5.2.3]{DM05}. Instead, we use the pre-Weil operator $\phi$ to keep track of non-degeneracy of Poincar\'e pairings. We use the same choice as in Case II. By Theorem \ref{thm:main}(i) we have $\eta^{\ell}:{}^{\fp}\cH^{-\ell}(f_{\ast}K) \xrightarrow{\sim} {}^{\fp}\cH^{\ell}(f_{\ast}K)$, which gives a Lefschetz decomposition
\begin{equation}\label{lefschetz decomposition on the sheaf level}
{}^{\fp}\cH^{0}(f_{\ast}K)=\bigoplus_{m\geq 0} \eta^{m}\cP^{-2m},\quad \cP^{-2m}\colonequals \Ker \eta^{2m+1}\subseteq {}^{\fp}\cH^{-2m}(f_{\ast}K).
\end{equation}
If $m\geq 1$, then $A: H^{-1}(Y,\eta^{m}\cP^{-2m}) \to H^1(Y,\eta^{m}\cP^{-2m})$ is an isomorphism by Case I. Therefore we just need to deal with $m=0$, i.e. 
\begin{equation}\label{eqn: cup product with A for m=0}
    A:  H^{-1}(Y,\cP^{0}) \to H^1(Y,\cP^{0})
\end{equation}
is an isomorphism. The idea is to decompose it into restriction and Gysin maps, which come from the corresponding maps on $X$.

Let us denote by $i_X:X_1\hookrightarrow X$ the inclusion map. Since $X_1\in |L|$, the cup product with $L$ on $X$ can be decomposed into 
\begin{equation}\label{eqn: the decomposition of cup product of L}
\begin{tikzcd}
H^{n-1}(X,\cV) \arrow[r, "R=i^{\ast}_X"] \arrow[d,equal]&  H^{n-1}(X_1,\cV|_{X_1}) \arrow[r,"G=i_{X,\ast}"] \arrow[d,equal] &  H^{n+1}(X,\cV)\arrow[d,equal] \\
H^{-1}(X,K) \arrow[r, "R"] &  H^{0}(X_1,K[-1]|_{X_1}) \arrow[r,"G"] &  H^{1}(X,K) 
\end{tikzcd}
\end{equation}
where $R$ and $G$ denote the restriction map and the Gysin map respectively for $i_X$. Similarly, for $\cV^{\ast}$ we denote
\[ \widetilde{L}\colon H^{n-1}(X,\cV^{\ast}) \xrightarrow{\widetilde{R}} H^{n-1}(X_1,\cV^{\ast}|_{X_1}) \xrightarrow{\widetilde{G} } H^{n+1}(X,\cV^{\ast}).\]
Note that $G$ can be identified with the dual of $\widetilde{R}$ using the Poincar\'e duality. 

\begin{claim} The maps $R$ and $G$ in \eqref{eqn: the decomposition of cup product of L} give a decomposition of the cup product map \eqref{eqn: cup product with A for m=0} into
\begin{equation}\label{eqn: cup product on Y}
    A \colon H^{-1}(Y,\cP^{0}) \xrightarrow{R} H^0(Y_1,\cP^{0}_1) \xrightarrow{G} H^{1}(Y,\cP^{0}),
\end{equation}
where $\cP^{0}_1\colonequals \Ker \eta|_{X_1}\subseteq {}^{\fp}\cH^{0}(f_{\ast}K)[-1]|_{Y_1}$. Similarly, we have
\[ A\colon H^{-1}(Y,\tilde{\cP}^{0}) \xrightarrow{\tilde{R}} H^0(Y_1,\tilde{\cP}^{0}_1) \xrightarrow{\tilde{G}} H^{1}(Y,\tilde{\cP}^{0})\]
for corresponding objects associated with $K^{\ast}$. Moreover, $G$ is the dual of $\widetilde{R}$ under the Poincar\'e duality.
\end{claim}
\begin{proof}[Proof of claim]

Step 1: we first show that $R$ and $G$ restrict to the cup product with $A$ on ${}^{\fp}\cH^0(f_{\ast}K)$. Since we have proven that $ f_{\ast}K\cong \oplus {}^{\fp}\cH^{\ell}(f_{\ast}K)[-\ell]$, by \cite[Remark 4.4.3]{DM05} again, the cup product map $L:H^{-1}(X,K)\to H^{1}(X,K)$ restricts to the cup product map with $A$ on the component ${}^{\fp}\cH^0(f_{\ast}K)$:
\begin{equation*}
A:H^{-1}(Y,{}^{\fp}\cH^0(f_{\ast}K)) \to H^{1}(Y,{}^{\fp}\cH^0(f_{\ast}K)). 
\end{equation*}
Denote by $i_Y:Y_1\hookrightarrow Y$ the inclusion. Since $Y_1\in |A|$, the cup product with $A$ can also be decomposed as
\begin{equation}\label{eqn: the decomposition of cup product of A} A:H^{-1}(Y,{}^{\fp}\cH^0(f_{\ast}K))\xrightarrow{R=i^{\ast}_Y} H^{0}(Y_1,{}^{\fp}\cH^0(f_{\ast}K)[-1]|_{Y_1}) \xrightarrow{G=i_{Y,\ast}} H^{1}(Y,{}^{\fp}\cH^0(f_{\ast}K)).
\end{equation}
We claim that the maps $R$ and $G$ in \eqref{eqn: the decomposition of cup product of A} are the restriction of $R$ and $G$ in \eqref{eqn: the decomposition of cup product of L}. Since $L=f^{\ast}A$, one has a commutative diagram
\[ \begin{tikzcd}
X_1 \arrow[r,"i_X"] \arrow[d,"f_1"] &X \arrow[d,"f"]\\
Y_1 \arrow[r,"i_Y"]  &Y
\end{tikzcd}
\]
Since $Y_1$ is chosen to be transverse to the strata of $Y$ (adapted to $f$), the embedding $Y_1\hookrightarrow Y$ is a normally nonsingular inclusion (see \cite[Page 714]{DM05} for the definition and discussion). Furthermore, by \cite[Remark 3.5.1]{DM05} (or see the proof of \cite[Lemma 4.3.8]{DM05}) $i_Y^{\ast}[-1]$ is $t$-exact and one has
\[ i_Y^{\ast}{}^{\fp}\cH^0(f_{\ast}K)[-1]\cong {}^{\fp}\cH^0(f_{1,\ast}(i_{X}^{\ast}K[-1]))\]
This implies that the map $R$ in \eqref{eqn: the decomposition of cup product of L} induces the map $R$ in \eqref{eqn: the decomposition of cup product of A}. On the other hand, since the closed embedding $i_X$ is affine and quasi-finite, one has $i_{X,\ast}$ is $t$-exact and thus ${}^{\fp}\cH^0f_{\ast}\circ i_{X,\ast}\cong i_{Y,\ast}\circ {}^{\fp}\cH^0f_{1,\ast}$. This implies that
\[ {}^{\fp}\cH^0f_{\ast}(i_{X,\ast}(i_X^{\ast}K[-1]))=i_{Y,\ast} {}^{\fp}\cH^0(f_{1,\ast}(i_X^{\ast}K[-1])=i_{Y,\ast}i_Y^{\ast}{}^{\fp}\cH^0(f_{\ast}K)[-1],\]
and hence the map $G$ in \eqref{eqn: the decomposition of cup product of L} restricts to the map $G$ in \eqref{eqn: the decomposition of cup product of A}.

Step 2: Now it suffices to show that $R$ and $G$ in \eqref{eqn: the decomposition of cup product of A} restrict to $R$ and $G$ in \eqref{eqn: cup product on Y}. This requires addtional two substeps.

Step 2.1: we show that $R$ and $G$ in \eqref{eqn: the decomposition of cup product of L} are compatible with the cup product map $\eta: K \to K[2]$. Explicitly, we claim that the map $i_X^{\ast}(K\xrightarrow{\eta} K[2])$ is the cup product map $ \eta|_{X_1}:i_X^{\ast}K \to i_X^{\ast}K[2]$. The proof is similar to the proof of Lemma \ref{lemma: dual of cup product}, for reader's convenience, we give some details here. By \cite[Remark 4.4.1]{DM05}, one can choose a section $s\in \Gamma(X,\eta)$ whose zero locus defines a normally nonsingular inclusion $\alpha:\{s=0\} \hookrightarrow X$, so that the map $\eta:K\to K[2]$ can be described as
\[ K \to \alpha_{\ast}\alpha^{\ast}K\xrightarrow{\sim} \alpha_{!}\alpha^{!}K[2] \to K[2].\]
On the other hand, consider the following diagram
\[ \begin{tikzcd}
    \{s=0\}\cap X_1 \arrow[r,"\alpha_1"] \arrow[d,"i_{X,1}"] & X_1 \arrow[d,"i_X"]\\
    \{s=0\}\arrow[r,"\alpha"] &X
\end{tikzcd}
\]
We can choose $s$ such that $\alpha_1$ is also a normally nonsingular inclusion and so the cup product with $\eta|_{X_1}$ can be described using $\alpha_1$. Since $\alpha$ is proper, we have
\[ i_X^{\ast}\alpha_{\ast}=\alpha_{1,\ast}i_{X,1}^{\ast}.\]
Then
\begin{align*}
    i_X^{\ast}(K\xrightarrow{\eta} K[2])&=i_X^{\ast}K \to i_X^{\ast}\alpha_{\ast}\alpha^{\ast}K\xrightarrow{\sim} i_X^{\ast}\alpha_{!}\alpha^{!}K[2] \to i_X^{\ast}K[2]\\
    &=i_X^{\ast}K \to \alpha_{1,\ast}\alpha_1^{\ast}(i_X^{\ast}K)\xrightarrow{\sim} \alpha_{1,!}\alpha_1^{!}(i^{\ast}K)[2] \to i_X^{\ast}[2]\\
    &=i_X^{\ast}K\xrightarrow{\eta|_{X_1}} i_X^{\ast}K[2].
\end{align*}
Here we use the additional fact that $\alpha^{!}=\alpha^{\ast}[-2]$ \cite[Lemma 3.5.4]{DM05}. Using similar arguments, we can also show that the map $i_{X,\ast}(i_X^{\ast}K\xrightarrow{\eta|_{X_1}}i_X^{\ast}K[2])$ is the map $i_{X,\ast}i_X^{\ast}K\xrightarrow{\eta}i_{X,\ast}i_X^{\ast}K[2]$. We conclude that the Lefschetz decomposition with respect to $\eta$ on $X$ is compatible with $R$ and $G$ in \eqref{eqn: the decomposition of cup product of L}.

Step 2.2: we need to argue that this compatibility descends to $Y$, i.e. the maps $R$ and $G$ in \eqref{eqn: the decomposition of cup product of A} are compatible with the cup product map ${}^{\fp}\cH^0f_{\ast}(\eta):{}^{\fp}\cH^0f_{\ast}K\to {}^{\fp}\cH^2f_{\ast}K$, which would imply that they are compatible with the Lefschetz decomposition \eqref{lefschetz decomposition on the sheaf level}. For $R$, we need to show that the map 
\[i_Y^{\ast}\left({}^{\fp}\cH^0(f_{\ast}K)\xrightarrow{{}^{\fp}\cH^0f_{\ast}(\eta)}{}^{\fp}\cH^2(f_{\ast}K)\right)\]
is the cup product map
\[i_Y^{\ast}{}^{\fp}\cH^0(f_{\ast}K)\xrightarrow{{}^{\fp}\cH^0f_{1,\ast}(\eta|_{X_1})}i_Y^{\ast}{}^{\fp}\cH^2(f_{\ast}K).\]
This follows from the $t$-exactness of $i_Y^{\ast}$ and the compatiblity between the cup product map $\eta:K \to K[2]$ and $R$ in Step 2.1. For the compatibility of $G$, the argument is similar and we leave the details to the reader.

We conclude that the restriction map $R$ and Gysin map $G$ with respect to $L$ in \eqref{eqn: the decomposition of cup product of L} reduce to the corresponding ones with respect to $A$ on $\cP^0$ in \eqref{eqn: cup product on Y} and we finish the proof of claim.
\end{proof}

Now it suffices to show that the cup product map $A$ in \eqref{eqn: cup product on Y} is bijective. To do this, we will use the pre-Weil operator $\phi$ to produce a natural non-degenerate pairing $S:\Ker G\otimes \Ker \widetilde{G}  \to \C$, where $G$ and $\widetilde{G} $ are Gysin maps for $Y_1\hookrightarrow Y$ and $\cP^0$.

\textbf{Step 1}: Consider the  Poincair\'e pairing in \eqref{Poincare pairing}:
\[ S: H^{\ast}(X_1,\cV|_{X_1}) \otimes H^{\ast}(X_1,\cV^{\ast}|_{X_1}) \to \C.\]
By definition, $R$ and $\widetilde{G} $ are adjoint to each other with respect to $S$:
\begin{equation}\label{adjointness}
S(R(\alpha),\widetilde{\beta})=S(\alpha,\widetilde{G} (\widetilde{\beta})),
\end{equation}
for any $\alpha \in H^{n-1}(X,\cV)$ and $\widetilde{\beta}\in H^{n-1}(X_1,\cV^{\ast}|_{X_1})$.

\textbf{Step 2}.  Consider the vector spaces 
\[ \Ker A|_{Y_1} \subseteq H^0(Y_1,\cP^{0}_1), \quad \Ker \widetilde{A}|_{Y_1}\subseteq H^0(Y_1,\widetilde{\cP}^{0}_1).
\]
Denote by $K_1=K[-1]|_{X_1}$ and $f_1:X_1\to Y_1$ the induced map. By \cite[Theorem 4.4.4(c)]{DM05}, the isomorphism $H^0(Y_1,{}^{\mathfrak{p}}\cH^0(f_{1,\ast}K_1))\cong H^0_0(X_1,K_1)$ identifies
\[ H^0(Y_1,\cP^{0}_1)\cong \Ker \eta|_{X_1}\subseteq H^0_0(X_1,K_1).\]
Using \cite[Remark 4.4.3]{DM05}, one can further identify
\begin{equation}\label{eqn: identification between KerAY1 and P00} \Ker A|_{Y_1}\cong P^0_0(X_1)\colonequals\Ker \eta|_{X_1} \cap \Ker L|_{X_1}\subseteq H^0_0(X_1,K_1),
\end{equation}
where $P^0_0(X_1)$ is the primitive piece with respect to the double Lefschetz decomposition in Theorem \ref{thm:Hard Lefschetz Perverse Complexes}. The same applies for $\Ker \widetilde{A}|_{Y_1}$. Moreover, by Lemma \ref{lemma:polarization on double Lefschetz decomposition}, $S$ induces a pairing 
\[S^{\eta L}_{00}(X_1):H^0_0(X_1,K_1)\otimes H^0_0(X_1,K_1^{\ast})\to \C.\]
The inductive Theorem \ref{thm:Hodge-Riemann} for $f_1:X_1\to Y_1$ and $K_1$ implies that $S^{\eta L}_{00}(X_1)(\bullet,\phi(\bullet))$ polarizes the natural pure twistor structure $F$ on $P^0_0(X_1)$, where $\phi:\overline{F|_{z=-1}}\to \tilde{P}^0_0(X_1)$ is induced by the pre-Weil operator. Therefore the pairing
\[ S^{\eta L}_{00}(X_1): P^0_0(X_1)\otimes \tilde{P}^0_0(X_1) \to 
\C\]
is non-degenerate by Lemma \ref{lemma: polarization implies non-degeneracy}. To simplify the notation, we still denote by $S:\Ker A|_{Y_1} \otimes \Ker \widetilde{A}|_{Y_1} \to \C$ the pairing corresponding to $S^{\eta L}_{00}(X_1)$, under the identification \eqref{eqn: identification between KerAY1 and P00}.

Summarizing the discussion above, we know that $S$ induces a non-degenerate pairing
\[ S: \Ker A|_{Y_1} \otimes \Ker \widetilde{A}|_{Y_1} \to \C,\]
and $\Ker A|_{Y_1}$ underlies a pure twistor structure $ F$ polarized by $S(\bullet,\phi(\bullet))$, where $\phi:\overline{F|_{z=-1}} \xrightarrow{\sim} \Ker \widetilde{A}|_{Y_1}$ is induced by the pre-Weil operator $\phi$.   By Corollary \ref{cor:restriction of perverse filtration} and the fact that $G$ is the dual of $\widetilde{R} $, the Gysin map $G:H^0(Y_1,\cP^{0}_1) \to H^{1}(Y,\cP^{0})$ underlies a morphism of pure twistor structures. Moreover since $A|_{Y_1}=R\circ G$, we have an inclusion map 
\[ \Ker G\subseteq \Ker A|_{Y_1}, \]
which underlies a morphism of pure twistor structure. Then by Corollary \ref{cor:restriction of perverse filtration} and Lemma \ref{lemma:polarization of subtwistors}, $\Ker G$ underlies a pure twistor structure $E$ with an induced isomorphism $\phi_E: \overline{E|_{z=-1}}\xrightarrow{\sim} \Ker \widetilde{G}$ so that $E$ is polarized by 
\[S(\bullet,\phi_E(\bullet)): \Ker G\otimes \overline{E|_{z=-1}}\to \Ker G \otimes \Ker \widetilde{G} \to \C.\]
By Lemma \ref{lemma: polarization implies non-degeneracy} and the fact $\phi_E$ is an isomorphism, we conclude that the restricted pairing
\begin{equation}\label{eqn: S restricts to Ker G}
    S: \Ker G \otimes \Ker \widetilde{G} \to \C
\end{equation} 
is non-degenerate.

Now we prove the injectivity of $A$. Suppose by contradiction that there is a nonzero $\alpha \in\Ker A=\Ker G\circ R$, then $R(\alpha)\in \Ker G$. Since the pairing \eqref{eqn: S restricts to Ker G} is non-degenerate, we can find an element $\widetilde{\beta}\in \Ker \widetilde{G} $ so that
\[ 0\neq S(R(\alpha),\widetilde{\beta})=S(\alpha,\widetilde{G} (\widetilde{\beta}))=0,\]
where the first equality comes from \eqref{adjointness}. But this is a contradiction! Therefore $A$ is injective. The surjectivity of $A$ follows from the injectivity of $A$ for  $\cV^{\ast}$.
\end{proof}

\begin{corollary}\label{cor:identification of weight filtrations}
Let $W^{L}$ and $W^{\eta}$ be the weight filtrations on $H^{\ast}(X,\cV)$ in \S \ref{sec:twisted Poincare pairing}. Then
\begin{itemize}

\item $W^{\eta}_i=\bigoplus_{b \geq -i} H^{b}(X,K), \quad \Gr^{\eta}_{i}=H^{-i}(X,K)$.
\item $W^{L}_i=\bigoplus_{b \in \mathbb{Z}}H^{b}_{\leq b+i}(X,K), \quad \Gr^{L}_i=\bigoplus_{b \in \mathbb{Z}}H^{b}_{b+i}(X,K)$.
\item The filtration $W^{\eta}[j]$ induces the monodromy weight filtration of $\eta$ on $\Gr^L_j$. Therefore, for $\ell,j\in\Z$, we have a double Lefschetz decomposition:
\begin{equation}\label{double lefschetz in the proof}
H^{-\ell-j}_{-\ell}(X,\cV) = \bigoplus_{m,i\in \Z} \eta^{-\ell+i}L^{-j+m}P^{j-2m}_{\ell-2i},
\end{equation}
where $P^{-j}_{-\ell}$ are the primitive subspaces in Theorem \ref{thm:Hard Lefschetz Perverse Complexes}.

\item $\Gr^{\eta}_{j+\ell}\Gr^{L}_jH^{\ast}(X,\cV)=H^{-\ell-j}_{-\ell}(X,K)$ 
so that the bilinear pairing $S^{\eta L}_{\ell j}$ in (\ref{eqn:first time twisted Poincare pairing}) is well-defined and non-degenerate.

\end{itemize}
\end{corollary}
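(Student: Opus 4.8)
\textbf{Proof proposal for Corollary \ref{cor:identification of weight filtrations}.} The plan is to combine the explicit description of $W^{\eta}$ from the Hard Lefschetz Theorem with the Hard-Lefschetz-type statement for the perverse filtration just proved. First I would recall that by the Hard Lefschetz Theorem for semisimple local systems (Theorem \ref{thm:Hard Lefschetz for local systems}), the operator $\eta$ acting on $H^{\ast}(X,K)=\bigoplus_b H^b(X,K)$ satisfies $\eta^i\colon H^{-i}(X,K)\xrightarrow{\sim}H^{i}(X,K)$ and the Lefschetz decomposition holds; by the uniqueness in Lemma \ref{lemma:weight filtration of nilpotent map} this forces the weight filtration $W^{\eta}$ to be exactly $W^{\eta}_i=\bigoplus_{b\geq -i}H^b(X,K)$, so $\Gr^{\eta}_i=H^{-i}(X,K)$. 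This is purely formal once one notes $\eta\colon H^b\to H^{b+2}$ lowers the grading index $-b$ by $2$.

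Next, for $W^{L}$: the operator $L=f^{\ast}A$ preserves the perverse Leray filtration structure in the sense that $L\colon H^b_{\leq\ell}(X,K)\to H^{b+2}_{\leq\ell}(X,K)$ and induces $L\colon H^b_{\ell}(X,K)\to H^{b+2}_{\ell}(X,K)$ (cf.\ \cite[Remark 4.4.3]{DM05} and \S\ref{sec:statements}). Using the identification \eqref{perverse filtration on X and cohomology of perverse complex} $H^b_{\ell}(X,K)\cong H^{b-\ell}(Y,{}^{\fp}\cH^{\ell}(f_{\ast}K))$ and the statement $L^j\colon H^{\ell-j}_{\ell}(X,K)\xrightarrow{\sim}H^{\ell+j}_{\ell}(X,K)$ from Theorem \ref{thm:Hard Lefschetz Perverse Complexes} (which corresponds to $A^j$ being an isomorphism on $H^{-j}(Y,{}^{\fp}\cH^{\ell})$), one checks that $L$ acts on $\Gr^{\tot}$ with the Hard Lefschetz property relative to the grading $\bigoplus_b H^b_{b+i}(X,K)$ indexed by $i=\ell-b$. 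Again invoking uniqueness in Lemma \ref{lemma:weight filtration of nilpotent map}, this pins down $W^{L}_i=\bigoplus_b H^b_{\leq b+i}(X,K)$, i.e.\ $W^L=W^{\tot}$ of \eqref{eqn:total weight filtration}, and $\Gr^L_i=\bigoplus_b H^b_{b+i}(X,K)$.

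The third bullet — that $W^{\eta}[j]$ is the weight-$j$ relative filtration of $\eta$ with respect to $W^{L}$ — is where the real content lies, and I expect it to be the main obstacle. The point is that $\eta$ does not preserve $W^{L}$ (it shifts it: $\eta W^L_i\subseteq W^L_{i+2}$ since $\eta$ raises the perverse level by $2$), so one is genuinely in the relative weight filtration situation. The strategy is to verify Deligne's characterization of the relative weight filtration: one must show that $\eta$ induces, on each $\Gr^L_j$, the Hard Lefschetz isomorphisms $\eta^{\ell}\colon \Gr^{\eta[j]}_{j+\ell}\Gr^L_j\xrightarrow{\sim}\Gr^{\eta[j]}_{j-\ell}\Gr^L_j$. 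Concretely, $\Gr^{\eta}_{j+\ell}\Gr^L_j$ should be $H^{-\ell-j}_{-\ell}(X,K)$, and the required isomorphism $\eta^{\ell}\colon H^{-\ell-j}_{-\ell}(X,K)\xrightarrow{\sim}H^{\ell-j}_{\ell}(X,K)$ is precisely the first isomorphism of Theorem \ref{thm:Hard Lefschetz Perverse Complexes}. So the relative weight filtration axiom reduces, after unwinding indices, exactly to Theorem \ref{thm:Hard Lefschetz Perverse Complexes}, together with the compatibility of $\eta$ with the splitting \eqref{perverse filtration on X and cohomology of perverse complex} (which follows from the Decomposition Theorem, now available for $f$). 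Once these Lefschetz isomorphisms are in place, the double Lefschetz decomposition \eqref{double lefschetz in the proof} and the fact that $\Gr^{\eta}_{j+\ell}\Gr^L_j H^{\ast}(X,K)=H^{-\ell-j}_{-\ell}(X,K)$ follow from the general machinery of \S\ref{sec:weight filtrations} (the ``two weight filtrations'' setup, items (i)--(iii)), and then Lemma \ref{lemma:polarization on double Lefschetz decomposition} applied to the Poincar\'e pairing \eqref{Poincare pairing} — using that $(\eta,\widetilde{\eta})$ and $(L,\widetilde{L})$ are infinitesimal automorphisms by \eqref{eqn: Poincare pairing for double Lefschetz decomposition} — gives the well-definedness and non-degeneracy of $S^{\eta L}_{\ell j}$ after identifying it with the abstract pairing $S_{\ell j}$ of \eqref{S ell j for general set up}. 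I would close by remarking that the identification of $S^{\eta L}_{\ell j}$ with $S_{\ell j}([\,\cdot\,],[\,\cdot\,])=S(\,\cdot\,,\widetilde{\eta}^{\ell}\widetilde{N}^{j}\,\cdot\,)$ is the routine bookkeeping of unwinding the cup-product formula against the definition of the relative weight filtration graded pieces.
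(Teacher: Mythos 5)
Your proof follows the same route as the paper's: identify $W^{\eta}$ via Hard Lefschetz and the uniqueness in Lemma \ref{lemma:weight filtration of nilpotent map}; identify $W^{L}$ with the total perverse filtration $W^{\tot}$ of \eqref{eqn:total weight filtration} using Theorem \ref{thm:Hard Lefschetz Perverse Complexes} and uniqueness; verify the relative weight filtration condition (iii) of \S\ref{sec:weight filtrations}; and then invoke Lemma \ref{lemma:polarization on double Lefschetz decomposition} together with \eqref{eqn: Poincare pairing for double Lefschetz decomposition} to get the non-degeneracy of $S^{\eta L}_{\ell j}$. You make the verification of condition (iii) explicit, which the paper treats as immediate after the graded pieces are identified; this is a fair elaboration rather than a different argument.

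However, your parenthetical claim that $\eta$ does not preserve $W^{L}$ (``it shifts it: $\eta W^L_i\subseteq W^L_{i+2}$'') is wrong. With $W^{L}=W^{\tot}$ and the map $\eta\colon H^b_{\leq\ell}(X,K)\to H^{b+2}_{\leq\ell+2}(X,K)$ one computes
\[
\eta\bigl(H^{b}_{\leq b+i}(X,K)\bigr)\subseteq H^{b+2}_{\leq (b+2)+i}(X,K)\subseteq W^{\tot}_{i},
\]
so $\eta$ \emph{does} preserve $W^{L}$. This is not cosmetic: your very next sentence --- that ``$\eta$ induces, on each $\Gr^L_j$, the Hard Lefschetz isomorphisms'' --- already presupposes that $\eta$ acts on $\Gr^L_j$, which requires exactly this preservation; and the setup of \S\ref{sec:weight filtrations} with commuting nilpotents $(M,N)=(\eta,L)$ likewise has $M$ preserving $W^N$. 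The subsequent reduction of the relative weight filtration axiom to $\eta^{\ell}\colon H^{-\ell-j}_{-\ell}(X,K)\xrightarrow{\sim}H^{\ell-j}_{\ell}(X,K)$ from Theorem \ref{thm:Hard Lefschetz Perverse Complexes} is correct, and the rest of the argument goes through; simply delete the false assertion.
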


\begin{proof}
Since at this stage we have Theorem  \ref{thm:Hard Lefschetz Perverse Complexes}, we can just proceed as in the proof of \cite[Proposition 5.2.4]{DM05}, using Lemma \ref{lemma:polarization on double Lefschetz decomposition} and Theorem \ref{thm:Hodge-Simpson}.
\end{proof}

\begin{prop}\label{prop: proof of Hodge Riemann}
With the assumption \ref{inductive assumption}, Theorem \ref{thm:Hodge-Riemann}
holds for $f$, i.e. we have
\begin{itemize}

\item The double Lefschetz decomposition \eqref{double lefschetz in the proof} satisfies
\[ S^{\eta L}_{\ell j}(\eta^{-\ell+i}L^{-j+m}P^{j-2m}_{\ell-2i},\eta^{-\ell+i'}L^{-j+m'}P^{j-2m'}_{\ell-2i'})=0, \quad  \forall (i,m)\neq (i',m').\]

\item Each direct summand $\eta^{-\ell+i}L^{-j+m}P^{j-2m}_{\ell-2i}$ underlies a natural pure sub-twistor structure $F$ so that the pre-Weil operator $\phi$ restricts to $\phi: \overline{F|_{z=-1}} \xrightarrow{\sim} \eta^{-\ell+i}L^{-j+m}\widetilde{P}^{j-2m}_{\ell-2i}$.
\item The pure twistor structure $F$ is polarized by $S^{\eta L}_{\ell j}(\bullet,\phi(\bullet))$ up to a non-zero constant and $S^{\eta L}_{\ell j}$ is non-degenerate.
\end{itemize}
\end{prop}

\begin{proof}
The first two statements follow from Lemma \ref{lemma:polarization on double Lefschetz decomposition} and Lemma \ref{lemma:perverse is twistor}. We focus on the last statement, whose proof is divided into two cases.

\textbf{Case I}: Consider all $\eta^{-\ell+i}L^{-j+m}P^{j-2m}_{\ell-2i} \neq P^{0}_0$. It suffices to deal with the case $i\geq \ell, m\geq j$ by definition and Remark \ref{remark: signs for twisted Poincare pairing}. We can also assume $i=\ell,m=j$. Then one uses Proposition \ref{prop:weak Lefschetz cohomology} and Proposition \ref{prop:weak Lefschetz hypercohomology} and the compatibility of the bilinear paring $S^{\eta L}_{\ell j}$ between $X$ and its hyperplanes, similar to the proof of \cite[Proposition 5.3.2]{DM05}.

\textbf{Case II}: $P^0_0$. This is the most difficult part of the entire proof, as in \cite[\S 5.4]{DM05}, where we will show that $i^{-n}\cdot S^{\eta L}_{00}(\bullet,\phi(\bullet))$ polarizes the twistor structure on $P^0_0$. For any $\epsilon>0$, define
\begin{align*}
\Lambda_{\epsilon}\colonequals \Ker (L+\epsilon \eta) \subseteq H^0(X,K), \quad \widetilde{\Lambda}_{\epsilon}\colonequals \Ker(L+\epsilon \eta) \subseteq H^0(X,K^{\ast}).
\end{align*}
Let $E$ be the natural pure twistor structure on $H^0(X,K)$, with the pre-Weil operator $\phi: \overline{E|_{z=-1}} \xrightarrow{\sim}H^0(X,K^{\ast})$.
By Theorem \ref{thm:Hodge-Simpson}, each $\Lambda_{\epsilon}$ has the same dimension $b\colonequals \dim H^0(X,K)-\dim H^2(X,K)$ and underlies a pure sub-twistor structure $F_\epsilon$. By Lemma \ref{lemma:Identification map is functorial} the pre-Weil operator restricts to
\[ \phi_\epsilon: \overline{F_{\epsilon}|_{z=-1}} \xrightarrow{\sim}\widetilde{\Lambda}_{\epsilon}.\]
Recall from \eqref{Poincare pairing} there is a Poincar\'e pairing:
\[ S: H^0(X,K)\otimes H^0(X,K^{\ast}) \to \C.\]
\begin{lemma}\label{lemma:twistor structure on limiting space}
Consider the limiting spaces in the Grassmannians:
\begin{align*}
\Lambda \colonequals \lim_{\epsilon\to 0} \Lambda_{\epsilon} \in G(b,H^0(X,K)), \quad \widetilde{\Lambda} \colonequals \lim_{\epsilon\to 0}\widetilde{\Lambda}_{\epsilon} \in G(b,H^0(X,K^{\ast})).
\end{align*}
Then $\Lambda$ underlies a pure sub-twistor structure $F\subseteq E$, with the isomorphism:
\[ \phi_0: \overline{F|_{z=-1}} \xrightarrow{\sim} \widetilde{\Lambda},\]
as the limit of $\phi_\epsilon$.
Moreover, the bilinear pairing
\[ i^{-n}\cdot S(\bullet,\phi_0\circ \overline{\mathrm{Iden}}(\bullet)):\Lambda \otimes \overline{\Lambda} \to \Lambda \otimes \overline{F|_{z=-1}}\to \Lambda\otimes \widetilde{\Lambda}\xrightarrow{S} \C,\]
is positive semi-definite, where $\Iden$ is the Identification map from Definition \ref{definition:Identification map}. 
\end{lemma}

\begin{proof}
Remark \ref{remark:Category of twistor structures} implies that $\Lambda$ underlies a twistor structure $F$ and the pairing is positive semi-definite, as it is the limit of positive definite pairings. The only thing we need to argue is that $\phi_0=\lim_{\epsilon\to 0}\phi_\epsilon$ is still an isomorphism. By Lemma \ref{lemma:Identification map is functorial} and the definition of $\phi_0$, we have a commutative diagram
\[\begin{tikzcd}
    \overline{F|_{z=-1}} \arrow[r,hook]\arrow[d,"\phi_0"] & \overline{E|_{z=-1}} \arrow[d,"\phi"]\\
    \tilde{\Lambda} \arrow[r,hook] & H^0(X,K^{\ast})
\end{tikzcd}
\]
where both horizontal maps are inclusions. Since $\phi$ is injective ($\phi$ is an isomorphism) and $\phi_0$ is the restriction of $\phi$, we know that $\phi_0$ is also injective.  On the other hand, we know
\[ \dim \overline{F|_{z=-1}}=\dim F|_{z=1}=\dim \Lambda=b=\dim \tilde{\Lambda}.\]
Therefore $\phi_0$ must be an isomorphism.

\end{proof}

\begin{notation} Denote the cup product operators with $L$ by
\begin{align*}
L^{k}_r \colon H^{-r}(X,K) \to H^{-r+2k}(X,K), \quad \widetilde{L}^{k}_r \colon H^{-r}(X,K^{\ast}) \to H^{-r+2k}(X,K^{\ast}).
\end{align*}
Similarly, the cup products with $\eta$ are denoted by 
\[ \eta:H^i(X,K)\to H^{i+2}(X,K), \quad \tilde{\eta}:H^i(X,K^{\ast})\to H^{i+2}(X,K^{\ast}).\]
\end{notation}

For any subspace $V\subseteq H^0(X,K^{\ast})$, recall that the orthogonal complement of $V$ in $H^0(X,K)$ with respect to $S$ is defined to be
\[ V^{\perp}\colonequals \{ w\in H^0(X,K) | S(w,v)=0, \forall v\in V. \}\]
The following two results are direct adaptions of \cite[Lemma 5.4.1, Lemma 5.4.2]{DM05}. For details, see \cite[Lemma 2.7.8 and Lemma 2.7.9]{Yang21}.
\begin{lemma}\label{lem: intersection of various kernels}
With the notations above, we have
\[\eta \Ker L^{1}_2 \cap (\widetilde{\eta} \Ker \widetilde{L}^{ 1}_2)^{\perp} \cap \cdots \cap (\widetilde{\eta}^{ i} \Ker \widetilde{L}^{ i}_{2i})^{\perp}=\{ 0 \} \subset H^{0}(X,K), \quad i \gg 0.\]
\end{lemma}

Since $\Ker L^1_0\subseteq H^0(X,K)$, we have a restricted pairing
\begin{equation}\label{eqn: S restricts to kernel}
    S:\Ker L^1_0 \otimes \Ker \tilde{L}^1_0 \to \C.
\end{equation}
\begin{lemma}\label{lemma:structure of Lambda}
We have
\[ \Lambda=\Ker L^{1}_{0} \cap \left( \bigcap_{i\geq 1} (\widetilde{\eta}^{ i}\Ker \widetilde{L}^{ i}_{2i})^{\perp}\right), \quad \widetilde{\Lambda}=\Ker \widetilde{L}^{1}_{0} \cap \left( \bigcap_{i\geq 1} (\eta^{ i}\Ker L^{ i}_{2i})^{\perp}\right).\]
We also have direct sum decompositions
\[ \Ker L^{1}_0 = \Lambda \oplus\eta \Ker L^{1}_2, \quad \Ker \widetilde{L}^{1}_0 = \widetilde{\Lambda} \oplus\widetilde{\eta} \Ker \widetilde{L}^{1}_2\]
which are orthogonal with respect to \eqref{eqn: S restricts to kernel}.
\end{lemma}

\begin{remark}
Starting from this point, our argument is different from \cite[\S 5.4]{DM05}. It seems to us more appropriate to prove the counterpart of \cite[Lemma 5.4.4]{DM05} first and then prove the counterpart of \cite[Lemma 5.4.3]{DM05}.
\end{remark}

Corollary \ref{cor:identification of weight filtrations} implies that
\[ \frac{\Ker L^1_0}{\Ker L^1_0\cap H^0_{\leq -1}(X,K)}\subseteq \frac{W^L_0\cap H^0(X,K)}{W^L_0\cap H^0_{\leq -1}(X,K)}=\frac{H^0_{\leq 0}(X,K)}{H^0_{\leq -1}(X,K)}=H^0_0(X,K).\]
Define 
\[\Lambda_0\colonequals \Lambda/(\Lambda \cap H^{0}_{\leq -1}(X,K))\]
and similarly for $\widetilde{\Lambda_0}$. 
\begin{corollary}\label{non-degeneracy of pairing on Lambda0}
The pairing \eqref{eqn: S restricts to kernel} induces a non-degenerate pairing
\begin{equation}\label{eqn: new pairing on the quotient of Ker}
    S^{\eta L}_{00}: \Ker L^1_0/(\Ker L^1_0\cap H^0_{\leq -1}(X,K))\otimes\Ker \widetilde{L}^1_0/ (\Ker \widetilde{L}^1_0\cap H^0_{\leq -1}(X,K^{\ast})) \to \C,
\end{equation}
which is the restriction of the pairing $S^{\eta L}_{00}$ from \eqref{eqn:first time twisted Poincare pairing}. There are direct sum decompositions
\[\Ker L^1_0/ (\Ker L^1_0\cap H^0_{\leq -1}(X,K))=\Lambda_0\oplus (\eta \Ker L^1_2/\eta \Ker L^1_2\cap H^0_{\leq -1}(X,K))\]
\[ \Ker \widetilde{L}^1_0/ (\Ker \widetilde{L}^1_0\cap H^0_{\leq -1}(X,K^{\ast}))=\widetilde{\Lambda}_0\oplus (\widetilde{\eta} \Ker \widetilde{L}^1_2/\widetilde{\eta} \Ker \widetilde{L}^1_2\cap H^0_{\leq -1}(X,K^{\ast}))\]
which are orthogonal with respect to \eqref{eqn: new pairing on the quotient of Ker}. Moreover, the restricted bilinear pairing
\[ S^{\eta L}_{00}:\Lambda_0 \otimes \widetilde{\Lambda_0} \to \C\]
is non-degenerate and we have
\begin{equation}\label{the crucial equality in the Hodge Riemann proof}
 \Lambda_0=\frac{(\widetilde{\eta} \Ker \widetilde{L}^1_2)^{\perp}\cap \Ker L^1_0}{(\widetilde{\eta} \Ker \widetilde{L}^1_2)^{\perp}\cap \Ker L^1_0\cap H^0_{\leq -1}(X,K)}.
\end{equation}
\end{corollary}

\begin{remark}
We decide to give a careful proof here because there are some subtleties about taking orthogonal complement with respect to a general (not necessarily positive definite) pairing. This is where \eqref{eqn: new pairing on the quotient of Ker} is used.
\end{remark}
\begin{proof}
By Lemma \ref{lemma:structure of Lambda}, we can apply Lemma \ref{non-degeneracy of subspaces after quotient} to $H=\Ker L^1_0, H_1=\Lambda,H_2=\eta \Ker L^{1}_2$ and the pairing \eqref{eqn: S restricts to kernel}. 

To deduce the first two statements, we need to calculate the orthogonal complement of $\Ker \widetilde{L}^1_0$ inside $\Ker L^1_0$ with respect to \eqref{eqn: S restricts to kernel}, which is 
\[\Ker L^1_0\cap (\Ker \widetilde{L}^1_0)^{\perp}=\Ker L^1_0\cap (W^L_{-1}\cap H^0(X,K))=\Ker L^1_0\cap H^0_{\leq -1}(X,K).\]
Here the equalities follows from \eqref{kernel intersect with its orthogonal complement} and  $W^L_{-1}=W^{\tot}_{-1}$ in Corollary \ref{cor:identification of weight filtrations}. Similarly \[\Lambda \cap (\Ker \widetilde{L}^1_0)^{\perp}=\Lambda \cap H^0_{\leq -1}(X,K).\] Therefore the non-degeneracy of $S^{\eta L}_{00}:\Lambda_0\otimes \widetilde{\Lambda_0}\to \C$ follows from Lemma \ref{non-degeneracy of subspaces after quotient}. 

To obtain \eqref{the crucial equality in the Hodge Riemann proof}, one needs to calculate the orthogonal complement of
\[\widetilde{\eta} \Ker \widetilde{L}^1_2/\widetilde{\eta} \Ker \widetilde{L}^1_2\cap H^0_{\leq -1}(X,K^{\ast})\]
with respect to the pairing \eqref{eqn: new pairing on the quotient of Ker} inside $\Ker L^1_0/ (\Ker L^1_0\cap H^0_{\leq -1}(X,K))$, which is 
\[ \frac{(\widetilde{\eta} \Ker \widetilde{L}^1_2)^{\perp}\cap \Ker L^1_0}{(\widetilde{\eta} \Ker \widetilde{L}^1_2)^{\perp}\cap \Ker L^1_0\cap H^0_{\leq -1}(X,K)}.\]
This is because the pairing \eqref{eqn: new pairing on the quotient of Ker} is induced from $S:H^0(X,K)\otimes H^0(X,K^{\ast})\to \C$. 
\end{proof}

\begin{lemma}\label{lemma:Gamma_0 is polarized}
With the notations above, $\Lambda_0$ underlies a pure twistor structure $F_0$, with a descent isomorphism from Lemma \ref{lemma:twistor structure on limiting space} to $\phi:\overline{F_0|_{z=-1}} \to \widetilde{\Lambda}_0$ so that $F_0$ is polarized by the form $i^{-n}\cdot S^{\eta L}_{00}(\bullet,\phi(\bullet))$. 
\end{lemma}

\begin{proof}
Since $H^0_{\leq -1}(X,K)$ underlies a natural pure twistor structure by Lemma \ref{lemma:perverse is twistor}, $\Lambda_0$ underlies a pure twistor structure $F_0$ with the pre-Weil operator $\phi:\overline{F_0|_{z=-1}} \to \widetilde{\Lambda_0}$.
Lemma \ref{lemma:twistor structure on limiting space} implies that the pairing
\[ i^{-n}\cdot S(\bullet,\phi\circ \overline{\mathrm{Iden}}(\bullet)): \Lambda \otimes \overline{\Lambda} \to \Lambda \otimes \widetilde{\Lambda} \xrightarrow{S}   \C\]
is positive semi-definite. On the other hand, the pairing $S^{\eta L}_{00}:\Lambda_0\otimes \widetilde{\Lambda_0} \to \C$
is non-degenerate by Corollary \ref{non-degeneracy of pairing on Lambda0}. Therefore the pairing on the quotients 
\[ i^{-n}\cdot S^{\eta L}_{00}(\bullet,\phi\circ \overline{\mathrm{Iden}}(\bullet)): \Lambda_0 \otimes \overline{\Lambda_0}\to \Lambda_0\otimes \widetilde{\Lambda_0}\to \C\]
is positive definite, since it is  positive semi-definite and non-degenerate at the same time. In particular, $\Lambda_0$ is polarized by $i^{-n}\cdot S^{\eta L}_{00}(\bullet,\phi(\bullet))$.
\end{proof}

Now we can finish the proof of Theorem \ref{thm:Hodge-Riemann} for $P^0_0$. By definition
\[ P^0_0 = \Ker \eta \cap \Ker L \subseteq \Gr^{\eta}_0\Gr^L_0H^{\ast}(X,\cV).\]
It is direct to check that
\[ P^0_0 \subseteq \Ker L^1_0/(\Ker L^1_0\cap W^L_{-1}).\]
The next claim is mentioned without proof in \cite[Proof of Theorem 2.1.8]{DM05}, again we would like to add some details here.
\begin{claim}\label{claim: for Hodge Riemann} 
$P^0_0 \subseteq (\widetilde{\eta} \Ker \widetilde{L}^1_2)^{\perp}/(\widetilde{\eta} \Ker \widetilde{L}^1_2)^{\perp}\cap W^L_{-1}$.
\end{claim}
\begin{proof}[Proof of claim]
Let $a \in H^{0}(X,K)=\Gr^{\eta}_0H^{\ast}(X,\cV)$ be an element so that $[a]\in P^0_0$. Then $\eta[a]=0\in \Gr^{L}_0\Gr^{\eta}_0$, which means that $\eta a\in W^{L}_{-1}H^{\ast}(X,\cV)$. Using the hard Lefschetz  $L^i:\Gr^L_i\xrightarrow{\sim} \Gr^L_{-i}$, we can find an element $b\in W^{L}_1$ so that $\eta a=Lb$. Now let $\widetilde{c}=\widetilde{\eta}\widetilde{d}$ be an element so that $\widetilde{d}\in \Ker \widetilde{L}^1_2$. Then
\[ S(a,c)=S(a,\widetilde{\eta}\widetilde{d})=-S(\eta a,\widetilde{d})=-S(Lb,\widetilde{d})=S(a,\widetilde{L}\widetilde{d})=0.\] We conclude that $a\in (\widetilde{\eta} \Ker \widetilde{L}^1_2)^{\perp}$.
\end{proof}
Now combining with  \eqref{the crucial equality in the Hodge Riemann proof}, we have
\[ P^0_0 \subseteq \frac{(\widetilde{\eta} \Ker \widetilde{L}^{ 1}_{2})^{\perp}\cap \Ker L^1_{0}}{(\widetilde{\eta} \Ker \widetilde{L}^{ 1}_{2})^{\perp}\cap \Ker L^1_0\cap W^L_{-1}}=\frac{(\widetilde{\eta} \Ker \widetilde{L}^{ 1}_{2})^{\perp}\cap \Ker L^1_{0}}{(\widetilde{\eta} \Ker \widetilde{L}^{ 1}_{2})^{\perp}\cap \Ker L^1_0\cap H^0_{\leq -1}(X,K)}=\Lambda_0.\]
Moreover, the inclusion $P^0_0\subseteq \Lambda_0$
underlies a morphism of pure twistor structures. Since $ \Lambda_0$ is polarized by $i^{-n}\cdot S^{\eta L}_{00}(\bullet,\phi(\bullet))$ by Lemma \ref{lemma:Gamma_0 is polarized}, we conclude by Lemma \ref{lemma:polarization of subtwistors} that $P^0_0$ is polarized by the restriction of $i^{-n}\cdot S^{\eta L}_{00}(\bullet,\phi(\bullet))$.
\end{proof}

\subsection{Splitting criterion of perverse sheaves}
After the proof of Theorem \ref{thm:Hodge-Riemann}, now we can prove some auxiliary facts needed for the proof of Theorem \ref{thm:main}(iii) for $\ell=0$, where the functoriality of the pre-Weil operator $\phi$ is again crucial.

Let us first recall the splitting criterion of perverse sheaves due to de Cataldo-Migiliorini \cite{DM05}. Let $Y$ be an algebraic variety and $T$ is a closed subset of $Y$. We use $T \xrightarrow{\alpha} Y\xleftarrow{\beta} Y\setminus T$ to denote the closed and open embeddings. For any perverse sheaf $P$ on $Y$, as in \S \ref{splitting criterion}, the distinguished triangles associated to $\alpha$ and $\beta$ induces the following map
\begin{equation}\label{eqn: splitting map in the proof}
H^{k}(Y,\alpha_{!}\alpha^{!}P) \to H^{k}(Y,\alpha_{\ast}\alpha^{\ast}P), \quad \forall k\in \mathbb{Z}.
\end{equation}
Furthermore, suppose there is a stratification $\mathfrak{Y}$ of $Y$ with
\[ Y = U \cup T, \quad U=\bigcup_{d'>d} S_{d'}, \quad T=S_{d},\]
where $S_{d'}$ denotes a $d'$-dimensional stratum.  
\begin{lemma}{\cite[Lemma 4.1.3]{DM05}}\label{lem:DM splitting}
Let $P$ be a perverse sheaf on $Y$, constructible with respect to $\mathfrak{Y}$. Assume 
\begin{align}\label{eqn: assumption on the dimension of stalks}
\dim \cH^{-d}(\alpha_{!}\alpha^{!}P)_y = \dim \cH^{-d}(\alpha_{\ast}\alpha^{\ast}P)_y
\end{align}
for any $y \in T$. Then the following statements are equivalent:
\begin{itemize}
\item $\cH^{-d}(\alpha_{!}\alpha^{!}P) \to \cH^{-d}(P)$ is an isomorphism.

\item $P\cong \beta_{!\ast}\beta^{\ast}P\oplus \cH^{-d}(P)[d]$.
\end{itemize}
Here $\beta_{!\ast}$ is the intermediate extension functor.
\end{lemma}

Now we prove that for $T=\{y\}$, $P={}^{\fp}\cH^0(f_\ast K)$ and $k=0$, the map in \eqref{eqn: splitting map in the proof} is an isomorphism. This implies that \eqref{eqn: assumption on the dimension of stalks} is satisfied in this case.
\begin{lemma}\label{lemma:surjectivity absolution version}
For each point $\alpha:\{y\} \hookrightarrow Y$ in the support of the sheaf $\cH^0({}^{\fp}\cH^0(f_\ast K))$ on $Y$, the restriction map
\[ H^0(Y,{}^{\fp}\cH^0(f_\ast K)) \to H^0(Y,\alpha_{\ast}\alpha^{\ast}{}^{\fp}\cH^0(f_\ast K))\]
is surjective. Moreover, the following cycle map 
\[ \mathrm{cl}:H^0(Y,\alpha_{!}\alpha^{!}{}^{\fp}\cH^0(f_\ast K)) \to H^0(Y,{}^{\fp}\cH^0(f_\ast K))\]
is injective.
\end{lemma}

\begin{proof}
Since we have Theorem \ref{thm:main}(ii) for $f_{\ast}K$ and Theorem \ref{thm: global invariant cycle}, the proof of the first statement follows the same line as in \cite[Proposition 6.2.2]{DM05}. For the second statement, apply the surjectivity statement to $K^{\ast}$.
\end{proof}

\begin{prop}\label{prop:splitting}
Let $\alpha: 
\{y\} \hookrightarrow Y$ be a point in the support of $\cH^0({}^{\fp}\cH^0(f_{\ast}K))$. Then the map \eqref{eqn: splitting map in the proof} for $k=0$ and $P={}^{\fp}\cH^0(f_{\ast}K)$:
\[ H^{0}(Y,\alpha_{!}\alpha^{!}{}^{\fp}\cH^0(f_{\ast}K)) \to H^{0}(Y,\alpha_{\ast}\alpha^{\ast}{}^{\fp}\cH^0(f_{\ast}K)) \]
is an isomorphism.
\end{prop}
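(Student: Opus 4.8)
The plan is to identify the map \eqref{eqn:the splitting map} for $k=0$ and $P={}^{\fp}\cH^0(f_{\ast}K)$ with a map induced by the twisted Poincar\'e pairing $S^{\eta L}_{00}$, and then use the polarization from Theorem \ref{thm:Hodge-Riemann} together with the injectivity of the cycle map to conclude bijectivity. First I would invoke the fact that, by the part of Theorem \ref{thm:main} already proved at this stage (the Decomposition Theorem holds for $f$, and also for $f_\ast K^\ast$ since $\cV^\ast$ is semisimple of the same defect of semismallness and dimension of image), we are in the hypotheses of Corollary \ref{cor: refined intersection is Poincare pairing}. That corollary says the composition
\[ H^0(Y,\alpha_{!}\alpha^{!}{}^{\fp}\cH^{0}f_{\ast}K) \xrightarrow{S^{\eta L}_{00}}H^0(Y,\alpha_{!}\alpha^{!}{}^{\fp}\cH^{0}f_{\ast}K^{\ast})^{\vee} \xrightarrow{\mathrm{V.D.}} H^0(Y,\alpha_{\ast}\alpha^{\ast}{}^{\fp}\cH^{0}f_{\ast}K) \]
equals the map \eqref{eqn:the splitting map}. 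Since $\mathrm{V.D.}$ is an isomorphism, it suffices to prove that the pairing
\[ S^{\eta L}_{00}: H^0(Y,\alpha_{!}\alpha^{!}{}^{\fp}\cH^0(f_{\ast}K)) \otimes H^0(Y,\alpha_{!}\alpha^{!}{}^{\fp}\cH^0(f_{\ast}K^{\ast})) \to \C \]
is non-degenerate (both vector spaces have the same dimension by Verdier duality, so non-degeneracy on one side is enough).

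The key step is the non-degeneracy. By Corollary \ref{cor:injectivity lemma}, the cycle maps $\mathrm{cl}: H^0(Y,\alpha_{!}\alpha^{!}{}^{\fp}\cH^0(f_\ast K)) \to H^0(Y,{}^{\fp}\cH^0(f_\ast K)) = H^0_0(X,K)$ and its analogue $\widetilde{\mathrm{cl}}$ for $K^\ast$ are injective. So $\mathrm{Im}(\mathrm{cl})$ and $\mathrm{Im}(\widetilde{\mathrm{cl}})$ are subspaces of $H^0_0(X,K)$ and $H^0_0(X,K^\ast)$ respectively, and $S^{\eta L}_{00}$ on the local cohomology groups is just the restriction of $S^{\eta L}_{00}$ on $H^0_0(X,K)\otimes H^0_0(X,K^\ast)$ to these subspaces (using the definition of the pairing via $\mathrm{cl},\widetilde{\mathrm{cl}}$). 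By Lemma \ref{lemma:perverse is twistor}, $H^0_0(X,K)$ underlies a pure twistor structure with a canonical map $\phi:\overline{F|_{z=-1}}\xrightarrow{\sim} H^0_0(X,K^\ast)$, and by Theorem \ref{thm:Hodge-Riemann} (the case $P^0_0$, combined with the double Lefschetz decomposition — here $\ell=j=0$ so the whole space $H^0_0(X,K)$ is a sum of pieces $\eta^{i}L^{m}P^{-2m}_{-2i}$ each of which is polarized up to a constant), the form $i^{-n}S^{\eta L}_{00}(\bullet,\phi(\bullet))$ is positive definite (hence in particular non-degenerate) on each summand, and the summands are $S^{\eta L}_{00}$-orthogonal. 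Therefore $S^{\eta L}_{00}$ is non-degenerate on $H^0_0(X,K)\otimes H^0_0(X,K^\ast)$. To pass to the subspaces $\mathrm{Im}(\mathrm{cl})$ and $\mathrm{Im}(\widetilde{\mathrm{cl}})$, I would show that these subspaces underlie sub-twistor structures whose canonical maps are compatible with $\phi$: the cycle map $\mathrm{cl}$ underlies a morphism of twistor structures (this needs the local cohomology group $H^0(Y,\alpha_{!}\alpha^{!}{}^{\fp}\cH^0(f_\ast K))$ to carry a twistor structure compatible with that on $H^0_0(X,K)$, which follows by the same argument as in Lemma \ref{lemma:perverse is twistor} using the distinguished triangle for $\alpha_{!}\alpha^{!}$ and Sabbah's rescaling map), and likewise $\widetilde{\mathrm{cl}}$, with the canonical maps matching under $\phi$ as in Lemma \ref{lemma:Identification map is functorial}. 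Then $\mathrm{Im}(\mathrm{cl})$ is a pure sub-twistor structure of a polarized one, hence polarized by Lemma \ref{lemma:polarization of subtwistors}, and in particular the restricted pairing $S^{\eta L}_{00}(\bullet,\phi(\bullet))$ is positive definite on $\mathrm{Im}(\mathrm{cl})$; non-degeneracy of $S^{\eta L}_{00}$ on $\mathrm{Im}(\mathrm{cl})\otimes\mathrm{Im}(\widetilde{\mathrm{cl}})$ then follows since $\phi$ maps $\overline{\mathrm{Im}(\mathrm{cl})|_{z=-1}}$ isomorphically onto $\mathrm{Im}(\widetilde{\mathrm{cl}})$ (both being the canonical-map image of the sub-twistor structure, using injectivity of the cycle maps and Corollary \ref{cor:injectivity lemma} for both $K$ and $K^\ast$).

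The main obstacle I anticipate is the bookkeeping around twistor structures on the local cohomology groups $H^0(Y,\alpha_{!}\alpha^{!}{}^{\fp}\cH^0(f_\ast K))$ and the verification that the cycle map is twistor-theoretic and compatible with the canonical maps — this is not hard in principle, since it parallels the argument in Lemma \ref{lemma:perverse is twistor} (where the analogous statement is proved for $H^b_{Z_{-j}}(X,K)$ via the mapping cone complex $\cA^\bullet_X(H)_{Z_{-j}}$ and the rescaling map), but one must be careful that the perverse truncation ${}^{\fp}\cH^0(f_\ast K)$ rather than $f_\ast K$ is involved; however, since the Decomposition Theorem for $f$ is already available, one has $H^0_0(X,K) \cong H^0(Y,{}^{\fp}\cH^0(f_\ast K))$ and can transport the twistor structure directly, and the compatibility with $\mathrm{cl}$ follows exactly as in Corollary \ref{cor: refined intersection is Poincare pairing} by representing classes via liftings to $H^0(Y,\alpha_{!}\alpha^{!}f_\ast K)$. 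Once these compatibilities are in place, the positivity supplied by Theorem \ref{thm:Hodge-Riemann} does all the real work, exactly as the classical Hodge-Riemann relations do in \cite[\S 6.3]{DM05}.
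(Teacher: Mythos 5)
Your overall strategy matches the paper's: identify the map with the composition through $S^{\eta L}_{00}$ via Corollary~\ref{cor: refined intersection is Poincare pairing}, use Corollary~\ref{cor:injectivity lemma} for injectivity of the cycle map, observe that the local cohomology group carries a natural pure twistor structure (Lemma~\ref{lemma:perverse is twistor} and Corollary~\ref{cor:restriction of perverse filtration}), and derive non-degeneracy from Theorem~\ref{thm:Hodge-Riemann} plus the sub-twistor polarization lemma. Up to this point the argument is correct and closely parallels the paper.

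There is, however, a genuine gap in your final step ``$\mathrm{Im}(\mathrm{cl})$ is a pure sub-twistor structure of a polarized one, hence polarized by Lemma~\ref{lemma:polarization of subtwistors}.'' The ambient space $H^0_0(X,K)$ is \emph{not} polarized as a whole by any single constant times $S^{\eta L}_{00}(\bullet,\phi(\bullet))$: by Theorem~\ref{thm:Hodge-Riemann} and Remark~\ref{remark:sign of the polarization}, each summand $\eta^{i}L^{m}P^{-2m}_{-2i}$ of the double Lefschetz decomposition is polarized only up to a constant that \emph{varies} (in sign) with $i$ and $m$. A sub-twistor structure of a direct sum of pieces polarized with opposite-sign constants need not itself be polarized nor even non-degenerately paired — a ``diagonal'' subspace can be isotropic. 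So Lemma~\ref{lemma:polarization of subtwistors} does not apply directly to $\mathrm{Im}(\mathrm{cl}) \subseteq H^0_0(X,K)$.

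The missing ingredient is the observation that $\mathrm{Im}(\mathrm{cl}) \subseteq \Ker L \subseteq H^0_0(X,K)$ (the class of $L=f^{\ast}A$ kills any cohomology class supported on the single point $\{y\}$), and that by functoriality/additivity of $\alpha_{!}\alpha^{!}$ with respect to the cup-product operators $\eta$ and $L$, the subspace $\mathrm{Im}(\mathrm{cl})$ decomposes \emph{compatibly} with the double Lefschetz decomposition. Each piece of $\mathrm{Im}(\mathrm{cl})$ then sits inside a single summand $\eta^{i}L^{m}P^{-2m}_{-2i}$, which is polarized up to one fixed constant, and only \emph{then} does Lemma~\ref{lemma:polarization of subtwistors} give polarization of each piece, hence non-degeneracy of the restricted pairing. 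Once this step is supplied, the rest of your argument goes through as written.
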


\begin{proof}
By construction in \S \ref{splitting criterion}, this map decomposes as 
\begin{equation}\label{eqn:refined intersection form}
H^0(Y,\alpha_{!}\alpha^{!}{}^{\fp}\cH^{0}(f_{\ast}K) ) \xrightarrow{\mathrm{cl}} H^0(Y,{}^{\fp}\cH^0(f_{\ast}K)) \to H^0(Y,\alpha_{\ast}\alpha^{\ast}{}^{\fp}\cH^{0}(f_{\ast}K)).
\end{equation}
where $\mathrm{cl}$ is the injective cycle map in Lemma \ref{lemma:surjectivity absolution version}. Let $\beta:U=Y\setminus \{y\} \hookrightarrow Y$ be the open embedding. The distinguished triangle associated to $\{y\} \xrightarrow{\alpha} Y \xleftarrow{\beta} U$ gives rise to the following short exact sequence:
\[ 0\to H^0(Y,\alpha_{!}\alpha^{!}{}^{\fp}\cH^{0}(f_{\ast}K) ) \xrightarrow{\mathrm{cl}} H^0(Y,{}^{\fp}\cH^0(f_{\ast}K)) \to H^0(Y,\beta_{\ast}\beta^{\ast}{}^{\fp}\cH^0(f_{\ast}K)). \]
Corollary \ref{cor:restriction of perverse filtration} implies that $H^0(Y,\beta_{\ast}\beta^{\ast}{}^{\fp}\cH^0(f_{\ast}K))$ underlies a mixed twistor structure $E_U$ with an isomorphism $\overline{E_U|_{z=-1}} \xrightarrow{\sim} H^0(Y,\beta_{\ast}\beta^{\ast}{}^{\fp}\cH^0(f_{\ast}K^{\ast}))$. Therefore $H^0(Y,\alpha_{!}\alpha^{!}{}^{\fp}\cH^{0}(f_{\ast}K))$, as the kernel of the restriction map, underlies a pure sub-twistor structure $F$ so that the isomorphism $\phi$ in Corollary \ref{cor:identification of weight filtrations} restricts to
\[ \phi: \overline{F|_{z=-1}} \xrightarrow{\sim} H^0(Y,\alpha_{!}\alpha^{!}{}^{\fp}\cH^{0}(f_{\ast}K^{\ast})).\]
Now we want to use $\phi$ to polarize different pieces of $H^0(Y,\alpha_{!}\alpha^{!}{}^{\fp}\cH^{0}(f_{\ast}K))$. Note that
\[ \mathrm{Im(cl)} \subseteq \Ker L\subseteq H^0_0(X,K). \] 
By the additivity of $\alpha_{!}\alpha^{!}$, this inclusion is compatible with the double Lefschetz decomposition with respect to $\eta$ and $L$. Therefore, each direct summand of $\mathrm{Im}(\mathrm{cl})$ is polarized by the restriction of $S^{\eta L}_{00}(\bullet,\phi(\bullet))$ by Theorem \ref{thm:Hodge-Riemann} and Lemma \ref{lemma:polarization of subtwistors}, up to a non-zero constant. Then the pairing $S^{\eta L}_{00}$ in \eqref{eqn:first time twisted Poincare pairing} restricts to a non-degenerate pairing 
\[ S^{\eta L}_{00}: H^0(Y,\alpha_{!}\alpha^{!}{}^{\fp}\cH^{0}(f_{\ast}K) ) \otimes H^0(Y,\alpha_{!}\alpha^{!}{}^{\fp}\cH^{0}(f_{\ast}K^{\ast})) \to \C. \]
Using Verdier duality map \eqref{eqn: Verdier duality for 0th perverse cohomology}, this pairing gives rise to the map
\[ H^0(Y,\alpha_{!}\alpha^{!}{}^{\fp}\cH^{0}(f_{\ast}K) ) \xrightarrow{S^{\eta L}_{00}} H^0(Y,\alpha_{!}\alpha^{!}{}^{\fp}\cH^{0}(f_{\ast}K^{\ast}) )^{\vee}\xrightarrow{\mathrm{V.D.}} H^0(Y,\alpha_{\ast}\alpha^{\ast}{}^{\fp}\cH^{0}(f_{\ast}K)),\]
which coincides with the map (\ref{eqn:refined intersection form}) by Lemma \ref{cor: refined intersection is Poincare pairing}. Therefore, the non-degeneracy of $S^{\eta L}_{00}$ implies the desired isomorphism.
\end{proof}

\subsection{Semisimplicity Theorem for $\ell=0$}
\label{sec:ST}
In this subsection, we finish the proof of Theorem \ref{thm:main} by showing that ${}^{\fp}\cH^0(f_{\ast}K)$ is a semisimple perverse sheaf. To do this, we first show that ${}^{\fp}\cH^0(f_{\ast}K)$ decomposes into the direct sum of intersection complexes associated to local systems over the strata of $Y=\amalg_d S_d$, where $S_d$ is a smooth locally closed subset of pure dimension $d$. An extra care is also needed here compared to \cite[Proposition 6.3.2 and Lemma 6.1.3]{DM05}, because the assumption in \cite[Lemma 4.1.3]{DM05} is not automatically satisfied.
\begin{prop}\label{prop:splitting of the 0-th perverse cohomology}
There is a canonical isomorphism in $\mathrm{Perv}(Y)$:
\[ {}^{\fp}\cH^{0}(f_{\ast}K) \cong \bigoplus_{d =\dim S_d} \IC_{\overline{S_d}}(\cH^{-d}({}^{\fp}\cH^{0}(f_{\ast}K)|_{S_d})). \]
\end{prop}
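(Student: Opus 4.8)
The plan is to prove the decomposition by descending induction on the dimension of the strata, peeling off one stratum at a time, exactly in the spirit of \cite[\S 6.3]{DM05}. Fix the stratification $Y=\amalg_d S_d$ adapted to $f$, say with strata of dimensions $d_{\max}>\cdots>d_{\min}$. Write $P\colonequals {}^{\fp}\cH^0(f_\ast K)$. For the top stratum we have nothing to do, so suppose inductively that on the open union $U=\bigcup_{d'>d}S_{d'}$ we have already established $P|_U\cong \bigoplus_{d'>d}\IC_{\overline{S_{d'}}\cap U}(L_{0,d'})$, where $L_{0,d'}=\cH^{-d'}(P)|_{S_{d'}}$; we want to split off the contribution of $T\colonequals S_d$. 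Shrinking to a neighborhood and using that the statement is local on $Y$ together with the contractibility hypothesis available after passing to a small ball transverse to $T$, the task reduces — via MacPherson--Vilonen's criterion in the form of Lemma \ref{lemma:splitting criterion} and the relative version Lemma \ref{lem:DM splitting} — to checking that the splitting map \eqref{eqn:the splitting map}
\[ \cH^{-d}(Y,\alpha_!\alpha^! P) \to \cH^{-d}(Y,\alpha_\ast\alpha^\ast P)\]
is an isomorphism, where $\alpha\colon T\hookrightarrow Y$ is the closed embedding of (a contractible piece of) the stratum.

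The key input that makes this map an isomorphism is Proposition \ref{prop:splitting}: for a point $y$ in the support of $\cH^{-d}(P)$ along $T$, after slicing $Y$ by $d$ general hyperplane sections from $|A|$ transverse to the strata (which, by Proposition \ref{prop:weak Lefschetz hypercohomology}, reduces $\dim f(X)$ while preserving $r(f)$, so that the inductive Assumption \ref{inductive assumption} applies and in particular the Decomposition Theorem and Theorem \ref{thm:Hodge-Riemann} hold for the sliced map) one is reduced to the zero-dimensional-stratum case already handled in Proposition \ref{prop:splitting}. Concretely: cutting with $d$ hyperplanes replaces $T$ by a point $y$ and replaces $P$ by ${}^{\fp}\cH^0$ of the direct image of the restricted perverse sheaf; by Proposition \ref{prop:weak Lefschetz hypercohomology} the relevant local and co-local cohomology groups are unchanged, so the isomorphism downstairs forces the isomorphism upstairs. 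Here I would be careful to check the dimension equality $\dim\cH^{-d}(\alpha_!\alpha^! P)_y=\dim\cH^{-d}(\alpha_\ast\alpha^\ast P)_y$ required to invoke Lemma \ref{lem:DM splitting}; this follows from Verdier duality $D_Y(\alpha_!\alpha^! P)\cong \alpha_\ast\alpha^\ast D_Y P$ together with the self-duality up to the swap $K\leftrightarrow K^\ast$ (Corollary \ref{corollary:duality of cup product map} and the fact that $P^\ast={}^{\fp}\cH^0(f_\ast K^\ast)$ is the Verdier dual of $P$, which is available since the Decomposition Theorem holds for $f$).

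Having split off $T$, the complement of $\IC_{\overline{T}}(\cH^{-d}(P)|_T)$ is, by Lemma \ref{lemma:splitting criterion}, canonically $\beta_{!\ast}\beta^\ast P$ where $\beta\colon Y\setminus T\hookrightarrow Y$; its restriction to $Y\setminus T$ is $P|_{Y\setminus T}$, so one continues the induction on the smaller open set, eventually exhausting all strata and producing the asserted direct sum $\bigoplus_d\IC_{\overline{S_d}}(\cH^{-d}(P)|_{S_d})$. The canonicity of the decomposition is built into the MacPherson--Vilonen description: at each step the summand split off is the intersection complex of a canonically determined local system, and the splitting map is intrinsic. I expect the main obstacle to be bookkeeping rather than conceptual: one must ensure that at the inductive step the hypotheses of Proposition \ref{prop:splitting} (or of Lemma \ref{lem:DM splitting}) genuinely apply after the hyperplane cuts — i.e.\ that the cut map still satisfies Assumption \ref{inductive assumption} with strictly smaller $(r,m)$, that transversality keeps the stratification adapted, and that the twistor-theoretic non-degeneracy of $S^{\eta L}_{00}$ from Theorem \ref{thm:Hodge-Riemann} is compatible with the restrictions (this last compatibility is exactly Corollary \ref{cor:restriction of perverse filtration} and Lemma \ref{lemma:Identification map is functorial}). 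Everything else is a routine assembly of the splitting criteria with the polarization package developed in \S\ref{sec: weight and perverse}.
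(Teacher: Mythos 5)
Your overall strategy matches the paper's: peel off strata by descending dimension, reduce the splitting criterion to a stalkwise condition, and verify that condition for $d\ge 1$ by slicing down to a point with $d$ general members of $|A|$ (Case I) and for $d=0$ by Proposition \ref{prop:splitting} (Case II). But there are two concrete technical errors in the details.

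First, the claim that the dimension equality $\dim\cH^{-d}(\alpha_!\alpha^!P)_y=\dim\cH^{-d}(\alpha_\ast\alpha^\ast P)_y$ ``follows from Verdier duality'' is false as stated. What Verdier duality gives you is $D_Y(\alpha_!\alpha^!P)\cong\alpha_\ast\alpha^\ast D_YP=\alpha_\ast\alpha^\ast P^\ast$, hence $\dim\cH^{-d}(\alpha_!\alpha^!P)_y=\dim\cH^{-d}(\alpha_\ast\alpha^\ast P^\ast)_y$ — this compares the costalk of $P$ with the stalk of $P^\ast={}^{\fp}\cH^0(f_\ast K^\ast)$, not with the stalk of $P$ itself. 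For a general perverse sheaf those two dimensions are different (e.g.\ $j_!\C_{\C^\ast}[1]$ vs.\ $j_\ast\C_{\C^\ast}[1]$ at the origin of $\C$). To conclude you need to already know that the sliced complex ${}^{\fp}\cH^0(f_{d\ast}K_d)$ and its dual are sums of intersection complexes with matching supports and ranks. That is exactly how the paper gets the dimension equality: the inductive Assumption \ref{inductive assumption} applied to $f_d$ (smaller $\dim f(X)$) yields the IC decomposition, and then \cite[Remark 4.1.2]{DM05} gives the equality of stalk and costalk dimensions at a zero-dimensional stratum. Alternatively, if you first establish that the splitting map is an isomorphism for $f_d$ at $y$ (which your invocation of Proposition \ref{prop:splitting} for the sliced map would give), the dimension equality is automatic and there is nothing to check; but then the Verdier-duality reasoning is superfluous rather than a proof of anything.

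Second, the assertion that ``by Proposition \ref{prop:weak Lefschetz hypercohomology} the relevant local and co-local cohomology groups are unchanged'' cites the wrong kind of statement. Proposition \ref{prop:weak Lefschetz hypercohomology} compares \emph{global} hypercohomology of perverse cohomology sheaves over $Y$ and $Y_1$, while what you need for the transfer is a \emph{stalkwise} compatibility between $\cH^\bullet(\alpha_!\alpha^!{}^{\fp}\cH^0 f_\ast K)_y$, $\cH^\bullet({}^{\fp}\cH^0 f_\ast K)_y$, $\cH^\bullet(\alpha_\ast\alpha^\ast{}^{\fp}\cH^0 f_\ast K)_y$ and the corresponding stalks for the sliced map $f_d$ at the same point $y$. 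The paper obtains precisely this from the commutative diagram \eqref{cut by hyperplanes}, whose vertical isomorphisms are supplied by \cite[Lemma 4.3.8]{DM05}, not by Proposition \ref{prop:weak Lefschetz hypercohomology}. Finally, one small stylistic point: for $d\ge 1$ the paper does not re-run Proposition \ref{prop:splitting} on $f_d$, but instead appeals directly to the inductive semisimplicity of ${}^{\fp}\cH^0(f_{d\ast}K_d)$ (Theorem \ref{thm:main}(iii) for $f_d$), which together with Lemma \ref{lem:DM splitting} immediately yields both the dimension equality and the isomorphism; your route through Proposition \ref{prop:splitting} for $f_d$ is also viable under the inductive assumption, but it is the longer path.
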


\begin{proof}
For any $d$, consider a $d$-dimensional stratum $S_{d}$ and set $U_{d} \colonequals \bigcup_{d'\geq d} S_{d'}$. We denote two embeddings by $S_{d} \xrightarrow{\alpha} U_{d} \xleftarrow{\beta} U_{d+1}$. By Deligne's formula, one has $\beta_{\ast !}\cong \tau_{\leq -d-1}\beta_{\ast}$. Then it suffices to show that
\begin{equation}\label{eqn: after Delignes formula it suffices to prove this}
{}^{\fp}\cH^0(f_{\ast}K)|_{U_{d}} \cong \beta_{\ast !}({}^{\fp}\cH^0(f_{\ast}K)|_{U_{d+1}})\oplus \cH^{-d}({}^{\fp}\cH^0(f_{\ast}K)|_{S_{d}})[d].
\end{equation}
To achieve this, let us prove the following two claims.
\begin{itemize}
\item [(A)] $\dim \cH^{-d}(\alpha_{!}\alpha^{!}{}^{\fp}\cH^0(f_{\ast}K))_y=\dim \cH^{-d}(\alpha_{\ast}\alpha^{\ast}{}^{\fp}\cH^0(f_{\ast}K))_y$,
\item [(B)] the morphism 
\[\cH^{-d}(\alpha_{!}\alpha^{!}{}^{\fp}\cH^0(f_{\ast}K)) \to \cH^{-d}({}^{\fp}\cH^0(f_{\ast}K))\]
    is an isomorphism at $y$.
\end{itemize}
For both statements, we use the following induction method. If $d\geq 1$ and $y\in S_d$, choose a generic $d$-dimensional complete intersection $Y_{d}\subseteq Y$ which contains $y$ and is transversal to all strata of $Y$ (adapted to $f$) and $X_d=X\times_Y Y_d$ is smooth. It induces the following Cartesian diagram
\[ \begin{CD}
X_{d} @>i_d>> X \\
@VVf_{d}V @VVfV  \\
Y_{d} @>>> Y
\end{CD}
\]
By the repeated use of Proposition \ref{prop:weak Lefschetz hypercohomology}, we have $\dim Y_{d} <\dim Y$ and $r(f_{d})\leq \max \{r(f)-d,0\}$. Set $K_d\colonequals i_{d}^{\ast}K[-d]$ and denote the inclusion by $i:\{y\} \hookrightarrow Y_{d}$. We have the following commutative diagram
\begin{equation}\label{cut by hyperplanes}
\begin{CD}
\cH^{0}(i_{!}i^{!}{}^{\fp}\cH^0(f_{d\ast}K_d))_y @>>> \cH^{0}({}^{\fp}\cH^0(f_{d\ast}K_d))_y@>>> \cH^{0}(i_{\ast}i^{\ast}{}^{\fp}\cH^0(f_{d\ast}K_d))_y\\
@VV\cong V @VV\cong V @VV\cong V\\
\cH^{-d}(\alpha_{!}\alpha^{!}{}^{\fp}\cH^0(f_{\ast}K))_y @>>> \cH^{-d}({}^{\fp}\cH^0(f_{\ast}K))_y @>>>
\cH^{-d}(\alpha_{\ast}\alpha^{\ast}{}^{\fp}\cH^0(f_{\ast}K))_y
\end{CD}
\end{equation}
The isomorphisms in the vertical direction follow from \cite[Lemma 4.3.8]{DM05}. 
By the inductive Theorem \ref{thm:main}(iii) for the morphism $f_{d}$ (recall that we are under Assumption \ref{inductive assumption} and $X_d$ is smooth), ${}^{\fp}\cH^0(f_{d \ast}K_d)$ is a direct sum of intersection cohomology complexes. Therefore by \cite[Remark 4.1.2]{DM05} we have
\begin{equation}\label{eqn: dimension equality from induction}
\dim \cH^{0}(i_{!}i^{!}{}^{\fp}\cH^0(f_{d\ast}K_d))_y= \dim \cH^{0}(i_{\ast}i^{\ast}{}^{\fp}\cH^0(f_{d\ast}K_d))_y.
\end{equation}
Moreover, by Lemma \ref{lem:DM splitting} the morphism
\begin{equation}\label{eqn: statement B induction}
\cH^{0}(i_{!}i^{!}{}^{\fp}\cH^0(f_{d\ast}K_d))_y \to  \cH^{0}({}^{\fp}\cH^0(f_{d\ast}K_d))_y
\end{equation}
is an isomorphism.

We first prove (A) for any $d$. The case $d=0$ is obtained in Proposition \ref{prop:splitting of the 0-th perverse cohomology}. For $d\geq 1$ and $y\in S_d$, we apply the left and right vertical isomorphisms in the diagram  \eqref{cut by hyperplanes} to reduce (A) to the equality \eqref{eqn: dimension equality from induction}, which holds by induction.

At this point (A) is proved. Then by Lemma \ref{lem:DM splitting} (the splitting criterion for perverse sheaves), to prove \eqref{eqn: after Delignes formula it suffices to prove this}, it suffices to prove that (B) holds at any point of any stratum $S_d$ for any $d\geq 0$. For convenience, we denote this statement by $\mathrm{(B)}_d$. For $d=0$, $\mathrm{(B)}_0$ follows from Proposition \ref{prop:splitting of the 0-th perverse cohomology}. When $d\geq 1$, we apply the left vertical isomorphism in the diagram \eqref{cut by hyperplanes} to reduce $\mathrm{(B)}_d$ to \eqref{eqn: statement B induction} being an isomorphism, which also holds by induction. Therefore \eqref{eqn: after Delignes formula it suffices to prove this} holds and we finish the proof of this proposition.

\end{proof}

\begin{prop}\label{prop:semisimplicity} ${}^{\fp}\cH^0(f_{\ast}K)$ is semisimple.
\end{prop}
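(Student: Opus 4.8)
\textbf{Proof proposal for Proposition \ref{prop:semisimplicity}.}

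The plan is to combine the structure theorem just obtained, Proposition \ref{prop:splitting of the 0-th perverse cohomology}, with the semisimplicity of the local systems appearing on the strata. By that proposition we already have a decomposition
\[
{}^{\fp}\cH^0(f_{\ast}K) \cong \bigoplus_{d=\dim S_d} \IC_{\overline{S_d}}(L_{0,d}), \qquad L_{0,d}\colonequals \cH^{-d}\big({}^{\fp}\cH^0(f_{\ast}K)\big)\big|_{S_d}.
\]
Since the intermediate extension functor sends semisimple local systems to semisimple perverse sheaves, and a finite direct sum of semisimple perverse sheaves is semisimple, it suffices to show that each $L_{0,d}$ is a semisimple local system on $S_d$.

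First I would reduce to showing semisimplicity of $L_{0,d}$ over a Zariski dense open subset of $S_d$; by Remark \ref{remark:semisimplicity via Zariski dense open subset} (applied after passing to the smooth variety $S_d$), semisimplicity over such an open subset implies it on all of $S_d$. Shrinking $S_d$, I may assume $f$ is a topological fiber bundle over $S_d$ and in fact, by stratifying and using resolution of singularities as in Corollary \ref{thm:semisimplicity smooth projective map}, I may assume there is a smooth projective morphism $g\colon X_T\to T$ with $T$ a Zariski dense open subset of $S_d$, obtained by pulling back $f$ over $T$ and resolving; the key point is that over such a locus the stalk cohomology of ${}^{\fp}\cH^0(f_{\ast}K)$ is computed by the (shifted) higher direct images $R^qf_{\ast}(\cV|_{X_T})$. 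Concretely, over the smooth locus the complex $f_{\ast}K$ restricted to $T$ is, up to shift, $\bigoplus_q R^qf_{\ast}(\cV|_{X_T})[-q]$, and extracting the correct perverse cohomology sheaf and the correct stalk degree identifies $L_{0,d}|_T$ with one of the local systems $R^qf_{\ast}(\cV|_{X_T})$ (for the appropriate $q$ determined by $d$ and $\dim X$), possibly after the base change through the resolution, which by Corollary \ref{cor:pull back preserve semisimplicity} does not affect semisimplicity of the coefficients.

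Then Corollary \ref{thm:semisimplicity smooth projective map} (itself a consequence of Simpson's Semisimplicity Theorem \ref{thm:Simpson Semisimplicity}) applies directly: $R^qf_{\ast}(\cV|_{X_T})$ is a semisimple local system on $T$ for every $q\geq 0$. Hence $L_{0,d}|_T$ is semisimple, so $L_{0,d}$ is semisimple on $S_d$, so each $\IC_{\overline{S_d}}(L_{0,d})$ is a semisimple perverse sheaf, and therefore ${}^{\fp}\cH^0(f_{\ast}K)$ is semisimple.

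\textbf{Main obstacle.} The delicate part is the bookkeeping in the reduction step: one must check carefully that, after restricting to a dense open stratum and passing through a resolution $h\colon X_{\overline T}\to X$ as in Corollary \ref{thm:semisimplicity smooth projective map}, the local system $L_{0,d}$ genuinely coincides (up to the expected shift and choice of $q$) with a higher direct image of a \emph{semisimple} local system under a \emph{smooth projective} morphism, rather than merely a subquotient of such. This requires knowing that over the good locus the decomposition in Proposition \ref{prop:splitting of the 0-th perverse cohomology} is compatible with the Leray decomposition of $f_{\ast}K|_T$ and that the resolution $h$ can be chosen to be an isomorphism over $f^{-1}(T)$, so that $h^{\ast}\cV$ restricts to $\cV|_{X_T}$; both are arranged exactly as in the proof of Corollary \ref{thm:semisimplicity smooth projective map}. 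Once this identification is in place, the semisimplicity is immediate from the cited results.
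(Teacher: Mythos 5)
Your proof handles correctly the two extreme strata, $d=0$ (trivial) and $d=m=\dim f(X)$ (the locus over which $f$ is smooth), but it has a genuine gap for the intermediate strata $1\le d\le m-1$. The issue is that for such $d$, the map $f$ is \emph{not} smooth over $S_d$: the fibers over $S_d$ jump in dimension, and shrinking $S_d$ to a dense open $T$ only gives a topological fiber bundle with possibly singular fibers, not a smooth projective morphism. Resolving singularities upstream and invoking Corollary \ref{thm:semisimplicity smooth projective map} does not fix this, because that corollary only applies to a subvariety $T$ contained in the open locus $Y_0$ over which $f$ is smooth — which for $d<m$ fails by construction. So you cannot identify $L_{0,d}|_T$ with an $R^q$ of a smooth projective map by restriction alone, and the sentence in your ``Main obstacle'' paragraph claiming this is ``arranged exactly as in the proof of Corollary \ref{thm:semisimplicity smooth projective map}'' is precisely where the argument breaks down.

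The paper's proof circumvents this by an entirely different device in the intermediate case: the universal $d$-fold complete intersection family $\cX_T\to \cY_T\to T$ of de Cataldo–Migliorini (\cite[\S 6.4]{DM05}). Cutting $Y$ by $d$ moving generic hyperplanes produces a map $\pi\circ\Phi\colon \cX_T\to T$ that \emph{is} smooth projective, even though $f$ itself is not smooth over $S_d$. One then does not identify $L_{0,d}|_T$ with a higher direct image, but exhibits it as a \emph{quotient} of $R^{\dim\cX_T-d}(\pi\circ\Phi)_*(p_X^*\cV)$ via Lemma \ref{lemma:surjectivity relative version} (a relative version of the Global Invariant Cycle Theorem, hence relying on Theorem \ref{thm:Global Invariant Cycle Theorem}). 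Since the source is semisimple by Corollary \ref{thm:semisimplicity smooth projective map} and quotients of semisimple local systems are semisimple, the conclusion follows after a base-change computation identifying $\cH^{-d}(\theta^*\,{}^{\fp}\cH^0(\Phi_* p_X^*K))$ with $\cH^{-d}({}^{\fp}\cH^0(f_*K)|_T)$. So the missing ideas in your proposal are (a) the universal complete-intersection construction replacing restriction of $f$ over $S_d$, and (b) the use of the global invariant cycle theorem (in the guise of Lemma \ref{lemma:surjectivity relative version}) to get a surjection from a semisimple local system rather than an identification.
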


\begin{proof}
By Proposition \ref{prop:splitting of the 0-th perverse cohomology}, it suffices to show the local system 
\[ \cH^{-d}({}^{\fp}\cH^0(f_{\ast}K)|_{S_{d}})\] is semisimple over $S_d$ for each $d=\dim S_{d}$. Let $i: y \hookrightarrow Y$ be a point lying in $S_d$, the stalk of the local system at $y$ is 
\[ H^{-d}(Y,i_{\ast}i^{\ast}{}^{\fp}\cH^0(f_{\ast}K)) \subseteq H^{-d}(f^{-1}(y),K|_{f^{-1}(y)})= H^{n-d}(f^{-1}(y),\cV|_{f^{-1}(y)}).\]

\textbf{Case I}. $d=0$ or $d=m=\dim f(X)$. If $d=0$, the semisimplicity is trivial. Suppose $d=m$. By the construction of Whitney stratification, $S_d$ is a smooth Zariski dense open subset of the open subset of $Y$ over which $f$ is smooth. By passing to a resolution of singularity of $Y$ which does not modify $S_d$, we can assume $Y$ is smooth projective.
It follows from Corollary \ref{thm:semisimplicity smooth projective map} that $R^{n-d}f_{\ast}\cV|_{S_{d}}$ is semisimple, so is the sub-local system $\cH^{-d}({}^{\fp}\cH^0(f_{\ast}K)|_{S_{d}})$.

\textbf{Cast II}. $1\leq d=\dim S_d \leq m-1$. We use a geometric construction from \cite[\S 6.4]{DM05}. Extra care is needed to keep track of semisimplicity. Choose $A$ to be a very ample line bundle on $Y$ and let $\bP^{\vee}=|A|$ be the associated linear system. Set $\Pi \colonequals (\bP^{\vee})^{d}$. Consider the universal $d$-fold complete intersection families
\[ \cY\colonequals \{ (y,(H_1,\cdots,H_d)) \colon  y\in \bigcap^d_{j=1}H_j, \quad \text{$H_j$ is a hyperplane of $Y$} \} \subseteq Y\times \Pi\]
and set $\cX\colonequals \cY\times_{Y\times \Pi} (X\times \Pi)$. For any subset $W\subseteq Y$, we write 
\[\cY_W\colonequals \cY\times_Y W, \quad \cX_W\colonequals \cX\times_{\cY} \cY_W.\]
In \cite[\S 6.4, Page 743]{DM05}, de Cataldo and Migliorini showed that there is a commutative diagram with Cartesian squares:
 \[ \begin{CD}
    \cX_T @>p'>> \cX @>p''>>X \\
    @VV \Phi V   @VVV@VV f V \\
    \cY_T @>q'>> \cY @>q''>>Y \\
    @VV\pi V @VVV\\
    T  @>>> Y
    \end{CD} \]
satisfying the following conditions:
\begin{itemize}
    \item $T\subseteq S_d$ is a Zariski open subset.
    \item $\cY_T\to T$ admits a section $\theta$ and $\Phi$ inherits a stratification from the Whitney stratification on $f$ chosen in \S \ref{sec: Set up of the inductive proof}. Moreover all strata on $\cY_T$ map surjectively and smoothly onto $T$.
    \item The morphism $\pi\circ \Phi$ is surjective and smooth projective of relative dimension $(\dim \cX_T-\dim T)$ and $\dim \cX_T=\dim X=n$. 
\end{itemize}
Set 
\[ p_X = p'' \circ p', \quad p_Y=q'' \circ q'.\]
Since $\cV$ is semisimple on $X$, by Corollary \ref{cor:pull back preserve semisimplicity}, we have $p_X^{\ast}\cV$ is semisimple on $\cX_T$. At this point, we already know that the Decomposition Theorem holds for $f_{\ast}(\cV[\dim X])$, by the base-change property, the Decomposition holds for $\Phi_{\ast}(p_X^{\ast}\cV[\dim \cX_T])$ as well. Therefore, we can apply Lemma \ref{lemma:surjectivity relative version} to $p_X^{\ast}\cV$ and the diagram
\[ \cX_T \xrightarrow{\Phi} \cY_T \xrightarrow{\pi} T \xrightarrow{\theta} \cY_T,\]
so that we obtain a surjection map of local systems over $T$:
\[ R^{\dim \cX_T-d}(\pi \circ \Phi)_{\ast}(p_X^{\ast}\cV) \twoheadrightarrow\cH^{-d}(\theta^{\ast}{}^{\fp}\cH^0(\Phi_{\ast}p_X^{\ast}\cV[\dim \cX_T]))=\cH^{-d}(\theta^{\ast}{}^{\fp}\cH^0(\Phi_{\ast}p_X^{\ast}K))\]
As in Case I, by passing to a suitable resolution of $Y$, we can assume $Y$ is smooth projective. Since $\cX$ is smooth projective, $T$ is a smooth subvariety contained in the open subset over which the map $\cX \to \cY \to Y$ is smooth, we can apply Corollary \ref{thm:semisimplicity smooth projective map} to conclude that 
\[ R^{\dim \cX_T-d}(\pi \circ \Phi)_{\ast}(p_X^{\ast}\cV)=R^{\dim \cX_T-d}(\pi \circ \Phi)_{\ast}((p'')^{\ast}\cV|_{\cX_T})\] 
is semisimple on $T$, where $(p'')^{\ast}\cV$ is semisimple on $\cX$ because of Corollary \ref{cor:pull back preserve semisimplicity}. Therefore the quotient local system $\cH^{-d}(\theta^{\ast}{}^{\fp}\cH^0(\Phi_{\ast}p_X^{\ast}K))$ is also semisimple. Now the standard base change formula implies that
\begin{align*}
    \cH^{-d}(\theta^{\ast}{}^{\fp}\cH^0(\Phi_{\ast}p_X^{\ast}K)) &\cong  \cH^{-d}(\theta^{\ast}{}^{\fp}\cH^0(p_Y^{\ast}f_{\ast}K))\\
    &\cong  \cH^{-d}(\theta^{\ast}p_Y^{\ast}{}^{\fp}\cH^0(f_{\ast}K)) \quad [\text{$p_Y^{\ast}$ is $t$-exact}]\\
    &\cong \cH^{-d}({}^{\fp}\cH^0(f_{\ast}K)|_T)
\end{align*}
Since $T$ is Zariski-dense in $S_{d}$, by Remark \ref{remark:semisimplicity via Zariski dense open subset} we conclude that $\cH^{-d}({}^{\fp}\cH^0(f_{\ast}K)|_{S_{d}})$ is semisimple on $S_{d}$. In particular, ${}^{\fp}\cH^0(f_{\ast}K)$ is semisimple and this finishes the proof of Theorem \ref{thm:main}.
\end{proof}

\begin{lemma}\label{lemma:surjectivity relative version}
Consider the following diagram
\[ \begin{CD} 
\cX @>\Phi>> \cY \\
@VVFV @VV\pi V \\
T @>=>> T
\end{CD} \]
and $\theta:T \to \cY$ is a section of $\pi$. Suppose
\begin{itemize}
    \item $\cX$ is smooth of dimension $n$, $T$ is smooth of dimension $d$, and $F$ is surjective and smooth projective of relative dimension $n-d$.
    \item The map $\Phi$ is stratified and the strata of $\cY$ map smoothly and surjectively onto $T$. $\theta(T)$ is a stratum of $\cY$.
    \item The decomposition theorem holds for $\Phi_\ast \cV[\dim \cX]$, where $\cV$ is a semisimple local system on $\cX$.
\end{itemize}
Then, there is a surjective map of local systems on $T$:
\[ R^{\dim \cX-d}F_{\ast}\cV \to \cH^{-d}(\theta^{\ast}{}^{\fp}\cH^0(\Phi_{\ast}\cV[\dim \cX])).\]
\end{lemma}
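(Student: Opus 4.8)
The plan is to derive the surjection from the decomposition theorem for $\Phi_{\ast}\cV[\dim\cX]$ together with the compatibility of perverse truncation with the smooth pullback $F = \pi\circ\Phi$, using the section $\theta$ to read off a direct summand on $T$. First I would use that $F\colon \cX\to T$ is smooth projective of relative dimension $n-d$, so that $F_{\ast}\cV[\dim\cX] = F_{\ast}\cV[n]$ and its perverse cohomology sheaves on the smooth variety $T$ are (up to the shift by $\dim T = d$) the ordinary higher direct image local systems $R^{q}F_{\ast}\cV$; concretely ${}^{\fp}\cH^{0}(F_{\ast}\cV[n])[-d] \cong R^{n-d}F_{\ast}\cV[d]$, so the local system $\cH^{-d}({}^{\fp}\cH^{0}(F_{\ast}\cV[n]))$ is exactly $R^{n-d}F_{\ast}\cV$. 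Thus the target of the asserted map is $\cH^{-d}({}^{\fp}\cH^{0}(F_{\ast}\cV[n]))$, and I want a surjection of local systems $\cH^{-d}({}^{\fp}\cH^{0}(F_{\ast}\cV[n])) \twoheadrightarrow \cH^{-d}(\theta^{\ast}{}^{\fp}\cH^{0}(\Phi_{\ast}\cV[\dim\cX]))$; composing with the isomorphism $R^{\dim\cX-d}F_{\ast}\cV \cong \cH^{-d}({}^{\fp}\cH^{0}(F_{\ast}\cV[n]))$ gives the statement (note $\dim\cX = n$).

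Next I would produce this surjection. Since $F = \pi\circ\Phi$ and $\pi$ is smooth (it is a complete intersection fibration, smooth onto $T$), the functor $\pi^{\ast}[d]$ is $t$-exact for the perverse $t$-structure, hence $F_{\ast}\cV[n] = \pi_{\ast}\Phi_{\ast}\cV[n]$ gives no direct relation by itself; instead I use the section. The key point is the adjunction/base-change: restricting along $\theta\colon T\to\cY$ (a closed stratum embedding, with $\pi\circ\theta = \mathrm{id}_T$) gives a functorial map $\theta^{\ast}{}^{\fp}\cH^{0}(\Phi_{\ast}\cV[\dim\cX]) \to {}^{\fp}\cH^{0}(F_{\ast}\cV[n])[\,\text{shift}\,]$ — more precisely, because the decomposition theorem holds for $\Phi_{\ast}\cV[\dim\cX]$ by hypothesis, $\Phi_{\ast}\cV[\dim\cX]$ splits as a direct sum of shifted intersection complexes $\IC_{\overline{Z}}(L)$; applying $\pi_{\ast}$ and using that $\theta(T)$ is a stratum, the stratum $\theta(T)$-summand $\IC_{\overline{\theta(T)}}(\cH^{-d}(\theta^{\ast}{}^{\fp}\cH^{0}(\Phi_{\ast}\cV[\dim\cX])))[d]$ contributes, via $\pi_{\ast}$, the local system $\cH^{-d}(\theta^{\ast}{}^{\fp}\cH^{0}(\Phi_{\ast}\cV[\dim\cX]))$ as a \emph{sub}quotient of $R^{n-d}F_{\ast}\cV$. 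To see that it is a \emph{quotient}, I would run exactly the argument of \cite[Proposition 6.2.2]{DM05} (invoked earlier in the excerpt for Lemma \ref{lemma:surjectivity absolution version} via the Global Invariant Cycle Theorem \ref{thm:Global Invariant Cycle Theorem}): the restriction map from the cohomology of $\Phi_{\ast}\cV[\dim\cX]$ over $\cY$ to the stalk cohomology at a point of $\theta(T)$ is surjective, because each $\IC$-summand either restricts to its generic stalk (for the $\theta(T)$-summand, giving the identity) or restricts through the global invariant cycle mechanism; fiberwise over $t\in T$ this says $R^{n-d}F_{\ast}\cV|_t = H^{n-d}(F^{-1}(t),\cV)$ surjects onto the stalk $\cH^{-d}(\theta^{\ast}{}^{\fp}\cH^{0}(\Phi_{\ast}\cV[\dim\cX]))_t$, and this surjection is a map of local systems as $t$ varies by functoriality of the splitting.

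The main obstacle I expect is the bookkeeping around the section $\theta$ and the identification of which $\IC$-summand of $\Phi_{\ast}\cV[\dim\cX]$ is responsible for the target local system: one must check that $\theta(T)$ being a stratum of the stratification of $\Phi$ forces $\theta^{\ast}{}^{\fp}\cH^{0}(\Phi_{\ast}\cV[\dim\cX])$ to be a local system placed in the single degree $-d$ (so that $\cH^{-d}$ of it makes sense and is the full object), and that the $\pi_{\ast}$-pushforward of the $\overline{\theta(T)}$-supported $\IC$-summand contributes to $R^{n-d}F_{\ast}\cV$ rather than to a different cohomological degree — this is where the relative dimension count $\dim\cX_T = \dim X = n$ and $\dim T = d$ is used, via $\pi\circ\Phi$ having relative dimension $n-d$. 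Once the correct summand is pinned down, the surjectivity is a direct transcription of the global invariant cycle argument already established, applied to the semisimple local system $p_X^{\ast}\cV$ on $\cX_T$ (equivalently $\cV$ on $\cX$), so no genuinely new input beyond Theorem \ref{thm:Global Invariant Cycle Theorem} and the hypothesised decomposition theorem for $\Phi_{\ast}\cV[\dim\cX]$ is needed.
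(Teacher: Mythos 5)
Your proposal is correct and follows essentially the same route the paper takes: the paper's proof is precisely a stalkwise reduction to Lemma~\ref{lemma:surjectivity absolution version} (the global invariant cycle / Deligne-style surjectivity argument, following~\cite[Lemma~6.4.1]{DM05}), after first identifying $R^{n-d}F_{\ast}\cV$ with $\cH^{-d}({}^{\fp}\cH^{0}(F_{\ast}\cV[n]))$ as you do. The only slip is a minor one of bookkeeping: the adjunction map you write down in the second paragraph points the wrong way (the unit $\mathrm{id}\to\theta_{\ast}\theta^{\ast}$ composed with $\pi_{\ast}$ produces $\pi_{\ast}\to\theta^{\ast}$, i.e.\ a map \emph{out of} the pushforward, not into it), but since you anyway abandon it and fall back on the fiberwise surjectivity supplied by Lemma~\ref{lemma:surjectivity absolution version}, this does not affect the argument.
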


\begin{proof}
The proof is basically identical to \cite[Lemma 6.4.1]{DM05}: by working stalkwise, one can reduce to Lemma \ref{lemma:surjectivity absolution version}, which is the semisimple local system version of \cite[Proposition 6.2.2]{DM05}.

\end{proof}



\subsection{Sabbah's Theorem}\label{sec:Sabbah's theorem}

Using the standard reductions in \cite[Page 71-74]{DM03} and \cite{de16}, we obtain the following version of Sabbah's Decomposition Theorem in \cite{Sabbah}. 
\begin{thm}\label{thm:Sabbah's theorem}
Let $f:U \to Y$ be a proper map between algebraic varieties, where $U$ is a Zariski open subset of a smooth projective variety $X$. Let $\cV$ be a semisimple local system on $X$. Then Theorem \ref{thm:main}(ii) and Theorem \ref{thm:main}(iii) hold for $f$ and $\cV|_U$. If in addition, $f$ is projective and $\eta$ is an $f$-ample line bundle, then Theorem \ref{thm:main}(i) holds as well.
\end{thm}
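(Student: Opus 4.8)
The plan is to reduce Theorem \ref{thm:Sabbah's theorem} to the projective-$X$ case already established in Theorem \ref{thm:main} by the standard dévissage arguments of de Cataldo--Migliorini \cite{DM03} and de Cataldo \cite{de16}. First I would treat the case where $f$ is projective. Since $U$ is quasi-projective, choose a smooth projective compactification $\overline{U}$ of $U$ together with a projective morphism $\overline{f}:\overline{U}\to \overline{Y}$ extending $f$, where $\overline{Y}$ is a projective compactification of $Y$; by resolution of singularities and elimination of indeterminacies applied to the composition $X \dashrightarrow Y$ (and enlarging $X$ if necessary), one may even arrange $\overline{U}$ to be a modification of $X$ so that $\cV$ extends to a semisimple local system on $\overline{U}$. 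The key point is that $f_{\ast}(\cV|_U[\dim U])$ is the restriction to $Y$ of $\overline{f}_{\ast}(\overline{\cV}[\dim \overline{U}])$ along the open embedding $Y \hookrightarrow \overline{Y}$, because $f$ is proper over $Y$, so $f_{\ast}$ and $\overline{f}_{\ast}$ agree over $Y$; then Theorem \ref{thm:main} applied to $\overline{f}:\overline{U}\to\overline{Y}$ gives the Relative Hard Lefschetz, Decomposition, and Semisimplicity statements, which restrict to the corresponding statements over $Y$. The restriction of an intersection complex $\mathrm{IC}_{\overline{S}}(L)$ on $\overline{Y}$ to the open set $Y$ is again an intersection complex (of $L|_{S\cap Y}$ on $\overline{S}\cap Y$, possibly zero), and restriction of local systems preserves semisimplicity, so Theorem \ref{thm:main}(ii),(iii) for $\overline{f}$ descend to $f$; similarly the relative hard Lefschetz isomorphism is preserved under open restriction, and the reduction from $f$-ample to ample line bundles on $\overline{U}$ is exactly \cite[Remark 5.1.2]{DM05}.

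Next I would handle a general proper (not necessarily projective) morphism $f:U\to Y$. Here the standard device (see \cite[Page 71--74]{DM03}) is Chow's lemma together with the already-proven projective case: choose a projective modification $\pi:U'\to U$ with $U'$ quasi-projective (in fact we may take $U'$ open in a blow-up of $X$, hence open in a smooth projective variety) so that $f\circ\pi:U'\to Y$ is projective, and set $\cV'\colonequals\pi^{\ast}\cV$, which is semisimple by Corollary \ref{cor:pull back preserve semisimplicity}. By Verdier's Decomposition Theorem for the proper birational map $\pi$ (or rather, by the projective case of our theorem applied to $\pi$, since $\pi$ is projective), $\cV[\dim U]$ is a direct summand of $\pi_{\ast}\cV'[\dim U']$ up to shift. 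Applying the projective case to $f\circ\pi$ and extracting the direct summand corresponding to $\cV$ via the idempotent supplied by $\pi$, one obtains the Decomposition and Semisimplicity Theorems for $f_{\ast}(\cV[\dim U])$ as a direct summand of $(f\circ\pi)_{\ast}(\cV'[\dim U'])$; the perverse cohomology sheaves of the latter are semisimple, hence so are those of any direct summand, and a direct summand of a direct sum of intersection complexes is again such a direct sum with semisimple local system coefficients. This is precisely the argument de Cataldo uses in \cite{de16} to pass from the projective to the proper case, and it goes through verbatim once one has the projective case.

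The main obstacle is ensuring that the semisimple local system $\cV$ on $X$ genuinely extends to a semisimple local system on whatever smooth projective variety compactifies $U'$, so that Simpson's and our results over projective bases apply. This is not automatic: the compactification $\overline{U'}$ obtained from resolution need not receive $\cV$ as the restriction of a local system on $\overline{U'}$ unless $X$ itself embeds into $\overline{U'}$. The resolution is to arrange all compactifications to dominate $X$: take $\overline{U'}$ to be a common resolution of $X$ and of a compactification of $U'$, so that there is a projective birational morphism $\rho:\overline{U'}\to X$, and put the semisimple local system $\rho^{\ast}\cV$ on $\overline{U'}$; its restriction to $U'$ is $\cV'=\pi^{\ast}\cV$ up to the identification $U'\hookrightarrow X$ followed by $\pi$, because $\rho$ restricts to $\pi$ over the locus where both are defined. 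With this bookkeeping in place --- carried out carefully but routinely --- every appeal to the projective case is legitimate, and the theorem follows. One should also note, as in \cite[Remark 5.1.2]{DM05} and the discussion after Theorem \ref{thm:main}, that the $f$-ample hypothesis in the relative hard Lefschetz statement is handled by twisting with a large multiple of $f^{\ast}($ample on $\overline{Y})$ to reduce to an ample class on $\overline{U'}$.
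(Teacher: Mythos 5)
Your proposal is correct and follows essentially the same d\'evissage as the paper's proof: first reduce the projective case to Theorem~\ref{thm:main} by compactifying $U$ inside a blow-up of $X$ (so that $\cV$ pulls back to a semisimple local system via Corollary~\ref{cor:pull back preserve semisimplicity}) and $Y$ to a projective variety, using that $f$ proper and $U$ dense in the blow-up force $\overline f^{-1}(Y)=U$, and then pass to the general proper case via a Chow envelope and the direct-summand trick, as in \cite{DM03,de16}. The one step you elide is the paper's reduction from a quasi-projective $Y$ to an arbitrary algebraic $Y$ via an affine cover --- your argument as written needs a projective compactification $\overline Y$, which may not exist when $Y$ is only an abstract algebraic variety --- but this is a routine device that you implicitly invoke through the reference to \cite{de16}.
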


\begin{proof}
One needs to keep track of the semisimplicity of $\cV$. For details, see \cite[\S 2.9]{Yang21}.
\end{proof}

\appendix

\section{Hodge star operators}\label{sec:Hodge star operators}
In this appendix, for the lack of appropriate references in the literature, we state the construction and basic properties of Hodge star operators for differential forms with Harmonic bundle coefficients, adapting \cite[\S 5]{Voisin}. We only give formulas, and the proof can be found in \cite[\S 2.4]{Yang21}. It is used to understand the natural pure twistor structures on $H^k(X,\cV)$ in Theorem \ref{thm:Hodge-Simpson} and the pre-Weil operator in Definition \ref{definition: pre-Weil operator}.

Let $X$ be a compact K\"ahler manifold of complex dimension $n$ with a K\"ahler metric $g$. Let $\cA^{k}_X$ denote the sheaf of $\cC^{\infty}$ $k$-forms on $X$ and let $\cA^{p,q}_X$ denote the sheaf of $(p,q)$-forms. Following \cite[\S 5.1.1]{Voisin}, there is an induced Hermitian metric on $\cA^k_X$: if $e_1,\ldots,e_n$ is an orthonormal basis for $(T_{X,x},g_x)$ and $e_i^{\ast}$ is the dual basis, then $e_{i_1}^{\ast}\wedge \cdots \wedge e_{i_k}^{\ast}$ form an orthonormal basis for the Hermitian metric on $\cA^k_{X,x}$. Let $H$ be a harmonic bundle on $X$ with harmonic metric $h$. Together, $h_x$ and the metric on $\cA^{k}_{X,x}$ induce a Hermitian metric $(,)_x$ on $H_x\otimes_{\C} \cA^{k}_{X,x}$. Denote $H^{\ast}$ to be the dual harmonic bundle from Construction \ref{construction:dual harmonic bundle}.

\begin{definition}[$L^2$ metric]\label{L2 metric}
For each integer $k$, there is a $L^2$ Hermitian metric on the space of $k$-forms with coefficients in $H$ defined by
\[ (A,\overline{B})_{L^2}\colonequals \int_X (A,\overline{B}) \mathrm{Vol},\]
where $\mathrm{Vol}$ is the volume form of $X$ relative to $g$, $A,B\in \cC^{\infty}(H\otimes \cA^k_X)$, and $(A,\overline{B})$ is the function $x\mapsto (A_x,\overline{B_x})_x$. 
\end{definition}

\begin{definition}[Hodge star operator]\label{definition:C-linear Hodge star operator}
The Hodge star operator for harmonic bundles is defined to be the composition of the following $\C$-linear map
\[ \ast: H_x \otimes \cA^k_{X,x} \to \text{Hom}(H_x \otimes \cA^k_{X,x}, \C) \to \overline{H^{\ast}_x \otimes \cA^{2n-k}_{X,x}},\]
where the first map is induced by $(,)_x$ and the second map is induced by the exterior product
\[ (H_x\otimes \cA^k_{X,x})\otimes (\overline{H^{\ast}_x \otimes \cA^{2n-k}_{X,x}})\to \cA^{2n}_{X,x}\cong \C.\]
so that we have
\[ (A_x,\overline{B_x})_x \mathrm{Vol_x} = A_x \wedge \overline{\ast B_x}, \]
where $A,B \in \cC^{\infty} (H\otimes  \cA^k_{X})$. In particular,
\[ (A,\overline{B})_{L^2} = \int_X A \wedge \overline{\ast B}. \] 
\end{definition}

\begin{lemma} The Hodge star operator restricts to 
\[ \ast : H \otimes \cA^{p,q}_X \to \overline{\Hdual \otimes \cA^{n-p,n-q}_X}=\overline{\Hdual} \otimes \cA^{n-q,n-p}_X.\]
\end{lemma}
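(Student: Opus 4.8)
The statement to prove is that the Hodge star operator $\ast$, defined via the $L^2$-pairing and the exterior product, sends a form of type $(p,q)$ with coefficients in $H$ to a form of type $(n-q,n-p)$ with coefficients in $\overline{H^\ast}$. This is a purely pointwise (and in fact linear-algebraic) statement, so the plan is to fix a point $x\in X$ and work in the fibers $H_x\otimes\cA^{p,q}_{X,x}$, reducing everything to the classical computation for the Hermitian metric on $\wedge^\bullet T^\ast_{X,x}$ together with a harmonic metric on $H_x$.

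First I would observe that the Hermitian metric $(\,,\,)_x$ on $H_x\otimes\cA^k_{X,x}$ is, by construction, the tensor product of the harmonic metric $h_x$ on $H_x$ and the metric on $\cA^k_{X,x}$ induced by the K\"ahler metric $g$. The metric on $\cA^\bullet_{X,x}$ is orthogonal for the bigrading by type: $\cA^{p,q}_{X,x}\perp\cA^{p',q'}_{X,x}$ unless $(p,q)=(p',q')$. Hence $(\,,\,)_x$ is orthogonal for the bigrading on $H_x\otimes\cA^\bullet_{X,x}$ as well, since tensoring with $H_x$ does not mix types. Therefore, for $A_x\in H_x\otimes\cA^{p,q}_{X,x}$, the functional $(A_x,\overline{(\cdot)})_x$ on $H_x\otimes\cA^k_{X,x}$ vanishes on all summands of type other than $(p,q)$.

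Second, I would trace through the two maps defining $\ast$. Under the identification $\overline{H^\ast_x\otimes\cA^{2n-k}_{X,x}}=\overline{H^\ast_x}\otimes\cA^{2n-k}_{X,x}$ (the conjugate of $\cA^{2n-k}_X$ being canonically $\cA^{2n-k}_X$ via conjugation of bidegrees, $\overline{\cA^{a,b}_X}\cong\cA^{b,a}_X$), the element $\ast A_x$ is characterized by $A_x\wedge\overline{\ast A_x}=(A_x,\overline{A_x})_x\,\mathrm{Vol}_x$, and more generally by $B_x\wedge\overline{\ast A_x}=(B_x,\overline{A_x})_x\,\mathrm{Vol}_x$ for all $B_x$. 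The exterior product $H_x\otimes\cA^{a,b}_{X,x}\otimes\overline{H^\ast_x}\otimes\cA^{c,d}_{X,x}\to\cA^{2n}_{X,x}$ is nonzero only when $a+c=n$ and $b+d=n$ (the top exterior power of $T^\ast_X$ sits in bidegree $(n,n)$), while the $H$-$\overline{H^\ast}$ pairing $h$ is a perfect pairing of the coefficient spaces. Combining this with the orthogonality from the previous paragraph: $\ast A_x$ must lie in the summand $\overline{H^\ast_x}\otimes\cA^{c,d}_{X,x}$ with $c=n-p$, $d=n-q$, because the functional $B_x\mapsto B_x\wedge\overline{\ast A_x}$ (for $B_x$ ranging over all bidegrees) agrees with $B_x\mapsto(B_x,\overline{A_x})_x\mathrm{Vol}_x$, which is supported on bidegree $(p,q)$; pairing a $(p,q)$-form against $\overline{\ast A_x}$ nontrivially in the wedge forces $\ast A_x$ into $\cA^{n-p,n-q}_{X,x}$, and then conjugation rewrites this as $\cA^{n-q,n-p}_{X,x}$, giving $\overline{H^\ast}\otimes\cA^{n-q,n-p}_X$ as claimed.

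I do not expect a genuine obstacle here; the content is bookkeeping of bidegrees and the observation that $\otimes H_x$ is ``transparent'' to the type decomposition because $h_x$ is a Hermitian metric of bidegree $(0,0)$. The one point requiring a little care is the identification $\overline{H^\ast_x\otimes\cA^{2n-k}_{X,x}}=\overline{H^\ast_x}\otimes\cA^{2n-k}_{X,x}$ and, within it, the conjugate-linear matching $\overline{\cA^{a,b}_{X,x}}=\cA^{b,a}_{X,x}$, which is exactly the source of the type swap $(n-p,n-q)\leftrightarrow(n-q,n-p)$ in the final formula; I would state this identification explicitly and reference the analogous discussion in the classical case \cite[\S 5]{Voisin}. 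The remaining verification that $\ast$ is well-defined and $\C$-linear is already built into its definition in Definition \ref{definition:C-linear Hodge star operator}, so nothing further is needed beyond the bidegree tracking. Full details can be found in \cite[\S 2.4]{Yang21}.
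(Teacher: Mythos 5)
Your proof is correct in substance, and since the paper states the lemma without proof (deferring to \cite[\S 2.4]{Yang21}), the expected argument is exactly the pointwise bidegree bookkeeping you carry out: orthogonality of the bigrading for the induced metric, plus the bidegree constraint on the wedge pairing, plus the type-swap under conjugation. Two small imprecisions are worth cleaning up. First, the pairing of coefficient spaces in the exterior product defining $\ast$ (Definition \ref{definition:C-linear Hodge star operator}) is the canonical evaluation $H_x \otimes H^{\ast}_x \to \C$, not the harmonic metric $h$; $h$ enters only through the Hermitian form $(\,,\,)_x$ on $H_x\otimes \cA^k_{X,x}$ — so the phrase ``the $H$-$\overline{H^\ast}$ pairing $h$'' conflates two different pairings. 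Second, in your constraint ``nonzero only when $a+c=n$ and $b+d=n$'', one has to be careful whether $(c,d)$ is the bidegree of $\ast A_x$ (before the conjugation identification) or of $\overline{\ast A_x}$; as written it is the bidegree of $\overline{\ast A_x}$, in which case your later sentence ``conjugation rewrites this as $\cA^{n-q,n-p}$'' is doing the needed swap — but the two sentences use $(c,d)$ in different conventions, which is confusing even though the conclusion is right. A slightly cleaner alternative route, which is likely closer to what \cite{Yang21} does given that Lemma \ref{lemma:Hodge star operator} appears next in the appendix, is to first establish the explicit formula $\ast(e\otimes\alpha)=\overline{e^{\vee}}\otimes(\ast\alpha)$ by observing that $(\,,\,)_x$ is a tensor-product metric, and then to quote the classical fact that the ordinary Hodge star sends $\cA^{p,q}_X$ to $\cA^{n-q,n-p}_X$; that factors the argument through a reusable formula rather than redoing the bidegree trace from scratch, but yields the same result.
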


Let $e$ be a $\cC^{\infty}$ section of $H$. One constructs a section $\overline{e^{\vee}}\in \cC^{\infty}(\overline{\Hdual})$ via
\[ \overline{e^{\vee}}(\bullet)= h(e,\bullet).\]
where $h$ is the harmonic metric. 

\begin{lemma}\label{lemma:Hodge star operator} The Hodge star operator for $H$ satisfies
\[ \ast (e\otimes \alpha) = \overline{e^{\vee}}\otimes (\ast \alpha), \]
where $e\otimes \alpha \in \cC^{\infty}(H\otimes  \cA^k_X)$ and $\ast \alpha$ is the Hodge star operator for $k$-forms so that
\[ (\alpha,\overline{\beta})_{L^2} = \int_X \alpha \wedge \ast \overline{\beta}.\]
\end{lemma}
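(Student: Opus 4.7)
The plan is to verify the proposed formula pointwise by checking that $\overline{e^\vee}\otimes \ast\alpha$ satisfies the defining relation $A_x\wedge \overline{\ast B_x}=(A_x,\overline{B_x})_x\mathrm{Vol}_x$ against every simple tensor $A=e\otimes\alpha$, and then appealing to uniqueness.

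First I would unpack the right hand side. Since the Hermitian metric on $H_x\otimes_\C \cA^k_{X,x}$ is by construction the tensor product of $h_x$ and the induced Hermitian metric on $\cA^k_{X,x}$, for simple tensors $A=e\otimes\alpha$ and $B=f\otimes\beta$ one has
\[ (e\otimes\alpha,\overline{f\otimes\beta})_x\,\mathrm{Vol}_x \;=\; h(e,\overline{f})_x\cdot(\alpha,\overline{\beta})_x\,\mathrm{Vol}_x \;=\; h(e,\overline{f})_x\cdot \alpha\wedge\ast\overline{\beta}, \]
where in the last step I apply the pointwise form of the classical Hodge star identity $\int_X\alpha\wedge\ast\overline{\beta}=(\alpha,\overline{\beta})_{L^2}$ recalled at the end of the lemma.

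Second, I would compute the wedge product $(e\otimes\alpha)\wedge\overline{\,\overline{f^\vee}\otimes\ast\beta\,}$. Unwinding complex conjugation on the two tensor factors and using the identification $\overline{\overline{H^\ast}\otimes \cA^{n-q,n-p}}\cong H^\ast\otimes \cA^{n-p,n-q}$, this becomes $(e\otimes\alpha)\wedge(f^\vee\otimes\overline{\ast\beta})=f^\vee(e)\cdot\alpha\wedge\overline{\ast\beta}$ by the $H$--$H^\ast$ wedge pairing. The reality property $\overline{\ast\gamma}=\ast\overline{\gamma}$ of the standard Hodge star on forms rewrites the last factor as $\ast\overline{\beta}$, while unwinding the definition $\overline{f^\vee}(\bullet)=h(f,\bullet)$ together with Hermitian symmetry of $h$ gives $f^\vee(e)=h(e,\overline{f})$. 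The result matches the expression obtained in the first step exactly.

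Comparing the two calculations shows that $\overline{e^\vee}\otimes\ast\alpha$ satisfies the defining relation for $\ast(e\otimes\alpha)$ for every test $B$, and since the defining relation pins down $\ast(e\otimes\alpha)$ uniquely, the lemma follows. There is no real obstacle; the only subtlety is the bookkeeping of conjugations on the $H^\ast$ and form factors, which I would fix at the outset by recording once and for all the natural isomorphism $\overline{H^\ast\otimes \cA^{n-p,n-q}}\cong \overline{H^\ast}\otimes \cA^{n-q,n-p}$ together with the reality identity for the classical Hodge star.
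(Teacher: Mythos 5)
Your verification is correct and is exactly the expected argument: the lemma is a direct pointwise unwinding of Definition \ref{definition:C-linear Hodge star operator}, which the paper itself does not prove in the text (it defers to the thesis cited in the appendix), and your check of the defining relation on simple tensors, combined with the reality of the classical star, the identity $f^{\vee}(e)=h(e,\overline{f})$, and non-degeneracy of the wedge pairing, is precisely that verification. The only blemish is notational: in your first and last sentences the roles of the starred element and the test element are swapped (it is $B=f\otimes\beta$ that gets starred and $A=e\otimes\alpha$ that serves as the test), but your actual computation assigns them correctly, and your use of the pointwise form $\alpha\wedge\ast\overline{\beta}=(\alpha,\overline{\beta})_x\,\mathrm{Vol}_x$ of the classical Hodge identity, rather than only its $L^2$ integral, is exactly what the argument needs.
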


\begin{lemma}\label{lemma:Hodge star operator for primitive forms}
Let $L$ be the ample line bundle on $X$ associated to the K\"ahler metric $g$. Let $e\otimes \alpha$ be a primitive $(p,q)$-form with coefficient in $H$ so that $p+q=k$. Then for $k\leq n$, we have
\[\ast(e\otimes \alpha) = C \overline{e^{\vee}}\otimes L^{n-k}\wedge \alpha,\]
where $C=\frac{(-1)^{k(k+1)/2}i^{p-q}}{(n-k)!}$ is a constant.
For $k\geq n$ we have
\[ \ast(e\otimes \alpha) = C \overline{e^{\vee}}\otimes (L^{k-n})^{-1}(\alpha),\]
here $(L^{k-n})^{-1}$ represents the inverse of the cup product map
\[ L^{k-n}: \cA^{2n-k}_X \to \cA^{k}_X.\]
\end{lemma}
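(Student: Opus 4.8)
The statement to prove is Lemma~\ref{lemma:Hodge star operator for primitive forms}, computing the Hodge star operator $\ast$ on a primitive $(p,q)$-form $e\otimes\alpha$ with coefficients in a harmonic bundle. The plan is to reduce everything to the classical computation for ordinary primitive forms, which is \cite[Theorem 6.29]{Voisin} (equivalently, the Weil formula). The key reduction is Lemma~\ref{lemma:Hodge star operator}, which already separates the ``bundle part'' from the ``form part'': it says $\ast(e\otimes\alpha)=\overline{e^\vee}\otimes(\ast\alpha)$, where on the right $\ast$ is the classical Hodge star operator on scalar-valued $k$-forms (normalized via the $L^2$ pairing $(\alpha,\overline\beta)_{L^2}=\int_X\alpha\wedge\ast\overline\beta$). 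So once that lemma is in hand, the entire content of Lemma~\ref{lemma:Hodge star operator for primitive forms} becomes a statement purely about scalar primitive forms and the Lefschetz operator $L$ on $\cA^\bullet_X$.

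First I would invoke Lemma~\ref{lemma:Hodge star operator} to write $\ast(e\otimes\alpha)=\overline{e^\vee}\otimes(\ast\alpha)$. Then, for the case $k\le n$, I would apply the classical Weil identity for a primitive $(p,q)$-form $\alpha$ with $p+q=k$: one has
\[
\ast\alpha=\frac{(-1)^{k(k+1)/2}i^{p-q}}{(n-k)!}\,L^{n-k}\alpha,
\]
which is exactly \cite[Theorem 6.29]{Voisin} (up to the sign/normalization bookkeeping). Substituting gives $\ast(e\otimes\alpha)=C\,\overline{e^\vee}\otimes L^{n-k}\wedge\alpha$ with $C$ as stated. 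For the case $k\ge n$, I would note that the Lefschetz operator $L^{k-n}\colon \cA^{2n-k}_X\to\cA^k_X$ is an isomorphism on the relevant bidegree (Hard Lefschetz at the level of the fiberwise exterior algebra), so $(L^{k-n})^{-1}(\alpha)$ is a well-defined primitive form of degree $2n-k\le n$; applying the $k\le n$ case to this primitive form and using $\ast\ast=(-1)^{k}\mathrm{Id}$ on $k$-forms (to invert the star) yields the claimed formula, with the constant $C$ being carried through the same Weil identity. Alternatively one can simply quote the corresponding ``dual'' Weil formula directly.

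The only genuinely delicate point is the sign and constant bookkeeping: the paper uses $C=\frac{(-1)^{k(k+1)/2}i^{p-q}}{(n-k)!}$, and one must check this matches \cite[Theorem 6.29]{Voisin} under the conventions for the K\"ahler form, the volume form $\mathrm{Vol}=\omega^n/n!$ (or $\omega^n$, depending on normalization), and the definition of $\ast$ via $(A,\overline B)_x\mathrm{Vol}_x=A_x\wedge\overline{\ast B_x}$. I would handle this by fixing the normalization so that $\mathrm{Vol}=\omega^n/n!$ and verifying the formula on the generating example $\alpha=dz_1\wedge d\bar z_1$ (or a product of such) in $n=1$ and $n=2$, then citing that the general primitive case follows from \cite{Voisin}. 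Since the paper explicitly defers the proof to \cite[\S 2.4]{Yang21}, the write-up here can be brief: state the reduction via Lemma~\ref{lemma:Hodge star operator}, cite the Weil identity for the scalar case, and note the $k\ge n$ case follows by inverting $L^{k-n}$ and applying $\ast\ast=\pm\mathrm{Id}$. I do not anticipate any conceptual obstacle beyond this routine-but-error-prone constant tracking.
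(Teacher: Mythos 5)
The paper defers the proof of this lemma to \cite[\S 2.4]{Yang21} and gives no argument here, so there is no in-paper proof to compare against; your proposal is the natural route and is correct for $k \le n$, which is the only case actually invoked in the body of the paper (Theorem~\ref{cor:polarized by twisted Poincare pairing} and Lemma~\ref{lemma:identification is Hodge star operator}). Reducing via Lemma~\ref{lemma:Hodge star operator} to the scalar Hodge star and then quoting the Weil primitive-form identity is exactly the right plan.

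For $k \ge n$, be a bit more careful. As literally stated the lemma requires $\alpha$ to be primitive, and a non-zero primitive form of degree $k > n$ does not exist, so the $k > n$ branch is either vacuous or the hypothesis must be read as $\alpha = L^{k-n}\beta$ with $\beta$ primitive; only under that reading is your assertion that ``$(L^{k-n})^{-1}(\alpha)$ is a well-defined primitive form'' correct — hard Lefschetz makes $(L^{k-n})^{-1}(\alpha)$ well-defined for arbitrary $\alpha$, but it is primitive precisely when $\alpha$ lies in $L^{k-n}$ of the primitive subspace. Also note the constant $(-1)^{k(k+1)/2}i^{p-q}/(n-k)!$ cannot literally carry over to $k > n$ (since $(n-k)!$ is undefined); your $\ast\ast = (-1)^k$ argument produces a constant involving $(k-n)!$ and the Weil sign $(-1)^{(2n-k)(2n-k+1)/2}$ in degree $2n-k$. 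These are loose spots in the statement of the lemma itself rather than gaps in your reasoning for the substantive case $k \le n$.
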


\vspace{3pt}

\bibliographystyle{abbrv}
\bibliography{decomposition}{}

\begin{thebibliography}{10}

\bibitem{BBD}
A.~A. Be\u{\i}linson, J.~Bernstein, P.~Deligne, and O.~Gabber.
\newblock Faisceaux pervers.
\newblock volume 100 (2nd ed.) of {\em Ast\'{e}risque}, pages vi +180. Soc.
  Math. France, Paris, 2018.

\bibitem{BL}
B.~Bhatt and J.~Lurie.
\newblock {$p$}-adic {R}iemann-{H}ilbert correspondence.
\newblock {\em to appear}.

\bibitem{BK}
G.~B\"{o}ckle and C.~Khare.
\newblock Mod {$l$} representations of arithmetic fundamental groups. {II}. {A}
  conjecture of {A}. {J}. de {J}ong.
\newblock {\em Compos. Math.}, 142(2):271--294, 2006.

\bibitem{BW}
N.~Budur and B.~Wang.
\newblock Absolute sets and the decomposition theorem.
\newblock {\em Ann. Sci. \'{E}c. Norm. Sup\'{e}r. (4)}, 53(2):469--536, 2020.

\bibitem{Corlette}
K.~Corlette.
\newblock Flat {$G$}-bundles with canonical metrics.
\newblock {\em J. Differential Geom.}, 28(3):361--382, 1988.

\bibitem{de16}
M.~A.~A. de~Cataldo.
\newblock Decomposition theorem for semi-simples.
\newblock {\em J. Singul.}, 14:194--197, 2016.

\bibitem{DM03}
M.~A.~A. de~Cataldo and L.~Migliorini.
\newblock The {H}odge theory of algebraic maps.
\newblock {\em arXiv preprint math/0306030}, 2003.

\bibitem{DM05}
M.~A.~A. de~Cataldo and L.~Migliorini.
\newblock The {H}odge theory of algebraic maps.
\newblock {\em Ann. Sci. \'{E}cole Norm. Sup. (4)}, 38(5):693--750, 2005.

\bibitem{DM09}
M.~A.~A. de~Cataldo and L.~Migliorini.
\newblock The decomposition theorem, perverse sheaves and the topology of
  algebraic maps.
\newblock {\em Bull. Amer. Math. Soc. (N.S.)}, 46(4):535--633, 2009.

\bibitem{DM10}
M.~A.~A. de~Cataldo and L.~Migliorini.
\newblock The perverse filtration and the {L}efschetz hyperplane theorem.
\newblock {\em Ann. of Math. (2)}, 171(3):2089--2113, 2010.

\bibitem{Deligne68}
P.~Deligne.
\newblock Th\'{e}or\`eme de {L}efschetz et crit\`eres de
  d\'{e}g\'{e}n\'{e}rescence de suites spectrales.
\newblock {\em Inst. Hautes \'{E}tudes Sci. Publ. Math.}, (35):259--278, 1968.

\bibitem{Deligne71}
P.~Deligne.
\newblock Th\'{e}orie de {H}odge. {II}.
\newblock {\em Inst. Hautes \'{E}tudes Sci. Publ. Math.}, (40):5--57, 1971.

\bibitem{Deligne74}
P.~Deligne.
\newblock Th\'{e}orie de {H}odge. {III}.
\newblock {\em Inst. Hautes \'{E}tudes Sci. Publ. Math.}, (44):5--77, 1974.

\bibitem{Drinfeld}
V.~Drinfeld.
\newblock On a conjecture of {K}ashiwara.
\newblock {\em Math. Res. Lett.}, 8(5-6):713--728, 2001.

\bibitem{ElZein}
F.~El~Zein.
\newblock Mixed {H}odge structures.
\newblock {\em Trans. Amer. Math. Soc.}, 275(1):71--106, 1983.

\bibitem{ELY}
F.~El~Zein, L.~D. {u}ng Tr\'{a}ng, and X.~Ye.
\newblock Decomposition, purity and fibrations by normal crossing divisors,
  2018.

\bibitem{EKP}
P.~Eyssidieux, L.~Katzarkov, T.~Pantev, and M.~Ramachandran.
\newblock Linear {S}hafarevich conjecture.
\newblock {\em Ann. of Math. (2)}, 176(3):1545--1581, 2012.

\bibitem{Gaitsgory}
D.~Gaitsgory.
\newblock On de {J}ong's conjecture.
\newblock {\em Israel J. Math.}, 157:155--191, 2007.

\bibitem{God71}
C.~Godbillon.
\newblock {\em \'{E}l\'{e}ments de topologie alg\'{e}brique}.
\newblock Hermann, Paris, 1971.

\bibitem{HTT}
R.~Hotta, K.~Takeuchi, and T.~Tanisaki.
\newblock {\em {$D$}-modules, perverse sheaves, and representation theory},
  volume 236 of {\em Progress in Mathematics}.
\newblock Birkh\"{a}user Boston, Inc., Boston, MA, 2008.
\newblock Translated from the 1995 Japanese edition by Takeuchi.

\bibitem{Kashiwara96}
M.~Kashiwara.
\newblock Semisimple holonomic {$D$}-modules.
\newblock In {\em Topological field theory, primitive forms and related topics
  ({K}yoto, 1996)}, volume 160 of {\em Progr. Math.}, pages 267--271.
  Birkh\"{a}user Boston, Boston, MA, 1998.

\bibitem{KS}
M.~Kashiwara and P.~Schapira.
\newblock {\em Sheaves on manifolds}, volume 292 of {\em Grundlehren der
  mathematischen Wissenschaften [Fundamental Principles of Mathematical
  Sciences]}.
\newblock Springer-Verlag, Berlin, 1990.
\newblock With a chapter in French by Christian Houzel.

\bibitem{MV}
R.~MacPherson and K.~Vilonen.
\newblock Elementary construction of perverse sheaves.
\newblock {\em Invent. Math.}, 84(2):403--435, 1986.

\bibitem{Mochizuki1}
T.~Mochizuki.
\newblock Asymptotic behaviour of tame harmonic bundles and an application to
  pure twistor {$D$}-modules. {I}.
\newblock {\em Mem. Amer. Math. Soc.}, 185(869):xii+324, 2007.

\bibitem{Mochizuki2}
T.~Mochizuki.
\newblock Asymptotic behaviour of tame harmonic bundles and an application to
  pure twistor {$D$}-modules. {II}.
\newblock {\em Mem. Amer. Math. Soc.}, 185(870):xii+565, 2007.

\bibitem{Mochizuki3}
T.~Mochizuki.
\newblock Wild harmonic bundles and wild pure twistor {$D$}-modules.
\newblock {\em Ast\'{e}risque}, (340):x+607, 2011.

\bibitem{Sabbah}
C.~Sabbah.
\newblock Polarizable twistor {$D$}-modules.
\newblock {\em Ast\'{e}risque}, (300):vi+208, 2005.

\bibitem{Saito88}
M.~Saito.
\newblock Modules de {H}odge polarisables.
\newblock {\em Publications of the Research Institute for Mathematical
  Sciences}, 24(6):849--995, 1988.

\bibitem{Schmid}
W.~Schmid.
\newblock Variation of {H}odge structure: the singularities of the period
  mapping.
\newblock {\em Inventiones mathematicae}, 22(3):211--319, 1973.

\bibitem{Simpson88}
C.~Simpson.
\newblock Constructing variations of {H}odge structure using {Y}ang-{M}ills
  theory and applications to uniformization.
\newblock {\em J. Amer. Math. Soc.}, 1(4):867--918, 1988.

\bibitem{Simpson92}
C.~Simpson.
\newblock Higgs bundles and local systems.
\newblock {\em Inst. Hautes \'{E}tudes Sci. Publ. Math.}, (75):5--95, 1992.

\bibitem{Simpson93}
C.~Simpson.
\newblock Some families of local systems over smooth projective varieties.
\newblock {\em Ann. of Math. (2)}, 138(2):337--425, 1993.

\bibitem{Simpson97}
C.~Simpson.
\newblock Mixed twistor structures.
\newblock {\em arXiv preprint alg-geom/9705006}, 1997.

\bibitem{Voisin}
C.~Voisin.
\newblock {\em Hodge theory and complex algebraic geometry. {I}}, volume~76 of
  {\em Cambridge Studies in Advanced Mathematics}.
\newblock Cambridge University Press, Cambridge, 2002.
\newblock Translated from the French original by Leila Schneps.

\bibitem{Yang21}
R.~Yang.
\newblock {\em Decomposition Theorem for Semisimple Local Systems}.
\newblock PhD thesis, Stony Brook University, 2021.

\end{thebibliography}
\end{document}